\newtheorem{theorem}{Theorem}
\newtheorem{definition}{Definition}
\newtheorem{lemma}{Lemma}
\newtheorem{proposition}[theorem]{Proposition}
\newtheorem{remark}{Remark}
 \newtheorem*{theorem*}{Rough version of the Main theorem}
\let\e=\varepsilon
\let\p=\partial
\let\O=\Omega
\let\o=\omega
\numberwithin{equation}{section}
\let\hide\iffalse
\let\unhide\fi
\DeclareMathAlphabet{\mathpzc}{OT1}{pzc}{m}{it}
\newcommand{\R}{\mathbb{R}}
\renewcommand{\S}{\mathbb{S}}
\newcommand{\be}{\begin{equation}}
\newcommand{\bm}{\begin{multline}}
\newcommand{\ee}{\end{equation}}
\newcommand{\dd}{\mathrm{d}}
\newcommand{\xb}{x_{\mathbf{b}}}
\newcommand{\tb}{t_{\mathbf{b}}}
\newcommand{\f}{\frac}
\newcommand{\Bes}{\begin{eqnarray*}}
\newcommand{\Ees}{\end{eqnarray*}}
\newcommand{\Be}{\begin{equation} }
\newcommand{\Ee}{\end{equation}}
\newcommand{\Bs}{\begin{split}}   
\newcommand{\vertiii}[1]{{\left\vert\kern-0.25ex\left\vert\kern-0.25ex\left\vert #1 
    \right\vert\kern-0.25ex\right\vert\kern-0.25ex\right\vert}}
    \newcommand{\vertip}[1]{{\left\{\kern-0.7ex\left\{\kern-0.25ex
    %\left\vert 
    #1 
   % \right\vert\kern-0.25ex 
     \right\}\kern
    -0.7ex\right\}}}
\def\munderbar#1{\underline{\sbox\tw@{$#1$}\dp\tw@\z@\box\tw@}}
\def\p{\partial}
\def\O{\Omega}
\def\R{\mathbb{R}}
\def\B{\begin{equation}}
\def\E{\end{equation}}
\def\BN{\begin{eqnarray*}}
\def\EN{\end{eqnarray*}}
\def\bcb{\begin{color}{blue}}
\def\ec{\end{color}}
\def\bcr{\begin{color}{red}}
\def\ec{\end{color}}
\begin{document}
 \date{ \today
 }

 \title{Passage from the Boltzmann equation with Diffuse Boundary %Condition 
 to the Incompressible Euler equation with Heat Convection %transfer
 }

 \author{Yunbai Cao}
  \address{Department of Mathematics, Rutgers University, Piscataway, NJ, 08854, USA email: yc1157@math.rutgers.edu 
 }
 \author{Juhi Jang}
  \address{Department of Mathematics, University of Southern California, Los Angeles, CA, 90089 USA, email: juhijang@usc.edu}
  
 \author{Chanwoo Kim}
  \address{Department of Mathematics, University of Wisconsin-Madison, Madison, WI, 53706, USA,  
  email: chanwoo.kim@wisc.edu
 }

 \maketitle
 
 \begin{abstract} 
We derive the incompressible Euler equations  with heat convection  with the no-penetration boundary condition from the Boltzmann equation with the diffuse boundary  in the hydrodynamic limit for the scale of large Reynold number. 
% We prove the convergence of the Boltzmann equation with the diffuse boundary condition to the incompressible Euler equations with the no-penetration boundary condition with heat convection %transfer 
% in the hydrodynamic limit for the scale of large Reynold number. 
Inspired by the recent framework in \cite{JK}, we consider the Navier-Stokes-Fourier system with no-slip boundary conditions as an intermediary approximation and develop a Hilbert-type expansion of the Boltzmann equation around the global Maxwellian that  allows the nontrivial heat transfer by convection in the limit. 
   % We employ  %follow 
% the recent %new 
% framework in \cite{JK} by considering the Navier-Stokes-Fourier system with no-slip boundary condition as an intermediary approximation and using a new Hilbert-type expansion of the Boltzmann equation around the global Maxwellian. 
To justify our expansion and the limit, a new direct estimate of the heat flux and its derivatives in the Navier-Stokes-Fourier system is established adopting a recent Green's function approach in the study of the inviscid limit. 
% A new direct estimate of the heat flux and its derivatives in the Navier-Stokes-Fourier system is established adopting a recent Green's function approach to justify the limit. %This generalizes the recent result of the incompressible Euler limit in \cite{JK}.
\end{abstract}   

\tableofcontents

%\noindent \textbf{Acknowledgements}: 

\section{Introduction}

Building the connection between kinetic theory and macroscopic fluid dynamics has been an important subject over the past decades. Kinetic equations study the time evolution of the distribution function $F(t,x,v) \ge 0$ representing the density of particles of some rarified gas with position $x$ and velocity $v$ in the phase space $\O \times \mathbb R^3$ at time $t$. The interaction of particles through collisions is often modeled by some binary collision operator $Q(F,F)$. 
 
When the gas is dense enough such that particles go through many collisions, 
the hydrodynamic limits are obtained. 
A small parameter $\mathpzc{Kn}$ called the \textit{Knudsen number}, which represents the ratio of the mean free path of particles between collisions to the characteristic length, is a key dimensionless number 
%can be introduced to describe 
in describing such phenomena. On the other hand, the velocity scale that some macroscopic portion of the gas is transported, described by the kinetic \textit{Strouhal number} $\mathpzc{St}$, also affect the limits. %Hence 
 The dimensionless Boltzmann equation takes the form of 
\Be\label{Boltzmann}
 \mathpzc{St} \p_t F + v\cdot \nabla_x F  = \f{1}{\mathpzc{Kn}} Q(F ,F ), \text{ on } [0, \infty) \times \O \times \mathbb R^3, 
\Ee 
%Here the distribution function of the gas is denoted by $F(t,x,v) \geq 0$ with the time variable $t \in \R_+: = \{ t \geq 0\}$, the space variable $x = (x_1,x_2,x_3)\in \O \subset \R^3$, and the velocity variable $v  = (v_1,v_2,v_3) \in \R^3$.  
where $\O$ is an open subset of $\mathbb R^3$. Throughout this paper, we assume the hard sphere Boltzmann collision operator:
% $Q(\cdot, \cdot)$ of the hard sphere takes the form of 
\Be\label{Q}
\begin{split}
Q(F,G)  = \frac{1}{2} \int_{\R^3} \int_{\S^2}  |(v-v_*) \cdot  \mathfrak{u} | \{ &
F(v^\prime) G(v^\prime_*) + G(v^\prime) F(v^\prime_*)\\
 &- F(v ) G(v_* ) - G(v ) F(v_* ) 
\}\dd \mathfrak{u} \dd v_*, \end{split}\Ee
where $v^\prime := v- ((v-v_*) \cdot  \mathfrak{u} )  \mathfrak{u} $ and $v_*^\prime := v_*+ ((v-v_*) \cdot  \mathfrak{u} )  \mathfrak{u} $. %In this paper we assume a hard sphere cross section $B(v-v_* ,\mathfrak{u} ) =$. %^\gamma q_0 (\frac{v-v_*}{|v-v_*|} \cdot  \mathfrak{u} )$ with $0 \leq \gamma \leq 1$ and $q_0 (\frac{v-v_*}{|v-v_*|} \cdot \o) \in L^1(\{\o \in\S^2\})$.
This collision operator enjoys the collision invariance property: for any $F(v)\text{ and } G(v) $,
%decaying sufficiently fast as $|v|\rightarrow 0$, 
\Be\label{collision_inv}
\int_{\R^3} Q(F,G) (v)  \Big( 1 , v  , \frac{| {v } |^2-3}{\sqrt{6}}
 \Big) \dd v=(0,0,0). %\ \ \ \text{for any } F(v)\text{ and } G(v) 
\Ee
%which represents the local conservation laws of mass, momentum and energy. 
%It is well-known that the global Maxwellian $\mu$ satisfies $Q(\cdot, \cdot) = 0 $, where 
We denote the global Maxwellian, which satisfies  $Q(\cdot, \cdot) = 0 $, by $\mu$
\Be\label{mu_e}
\mu(v) :  = \frac{1}{(2\pi)^{\frac{3}{2}}} e^{-\frac{|v|^2}{2}}. 
\Ee

Another parameter %that can be 
 considered  in hydrodynamic limits is the \textit{Mach number} $\mathpzc{Ma}$, which can be viewed as the scale of fluctuations around some reference flow. In this paper, we take the scale that the \textit{Mach number} is equal to the  \textit{Strouhal number}: 
%\textit{Knudsen number}:
\Be \label{StMae}
 \mathpzc{St}= \mathpzc{Ma} =\e.
\Ee
The reciprocal viscosity of a fluid can be measured in terms of the Reynold number $\mathpzc{Re} = \mathpzc{Ma} / \mathpzc{Kn}$. In this paper, we consider the scale of large Reynold number with
\Be \label{Knke}
\mathpzc{Kn} = \kappa \e,
\Ee
where $\kappa = \kappa (\e)  \to 0 $ as $\e \to 0$. Under such scaling, we will derive the incompressible Euler equations with heat convection with the no-penetration boundary condition in the limit
\begin{align}
\p_t u_E+ u_E \cdot \nabla_x u_E  + \nabla_x p_E  &=0 \ \ \text{in} \ \O, 
\label{Euler}\\
\nabla_x \cdot u_E&=0  \ \ \text{in} \ \O, 
 \label{incomp_E}
  \\
 \ \ u_E \cdot n  &=0  \ \ \text{on} \ \p\O, \label{no-pen}
\\
\p_t \theta_E + u_E \cdot \nabla_x \theta_E    &=0 \ \ \text{in} \ \O.
\label{heat_E}
\end{align} 
Here $n = n(x)$ is the unit outward normal vector at $x \in \p \O$. Note that the boundary value of the heat convection $\theta_E$ is completely determined by the transport equation \eqref{heat_E} and the initial data, therefore no boundary condition on $\theta_E$ is imposed.
 
In many important physical applications, e.g. turbulence theory, boundary effect plays an important role in global dynamics, and it is of both physical and mathematical interests to take the boundary into consideration in the hydrodynamic limit.  %could be taken into consideration in the hydrodynamic limit. 
In this paper we consider one of the physical boundary conditions, the  so-called \textit{diffuse boundary condition}, which models the ideal 
%takes into account the idea 
situation that gas particles reflected from the boundary reach an instantaneous thermal equilibration (see \cite{EGKM}): for $ (x,v) \in 
\{ \p\O \times \R^3: n(x) \cdot v<0\},$
\Be\label{diffuse_BC}
F (t,x,v)  = c_\mu \mu (v) \int_{n(x) \cdot \mathfrak{v}>0} F  (t,x,\mathfrak{v}) (n(x) \cdot \mathfrak{v}) \dd\mathfrak{v}.%\ \text{ for }  (x,v) \in 
%\{ \p\O \times \R^3: n(x) \cdot v<0\},
\Ee
Here, the constant $c_\mu $ is chosen to be $\sqrt{2 \pi }$ so that $\int_{n(x) \cdot \mathfrak{v}>0} \mu  (\mathfrak{v}) (n(x) \cdot \mathfrak{v}) \dd\mathfrak{v} =1$, which ensures the null flux condition $ \int_{\R^3} F(t,x,v) (n(x) \cdot v) \dd v=0 \text{ for }x \in \p\O$.

The first mathematical studies of hydrodynamic limits of the Boltzmann equation may date back to Hilbert in \cite{Hilbert1}, where he introduced the method of Hilbert expansion to obtain the derivations at the formal level. Since then there have been many results on the rigorous justifications of hydrodynamic limits based on the truncated asymptotic expansions method. For instance, the compressible Euler limits with heat transfer are derived in \cite{Caflisch, Lach, UA}; incompressible Navier-Stokes limits are achieved in \cite{Mouhot, DEL, Guo2006}; and diffusive limits from the Vlasov-Maxwell-Boltzmann system has been obtained in \cite{Jang}. On the other hand, since Bardos, Golse, and Levermore \cite{BGL1,BGL2} have proposed the derivation of weak solutions of fluid equations from the renormalized solutions of the Boltzmann equation \cite{DP} in the early nineties, extensive studies have been made in this direction and a lot of important results have been obtained  %success has been achieved. 
(see \cite{Golse,JM,Laure} for the references in this direction). Notably, the Leray solutions of the incompressible Navier-Stokes equations have been successfully derived %achieved 
in \cite{GSR, LM1,LM2}; and dissipative solutions of the incompressible Euler have been obtained in \cite{BGPDB, LM1,LM2,SR}.

It should be noted that all the results mentioned above deal with domains that do not contain boundaries. In general, however, solutions of the Boltzmann equation with physical boundary conditions %can 
behave differently; in particular, high regularity may not be expected (see \cite{GKTT1,GKTT2,Kim}). To overcome this an $L^2-L^\infty$ framework was developed in \cite{Guo10} to study global solutions of the Boltzmann equation with various boundary conditions, which  prompted %leads to 
substantial development in various directions including  \cite{CaoK, CKL, CK1, CKLi, EGKM, EGKM2,EGKM3,GJJ, GJ,GJ1,GW, JK2, JinK, Kim2, KL,KL1,KYun, Wu}. Among them in \cite{EGKM, EGKM2}, the hydrodynamic limit of the incompressible Navier-Stokes-Fourier system was derived from the Boltzmann equation under the diffuse boundary condition.

In terms of the incompressible Euler limit, based on the relative entropy method, the convergence of renormalized solutions of the Boltzmann equation to dissipative solutions of the incompressible Euler equations was first obtained in \cite{BGPDB} assuming the local conservation of momentum which is not guaranteed for renormalized solutions of the Boltzmann equation, some nonlinear estimate, and the initial data to be well-prepared, 
in particular, the initial temperature fluctuation $\theta_{in} = 0$, so there is no heat transfer. Later, the local conservation of momentum assumption was removed in \cite{LM1} by using the local momentum conservation with matrix-valued defect measure satisfied by renormalized solutions of the Boltzmann equation. 
The nonlinear estimate assumption in \cite{BGPDB} was further removed in \cite{SR} using refined dissipation estimates which were developed first in the framework of the BGK equation in \cite{SRBGK}. When considering physical boundaries, the incompressible Euler limit with no heat transfer has been derived in \cite{BGP} from the Boltzmann equation under the Maxwell boundary condition. Finally, in \cite{SR09} the well-prepared initial data assumption in \cite{BGPDB} was removed by the construction of refined approximate solutions with converging modulated entropy, and the incompressible Euler limit with heat transfer has been justified under specular reflection boundary condition. We refer to \cite{Laure} for detailed results and discussions in this direction. 

To the best of our knowledge, the incompressible Euler limit with heat transfer from the Boltzmann equation with diffuse boundary has not been established in any framework yet. The main goal of the paper is to rigorously justify the incompressible Euler limit with heat convection under the no-penetration boundary condition \eqref{Euler}-\eqref{heat_E} from the Boltzmann equation \eqref{Boltzmann} in the scale of \eqref{StMae}, \eqref{Knke} with the diffuse boundary condition \eqref{diffuse_BC}. 

%On the other hand, the dissipative solutions of the incompressible Euler limit with no heat transfer has been derived in \cite{BGP} from the renormalized solutions of the Boltzmann equation under the Maxwell boundary condition. The limits with heat transfer have also been justified in \cite{SR09} under specular reflection boundary condition.
%
%
% 
% \vspace{2cm}
%

Under the scaling \eqref{StMae}, \eqref{Knke}, it is well-known that a mismatch exists between the diffuse boundary condition \eqref{diffuse_BC} of the Boltzmann equation and the no-penetration boundary condition \eqref{no-pen} of the Euler equation. To overcome such difficulty, adopting the recent framework %the same spirit 
in \cite{JK}, we study the Euler limit with heat transfer from the Boltzmann equation through the Navier-Stokes-Fourier system with the no-slip boundary condition 
\begin{align}
\p_t u+ u \cdot \nabla_x u  - \kappa \eta_v \Delta u + \nabla_x p  &=0 \ \ \text{in} \ \O, 
\label{NS}\\
\nabla_x \cdot u&=0  \ \ \text{in} \ \O, 
 \label{incomp_NS}
  \\  \ \ u  &=0  \ \ \text{on} \ \p\O, \label{NSB}
  \end{align}
  \begin{align}
%\\ 
\p_t \theta + u \cdot \nabla_x \theta  - \kappa \eta_c \Delta \theta   &=0 \ \ \text{in} \ \O,
\label{heat_NS1}
\\  \ \ \theta &= 0 \ \ \text{on} \ \p\O, \label{thetaB1}
 \\
\text{Boussinesq relation} \ \ \  \rho+ \theta &= 0 \ \ \text{in} \ \O,
 \label{Bouss} 
\end{align} 
where $\eta_v$ and $\eta_c$ are physical constants that can be computed explicitly by the Boltzmann theory as in \eqref{etavcomp}, \eqref{etavcomp}.
%Choose 
%\Be\label{scale}
% \mathpzc{St}= \mathpzc{Ma} =\e , \ \ \mathpzc{Kn}= \e \kappa.
%\Ee

We expand the Boltzmann solution $F$ around a global Maxwellian $\mu$ plus the first and second correction associated with a Navier-Stokes-Fourier flow \eqref{NS}-\eqref{Bouss}: 
 \Be\label{F_e}
F = \mu+ \e f_1\sqrt{\mu}+ \e^2 f_2 \sqrt{\mu }  + \e^{3/2} f_R\sqrt{\mu}   ,
 %+ 
 %\delta \e^{2  } F_{ 2}  
% + 
%  \delta  \e^{3  } F_{ 3}
%   + 
%  \delta  \e^{4  } F_{ 4} 
%   +  \delta ^{\f{3}{2}} \e F_{ R} .
\Ee 
where $f_1 = (\rho + u\cdot v + \theta \frac{|v|^2 - 3}{2} ) \sqrt{\mu}$, and $f_2$, which also determined by $u$, $\rho$, $\theta$, will be specified in \eqref{IPf_2}-\eqref{Pf2choice}. Here $F=F^\e$, $f_R=f_R^\e$ depend on $\e$, but we drop the superscript $\e$ for the sake of simplicity. Then the equation for the Boltzmann remainder $f_R$ is
%(Alert: there might be sign errors) check \cite{Guo}
 \begin{align}
& 
 \p_t f_R 
 + 
  \frac{1}{\e} v\cdot \nabla_x f_R+    \frac{1}{ \e^2\kappa} Lf_R
  \notag
\\ &= - \frac{1}{\e^{5/2 }\kappa}L f_1\label{eqtn_f_1}
\\ & \ \ - \frac{1}{ \e^{3/2}}  \Big\{  v \cdot \nabla_x  f_1   - \frac{1}{\kappa} \Gamma(f_1,f_1) + \frac{ 1}{    \kappa} L f_2 \Big\}
\label{eqtn_f_2}
\\ & \ \  -\frac{1}{\e^{1/2}}  \Big\{ \p_t f_1 +  v \cdot \nabla_x f_2-  \frac{2}{\kappa} \Gamma(f_1,f_2)   \Big\} \label{eqtn_f_3}
\\ &  \ \ - \e^{1/2} \p_t f_2 +  \frac{2}{\e \kappa} \Gamma(f_1,f_R) +  \frac{2}{\kappa}  \Gamma({f_2}, f_R) +   \frac{ \e^{1/2} }{ \kappa } \Gamma({f_2}, f_2) +    \frac{  1 }{\e^{1/2}\kappa}\Gamma(f_R,f_R)   \label{eqtn_f_5}
\end{align}  
where 
\Be\label{L_Gamma}
L  f = \frac{-2}{\sqrt{\mu }} Q(\mu , \sqrt{\mu }f)   ,\ \
\Gamma (f,g)= \frac{1}{ \sqrt{\mu}}  Q(\sqrt{\mu}f, \sqrt{\mu}g) .
\Ee
The operators $L$ and $\Gamma$ can be written as 
%$Lf (v+\e u)= \nu(v)\tilde{f} (v) - \int_{\R^3} \mathbf{k}(v,v_*) \tilde{f} (v_*) \dd v_*$, and hence
 \begin{align}
Lf(v)&= \nu  f(v) -Kf(v)= \nu (v) f(v)- \int_{\R^3} \mathbf{k} (v , v_* ) f(v_*) \dd v_*,\label{nu_K}\\
\Gamma(f,g) (t,v)  &=  \Gamma_+ (f,g) (t,v) - \Gamma_- (f,g) (t,v)
\notag
\\
&= \iint_{\R^3 \times \S^2} 
|(v-v_*) \cdot \mathfrak{u}| \sqrt{ \mu(v_* )} 
\big(
f(t,v ^\prime) g(t,v_*^\prime)
+ g(t,v ^\prime) f(t,v_*^\prime)\big)\dd \mathfrak{u} \dd v_*\label{Gamma}
%+ \sqrt{\mu(v^\prime)\mu(v_*^\prime)} f(t,v ^\prime)
\\
& \ \ -   \iint_{\R^3 \times \S^2} 
|(v-v_*) \cdot \mathfrak{u}| \sqrt{ \mu(v_* )} 
\big(f(t,v  ) g(t,v_* )+g(t,v  ) f(t,v_* )\big)
%-\sqrt{\mu(v )\mu(v_* )} f(t,v  ) 
\dd \mathfrak{u} \dd v_*\notag
\end{align} 
where the collision frequency $\nu$ is defined as
\Be
\nu(v) : = \iint_{\R^3 \times \mathbb{S}^2} |(v-v_*) \cdot \mathfrak{u}| \mu_0(v_*) \dd \mathfrak{u} \dd v_* \ \sim \  \langle v \rangle : =  \sqrt{1 + |v|^2 },
\Ee
and
\Be
\mathbf{k}(v,v_*) = C_1 |v-v_*| e^{- \frac{|v|^2 + |v_*|^2}{4}}-  \frac{C_{2}}{|v-v_*|}  e^{- \frac{|v-v_*|^2}{8}- \frac{1}{8} \frac{(|v|^2 - |v_*|^2)^2}{|v-v_*|^2}} 
\Ee
for some constant $C_1, C_2 > 0 $.
The null space of $L$, denoted by $\mathcal{N}$, is 
% $\text{Ker} L= \langle \{\varphi_i \sqrt{\mu_\delta}\}_{i=1}^5
 %\varphi_1 \sqrt{\mu_\delta},  \varphi_2 \sqrt{\mu_\delta},  \varphi_3 \sqrt{\mu_\delta},  \varphi_4 \sqrt{\mu_\delta}
%\rangle_{L^2_v(\R^3)}$, 
a subspace of $L^2(\R^3)$ spanned by orthogonal bases $\{\varphi_i \sqrt{\mu }\}_{i=0}^4$ with%and $\{ \tilde{\varphi}_i \sqrt{\mu_0 }\}_{i=0}^4$ with
 \Be\label{basis}
\begin{split}
% \ \ \text{with} \ \ 
 &\varphi_0 := 1
,   \  \ \ \varphi_i: =   {v_i  } 
 \ \ \text{for} \ i=1,2,3 
,   \  \ \ \varphi_4: =   
\frac{| v  |^2-3 }{2}.
%
%& \tilde{\varphi}_0 := 1
%,   \  \ \  \tilde{\varphi}_i: =   {v_i   } 
% \ \ \text{for} \ i=1,2,3 
%,   \  \ \  \tilde{\varphi}_4: =   
%( | {v } |^2-3 )/{\sqrt{6}}
%
%\frac{2/\sqrt{15} }{\sqrt{(\f{2}{5}+ \delta \theta )^2 + \f{6}{25} }} \frac{|v -\e u |^2-3}{2}.
\end{split}\Ee

We define a hydrodynamic projection $\mathbf{P}$ as an $L_v^2$-projection on $\mathcal{N}$ such as 
\Be\begin{split}
\label{P}
\mathbf{P} g:= \sum%_{j=0}^4
  ( {P}_j g) \varphi_j \sqrt{\mu }, \ \ 
 {P}_j g:= \langle g ,\varphi_j \sqrt{\mu } \rangle  , \  \text{and}  \ \  
P g:= (P_0 g, P_1 g, P_2 g, P_3 g, P_4 g ),%\\
%
%\mathbf{\tilde{P}} g:= \sum%_{j=0}^4 
%( \tilde{P}_j g) \tilde{\varphi}_j \sqrt{\mu_0}, \ \ 
% \tilde{P}_j g:= \langle g ,\tilde{\varphi}_j \sqrt{\mu } \rangle  , \  \text{and}  \ \  
%\tilde{P} g:= (\tilde{P_0} g, \tilde{P_1} g, \tilde{P_2} g, \tilde{P_3} g, \tilde{P_4} g ),
%
\end{split}
\Ee
where $\langle \cdot, \cdot \rangle $ stands for an $L^2_v$-inner product. It is well-known that the operators enjoy $\mathbf{P}L=L \mathbf{P}=\mathbf{P} \Gamma=0$. Importantly the linear operator $L$ enjoys a coercivity away from the kernel $\mathcal{N}$: %non-negativity
for $\nu(v)\geq0$ defined in (\ref{nu_K}),
\Be\label{s_gap}
\langle Lf, f\rangle \geq \sigma_0 \|  \sqrt{\nu}(\mathbf{I} - \mathbf{P}) f \|_{L^2 (\R^3)}^2 \ \ \text{for some }   \sigma_0>0.
\Ee

From the no-slip boundary condition \eqref{NSB}, \eqref{thetaB1}, $f_1=0$ on $\p\O$ and hence $\mu + \e f_1 \sqrt{\mu}$ satisfies the diffuse reflection boundary condition (\ref{diffuse_BC}). By plugging (\ref{F_e}) into the boundary condition, we arrive at %derive that  
\Be\notag
 (\e^2 f_2  +  \e^{3/2} f_R )|_{\gamma_-}=
 c_\mu  \sqrt{\mu(v) } \int_{n(x) \cdot \mathfrak{v}>0}  (  \e^2 f_2 +  \e^{3/2} f_R )  \sqrt{\mu(\mathfrak{v})}(n(x) \cdot \mathfrak{v}) \dd \mathfrak{v} .
\Ee 
Letting $P_{\gamma_+}$ be an $L^2 (\{ v : n(x) \cdot v>0 \})$-projection of $\sqrt{c_\mu \mu}$, we derive that 
\Be \label{bdry_fR}
\begin{split}
&f_R(t,x,v)|_{\gamma_-}=    P_{\gamma_+}   f_R(t,x,v)- \e^{1/2} (1- P_{\gamma_+}) f_2(t,x,v)\\
&:=      \sqrt{c_\mu \mu(v)} \int_{n(x) \cdot \mathfrak{v}>0} f_R  (t,x,\mathfrak{v})\sqrt{c_\mu \mu(\mathfrak{v})} (n(x) \cdot \mathfrak{v}) \dd \mathfrak{v} - \e^{1/2} (1- P_{\gamma_+}) f_2(t,x,v).
 \end{split} \Ee
 Note that $\int_{n(x) \cdot v>0} c_\mu\mu(v) (n(x) \cdot v) \dd v=1$. And $\p_t f_R$ satisfies
  \Be\label{bdry_fR_t}
 \begin{split}
 \p_t f_R |_{\gamma_-} = P_{\gamma_+} \p_t f_R- \e^{1/2} (1-P_{\gamma_+})  \p_t (\mathbf{I} - \mathbf{P})f_2.
 \end{split}
 \Ee
 
%  \subsection{Main Theorem}\label{sec:MT}

%The main goal of the paper is to rigorously justify the incompressible Euler limit with heat transfer under the no-penetration boundary condition \eqref{Euler}-\eqref{heat_E} from the Boltzmann equation \eqref{Boltzmann} in the scale of \eqref{StMae}, \eqref{Knke} with the diffuse boundary condition \eqref{diffuse_BC}. 
%Due to many technical terms and assumptions, we present an informal statement here for simplicity. The precise statement can be found in Theorem \ref{main_theorem} in Section 4.

We are now ready to state the main result of this paper (informal version); the precise statement can be found in Theorem \ref{main_theorem} in Section 4.

\begin{theorem}[Informal statement]\label{Informal statement}
Let $\O$ be a upper half space in 3D:
 \Be\label{domain}
 \begin{split}
\O := \mathbb{T}^2 \times \R_+ \ni (x_1, x_2, x_3),
 \ \ & \text{where } \mathbb R_+ :=\{ x_3 \in \mathbb R: x_3 > 0 \},
 \\ & \text{ and } \mathbb{T} \text{ is the periodic interval } (-\pi, \pi).
 \end{split}
\Ee
Suppose an initial velocity field $u_{in}$ is divergence-free, the corresponding initial vorticity $\o_{in}= \nabla_x \times u_{in}$ and the initial heat $\theta_{in}$ live in some real analytic space and satisfy certain compatibility conditions.
%Then exists a unique real analytic solution $u$ and $\theta$ to the Navier-Stokes-Fourier flow (\ref{NS})-(\ref{Bouss}) in $[0,T] \times \O$ for some $T>0$. 
Choosing the proper correction terms $f_1$ and $f_2$, then there exists a large set of initial data $f_{R,in }$ such that for some $T>0$, there exists a unique solution $F(t,x,v)$ of the form \eqref{F_e} to the Boltzmann equation (\ref{Boltzmann}) with the diffuse reflection boundary condition (\ref{diffuse_BC}) under the scale of (\ref{StMae}), (\ref{Knke}) on $[0,T]$ such that for some choice of $\e$ and $\kappa(\e)$,

   \Be\notag
\sup_{0 \leq t \leq T}\left\|
\frac{F (t,x,v)- \mu (v)}{\e  \sqrt {\mu (v) } } - \left( - \theta_E(t,x) + u_E(t,x) \cdot v + \theta_E(t,x) \frac{|v|^2-3}{2} \right) \sqrt {\mu(v)}  \right\|_{L^2(\O \times \R^3 )}
\\    \longrightarrow 0
% \ \ \text{as} \ \ \e \downarrow 0.
\Ee
as $\e \to 0$, where $u_E$ and $\theta_E$ are the solutions of the incompressible Euler equations with no-penetration boundary condition under heat convection (\ref{Euler})-(\ref{heat_E}).

  \end{theorem}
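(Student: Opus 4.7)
The strategy is a two-stage hydrodynamic limit: first use the Hilbert expansion \eqref{F_e} to pass from Boltzmann to the Navier--Stokes--Fourier (NSF) system \eqref{NS}--\eqref{Bouss}, and then send the viscosity parameter $\kappa \to 0$ in NSF to reach the Euler system \eqref{Euler}--\eqref{heat_E}. Writing $(F-\mu)/(\e\sqrt{\mu}) = f_1 + \e f_2 + \e^{1/2} f_R$ and using the Boussinesq relation \eqref{Bouss} to identify the target $f_1 \to \big(-\theta_E + u_E\cdot v + \theta_E(|v|^2-3)/2\big)\sqrt{\mu}$ as $\kappa\to 0$, the claimed $L^2$ convergence reduces to three facts: a vanishing-viscosity limit for NSF, a uniform-in-$\e,\kappa$ $L^2$-bound on $f_2$, and a remainder estimate showing $\e^{1/2}\|f_R\|_{L^2}\to 0$.

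The first concrete step is to fix $f_2$ so that the formally singular terms in the remainder equation cancel. Since $f_1\in\mathcal N$, one has $L f_1=0$, killing \eqref{eqtn_f_1} identically. Define the non-hydrodynamic part by $\ip f_2 := L^{-1}\ip\big(v\cdot\nabla_x f_1 - \kappa^{-1}\Gamma(f_1,f_1)\big)$, which annihilates the $\ip$-projection of \eqref{eqtn_f_2}; the hydrodynamic piece $\mathbf P f_2$ is then chosen via the Burnett relations so that the $\mathbf P$-projection of \eqref{eqtn_f_2} vanishes precisely when $(\rho,u,\theta)$ solves NSF. The term \eqref{eqtn_f_3} is killed by incompressibility \eqref{incomp_NS}, the heat equation \eqref{heat_NS1}, and Boussinesq \eqref{Bouss}. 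What remains on the right-hand side is $O(\e^{1/2})$ plus linear and nonlinear contributions from $f_R$ alone.

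Next I would close a uniform energy estimate for $f_R$. Testing the remainder equation against $f_R$ and using the spectral gap \eqref{s_gap} extracts the dissipation $\sigma_0(\e^2\kappa)^{-1}\|\sqrt{\nu}\,\ip f_R\|_{L^2}^2$. The boundary flux $\tfrac12\int_{\p\O\times\R^3}(n\cdot v)|f_R|^2$ is controlled via \eqref{bdry_fR}: the $P_{\gamma_+}f_R$ contribution is absorbed using orthogonality of $P_{\gamma_+}$ and a Poincar\'e-type trace bound, while the residual $\e^{1/2}(1-P_{\gamma_+})f_2$ is genuinely small. The hydrodynamic piece $\mathbf P f_R$ is then recovered from $\ip f_R$ via a weak formulation testing against carefully chosen elements of $\mathcal N$ adapted to the no-slip/zero-temperature data carried by $f_1$. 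The dangerous cubic $\kappa^{-1}\e^{-1/2}\Gamma(f_R,f_R)$ and the singular linear $\kappa^{-1}\e^{-1}\Gamma(f_1,f_R)$ require pointwise control on $f_R$; this is the Guo $L^2$--$L^\infty$ bootstrap along backward characteristics, combined with Vidav-type compactness of $K$ from \eqref{nu_K}, yielding a weighted $L^\infty$ bound that closes the nonlinearity.

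The main obstacle is the $\kappa$-uniform analytic estimate for the heat flux $\nabla_x\theta$ announced in the abstract, needed both to bound $\|f_2\|_{L^2}$ uniformly and to carry out the inviscid limit NSF$\to$Euler. The difficulty is that $\theta|_{\p\O}=0$ generally disagrees with the Euler trace $\theta_E|_{\p\O}$, producing a thermal boundary layer parallel to the Prandtl velocity layer. Adapting the Green's-function approach recently used for the velocity inviscid limit, I would represent $\nabla_x\theta$ through the Dirichlet heat semigroup, exploit the divergence-free structure of the convection $u\cdot\nabla_x\theta$ to cancel the worst contributions, and close the estimate in an analytic norm in the tangential variables with weighted $L^\infty$ decay in the normal direction, ensuring the analytic radius remains positive on $[0,T]$. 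With this in hand, choosing $\kappa=\kappa(\e)\to 0$ slowly enough to defeat the polynomial powers of $\kappa^{-1}$ appearing in $f_2$ and in the collision prefactors drives $\e^{1/2}\|f_R\|_{L^2}\to 0$ and completes the passage to the Euler equations with heat convection.
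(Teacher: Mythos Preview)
Your architecture is correct and matches the paper: a two-stage limit via the NSF intermediary, with $f_1,f_2$ chosen so that the most singular pieces of the remainder equation vanish, followed by an $L^2$--$L^\infty$ bootstrap on $f_R$ and a Green's-function inviscid limit for the fluid system. But there is one genuine gap and one missing structural ingredient.

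\textbf{The scaling gap.} You write that it suffices to choose $\kappa(\e)\to 0$ ``slowly enough to defeat the polynomial powers of $\kappa^{-1}$.'' This is not what happens. The linear term $\tfrac{2}{\e\kappa}\Gamma(f_1,f_R)$, after the energy estimate, deposits $\tfrac{1}{\kappa}\int_0^t\|Pf_R(s)\|_{L^2_x}^2\,ds$ on the right-hand side (cf.\ \eqref{est:Energy}), and Gronwall then produces growth of order $\exp(C t/\kappa)$, not a polynomial in $\kappa^{-1}$. The paper closes by taking $\e=\exp(-\mathfrak C(T+1)/\kappa)$ (see \eqref{choice:delta}), so that the prefactor $\e^{1/2}$ beats $e^{Ct/\kappa}$. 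If you proceed as though the losses are merely polynomial, your choice of $\kappa(\e)$ will not absorb the remainder and the argument does not close.

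\textbf{The missing temporal layer.} Your description of recovering $\mathbf P f_R$ by a weak-formulation test against $\mathcal N$ is right in spirit (this is the $L^6$ gain), but that step produces $\e\|\p_t f_R\|_{L^2_{x,v}}$ on the right-hand side (see \eqref{est:wf3} and \eqref{L6}). You therefore need a parallel energy/dissipation estimate for $\p_t f_R$, closed via \eqref{eqtn_fR_t} and \eqref{bdry_fR_t}, together with an $L^2_tL^p_x$ averaging gain for $P\p_t f_R$ and an $L^2_tL^\infty_{x,v}$ bound on $\mathfrak w'\p_t f_R$. Without these, the $L^6$ estimate cannot be inserted back into the trilinear term $\tfrac{1}{\kappa\e^{1/2}}\int\Gamma(f_R,f_R)\ip f_R$ and the bootstrap stalls. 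Two smaller corrections: your formula for $\ip f_2$ is off by a factor of $\kappa$ (compare \eqref{IPf_2}); and only $\mathbf P\eqref{eqtn_f_3}$ is killed (this is where NSF enters), while $\ip\eqref{eqtn_f_3}$ survives as a genuine forcing term in \eqref{eqtn_fR}.
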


For the rest of this section, we present the strategy and key ideas to the theorems for rigorously justifying the limit. The core of Section $2$ is to establish an $L^2$ estimate for the Boltzmann remainder $f_R$ uniformly in $\e$. From our Hilbert expansion around the global Maxwellian $\mu$ in \eqref{F_e}, the equation for $f_R$ is \eqref{eqtn_f_1}-\eqref{eqtn_f_5}. In order to have a desired bound on $f_R$ it is necessary to dispose of the highly singular terms in \eqref{eqtn_f_1}-\eqref{eqtn_f_3}. We show in Section \ref{Hilbertexpansion} that by the choice of the first and second order correction $f_1, f_2$ associated with a Navier-Stokes-Fourier flow as in \eqref{f_1}-\eqref{Pf2choice}, we obtain $\eqref{eqtn_f_1} = \eqref{eqtn_f_2} = 0$, and $\mathbf P \eqref{eqtn_f_3} = 0 $. The remaining terms in \eqref{eqtn_f_1}-\eqref{eqtn_f_5} are of low enough singularity that  can be %is allowed to be 
controlled eventually. 

The $L^2$ estimate for $f_R$ shares the same framework as in \cite{JK}. A trilinear forcing term comes up as $ \frac{1 }{\kappa \e^{1/2} } \int^t_0 \iint_{\O \times \R^3}  \Gamma (f_R,f_R) (\mathbf{I} - \mathbf{P}) f_R$ in the energy estimate of $f_R$. Utilizing the dissipation from the linearized Boltzmann operator \eqref{s_gap} we can bound this term as
\[
 \frac{ \e^{1/2} }{\kappa^{1/2}} \|     {P} f_R \|_{L^\infty_tL^6_{x }} \|    {P} f_R \|_{L^2_tL^3_{x }} \| \kappa^{-\frac{1}{2}} \e^{-1} \sqrt{\nu} (\mathbf{I} - \mathbf{P}) f_R \|_{L^2_{t,x,v}}.
\]
The $L^2_tL^3_x$-estimate for $Pf_R$ is achieved by extending $f_R$ into a particularly designed domain first and then into the whole space using special cutoff functions, followed by Duhamel's formula and employing the $TT^*$-method developed in \cite{EGKM3,GV,JV}. Detailed estimates can be found in section \ref{L2L3section}. An $L^6$ integrability gain for $\mathbf P f_R$ is obtained by employing the micro-macro decomposition method as in \cite{EGKM2}. We use the test function method as in \cite{EGKM} to invert the operator $v \cdot \nabla_x \mathbf P$ and bound the $L^6$ norm of $P f_R $ by the dissipation and the $L^2$ bound of $\p_t f_R$. Now the temporal derivative of $f_R$ comes into play and we need to include the estimates of $\p_t f_R$ in a parallel way to those of $f_R$. Also, the temporal derivatives of the fluid part will need to be considered in order to close the estimate. The $ L_{t,x}^\infty$ of $f_R$ and the $L_t^2L_x^\infty$ of $\p_t f_R$ will be controlled using Duhamel's formula and analyzing the particle's interaction with the diffuse boundary condition. Note that the space $L_t^2L_x^\infty$ for $\p_t f_R$ is needed as the forcing term $\nabla_x \p_t^2 u$ processes an initial-boundary layer.

We point out one major difference between our estimate for $f_R$ and %comparing to 
the corresponding one in \cite{JK}. Here in \eqref{F_e} we expand around the \textit{global Maxwellian} $\mu$ and have two correction terms $f_1, f_2$; while in \cite{JK} the expansion was performed around the local Maxwellian $ \tilde \mu = \frac{1}{ (2\pi)^{3/2} } e^{ - \frac{ |v -\e u(t,x) |^2  }{2}} $ and only have one correction term and $u(t,x)$ is the solution of the Navier-Stokes equation. In both cases, effective cancellations on the singular terms are %can be 
obtained by making the correct choice of the correction terms. One benefit in our global Maxwellian setting is that whenever the derivatives $\p_t, \nabla_x$ hit $\mu$, it vanishes. This reduces the amount of computation and makes the estimates more straightforward %cleaner 
compared to the local Maxwellian setting. However, such a setting also brings an extra singular term $ \frac{2}{\e \kappa} \Gamma(f_1,f_R)$ in the equation for $f_R$ in \eqref{eqtn_f_5}, which can be compared to the term $\frac{(\p_t + \e^{-1} v\cdot \nabla_x) \sqrt{\tilde \mu}}{\sqrt{\tilde \mu}} f_R$ in \cite{JK}.  This singular term will result in a bound  
\[
 \frac{1}{\kappa}   \int^t_0 \| P f_R (s) \|_{L^2_x}^2 \dd s 
\]
in the $L^2$ estimate of $f_R$, which would essentially give rise to the growth of $e^{\frac{1}{\kappa}}$ by the Gronwall's inequality. Fortunately, the expansion leaves enough room so that we can find a range of $\e$ in terms of $\kappa$ in a scale of large Reynolds number to absorb the growth. 

In Section \ref{fluidestimate} we present the inviscid limit of the Navier-Stokes-Fourier system \eqref{NS}-\eqref{Bouss}. It is well known that due to the mismatch of the boundary conditions \eqref{no-pen} and \eqref{NSB}, boundary layers will form. In \cite{SC1,SC2} the famous Prandtl expansion was introduced to justify the limit with analytic initial data. In a recent work \cite{NN2018}, the authors proved the inviscid limit using a new Green's function approach based on the boundary vorticity formulation established in \cite{Mae14}. In this paper, we employ the same Green's function approach using the vorticity formulation \eqref{NS_o}-\eqref{thetaB} for the Navier-Stokes-Fourier system. As mentioned above, temporal derivatives of the solution to the fluid equations also need to be controlled.  To this end, %To achieve the desired bound on the temporal derivatives 
we follow the same strategy in \cite{JK} by setting the compatibility as in \eqref{CC} and deriving a similar integral representation formula for $\p_t \o$ without any initial layer.

%The estimate for the heat flow $\theta$ is similar to that of the vertical component of the vorticity $\o_3$. 
It turns out the equation for the heat flow $\theta$ resembles the one for the vertical component of the vorticity $\o_3$.
  Using Green's formula \eqref{thetaphi} we can establish the analytic bounds \eqref{thetanorm_bound} for $\theta$ in the same way for $\o_3$. However, this does not fulfill our need since the conormal derivative in the analytic norm \eqref{normtheta} allows a $\frac{1}{x_3}$ singularity of the normal derivative of $\theta$ in the boundary layer, and thus prevents us from getting %to get 
the $L^\infty$ bound. In order to tackle the issue we design a similar, yet different ${\vertiii \cdot }_z  $ norm in \eqref{px3thetanorm} to directly target the normal derivative of $\theta$ and its temporal derivatives. Taking $\p_{x_3}$ derivative directly to \eqref{thetaphi} and using integration by parts we derive the integral formula for $\p_{x_3}  \theta$. From there we carefully bound the bilinear integrand and applying the analytic recovery lemma to obtain the desired bound in \eqref{thetapx3_bound}. Note that this estimate leads to the highest singularity as
\[
\| \nabla_x^2 \theta \|_{L^\infty_{t,x} } \sim \frac{1}{\kappa},
\]
which is of the same order as that of $\nabla_x^2 u $. But because of our choice of $\e$ with respect to $\kappa$, such growth %can be eventually absorbed 
 is affordable in the Hilbert expansion, which concludes the rigorous justification of the incompressible Euler limit with heat flow.

 \section{Boltzmann estimate}
 \subsection{Hilbert expansion} \label{Hilbertexpansion}
 
 In this section we prove the following proposition:
 
 \begin{proposition}\label{prop:Hilbert}
 Suppose that $F$ of (\ref{F_e}), solves (\ref{Boltzmann}) and (\ref{diffuse_BC}) with (\ref{StMae}), (\ref{Knke}), and that $(\rho,u,\theta)$ solves (\ref{NS})-(\ref{Bouss}).  We choose %a hydrodynamic part 
  $f_1$ and $f_2$ as 
 \Be \label{f_1}
 f_1 = (\rho + u\cdot v + \theta \frac{|v|^2 - 3}{2} ) \sqrt{\mu},
 \Ee
 \Be \label{IPf_2}
 (\mathbf{I-P}) f_2 = L^{-1} ( - \kappa v \cdot \nabla_x f_1 +  \Gamma (f_1, f_1) ),
 \Ee
  \Be\label{Pf_2}
  \mathbf P f_2 = (\rho_2 + u_2 \cdot v + \theta_2 \frac{|v|^2 - 3}{2} ) \sqrt{\mu},
\Ee
where $\rho_2, u_2$, and $\theta_2$ satisfy 
\Be \label{Pf2choice} \begin{split}
\nabla \cdot u_2 &= -\p_t \rho,  
\\ \nabla_x \left( \rho_2 +  \theta_2  -  \frac{1}{3} |u|^2 \right) & =  \nabla_x  p.
%\left( \frac{\kappa}{3} \left \langle L^{-1} ( - v \cdot \nabla_x f_1 ), |v|^2 \sqrt \mu \right \rangle + \frac{5}{4} \theta^2+  \frac{1}{2} |u|^2  \right)
\end{split} \Ee
Then $f_{R}$ in (\ref{F_e}) satisfies
\Be      \label{eqtn_fR}  \begin{split}
 \Big[    \p_t %f_R 
  + \frac{1}{\e} v\cdot \nabla_x %f_R
    &+    \frac{1}{ \e^2\kappa} L%f_R 
  \Big] f_R
 = - \frac{1}{\e^{1/2}} \left \{ (\mathbf{I-P}) (v \cdot \nabla_x f_2) - \frac{2}{\kappa} \Gamma(f_1,f_2) \right \} 
 \\ & + \e^{1/2} \left(  - \p_t f_2 + \frac{1}{\kappa}  \Gamma(f_2,f_2) \right) +  \frac{2}{\e \kappa} \Gamma(f_1 + \e f_2,f_R)  
+    \frac{   1 }{\e^{1/2}\kappa}\Gamma(f_R, f_R),
\end{split}\Ee
and $\p_t f_R$ satisfies
\Be \label{eqtn_fR_t} \begin{split}
&  \Big[    \p_t %f_R 
  + \frac{1}{\e} v\cdot \nabla_x %f_R
    +    \frac{1}{ \e^2\kappa} L%f_R 
  \Big] \p_t f_R 
  \\ =&   - \frac{1}{\e^{1/2}} \left \{ (\mathbf{I-P}) (v \cdot \nabla_x \p_t f_2) - \frac{2}{\kappa} \Gamma(\p_t f_1,f_2) - \frac{2}{\kappa} \Gamma( f_1, \p_t f_2) \right \} 
 \\ & + \e^{1/2}  \left( - \p_t^2 f_2  + \frac{2 }{\kappa} \Gamma(\p_t f_2,f_2)  \right)
 \\ & +  \frac{2}{\e \kappa} \left(  \Gamma(\p_t f_1 + \e \p_t f_2,f_R)  + \Gamma(f_1 + \e f_2,\p_t f_R)  \right)
\\ &+    \frac{   2 }{\e^{1/2}\kappa} \Gamma(\p_t f_R, f_R).
 \end{split}\Ee
 
 \begin{remark}
 There is some freedom to chose $u_2, \rho_2, \theta_2$ satisfying \eqref{Pf2choice}. One possible choice could be
\Be
u_2(x_h,x_3) =
\begin{bmatrix}
0 \\ 0 \\  \int_0^{x_3} \p_t \theta(x_h, y ) \, dy
\end{bmatrix}, \, \rho_2 = \frac{1}{3} |u|^2 +p, \, \theta_2 = 0.
\Ee

 \end{remark}
 
\end{proposition}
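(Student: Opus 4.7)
The plan is to start from the identities \eqref{eqtn_f_1}--\eqref{eqtn_f_5}, which are obtained by inserting \eqref{F_e} into \eqref{Boltzmann} and grouping by powers of $\e$, and to show that the prescribed choices \eqref{f_1}--\eqref{Pf2choice} make the three most singular contributions vanish. First, \eqref{eqtn_f_1} is zero because $f_1$, as given in \eqref{f_1}, is a linear combination of the null-space basis $\{\varphi_j\sqrt{\mu}\}$, so $f_1\in\mathcal{N}$ and $Lf_1=0$. Next, using $L\mathbf{P}=0$, the vanishing of \eqref{eqtn_f_2} is equivalent to $L(\mathbf{I-P})f_2 = \Gamma(f_1,f_1)-\kappa v\cdot \nabla_x f_1$, which by \eqref{IPf_2} holds provided the right-hand side lies in the range of $L$. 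Since $\mathbf{P}\Gamma=0$, this further reduces to verifying $\mathbf{P}(v\cdot \nabla_x f_1)=0$. A direct computation of the five scalar projections against $\varphi_j\sqrt{\mu}$ via Gaussian moments collapses them to $\nabla_x\cdot u=0$ (from $P_0$ and $P_4$) and $\nabla_x(\rho+\theta)=0$ (from $P_k$ for $k=1,2,3$), both of which are supplied by \eqref{incomp_NS} and \eqref{Bouss}.

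The core step is to verify $\mathbf{P}\eqref{eqtn_f_3}=0$. Since $\mathbf{P}\Gamma=0$ and $\p_t f_1\in\mathcal{N}$, this reduces to $\p_t f_1 + \mathbf{P}(v\cdot \nabla_x f_2)=0$. I would split $f_2 = \mathbf{P}f_2 + (\mathbf{I-P})f_2$: the hydrodynamic part $\mathbf{P}f_2$ contributes $\nabla_x\cdot u_2$ to the mass and energy slots and $\nabla_x(\rho_2+\theta_2)$ to the three momentum slots, while the microscopic part $v\cdot \nabla_x L^{-1}(-\kappa v\cdot \nabla_x f_1 + \Gamma(f_1,f_1))$ produces, via the classical Burnett-function moment identities, the convective nonlinearities $u\cdot \nabla_x u$ and $u\cdot \nabla_x\theta$, the dissipative terms $-\kappa\eta_v\Delta u$ and $-\kappa\eta_c\Delta\theta$, and in the momentum slot an additional gradient contribution that reorganizes into $-\nabla_x(|u|^2/3)$. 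Matching the sum against $\p_t f_1$ yields five scalar identities coinciding exactly with the Navier--Stokes--Fourier system \eqref{NS}--\eqref{heat_NS1} together with the compatibility conditions \eqref{Pf2choice} on $(\rho_2,u_2,\theta_2)$; hence all five projections vanish under the hypotheses.

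With the three cancellations established, what remains on the right-hand side of the equation for $f_R$ is $(\mathbf{I-P})\eqref{eqtn_f_3}$ together with the entirety of \eqref{eqtn_f_5}; using $(\mathbf{I-P})\p_t f_1=0$ and $(\mathbf{I-P})\Gamma=\Gamma$, these regroup into exactly the right-hand side of \eqref{eqtn_fR}. For \eqref{eqtn_fR_t}, I would simply apply $\p_t$ to \eqref{eqtn_fR} and use the bilinearity $\p_t\Gamma(f,g)=\Gamma(\p_t f,g)+\Gamma(f,\p_t g)$ together with the commutation of $L$, $v\cdot\nabla_x$, and $\mathbf{I-P}$ with $\p_t$; no new structure is needed. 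The main technical burden is the Burnett-moment computation in the core step: one must explicitly evaluate moments of $L^{-1}$ acting on tensor polynomials in $v$ to extract the viscosity and heat-conductivity constants $\eta_v$, $\eta_c$ and recover the convective and dissipative terms of the fluid system. These evaluations are classical in Hilbert-expansion theory, so the work is algebraic bookkeeping rather than conceptual, but it concentrates essentially all the algebraic content of the proposition.
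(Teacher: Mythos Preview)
Your proposal is correct and follows essentially the same route as the paper: both establish \eqref{eqtn_fR} by showing \eqref{eqtn_f_1}$=0$, \eqref{eqtn_f_2}$=0$, and $\mathbf{P}\eqref{eqtn_f_3}=0$ via the same moment computations against the Burnett functions $A_{ij}=L^{-1}((v_iv_j-\tfrac{|v|^2}{3}\delta_{ij})\sqrt{\mu})$ and $B_i=L^{-1}((\tfrac{|v|^2-5}{2})v_i\sqrt{\mu})$, then obtain \eqref{eqtn_fR_t} by direct $\p_t$-differentiation. The one shortcut the paper makes explicit, which you subsume under ``classical Burnett-function moment identities,'' is the identity $\Gamma(f_1,f_1)=L\big(\tfrac{f_1^2}{2\sqrt{\mu}}\big)$ for $f_1\in\mathcal{N}$, which converts the nonlinear contribution $L^{-1}\Gamma(f_1,f_1)$ into the elementary $(\mathbf{I-P})(f_1^2/2\sqrt{\mu})$ and makes the extraction of $u\cdot\nabla_x u$, $u\cdot\nabla_x\theta$, and the $-\tfrac13\nabla_x|u|^2$ term a straightforward Gaussian-moment calculation.
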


\begin{proof} To show \eqref{eqtn_fR}, it suffices to prove $\eqref{eqtn_f_1} = 0 $, $\eqref{eqtn_f_2} = 0 $ and $\mathbf P  \eqref{eqtn_f_3}  = 0 $ from the remainder equation \eqref{eqtn_f_1}-\eqref{eqtn_f_5}, and \eqref{eqtn_fR_t} follows from a direct differentiation of  \eqref{eqtn_fR}. 

From our choice \eqref{f_1}, we have $ \mathbf{ (I-P)} f_1 = 0$, therefore $\eqref{eqtn_f_1} = 0 $.
Next, let's show \eqref{IPf_2} is well-defined, which implies $\eqref{eqtn_f_2} = 0 $. By the Fredholm alternative, the inverse operator of $L$ maps
\[
L^{-1}: \mathcal N^\perp \to \mathcal N^\perp, \text{ where } \mathcal N^\perp \text{ stands an } L_v^2-\text{orthogonal complement of } \mathcal N.
\]
Hence in order to show \eqref{IPf_2} is well-defined, all we need is to show
\[
 \mathbf P ( - \kappa v \cdot \nabla_x f_1 +  \Gamma (f_1, f_1) ) =- \kappa  \mathbf P ( v \cdot \nabla_x f_1 ) =0,
 \]
or equivalently
\Be \label{Pvcdotf10}
\left \langle v \cdot \nabla_x f_1 , \left[ 1, v , \frac{|v|^2 }{2} \right] \sqrt \mu \right \rangle  =0.
\Ee
 By direction computation, from the oddness in the $v$-integral and the incompressible condition \eqref{incomp_NS} we have
 \Be \label{Pvcdotf11}
 \begin{split}
\langle   v \cdot \nabla_x f_1, \sqrt \mu \rangle = & \langle   v \cdot \nabla_x \rho ,  \mu \rangle + \langle   v \cdot \nabla_x u \cdot v ,  \mu \rangle + \langle   v \cdot \nabla_x \theta , \frac{|v|^2 - 3}{2}  \mu \rangle
\\ = & \sum_{i =1}^3  \p_i u^i \left( \int_{\mathbb R^3} v_i^2 \mu \,  dv \right)
  =   \nabla \cdot u = 0.
 \end{split}
 \Ee
Next, for fixed $i = 1,2,3$, again from the oddness in the $v$-integral and \eqref{Bouss} we have
 \Be \label{Pvcdotf12}
 \begin{split}
 \langle   v \cdot \nabla_x f_1, v_i \sqrt \mu \rangle = & \langle   v \cdot \nabla_x \rho ,  v_i \mu \rangle + \langle   v \cdot \nabla_x u \cdot v , v_i \mu \rangle + \langle   v \cdot \nabla_x \theta , \frac{|v|^2 - 3}{2} v_i  \mu \rangle
 \\ = &   \p_i \rho  \left( \int_{\mathbb R^3} v_i^2 \mu \, dv \right) +   \p_i \theta \left( \int_{\mathbb R^3}  \frac{|v|^2 - 3}{2} v_i^2 \mu \, dv \right)
  =   \p_i (\rho + \theta) = 0.
 \end{split}
 \Ee
Finally similar to \eqref{Pvcdotf11}, we have
\Be \label{Pvcdotf13}
 \begin{split}
\left \langle   v \cdot \nabla_x f_1, \frac{|v|^2 }{2} \sqrt \mu \right \rangle = &\left  \langle   v \cdot \nabla_x \rho ,  \frac{|v|^2 }{2} \mu  \right \rangle + \left \langle   v \cdot \nabla_x u \cdot v , \frac{|v|^2 }{2} \mu \right \rangle + \left \langle   v \cdot \nabla_x \theta , \frac{|v|^2 -3}{2}\frac{|v|^2}{2}  \mu \right \rangle
\\ = & \sum_{i=1}^3 \p_i u^i \left( \int_{\mathbb R^3} v_i^2 \frac{|v|^2}{2} \mu dv \right)
 =   \frac{5}{2} \nabla \cdot u = 0.
 \end{split}
\Ee
Therefore from \eqref{Pvcdotf11}, \eqref{Pvcdotf12}, and \eqref{Pvcdotf13} we prove \eqref{Pvcdotf10}.

%Therefore $(\mathbf{I-P}) f_2 = L^{-1} ( - \kappa v \cdot \nabla_x f_1 +  \Gamma (f_1, f_1) )$ is well-defined.

The only thing left to get \eqref{eqtn_fR} is then to prove $\mathbf P  \eqref{eqtn_f_3}  = 0 $. 
%We claim that $\mathbf P (\p_t f_1+ v \cdot \nabla_x f_2-  \frac{2}{\kappa} \Gamma(f_1,f_2) ) = 0 $. 
It suffices to prove
\Be
\left \langle \p_t f_1 + v \cdot \nabla_x f_2 , \left[ 1, v , \frac{|v|^2 }{2} \right] \sqrt \mu \right \rangle  =0.
\Ee
This is equivalent to
\Be \label{IPf2eq2} \begin{split}
\p_t \rho + \nabla \cdot u_2 &= 0,
\\ \p_t u + \langle v \cdot \nabla_x ( \mathbf{I-P}) f_2 , v \sqrt \mu \rangle +  \nabla_x (\rho_2 + \theta_2 ) & = 0,
\\  \frac{3}{2} \p_t (  \rho +  \theta) + \frac{1}{2} \langle v \cdot \nabla_x ( \mathbf{I-P} ) f_2 , |v|^2 \sqrt \mu \rangle + \frac{5}{2} \nabla \cdot u_2 & = 0.
\end{split} \Ee

Let's consider the second equation of \eqref{IPf2eq2}. Using \eqref{IPf_2} we compute
\Be \label{IPf2vsqrtmu} 
\begin{split}
\langle v \cdot \nabla_x ( \mathbf{I-P}) f_2 , v \sqrt \mu \rangle = & \langle v \cdot \nabla_x L^{-1} ( - \kappa v \cdot \nabla_x f_1 +  \Gamma (f_1, f_1) ) , v \sqrt \mu \rangle
\\ = & \underbrace{ \langle v \cdot \nabla_x L^{-1} ( - \kappa v \cdot \nabla_x f_1  ) , v \sqrt \mu \rangle}_{\eqref{IPf2vsqrtmu}_1} +  \underbrace{ \langle v \cdot \nabla_x L^{-1} (   \Gamma (f_1, f_1) ) , v \sqrt \mu \rangle}_{\eqref{IPf2vsqrtmu}_2}.
\end{split} 
\Ee
We first consider $\eqref{IPf2vsqrtmu}_1$. It suffices to compute for fixed $i$,
\Be \label{vivj}
\sum_j \left \langle v_i v_j \p_j L^{-1}  ( - \kappa v \cdot \nabla_x f_1 )  , \sqrt \mu \right \rangle.
\Ee
Since $(v_i v_j   - \frac{|v|^2}{3} \delta_{ij} )\sqrt{\mu} \in \mathcal N^\perp $. Define
 \Be\label{A0_ij}
 {A}_{ij}(v):= L^{-1}\Big((v_i v_j   - \frac{|v|^2}{3} \delta_{ij} )\sqrt{\mu}\Big)(v).
 \Ee
By splitting $v_i v_j = (v_i v_j - \frac{1}{3} |v|^2 \delta_{ij}  ) + \frac{1}{3} |v|^2 \delta_{ij} $ in \eqref{vivj}, and using the self-adjointness of $L^{-1}$ we have
\Be %\label{vivj2}
\notag
\begin{split}
\eqref{vivj} & =  - \kappa \sum_j \left \langle  L^{-1}  ( \sum_k v_k \p_{jk} f_1 ) ,   (v_i v_j - \frac{1}{3} |v|^2 \delta_{ij}  )\sqrt \mu \right \rangle + \p_i \left( \frac{\kappa}{3} \left \langle L^{-1} ( - v \cdot \nabla_x f_1 ), |v|^2 \sqrt \mu \right \rangle \right)
\\ & =  - \kappa \sum_j \left \langle  L^{-1}  ( \sum_k v_k \p_{jk} f_1 ) ,   (v_i v_j - \frac{1}{3} |v|^2 \delta_{ij}  )\sqrt \mu \right \rangle
\\ & = - \kappa  \sum_{j, k,l} \p_{jk} u^\ell  \left \langle  v_k v_l \sqrt \mu  , A_{ij} \right \rangle
  = - \kappa  \sum_{j, k,l} \p_{jk} u^\ell  \left \langle  L A_{kl}  , A_{ij} \right \rangle,
\end{split}
\Ee
where the $\theta$ and $\rho$ terms vanish because of the oddness in the $v$-integration. Next by the computation (Lemma 4.4 in \cite{BGL2}), the above term is equal to 
\Be \label{etavcomp} \begin{split}
- \kappa \sum_{j,k,l } \langle A_{ij}, L A_{kl } \rangle \p_{jk} u^l & =- \kappa \eta_v \sum_{j,k,l} \left( (\delta_{ik } \delta_{jl} + \delta_{il}\delta_{jk } ) - \frac{2}{3} \delta_{ij} \delta_{kl} \right)  \p_{jk} u^l
\\ & = - \kappa\eta_v \{\Delta u^{i}  -   \p_i    \nabla \cdot  u  -  \frac{2}{3}   \p_i  \nabla \cdot  u   \}= - \kappa\eta_v \Delta u^{i}
 \ \ \text{for} \ i =1,2,3.
\end{split} \Ee
Here we have used the incompressible condition \eqref{NS}. Therefore we get
\Be \label{IPf2vsqrtmu1}
\eqref{IPf2vsqrtmu}_1 = - \kappa\eta_v \Delta u.
\Ee

Next we consider $\eqref{IPf2vsqrtmu}_2$. First by direct computation we have
\[
\begin{split}
 &  \Gamma (f_1, f_1) - L\left(\frac{f_1^2}{ 2 \sqrt \mu } \right) 
  \\ = & \frac{1}{\sqrt \mu } \left( Q( \sqrt \mu f_1, \sqrt \mu f_1 ) +  Q(\mu, f_1^2 ) \right)
  \\  = & \frac{1}{2 \sqrt \mu } \iint |(v - v_*) \cdot \mathfrak{u} | \left( 2 \sqrt \mu (v') f_1(v') \sqrt \mu (v_*' ) f_1(v_*')- 2 \sqrt \mu (v) f_1(v) \sqrt \mu (v_*) f_1(v_*)  \right) d\mathfrak{u} d v_*
  \\  & +\frac{1}{2 \sqrt \mu } \iint |(v - v_*) \cdot \mathfrak{u} | \left(  \mu(v' ) f_1^2(v_*') + \mu(v_*' ) f_1^2(v')  - \mu(v ) f_1^2(v_*) - \mu(v_* ) f_1^2(v)  \right) d\mathfrak{u} d v_* 
  \\ = & \frac{1}{2 \sqrt \mu } \iint   |(v - v_*) \cdot \mathfrak{u} | \left(  \sqrt  \mu(v' ) f_1(v_*') + \sqrt \mu(v_*' ) f_1(v')  - \sqrt \mu(v ) f_1(v_*) - \sqrt \mu(v_* ) f_1(v)  \right)
  \\ & \quad \quad \quad \quad  \times  \left(  \sqrt  \mu(v' ) f_1(v_*') + \sqrt \mu(v_*' ) f_1(v')  + \sqrt \mu(v ) f_1(v_*) + \sqrt \mu(v_* ) f_1(v)  \right)  d\mathfrak{u} d v_* 
  \\ = & \frac{1}{2 } \iint   \sqrt \mu(v_*)   |(v - v_*) \cdot \mathfrak{u} | \left( \frac{ f_1(v_*')}{\sqrt \mu( v_*')} + \frac{f_1(v')}{\sqrt \mu(v')}  - \frac{ f_1(v_*)}{\sqrt \mu(v_*)} - \frac{ f_1(v)}{\sqrt \mu(v)}  \right)
  \\ & \quad \quad \quad \quad  \times  \left(  \sqrt  \mu(v' ) f_1(v_*') + \sqrt \mu(v_*' ) f_1(v')  + \sqrt \mu(v ) f_1(v_*) + \sqrt \mu(v_* ) f_1(v)  \right)  d\mathfrak{u} d v_*   .
\end{split}
\]
And since $f_1 \in \mathcal N^{}$, $\frac{ f_1(v_*')}{\sqrt \mu( v_*')} + \frac{f_1(v')}{\sqrt \mu(v')}  - \frac{ f_1(v_*)}{\sqrt \mu(v_*)} - \frac{ f_1(v)}{\sqrt \mu(v)} = 0 $. Thus
\Be \label{LinvL}
 \Gamma (f_1, f_1) = L\left(\frac{f_1^2}{ 2 \sqrt \mu } \right), \text{ therefore } L^{-1} \left(  \Gamma (f_1, f_1) \right) = (\mathbf{I-P})(\frac{f_1^2}{ 2 \sqrt \mu }).
\Ee
From \eqref{f_1},
\Be \label{IPf_1^2}
(\mathbf{I-P})(\frac{f_1^2}{ 2 \sqrt \mu }) = \frac{1}{2} (\mathbf{I-P}) (\rho + u\cdot v + \theta \frac{|v|^2 - 3}{2} )^2 \sqrt{\mu}  =  \frac{1}{2} (\mathbf{I-P}) ( u\cdot v + \theta \frac{|v|^2 - 3}{2} )^2 \sqrt{\mu}.
\Ee

Now for fixed $i$, from \eqref{LinvL}, \eqref{IPf_1^2}, and using $|v|^2 \sqrt \mu \in \mathcal N$, 
\Be
\begin{split}
& \langle   v \cdot \nabla_x L^{-1} (   \Gamma (f_1, f_1) ) , v_i \sqrt \mu \rangle 
%\\
% = & \frac{1}{2} \left \langle v \cdot \nabla_x \left( (\mathbf{I-P}) \left( ( u\cdot v + \theta \frac{|v|^2 - 3}{2} )^2 \sqrt{\mu} \right) \right)   , v_i \sqrt \mu \right \rangle
\\ 
= & \frac{1}{2}  \sum_j \left  \langle  \p_j  \left( (\mathbf{I-P}) \left( ( u\cdot v + \theta \frac{|v|^2 - 3}{2} )^2 \right)  \sqrt \mu \right), v_j v_i \sqrt \mu \right \rangle
\\ = & \frac{1}{2}  \sum_j \left  \langle  \p_j  \left( (\mathbf{I-P}) \left( ( u\cdot v + \theta \frac{|v|^2 - 3}{2} )^2 \right)  \sqrt \mu \right), ( v_j v_i  - \frac{1}{3} \delta_{ij} |v|^2 ) \sqrt \mu \right \rangle.
\end{split}
\Ee
Now using $( v_j v_i - \frac{1}{3} \delta_{ij} |v|^2 ) \sqrt \mu  \in \mathcal N^\perp$, we have
\Be \label{Linvga} \begin{split}
 & \langle   v \cdot \nabla_x L^{-1} (   \Gamma (f_1, f_1) ) , v_i \sqrt \mu \rangle 
\\  = & \frac{1}{2}  \sum_j \left  \langle  \p_j  \left(   ( u\cdot v + \theta \frac{|v|^2 - 3}{2} )^2  \sqrt \mu \right), ( v_j v_i  - \frac{1}{3} \delta_{ij} |v|^2 ) \sqrt \mu \right \rangle
\\  = & \underbrace{ \frac{1}{2}  \sum_j \left  \langle  \p_j  \left(   ( u\cdot v)^2\sqrt \mu \right), ( v_j v_i  - \frac{1}{3} \delta_{ij} |v|^2 ) \sqrt \mu \right \rangle}_{\eqref{Linvga}_1}
 + \underbrace{ \frac{1}{2} \sum_j \left \langle \p_j \left( \theta^2 ( \frac{|v|^2 - 3}{2} )^2  \sqrt \mu \right), ( v_j v_i  - \frac{1}{3} \delta_{ij} |v|^2 ) \sqrt \mu \right \rangle }_{\eqref{Linvga}_2}
 \\ & + \underbrace{  \sum_j  \left \langle (u \cdot v ) \theta ( |v|^2 -3 ) \sqrt \mu, ( v_j v_i  - \frac{1}{3} \delta_{ij} |v|^2 ) \sqrt \mu \right \rangle }_{\eqref{Linvga}_3}.
\end{split} \Ee
First we have $ \eqref{Linvga}_3 = 0$ from oddness of the integration. By direct computation
\[
\begin{split}
\eqref{Linvga}_2  & =  \frac{1}{2} \sum_j \left \langle  \left(  \frac{|v|^2 - 3}{2}  \right)^2 \p_j (\theta ^2) \sqrt \mu,  \left( v_j v_i - \frac{1}{3} \delta_{ij} |v|^2\right )  \sqrt \mu \right \rangle 
 \\ &  =\frac{1}{2} \p_i (\theta^2) \int \left(  \frac{|v|^2 - 3}{2}  \right)^2 \left(  v_i^2 - \frac{1}{3} |v|^2\right ) \mu \, dv = 0 .
 \end{split}
\]
%\[
% \sum_j \left \langle v_j \left(  \frac{|v|^2 - 5}{2}  \right) \p_j ( (\rho + \theta)\theta ) \sqrt \mu, v_i \sqrt \mu \right \rangle =  \frac{1}{3} \p_i (\theta (\rho + \theta ) ) \int |v|^2 \left(  \frac{|v|^2 - 5}{2}  \right) \mu dv = 0 .
%\]
Lastly for $\eqref{Linvga}_1$, we have from computation 
%(can put more details here)
\[
\begin{split}
%& \frac{1}{2}  \sum_j \left  \langle  \p_j  \left(   ( u\cdot v)^2 \sqrt \mu \right), ( v_j v_i  - \frac{1}{3} \delta_{ij} |v|^2 ) \sqrt \mu \right \rangle
 \eqref{Linvga}_1% & =  \frac{1}{2} \sum_j \p_j \langle v_i v_j (v \cdot u)^2 \sqrt \mu, \sqrt \mu \rangle  - \frac{1}{6}  \p_i \left \langle \left(   ( u\cdot v)^2 \sqrt \mu \right),  |v|^2 \sqrt \mu \right \rangle
 %\\ 
 & = \frac{1}{2} \int \left( \sum_{j,k,\ell} v_i v_j v_k v_\ell  \p_j (u^k u^\ell) \right) \mu dv -  \frac{1}{6} \int \left( \sum_{k,\ell } \p_i (u^k u^\ell) v_k v_\ell |v|^2 \right)  \mu dv
 \\ & = \frac{1}{2} \int \left( \sum_{j \neq i } 2 v_i^2 v_j^2   \p_j (u^i u^j) \right) \mu dv + \frac{1}{2} \int \left( \sum_{k}  v_i^2 v_k^2   \p_i (u^k)^2 \right) \mu dv - \frac{1}{6} \int \left( \sum_k v_k^2 |v|^2 \p_i (u^k)^2 \right) \mu dv
\\ & = \sum_{j \neq i }  \p_j (u^i u^j) + \frac{3}{2} \p_i (u^i)^2 + \frac{1}{2} \sum_{k \neq i } \p_i (u^k)^2 - \frac{5}{6} \p_i ( |u|^2 ) 
\\ & =  \sum_{j  }  \p_j (u^i u^j) +   \frac{1}{2} \p_i (|u|^2) - \frac{5}{6} \p_i ( |u|^2 ) =   \sum_{j  }  [ (\p_j u^i )u^j ] -   \frac{1}{3} \p_i (|u|^2),
\end{split}
\]
where we've used $\nabla \cdot u = 0$. Therefore we get 
\[
 \eqref{IPf2vsqrtmu}_2 = u \cdot \nabla_x u -  \frac{1}{3} \nabla_x ( |u|^2 ).
 \]
Combing with \eqref{IPf2vsqrtmu1} we get the second equation of \eqref{IPf2eq2} is equivalent to
\[
\p_t u + u \cdot \nabla_x u - \kappa \nu_v \Delta u + \nabla_x \left( \rho_2 +  \theta_2  - \frac{1}{3} |u|^2 \right) = 0,
\]
which is guaranteed by \eqref{Pf2choice}.

We now consider the third equation of \eqref{IPf2eq2} subtracting $\frac{5}{2}$ of the first equation of  \eqref{IPf2eq2}:
\Be \label{thetaf2}
 \frac{5}{2} \p_t  \theta + \frac{1}{2} \langle v \cdot \nabla_x ( \mathbf{I-P} ) f_2 , |v|^2 \sqrt \mu \rangle  = 0.
\Ee
Using \eqref{IPf_2} we compute
\Be \label{IPf2v2sqrtmu}
 \begin{split}
  &  \frac{1}{2} \langle v \cdot \nabla_x ( \mathbf{I-P} ) f_2 , |v|^2 \sqrt \mu \rangle 
    = \frac{1}{2} \langle v \cdot \nabla_x L^{-1} ( - \kappa v \cdot \nabla_x f_1 +  \Gamma (f_1, f_1) ) , |v|^2 \sqrt \mu \rangle
\\ & = \underbrace{ \frac{1}{2} \langle v \cdot \nabla_x L^{-1} ( - \kappa v \cdot \nabla_x f_1  ) , |v|^2 \sqrt \mu \rangle }_{\eqref{IPf2v2sqrtmu}_1} + \underbrace{ \frac{1}{2} \langle v \cdot \nabla_x L^{-1} (   \Gamma (f_1, f_1) ) , |v|^2 \sqrt \mu \rangle}_{\eqref{IPf2v2sqrtmu}_2}.
\end{split}
\Ee
Since $( \frac{|v|^2 -5}{2} ) v_i \sqrt \mu \in \mathcal N^\perp$. Define
\[
B_i = L^{-1} \left( ( \frac{|v|^2 -5 }{2} ) v_i \sqrt \mu \right).
\]
By the self-adjointness of $L^{-1}$, we compute
\Be \label{deltathetacal}
\begin{split}
\eqref{IPf2v2sqrtmu}_1 = & - \frac{\kappa}{2} \sum_{j,k} \left \langle \p_j L^{-1} ( v_k \p_k f_1), \frac{v_j |v|^2}{2} \sqrt \mu \right \rangle
 \\ %= &- \frac{\kappa}{2} \sum_{j,k} \left \langle \p_j L^{-1} ( v_k \p_k f_1), v_j  \left( \frac{ |v|^2}{2} - \frac{5}{2} \right) \sqrt \mu \right \rangle
% \\ 
 = & - \frac{\kappa}{2} \sum_{j,k} \left \langle v_k \p_{jk} \left( \rho + v \cdot u + \frac{|v|^2 -3 }{ 2 } \theta \right), B_j \right \rangle
 \\ = & \underbrace{  - \frac{\kappa}{2} \sum_{j,k} \langle v_k \p_{jk}(v \cdot u ) \sqrt \mu ,B_j \rangle}_{\eqref{deltathetacal}_1} - \underbrace{  \frac{\kappa}{2} \sum_{j,k} \left \langle v_k \left(  \frac{|v|^2 -5 }{ 2 } \p_{jk} \theta \right) \sqrt \mu, B_j \right \rangle }_{\eqref{deltathetacal}_2}
 \\ &  - \underbrace{ \frac{\kappa}{2} \sum_{j,k} \langle v_k \p_{jk} (\rho + \theta) \sqrt \mu ,B_j \rangle }_{\eqref{deltathetacal}_3}.
\end{split} 
\Ee
From \cite{Guo2006}, $B_j$ is odd in $v_j$, thus $\eqref{deltathetacal}_1 = 0$. Also $\eqref{deltathetacal}_3 = 0$ as $\rho + \theta = constant$. Now by lemma $4.4$ in \cite{BGL2}, we have for some constant $\eta_c$, 
\Be \label{etaccomp}
\langle L B_k, B_j \rangle = 5 \eta_c \delta_{kj}.
\Ee
Thus
\[
\eqref{deltathetacal}_2 =  \frac{\kappa}{2} \sum_{j,k} \left \langle v_k \left(  \frac{|v|^2 -5 }{2 } \p_{jk} \theta \right) \sqrt \mu, B_j \right \rangle =   \frac{\kappa}{2} \sum_{j,k}   \langle L B_k, B_j \rangle \p_{jk } \theta = \frac{5}{2} \kappa    \eta_c \Delta \theta.
\]
Finally we compute $ \frac{1}{2} \langle v \cdot \nabla_x L^{-1} (   \Gamma (f_1, f_1) ) , |v|^2 \sqrt \mu \rangle$. From \eqref{LinvL}, \eqref{IPf_1^2} and using $ v_j  (|v|^2-5) \sqrt \mu  \in \mathcal N^\perp$,
\[ \begin{split}
 & \frac{1}{2}  \langle v \cdot \nabla_x L^{-1} (   \Gamma (f_1, f_1) ) , |v|^2 \sqrt \mu \rangle 
% = &  \frac{1}{4}  \left \langle v \cdot \nabla_x  (\mathbf{I-P} )\left( ( u\cdot v + \theta \frac{|v|^2 - 3}{2} )^2 \sqrt{\mu} ) \right)  , |v|^2 \sqrt \mu \right \rangle
%\\
 % = &  \frac{1}{4} \sum_j  \left \langle   \p_j  (\mathbf{I-P} )\left( ( u\cdot v + \theta \frac{|v|^2 - 3}{2} )^2 \sqrt{\mu} ) \right)  , v_j |v|^2 \sqrt \mu \right \rangle
%\\
  =    \frac{1}{4} \sum_j  \left \langle   \p_j  \left( ( u\cdot v + \theta \frac{|v|^2 - 3}{2} )^2 \sqrt{\mu} ) \right)  , v_j (|v|^2-5) \sqrt \mu \right \rangle
\\ = & \frac{1}{4} \sum_j  \left \langle   \p_j \left( ( u\cdot v ) \theta (|v|^2 - 3 ) \sqrt{\mu} \right)   , v_j (|v|^2-5) \sqrt \mu \right \rangle
  =   \frac{1}{4} \sum_j  \p_j  ( u^j  \theta )  \int  \left( (|v|^2 - 3 )  v_j^2 (|v|^2-5)  \mu \right) dv \\
   = &    \frac{5}{2} u \cdot \nabla \theta.
\end{split}\]
Therefore \eqref{thetaf2} gives
\[
\frac{5}{2} \left(  \p_t  \theta +   u \cdot \nabla \theta -\kappa \eta_c \Delta \theta \right) = 0,
\]
which is guaranteed by \eqref{heat_NS1}.
\end{proof}

 \subsection{$L^2$-Energy estimate}
 
 We start with an $L^2$-energy estimate for $f_R$ and $\p_t f_R$. From \eqref{eqtn_fR}, we define
\Be      \label{initial_ft}  \begin{split}
   \p_t  f_{R,0}
 = & - \frac{1}{\e} v\cdot \nabla_x f_{R,in}  -     \frac{1}{ \e^2\kappa } L f_{R,in} -  \frac{1}{\e^{1/2}} \left \{ (\mathbf{I-P}) (v \cdot \nabla_x f_{2,in}) - \frac{2}{\kappa} \Gamma(f_{1,in},f_{2,in}) \right \} 
 \\ & - \e^{1/2} \p_t f_{2,in} +  \frac{2}{\e \kappa} \Gamma(f_{1,in} + \e f_{2,in},f_{R,in})  + \frac{\e^{1/2} }{\kappa} \Gamma(f_{2,in},f_{2,in}) +    \frac{ 1 }{\e^{1/2}\kappa}\Gamma(f_{R,in}, f_{R,in}).
\end{split}\Ee
Here $ \p_t f_{2,in}$ is defined through solving the fluid equations for $\p_t$ part and evaluate at $t=0$. 

For the sake of notational  simplicity, let's denote $\mathcal I, \mathcal I_2 : [0,T ] \times \O \to \mathbb R^5$, 
\Be \label{HH2}
\mathcal I = \left[ \rho, u^1, u^2, u^3, \theta \right], \text{ and } \mathcal I_2 = \left[ \rho_2, u_2^1, u_2^2, u_2^3, \theta_2 \right].
\Ee
The following trace theorem is useful to control the boundary terms. 
\begin{lemma}[Trace theorem]
 
\Be\label{trace}
\frac{1}{\e}\int^t_0 \int_{\gamma_+^N} |h| \dd \gamma \dd s \lesssim_N  \iint_{\O \times \R^3} |h(0)|  +   \int^t_0 \iint_{\O \times \R^3} |h|  + \int^t_0 \iint_{\O \times \R^3 } | \p_t h +\frac{1}{\e} v\cdot \nabla_x h|,
\Ee
where $\gamma_+^N:= \{(x,v) \in \gamma_+: |n(x) \cdot v|> 1/N \ \text{and} \ 1/N<|v|<N \}$.\end{lemma}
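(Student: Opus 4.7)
The plan is to prove the estimate by a Green's identity (integration by parts in $x$) against a test function $\eta_N(v)$ that depends only on the velocity variable. On the half space $\O = \mathbb{T}^2 \times \R_+$ where the outward normal is $n(x) \equiv -\hat{e}_3$ on $\p\O$, I would choose $\eta_N$ to be a smooth non-negative function with $\eta_N \equiv 1$ on $\{v: -v_3 > 1/N,\ 1/N < |v| < N\}$, compactly supported in a slight enlargement of it, and satisfying $0 \le \eta_N \le 1$. The decisive structural feature is that $\eta_N$ is independent of $x$ and $t$, so $v \cdot \nabla_x \eta_N \equiv 0$; this is precisely what prevents differentiating the cutoff through the transport operator $\tfrac{1}{\e} v \cdot \nabla_x$ from producing an $\e^{-1}$-sized contribution that would destroy the estimate — exactly the naive difficulty one meets when $\eta_N$ is instead localized in $x$.

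Next, apply $(\p_t + \tfrac{1}{\e} v \cdot \nabla_x)$ to $\eta_N |h|$, integrate over $[0,t] \times \O \times \R^3$, and use the divergence theorem in $x$ to obtain
\begin{equation*}
\int \eta_N |h|(t) + \tfrac{1}{\e} \int_0^t \!\!\int_{\p\O \times \R^3} (n \cdot v)\,\eta_N\, |h| \;=\; \int \eta_N |h|(0) + \int_0^t \!\!\int \eta_N \big(\p_t + \tfrac{1}{\e} v \cdot \nabla_x\big) |h|.
\end{equation*}
The right-hand side is bounded by $\iint |h(0)| + \int_0^t \iint |\p_t h + \tfrac{1}{\e} v \cdot \nabla_x h|$ using $0 \le \eta_N \le 1$ and the standard subsolution inequality $(\p_t + \tfrac{1}{\e} v \cdot \nabla_x) |h| \le |\p_t h + \tfrac{1}{\e} v \cdot \nabla_x h|$. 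On the left, the support of $\eta_N$ lies entirely in $\gamma_+$ on $\p\O$, and there $(n \cdot v)\eta_N \ge \tfrac{1}{N} \mathbf{1}_{\gamma_+^N}$; hence the boundary integral dominates $\tfrac{1}{N\e} \int_0^t \int_{\gamma_+^N} |h|\, \dd\gamma\, \dd s$. Dropping the non-negative $\int \eta_N |h|(t)$ and multiplying through by $N$ yields the claimed estimate with implicit constant $\lesssim_N 1$.

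The main care required is in the subsolution inequality for $|h|$, which must be justified distributionally; this is standard and follows by approximating $|\cdot|$ by $\sqrt{\cdot^2 + \delta^2}$, applying the chain rule, and sending $\delta \to 0$. A minor additional technicality, due to the unboundedness of $\O$ in the $x_3$-direction, is the need for an auxiliary spatial truncation $\chi(x_3/R)$ before integrating by parts and letting $R \to \infty$; under the standing $L^1$-decay on $h$ and its derivatives this error vanishes. I would note that the interior term $\int_0^t \iint |h|$ appearing in the right-hand side of the statement does not in fact enter this argument — its inclusion simply provides slack for later applications where a weaker form of the bound is preferred.
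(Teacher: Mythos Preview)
Your proof is correct and is precisely the standard Green's-identity argument with a velocity-only cutoff; the paper itself gives no argument and simply cites Lemma~3.2 of \cite{EGKM2} and Lemma~7 of \cite{CKL}, whose proofs proceed in essentially the same way. Your observation that the interior term $\int_0^t\!\iint |h|$ is not actually used here is accurate for this half-space geometry with $\eta_N=\eta_N(v)$; the statement simply carries slack.
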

\begin{proof}
The proof is standard (for example see Lemma 3.2 in \cite{EGKM2} or Lemma 7 in \cite{CKL}).
\end{proof}

\begin{proposition}\label{prop:energy} Under the same assumptions in Proposition \ref{prop:Hilbert}, we have 
\Be
\begin{split}\label{est:Energy}
&   
 \| f_R(t) \|_2^2 + d_2 \| \kappa^{-\frac{1}{2}} \e^{-1} \sqrt{\nu} (\mathbf{I} - \mathbf{P})f_R \|_{L^2_{t,x,v}}^2
  + \int^t_0 | \e^{-\frac{1}{2}}
%(1- P_{\gamma_+}) 
f_R
|_{  L^2_\gamma}^2
  \\  \lesssim  & \ \| f_R (0)\|_{L^2_{x,v}}^2  + \frac{1}{\kappa}    (1 +   \|  \mathcal I \|_{L^\infty_{t,x}}^2 )\int^t_0 \| P f_R (s) \|_{L^2_x}^2 \dd s 
\\ &    +   \frac{ \e}{\kappa^3 } \| \kappa^{1/2}   {P} f_R(s) \|_{L^\infty_tL^6_{x }}^2  \| \kappa^{1/2}P  f_R \|^2_{L^2_tL^3_{x}} 
\\
&
 + \e \kappa \| (\ref{estR21})\|_{L^2_{t,x,v}}^2 + \frac{\e}{\kappa} \|\kappa (\ref{estR22})\|_{L^2_{t,x,v}}^2 +  \| \eqref{IPf2est} \|_{L^2_t L^2 (\p\O)},
  \end{split}
\Ee
where 
\Be \begin{split}\label{d2}
{d}_{2}
:= & \frac{\sigma_0}{2}-  \e^{3/2} \| \mathfrak{w} f_R \|_{L^\infty_{t,x,v}}
- \left(\e^2 \| (\ref{est:f2})  \|_{L^\infty_{t,x}} + \e  \|  \nabla_x \mathcal I \|_{L^\infty_{t,x}}
+ \frac{\e^2}{\kappa}  \| (\ref{est:f2})  \|_{L^\infty_{t,x}}^2 \right)
\\ &  - \kappa^{ 1/2} \e^{1/2}   \| (\ref{estR11})  \|_{L^2_{t,x}}  - \frac{\e^{1/2}}{\kappa^{1/2}} \| \kappa \eqref{estR12} \|_{L^2_{t,x}}   -   \kappa^{ 1/2} \e^{3/2} \| (\ref{estR21})\|_{L^2_{t,x,v}}  - \frac{\e^{3/2}}{\kappa^{1/2}}  \| \kappa (\ref{estR22})\|_{L^2_{t,x,v}} .
\end{split} \Ee

 We also have 
 %\bcb
 \Be\begin{split}\label{est:Energy_t}
&\| \p_t f_R (t) \|_{L^2_{x,v}}^2 + {d}_{2,t}
 \| \kappa^{-\frac{1}{2}} \e^{-1} \sqrt{\nu}(\mathbf{I} - \mathbf{P}) \p_t f_R\|_{L^2_{t,x,v}}^2
 +  |\e^{- \frac{1}{2}} \p_t f_R|_{L^2_tL^2_\gamma}^2
 \\ \lesssim& \ \| \p_t f_R (0) \|_{L^2_{x,v}}^2  + \e \kappa \| (\ref{estR21t})\|_{L^2_{t,x,v}}^2 + \frac{\e}{\kappa} \|\kappa (\ref{estR22t})\|_{L^2_{t,x,v}}^2
\\ & +   \frac{ \e}{\kappa^3 } \| \kappa^{1/2}   {P}  f_R(s) \|_{L^\infty_tL^6_{x }}^2   \| \kappa^{1/2}P \p_t f_R \|^2_{L^2_tL^3_{x}} +  \frac{1}{\kappa}  \int^t_0  \|     P \p_tf_R(s) \|_{L^{2}_x }^2 \dd s 
\\ & +  \Big\{ 1+ \frac{1}{\kappa} \left( \|  \mathcal I    \|_{L^\infty_{t,x}}^2 + \| \p_t \mathcal I    \|_{L^\infty_{t,x}}^2 \right)    \Big\} \times \int^t_0  \|     P  f_R(s) \|_{L^{2}_x }^2 \dd s  
\\  &+ \Big\{ 1 + \e^2 \| \sqrt{\nu} \p_t f_1 \|_{L^\infty_{t,x,v}}^2 \Big\} \times  \| \e^{-1}\kappa^{-1/2} \sqrt{\nu} (\mathbf{I} -\mathbf{P}) f_R \|_{L^2_{t,x,v}}^2,
 \end{split}\Ee
where $\p_t f_R (0,x,v) := f_{R,t}(0,x,v)$ is defined in \eqref{initial_ft}. %(\ref{f:initial_t}). 
Here
\Be\label{d2t}
\begin{split}
{d}_{2,t}&:= \frac{\sigma_0}{2}-  \frac{\e^2}{\kappa} \|  \eqref{estptf2} \|_{L^\infty_{t,x,v}}^2 -  \frac{ \e^2}{\kappa} \|  \mathcal I_2  \|_{L_{t,x}^\infty} - \frac{\e^2}{\kappa} \| \mathbf{(I-P) } f_2 \|_{L_{t,x}^\infty}  
\\ & \ \ \  - \e  \|   \mathcal I \|_{L_{t,x}^\infty} -  \e^{3/2} \|   \mathfrak{w}f_R\|_{L^\infty_{x,v}}
\\ & \ \ \ - \kappa^{ 1/2} \e^{1/2}   \| (\ref{estR11t})  \|_{L^2_{t,x}}  - \frac{\e^{1/2}}{\kappa^{1/2}} \| \kappa \eqref{estR12t} \|_{L^2_{t,x}} -  \kappa^{ 1/2} \e^{3/2} \| (\ref{estR21t})\|_{L^2_{t,x,v}}  - \frac{\e^{3/2}}{\kappa^{1/2}}  \| \kappa (\ref{estR22t})\|_{L^2_{t,x,v}}
\end{split}
\Ee

\end{proposition}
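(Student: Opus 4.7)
The plan is a standard $L^2$-energy argument applied to the remainder equations \eqref{eqtn_fR} and \eqref{eqtn_fR_t} derived in Proposition~\ref{prop:Hilbert}. For \eqref{est:Energy}, I first take the $L^2_{x,v}$-inner product of \eqref{eqtn_fR} with $f_R$ and integrate in $s\in[0,t]$. The $\p_t$ term gives $\tfrac{1}{2}\|f_R(t)\|_{L^2}^2-\tfrac{1}{2}\|f_R(0)\|_{L^2}^2$, the transport $\tfrac{1}{\e}v\cdot\nabla_x$ becomes a boundary flux $\tfrac{1}{2\e}\int_0^t\int_{\p\O\times\R^3}(n(x)\cdot v)\,f_R^2$, and the collision term $\tfrac{1}{\e^2\kappa}L$ produces $\tfrac{\sigma_0}{\e^2\kappa}\int_0^t\|\sqrt{\nu}(\mathbf{I}-\mathbf{P})f_R\|_{L^2_{x,v}}^2$ by the spectral gap \eqref{s_gap}; this already supplies the dissipation $\|\kappa^{-1/2}\e^{-1}\sqrt{\nu}(\mathbf{I}-\mathbf{P})f_R\|_{L^2_{t,x,v}}^2$ on the left of \eqref{est:Energy}.

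Next I convert the boundary flux into the dissipation $|\e^{-1/2}f_R|_{L^2_\gamma}^2$ using \eqref{bdry_fR}. Splitting $\gamma=\gamma_+\cup\gamma_-$, substituting $f_R|_{\gamma_-}=P_{\gamma_+}f_R-\e^{1/2}(1-P_{\gamma_+})f_2$, and using the identity $\int_{n\cdot v<0}|n\cdot v|c_\mu\mu\,\dd v=\int_{n\cdot v>0}(n\cdot v)c_\mu\mu\,\dd v=1$, one obtains a perfect cancellation between the $\int_{\gamma_-}|n\cdot v||P_{\gamma_+}f_R|^2$ piece and the $P_{\gamma_+}$-component of $\int_{\gamma_+}(n\cdot v)f_R^2$. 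This leaves the $|\e^{-1/2}(1-P_{\gamma_+})f_R|_{L^2_\gamma}^2$ dissipation (the full $|\e^{-1/2}f_R|_{L^2_\gamma}^2$ being recovered with the aid of the trace lemma \eqref{trace}), while the cross and square terms produced by the $\e^{1/2}(1-P_{\gamma_+})f_2$ contribution supply the forcing $\|\eqref{IPf2est}\|_{L^2_tL^2(\p\O)}$ on the right.

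The heart of the proof is estimating the bulk right-hand side of \eqref{eqtn_fR}. All the $\e^{-1/2}$- and $\e^{1/2}$-scale forcings lie in $\mathcal{N}^\perp$ thanks to $\mathbf{P}\Gamma=0$ and the explicit $(\mathbf{I}-\mathbf{P})$ factor, so their pairing with $f_R$ reduces to a pairing with $(\mathbf{I}-\mathbf{P})f_R$; Cauchy--Schwarz against the dissipation followed by Young's inequality delivers the $\e\kappa\|\eqref{estR21}\|^2$ and $(\e/\kappa)\|\kappa\,\eqref{estR22}\|^2$ forcings of \eqref{est:Energy}. The truly singular piece $\tfrac{2}{\e\kappa}\Gamma(f_1+\e f_2,f_R)$ tests only against $(\mathbf{I}-\mathbf{P})f_R$ (again by $\mathbf{P}\Gamma=0$); decomposing $f_R=\mathbf{P}f_R+(\mathbf{I}-\mathbf{P})f_R$ in the second slot, the $(\mathbf{I}-\mathbf{P})$-part is absorbed into $d_2$ via $\|\Gamma(g,h)\|_{L^2_v}\lesssim\|g\|_{L^\infty_v}\|\sqrt{\nu}h\|_{L^2_v}$, while the $\mathbf{P}f_R$-part, Young-split against the dissipation, produces precisely the singular growth $\tfrac{1}{\kappa}(1+\|\mathcal{I}\|_{L^\infty_{t,x}}^2)\int_0^t\|Pf_R\|_{L^2_x}^2$. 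Finally the trilinear $\tfrac{1}{\e^{1/2}\kappa}\Gamma(f_R,f_R)$, paired against $(\mathbf{I}-\mathbf{P})f_R$, is decomposed in both slots: the $\mathbf{P}f_R\otimes\mathbf{P}f_R$ piece is estimated by H\"older as $\|Pf_R\|_{L^\infty_tL^6_x}\|Pf_R\|_{L^2_tL^3_x}\|\sqrt{\nu}(\mathbf{I}-\mathbf{P})f_R\|_{L^2_{t,x,v}}$ and then Young-split against the dissipation, yielding the $\tfrac{\e}{\kappa^3}\|\kappa^{1/2}Pf_R\|^2_{L^\infty_tL^6_x}\|\kappa^{1/2}Pf_R\|^2_{L^2_tL^3_x}$ forcing, while the remaining pieces with at least one $(\mathbf{I}-\mathbf{P})f_R$ slot are absorbed into $d_2$ via the weighted $L^\infty_{x,v}$-bound $\e^{3/2}\|\mathfrak{w}f_R\|_{L^\infty_{t,x,v}}$.

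The estimate \eqref{est:Energy_t} is obtained by applying the same procedure to \eqref{eqtn_fR_t} tested against $\p_t f_R$, with initial datum \eqref{initial_ft} and boundary relation \eqref{bdry_fR_t} handled exactly as above. The only place where genuine new ideas are required, and what I expect to be the main obstacle, is the cross term $\tfrac{2}{\e\kappa}\Gamma(\p_t f_1,f_R)$: the dissipation available here is for $(\mathbf{I}-\mathbf{P})\p_t f_R$, not for $(\mathbf{I}-\mathbf{P})f_R$, so this contribution cannot be absorbed on the left of \eqref{est:Energy_t}. The resolution is to bound it by $\|\sqrt{\nu}\p_t f_1\|_{L^\infty_{t,x,v}}\|f_R\|_{L^2_v}$ in velocity, split $f_R=\mathbf{P}f_R+(\mathbf{I}-\mathbf{P})f_R$, and then observe that $(\mathbf{I}-\mathbf{P})f_R$ already possesses an $L^2_{t,x,v}$-bound through its own weighted dissipation, which produces exactly the factor $\{1+\e^2\|\sqrt{\nu}\p_t f_1\|_{L^\infty}^2\}\|\e^{-1}\kappa^{-1/2}\sqrt{\nu}(\mathbf{I}-\mathbf{P})f_R\|^2_{L^2_{t,x,v}}$ on the right of \eqref{est:Energy_t}, while the $\mathbf{P}f_R$-part generates the additional $\tfrac{1}{\kappa}\|\p_t\mathcal{I}\|^2_{L^\infty}\int_0^t\|Pf_R\|_{L^2_x}^2$ growth. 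The remaining pieces of \eqref{eqtn_fR_t} are direct analogues of those in the first estimate and yield the remaining forcings and dissipation-absorbing terms comprising $d_{2,t}$ in \eqref{d2t}, closing both inequalities.
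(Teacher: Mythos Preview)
Your proposal is essentially the same approach as the paper's proof: energy estimate on \eqref{eqtn_fR} and \eqref{eqtn_fR_t}, spectral gap for the dissipation, boundary handling via \eqref{bdry_fR} plus the trace lemma \eqref{trace}, and the decompositions of the $\Gamma$-terms match the paper's estimates \eqref{est:EG}--\eqref{est:Energy_v^3} and \eqref{est:EG_t}--\eqref{estfR2last}.

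One small slip: you claim the $\e^{1/2}$-scale forcing $-\p_t f_2+\tfrac{1}{\kappa}\Gamma(f_2,f_2)$ lies in $\mathcal{N}^\perp$, but $\p_t f_2$ has a nontrivial $\mathbf{P}$-component (coming from $\p_t\mathbf{P}f_2$ in \eqref{Pf_2}), so its pairing with $f_R$ does \emph{not} reduce to a pairing with $(\mathbf{I}-\mathbf{P})f_R$. The paper handles this in \eqref{est:Energy_v^3} by splitting against both $\mathbf{P}f_R$ and $(\mathbf{I}-\mathbf{P})f_R$; the $\mathbf{P}f_R$-part then contributes an additional $\tfrac{1}{\kappa}\|\mathbf{P}f_R\|_{L^2_{t,x,v}}^2$, which is exactly the ``$1$'' in the factor $\tfrac{1}{\kappa}(1+\|\mathcal{I}\|_{L^\infty}^2)$ of \eqref{est:Energy}. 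This is a harmless fix and does not affect the structure of your argument.
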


\begin{proof}

We fix a $0< \varrho < \frac{1}{4}$. Define
\Be \label{weight}
\mathfrak{w}_{\varrho}(x,v)=\mathfrak{w} := e^{\varrho |v|^2}.
\Ee
%We often abuse the notation of $\mathfrak{w}_{\varrho}$ and $\mathfrak{w}$. 
We will use both $\mathfrak{w}_{\varrho}$ and $\mathfrak{w}$ equivalently. %and likewise $\mathfrak{w}_{\varrho'}=\mathfrak{w'}$. 

From \eqref{Pf_2} we have $|\mathbf P f_2 | \lesssim e^{ - \varrho |v|^2} | \mathcal I_2 | $. And from \eqref{IPf_2}, by \cite{BGL1} we have
\Be \label{IPf2est} \begin{split}
| \mathbf{(I-P)} f_2  | & = | L^{-1} ( - \kappa v \cdot \nabla_x f_1 +  \Gamma (f_1, f_1) ) | 
\\ & \lesssim \kappa e^{ - \varrho |v|^2} | \nabla_x \mathcal I |  + e^{- \varrho |v|^2} | \mathcal I |^2.
\end{split} 
\Ee
Thus
\Be \label{est:f2} 
| f_2 | \lesssim  e^{ - \varrho |v|^2} \left(| \mathcal I |^2 + | \mathcal I_2 | + \kappa  | \nabla_x \mathcal I | \right).
\Ee

An energy estimate to (\ref{eqtn_fR}) and (\ref{bdry_fR}) reads as 
\begin{align}
&\frac{1}{2}\|f _R(t)\|_{L^2_{x,v}}^2 - \frac{1}{2} \|f _R (0)\|_{L^2_{x,v}}^2
+ \frac{1}{ \kappa \e^{2 }} \int^t_0 \iint _{\O \times \R^3}f _R L  f_R\label{Energy_LHS} 
\\ & +  \frac{1}{2\e} \int^t_0 \int_{\gamma_+} |f_R|^2 
-  \frac{1}{2\e} \int^t_0 \int_{\gamma_-} | 
P_{\gamma_+}   f_R-  \e^{1/2} (1- P_{\gamma_+})
(\mathbf{I} -\mathbf{P})
 f_2
|^2 \label{Energy_bdry}
 \\
=  
&   \frac{1 }{\kappa \e^{1/2} } \int^t_0 \iint_{\O \times \R^3} 
\Gamma (f_R,f_R) (\mathbf{I} - \mathbf{P}) f_R 
\label{Energy_Gamma}
\\
&+ \frac{2}{  \kappa \e} \int^t_0 \iint_{\O \times \R^3} \Gamma( f_1 + \e f_2, f_R) (\mathbf{I} - \mathbf{P}) f_R \label{Energy_Gf_2}
\\
&+ \frac{1}{\e^{1/2}} \int^t_0 \iint_{\O \times \R^3} -  \left \{ (\mathbf{I-P}) (v \cdot \nabla_x f_2) - \frac{2}{\kappa} \Gamma(f_1,f_2) \right \}  (\mathbf{I} - \mathbf{P})  f_R \label{Energy_I-PR_1}\\
&+ \e^{1/2}  \int^t_0 \iint_{\O \times \R^3}  \left( -  \p_t f_2  + \frac{1}{\kappa} \Gamma(f_2,f_2) \right)  f_R \label{Energy_R2}
%\\
%&+ \int^t_0 \iint_{\O \times \R^3}
%\frac{- (\p_t + \e^{-1} v\cdot \nabla_x ) \sqrt{\mu}}{\sqrt{\mu}} |f_R|^2.\label{Energy_v^3}
\end{align}

We start with (\ref{Energy_LHS}). 
%\textit{Estimate of (\ref{Energy_LHS}):} 
From the spectral gap estimate in (\ref{s_gap}), we have 
\Be\label{est:EL}
(\ref{Energy_LHS})\geq \frac{1}{2}\|f _R(t)\|_{L^2_{x,v}}^2 - \frac{1}{2} \|f _R (0)\|_{L^2_{x,v}}^2
+  \sigma_0
\| \kappa^{-\frac{1}{2}} \e^{-1} \sqrt{\nu}(\mathbf{I} - \mathbf{P}) f_R\|_{L^2_{t,x,v}}^2. 
\Ee

Now we consider (\ref{Energy_Gamma}), in which we need integrability gain of $\mathbf{P}f_R$ in $L^6_x$ %and $L^2_t L^3_x$ 
of the next sections. From decomposition $f_R= \mathbf{P} f_R + (\mathbf{I} - \mathbf{P}) f_R$ and $\Gamma=\Gamma_+- \Gamma_-$ in (\ref{Gamma}),  we derive 
\Be
\begin{split}
\label{est:EG}
|(\ref{Energy_Gamma})| \lesssim &  \ 
\frac{1}{\kappa \e^{1/2}}  \sum_{i=\pm }\int^t_0 \iint_{\O \times \R^3} | \nu^{-\frac{1}{2}} \Gamma_{i}( |f_R|, (\mathbf{I} -\mathbf{P}) f_R)| | \sqrt{\nu}(\mathbf{I} - \mathbf{P}) f_R|\\
&
+\frac{\delta}{\kappa \e^{1/2}}  \sum_{i=\pm } \int^t_0 \iint_{\O \times \R^3}  | \nu^{-\frac{1}{2}} \Gamma_{i}(|\mathbf{P}f_R|, |\mathbf{P} f_R|)| | \sqrt{\nu}(\mathbf{I} - \mathbf{P}) f_R|
\\
 \lesssim &   \  \e^{3/2} \|   \mathfrak{w}f_R\|_{L^\infty_{x,v}}  \| \kappa^{-\frac{1}{2}} \e^{-1} \sqrt{\nu}
(\mathbf{I} - \mathbf{P}) f_R \|^2_{L^2_{t,x,v}} 
\\
&+  \frac{ \e^{1/2}% t^{1/2}
}{\kappa^{3/2}}
 \|\kappa^{1/2}     {P} f_R \|_{L^\infty_tL^6_{x }}
 \| \kappa^{1/2}   {P} f_R \|_{L^2_tL^3_{x }}
\| \kappa^{-\frac{1}{2}} \e^{-1} \sqrt{\nu}
(\mathbf{I} - \mathbf{P}) f_R \|_{L^2_{t,x,v}}.%\\
\end{split}\Ee

Next we consider (\ref{Energy_Gf_2}), we have
\Be
\begin{split}
&|(\ref{Energy_Gf_2})|\\ &\leq  
\frac{1}{\kappa^{1/2}}
 \left( \e \|  (\ref{est:f2}) \|_{L^\infty_{t,x}} + \|  \mathcal I  \|_{L^\infty_{t,x}}  \right)
  \{
   \|  P f_R \|_{L^2_{t,x}} 
+  \kappa^{ \frac{1}{2}} \e  \| \kappa^{-\frac{1}{2}} \e^{-1}  \sqrt{\nu}(\mathbf{I} - \mathbf{P}) f_R \|_{L^2_{t,x,v}} 
  \}\\
  & \ \ \ \ \ \ \ \times
 \| \kappa^{-\frac{1}{2}} \e^{-1}  \sqrt{\nu}(\mathbf{I} - \mathbf{P}) f_R \|_{L^2_{t,x,v}} \\
 &\lesssim  
\{\e^2 \| (\ref{est:f2})  \|_{L^\infty_{t,x}} + \e  \|   \mathcal I \|_{L^\infty_{t,x}}
+ \frac{\e^2}{\kappa}  \| (\ref{est:f2})  \|_{L^\infty_{t,x}}^2
\}
  \| \kappa^{-\frac{1}{2}} \e^{-1}  \sqrt{\nu} (\mathbf{I} - \mathbf{P}) f_R \|_{L^2_{t,x,v}}^2
\\ & \ \ \ +  \frac{\sigma_0}{10} \| \kappa^{-\frac{1}{2}} \e^{-1}  \sqrt{\nu} (\mathbf{I} - \mathbf{P}) f_R \|_{L^2_{t,x,v}}^2
+ \frac{10}{\sigma_0}  \frac{1}{\kappa}   \|   \mathcal I \|_{L^\infty_{t,x}}^2 \|  P  f_R \|_{L^2_tL^2_{x}}^2.
  \label{est:Gf_2}
\end{split}
\Ee

From \eqref{IPf2est} and \eqref{est:f2}
\begin{align}
& \left| (\mathbf{I-P}) (v \cdot \nabla_x f_2) - \frac{2}{\kappa} \Gamma(f_1,f_2) \right|  \notag
\\ & \lesssim \left|   (\mathbf{I-P}) (v \cdot  \nabla_x \mathbf P f_2) \right| +  \left|   (\mathbf{I-P}) (v \cdot  \nabla_x L^{-1} ( - \kappa v \cdot \nabla_x f_1 + \Gamma(f_1, f_1) )) \right|  + \frac{1}{\kappa} |   \Gamma(f_1,f_2) | \notag
\\ & \lesssim e^{ - \varrho |v|^2}  | \nabla_x \mathcal I_2 |+ \kappa e^{ - \varrho |v|^2} | \nabla_x^2 \mathcal I | + e^{- \varrho |v|^2}  |\mathcal I |^2  \label{estR11} 
\\ & \ \ \ +  \frac{1}{\kappa} e^{ - \varrho |v|^2} |\mathcal I |  \left(  |\mathcal I |^2  + |\mathcal I_2 |+ \kappa  | \nabla_x \mathcal I | \right) \label{estR12},
\end{align}
we derive that 
\Be\begin{split}\label{est:I-PR_1R2}
|(\ref{Energy_I-PR_1})| &\lesssim 
\left( \kappa^{ 1/2} \e^{1/2}   \| (\ref{estR11})  \|_{L^2_{t,x}}  +\frac{\e^{1/2}}{\kappa^{1/2}} \| \kappa \eqref{estR12} \|_{L^2_{t,x}} \right) \| \kappa^{-1/2} \e^{-1} (\mathbf{I} - \mathbf{P} ) f_R \|_{L^2_{t,x,v}}.
 \end{split}
\Ee

Next we have from \eqref{LinvL}
\[ \begin{split}
\p_t f_2 & = \p_t \mathbf P f_2 + \p_t \mathbf{(I-P)} f_2 
\\ &=   (  \p_t   \rho_2 +   \p_t   u_2 \cdot v + \p_t    \theta_2 \frac{|v|^2 - 3}{2} ) \sqrt{\mu} +  L^{-1} ( - \kappa v \cdot \nabla_x   \p_t   f_1 +  \Gamma (  \p_t   f_1, f_1) + \Gamma(f_1, \p_t f_1)) 
\\ & = (  \p_t   \rho_2 +   \p_t   u_2 \cdot v + \p_t    \theta_2 \frac{|v|^2 - 3}{2} ) \sqrt{\mu} +  L^{-1} ( - \kappa v \cdot \nabla_x   \p_t   f_1 ) + ( \mathbf{I-P}) ( \frac{ f_1 \cdot \p_t f_1}{\sqrt \mu } ).
\end{split} \]
So we can bound
\Be \label{estptf2} \begin{split}
| \p_t f_2 | \lesssim  & e^{ - \varrho |v|^2} \left(  |\p_t \mathcal I_2 | +  \kappa   | \nabla_x \p_t \mathcal I | \right) +   e^{ - \varrho |v|^2}   | \mathcal I | | \p_t \mathcal I | .
\end{split} \Ee
Together with \eqref{est:f2} we have
\begin{align}
  \left| -  \p_t f_2  + \frac{1}{\kappa} \Gamma(f_2,f_2) \right|& \lesssim  
    e^{ - \varrho |v|^2} \left( | \p_t \mathcal I_2|  +  \kappa  | \nabla_x \p_t \mathcal I |  + | \mathcal I | | \p_t \mathcal I | \right)  \label{estR21} 
\\ & \ \ \ + \frac{1}{\kappa} \left(  e^{ - \varrho |v|^2} \left( | \mathcal I |^2  + | \mathcal I_2|  + \kappa | \nabla_x \mathcal I | \right) \right)^2  \label{estR22}
\end{align}
Thus
\Be \label{est:Energy_v^3} \begin{split}
|(\ref{Energy_R2})| \lesssim &  
\e \kappa \| (\ref{estR21})\|_{L^2_{t,x,v}}^2 + \frac{\e}{\kappa} \|\kappa (\ref{estR22})\|_{L^2_{t,x,v}}^2  + \frac{1}{\kappa}   \| \mathbf{P}f_R\|_{L^2_{t,x,v}}^2  
\\ & + \left(  \kappa^{ 1/2} \e^{3/2} \| (\ref{estR21})\|_{L^2_{t,x,v}}  + \frac{\e^{3/2}}{\kappa^{1/2}}  \| \kappa (\ref{estR22})\|_{L^2_{t,x,v}}  \right)
\|
  \kappa^{-1/2} \e^{-1}
  (\mathbf{I} -\mathbf{P})f_R\|_{L^2_{t,x,v}} 
\end{split}\Ee

Finally we control the boundary term (\ref{Energy_bdry}) using a trace theorem (\ref{trace}).
%%%%%%%%%%%%%%%%%%%
\hide
 (Lemma 3.2 in \cite{EGKM2})
\Be\label{trace1}
\frac{1}{\e}\int^t_0 \int_{\gamma_+^N} |h| \dd \gamma \dd s \lesssim_N  \iint_{\O \times \R^3} |h(0)|  +   \int^t_0 \iint_{\O \times \R^3} |h|  + \int^t_0 \iint_{\O \times \R^3 } | \p_t h +\frac{1}{\e} v\cdot \nabla_x h|,
\Ee
where $\gamma_+^N:= \{(x,v) \in \gamma_+: |n(x) \cdot v|> 1/N \ \text{and} \ 1/N<|v|<N \}$. 
\unhide
%%%%%%%%%%%%%%%%%%%%
First we have, from (\ref{bdry_fR}),
\Be
\begin{split}
(\ref{Energy_bdry}) &= \frac{1}{2\e} \int_0^t \int_{\gamma_+} \{
|f_R|^2 - |P_{\gamma_+} f_R|^2
\} - \frac{1}{2} \int^t_0 \int_{\gamma_-} |(1- P_{\gamma_+})
 (\mathbf{I}-\mathbf{P}) 
 f_2|^2\\
 &
 \ \ -  \int^t_0 %\cancel{ 
\int_{\gamma_-}  \frac{1}{\e^{1/2}}P_{\gamma_+ } f_R (1-  {P}_{\gamma_+}) (\mathbf{I}-\mathbf{P})  f_2 
%}
\\
&\geq  \frac{1}{2 } 
| \e^{-\frac{1}{2}}
(1- P_{\gamma_+}) f_R
|_{L^2_t L^2_{\gamma_+}}^2  
-\frac{1}{8C} | \e^{-\frac{1}{2}}
  P_{\gamma_+}   f_R
|_{L^2_t L^2_{\gamma_+}}^2\\
& \ \ \ 
 -( \frac{1}{2}
  + 2C
 )
  \int^t_0 \int_{\gamma_-} |(1- P_{\gamma_+}) 
   (\mathbf{I}-\mathbf{P}) 
   f_2|^2
  \ \ \text{for} \ C\gg 1,
\label{est:EB}
\end{split}
\Ee
where we have used the fact $|P_{\gamma_+}f_R|_{L^2_{\gamma_+}}= |P_{\gamma_+}f_R|_{L^2_{\gamma_-}}$ from $P_{\gamma_+}f_R(t,x,v)$ being a function of $(t,x,|v|)$ due to $u|_{\p\O}=0$.

Now we estimate $P_{\gamma_+}f_R$. % which is missed in (\ref{est:EB}). 
Since $P_{\gamma_+}$ in (\ref{bdry_fR}) is a projection of $c_\mu\sqrt{\mu}$ on $\gamma_+$, it follows $\int_{\gamma_+} |P_{\gamma_+}f|^2 \leq 2 \int_{\gamma_+^N} |P_{\gamma_+}f|^2$ for large enough $N>0$, where $\gamma_+^N:= \{(x,v) \in \gamma_+: |n(x) \cdot v|> 1/N \ \text{and} \ 1/N<|v|<N \}$. Setting $h=|f_R|^2$ in (\ref{trace}) and using (\ref{eqtn_fR}) we derive 
\Be \label{est:bdry1}
\begin{split}
&\frac{1}{\e}\int^t_0 \int_{\gamma_+^N } |f_R|^2 \dd \gamma \dd s 
 \leq C_N  \iint_{\O \times \R^3} |f_R(0)|^2  +   \int^t_0 \iint_{\O \times \R^3} |f_R|^2  \\
& \ \ \ + \int^t_0 \iint_{\O \times \R^3 }\Big|\Big[
 - \frac{1}{\e^2 \kappa} Lf_R  + \frac{2}{  \kappa \e}  \Gamma( f_1 + \e f_2, f_R) 
+    \frac{   1 }{ \e^{1/2}\kappa}\Gamma(f_R, f_R) \\
&  \ \ \ \ \ \ \ \ \ \ \ \  \ \ \ \ \ \ \ \ \ \ \ \ 
 -  \frac{1}{\e^{1/2}}  \left \{ (\mathbf{I-P}) (v \cdot \nabla_x f_2) - \frac{2}{\kappa} \Gamma(f_1,f_2) \right \} + \e^{1/2}   \left( -  \p_t f_2  + \frac{1}{\kappa} \Gamma(f_2,f_2) \right)  \Big] f_R\Big|\\
 &\leq C_N \big\{  \|f_R(0)\|^2_{L^2_{x,v}}  + \|    \mathbf{P}  f_R \|_{L^{2}_{t,x,v}}^2
 + \| \e^{-1} \kappa^{-1/2} \sqrt{\nu} (\mathbf{I} - \mathbf{P}) f_R \|_{L^{2}_{t,x,v}}^2\\
 &  \ \ \ \ \ \ \ \ \ \ \ \  \ \ \ 
 + (\ref{est:EG}) + (\ref{est:Gf_2}) +(\ref{est:I-PR_1R2})
 + (\ref{est:Energy_v^3})\big\}.
 \end{split}
\Ee
Furthermore from (\ref{bdry_fR}) and (\ref{est:bdry1}) 
\Be\label{f:+-}
\begin{split}
| f_R|_{L^2_t L^2_{\gamma_-}
 }^2
  \lesssim 
| f_R|_{L^2_t L^2_{\gamma_+}
 }^2 + 
\e | (1- P_{\gamma_+})  (\mathbf{I}-\mathbf{P}) f_2| _{L^2_t L^2_{\gamma_-}
 }
 ^2
 = | f_R|_{L^2_t L^2_{\gamma_+}
 }^2 + 
{\e } \| \eqref{IPf2est} \|_{L^2_t L^2 (\p\O)
 }
 ^2 .
 \end{split}
\Ee

%From the fact that $P_{\gamma_+}$ is a projection as in (\ref{bdry_fR}), we derive that 

Finally we collect the terms as 
\Be\notag
\begin{split}
&\text{r.h.s of} \ (\ref{est:EL})+ (\ref{est:EB})
+ \frac{1}{4C}  | \e^{-\frac{1}{2}}
  P_{\gamma_+}   f_R
|_{L^2_t L^2_{\gamma_+}}^2 + \frac{\e^{-1}}{16C} | f_R|_{ L^2_t L^2_{\gamma_-}}^2\\
&\leq \text{r.h.s of} \  (\ref{est:EG}) + (\ref{est:Gf_2}) +  (\ref{est:I-PR_1R2}) + (\ref{est:Energy_v^3}) + \frac{1}{4C} \times  \text{r.h.s of} \   (\ref{est:bdry1})
+ \frac{\e^{-1}}{16C}  \times  \text{r.h.s of} \    (\ref{f:+-}). 
\end{split}
\Ee
We choose large $N$ and then large $C$ so that $\frac{C_N}{4C}\ll \sigma_0$. Using Young's inequality for products, and then moving contributions of $ \| \kappa^{-\frac{1}{2}} \e^{-1} \sqrt{\nu} (\mathbf{I} - \mathbf{P}) f_R \|_{L^2_{t,x,v}}^2$ to l.h.s., we derive (\ref{est:Energy}). 

Next we prove (\ref{est:Energy_t}). An energy estimate to (\ref{eqtn_fR_t}) and (\ref{bdry_fR_t}) lead to (\ref{est:Energy_t})
\begin{align}
&\frac{1}{2}\| \p_t f _R(t)\|_2^2 - \frac{1}{2} \| \p_t  f _R (0)\|_2^2
+ \frac{1}{ \kappa \e^{2 }} \int^t_0 \iint _{\O \times \R^3}\p_t  f _R L  \p_t f_R\label{Energy_LHS_t} \\
& +  \frac{1}{2\e} \int^t_0 \int_{\gamma_+} |\p_t f_R|^2 \notag\\
&-  \frac{1}{2\e} \int^t_0 \int_{\gamma_-} | 
  P_{\gamma_+} \p_t f_R-  \e^{1/2}  (1-P_{\gamma_+})  \p_t   (\mathbf{I} -\mathbf{P}) f_2 |^2 \label{Energy_bdry_t}
 \end{align}
 \begin{align}
=  &  \frac{2 }{\kappa \e^{1/2} } \int^t_0 \iint_{\O \times \R^3}  \Gamma(\p_t f_R, f_R) \mathbf{(I-P)}\p_t f_R \label{Energy_Gamma_t}
\\ & +    \frac{2}{\e \kappa}  \int_0^t \iint_{\O \times \R^3} \left(  \Gamma(\p_t f_1 + \e \p_t f_2,f_R)  + \Gamma(f_1 + \e f_2,\p_t f_R)  \right) \p_t  f_R 
 \label{Energy_Gf_2_t}
\\
&+   \frac{1}{\e^{1/2}} \int^t_0 \iint_{\O \times \R^3}   \left \{  - (\mathbf{I-P}) (v \cdot \nabla_x \p_t f_2) + \frac{2}{\kappa} \Gamma(\p_t f_1,f_2) + \frac{2}{\kappa} \Gamma( f_1, \p_t f_2) \right \}  \mathbf{(I-P)} \p_t f_R \label{Energy_I-PR_1_t}
\\ &  + \e^{1/2} \int^t_0 \iint_{\O \times \R^3}   \left( - \p_t^2 f_2  + \frac{2 }{\kappa} \Gamma(\p_t f_2,f_2) \right) \p_t f_R \label{Energy_tlast}
\end{align}

We control the terms similarly as in the proof of (\ref{est:Energy}):
\begin{align}
(\ref{Energy_LHS_t})\geq &  \  \frac{1}{2}\| \p_t f _R(t)\|_{L^2_{x,v}}^2 - \frac{1}{2} \|\p_t f _R (0)\|_{L^2_{x,v}}^2
+  \sigma_0
\| \kappa^{-\frac{1}{2}} \e^{-1} \sqrt{\nu}(\mathbf{I} - \mathbf{P}) \p_t f_R\|_{L^2_{t,x,v}}^2,\label{est:EL_t} 
\\
|(\ref{Energy_Gamma_t})|  \lesssim &   \ 
(   \e^{3/2} \|   \mathfrak{w}f_R\|_{L^\infty_{t,x,v}}   ) \| \kappa^{-\frac{1}{2}} \e^{-1} \sqrt{\nu}
(\mathbf{I} - \mathbf{P}) \p_t f_R \|^2_{L^2_{t,x,v}} \notag
\\
&+  \frac{ \e^{1/2}}{\kappa^{3/2}}
  \|\kappa^{1/2}     {P} f_R \|_{L^\infty_tL^6_{x }} 
 \| \kappa^{1/2}   {P} \p_t f_R \|_{L^2_tL^3_{x }}
\| \kappa^{-\frac{1}{2}} \e^{-1} \sqrt{\nu}
(\mathbf{I} - \mathbf{P}) \p_t f_R \|_{L^2_{t,x,v}}\label{est:EG_t}
\end{align}

\Be \label{est:EG_t2} \begin{split}   
 & |(\ref{Energy_Gf_2_t})| 
\\  \lesssim   & \left( \kappa^{-\frac{1}{2}}\e \| \sqrt{\nu} \p_t f_2 \|_{L^\infty_{t,x,v}} + \kappa^{-\frac{1}{2}} \| \sqrt{\nu} \p_t f_1 \|_{L^\infty_{t,x,v}} \right) \{ \|P f_R \|_{L^2_{t,x}} +  \kappa^{\frac{1}{2}} \e  \|\kappa^{-\frac{1}{2}} \e^{-1} (\mathbf{I} -\mathbf{P}) f_R \|_{L^2_{t,x,v}} \}
 \\   &  \ \ \   \times \| \kappa^{-\frac{1}{2}} \e^{-1} (\mathbf{I} - \mathbf{P})  \p_t f_R \|_{L^2_{t,x,v}}
\\   &+ \left(  \frac{\e}{\kappa^{1/2}} \| P f_2 \|_{L_{t,x}^\infty} + \frac{\e}{\kappa^{1/2}} \| \mathbf{(I-P) } f_2 \|_{L_{t,x}^\infty}  +\frac{1}{\kappa^{1/2}}  \| \nu f_1 \|_{L_{t,x}^\infty} \right)
\\ & \ \ \ \times \left(  \| P \p_t f_R \|_{L_{t,x}^2} + \kappa^{\frac{1}{2}} \e \| \kappa^{-\frac{1}{2}} \e^{-1}  (\mathbf{I} - \mathbf{P})  \p_t f_R \|_{L^2_{t,x,v}}  \right) \| \kappa^{-\frac{1}{2}} \e^{-1} (\mathbf{I} - \mathbf{P})  \p_t f_R \|_{L^2_{t,x,v}}
\\  \lesssim & \left(  \frac{\sigma_0}{10} +  \frac{\e^2}{\kappa} \| \sqrt{\nu} \p_t f_2 \|_{L^\infty_{t,x,v}}^2 +  \frac{ \e^2}{\kappa} \| P f_2 \|_{L_{t,x}^\infty} + \frac{\e^2}{\kappa} \| \mathbf{(I-P) } f_2 \|_{L_{t,x}^\infty}  + \e  \| \nu f_1 \|_{L_{t,x}^\infty} \right)  \| \kappa^{-\frac{1}{2}} \e^{-1} (\mathbf{I} - \mathbf{P})  \p_t f_R \|_{L^2_{t,x,v}}^2 
\\ & + \left( 1+ \frac{1}{\kappa} \| \sqrt{\nu} \p_t f_1 \|_{L^\infty_{t,x,v}}^2 + \frac{1}{\kappa} \| \sqrt{\nu}  f_1 \|_{L^\infty_{t,x,v}}^2 \right)  \|P f_R \|_{L^2_{t,x}}^2  + \left( 1 + \e^2 \| \sqrt{\nu} \p_t f_1 \|_{L^\infty_{t,x,v}}^2 \right) \|\kappa^{-\frac{1}{2}} \e^{-1} (\mathbf{I} -\mathbf{P}) f_R \|_{L^2_{t,x,v}}^2
\end{split} \Ee

We have
\begin{align}
& \left| - (\mathbf{I-P}) (v \cdot \nabla_x \p_t f_2) + \frac{2}{\kappa} \Gamma(\p_t f_1,f_2) + \frac{2}{\kappa} \Gamma(f_1,\p_ tf_2 ) \right|  \notag
\\ & \lesssim \left|   (\mathbf{I-P}) (v \cdot  \nabla_x \mathbf P \p_t f_2) \right| +  \left|   (\mathbf{I-P}) (v \cdot  \nabla_x L^{-1} ( - \kappa v \cdot \nabla_x \p_t f_1 + 2 \Gamma(\p_t f_1, f_1) )) \right|   \notag
\\ & \ \ \ + \frac{1}{\kappa} |   \Gamma(\p_t f_1,f_2) | + \frac{1}{\kappa} |   \Gamma(f_1, \p_t f_2) | \notag
\\ & \lesssim e^{ - \varrho |v|^2} | \nabla_x \p_t \mathcal I_2 | + \kappa e^{ - \varrho |v|^2}  | \nabla_x^2 \p_t \mathcal I |  + e^{ - \varrho |v|^2} | \mathcal I | | \p_t \mathcal I |  \label{estR11t} 
\\ & \ \ \ +  \frac{1}{\kappa} e^{ - \varrho |v|^2} | \p_t \mathcal I |  \left( | \mathcal I |^2  + | \mathcal I_2|  + \kappa | \nabla_x \mathcal I | \right)  +  \frac{1}{\kappa} e^{ - \varrho |v|^2}| \mathcal I |  \left( | \p_t \mathcal I_2 | +  \kappa  | \nabla_x \p_t \mathcal I |   +  | \mathcal I | | \p_t \mathcal I | \right). \label{estR12t}
\end{align}
So we derive that 
\Be\begin{split}\label{est:I-PR_1R2_t}
|(\ref{Energy_I-PR_1_t})| &\lesssim 
\left( \kappa^{ 1/2} \e^{1/2}   \| (\ref{estR11t})  \|_{L^2_{t,x,v}}  +\frac{\e^{1/2}}{\kappa^{1/2}} \| \kappa \eqref{estR12t} \|_{L^2_{t,x,v}} \right) \| \kappa^{-1/2} \e^{-1} (\mathbf{I} - \mathbf{P} ) \p_t f_R \|_{L^2_{t,x,v}}.
 \end{split}
\Ee

And we have
\begin{align}
&  \left| -  \p_t^2 f_2  + \frac{2}{\kappa} \Gamma(\p_tf_2,f_2) \right| \lesssim  \notag
\\  &   e^{ - \varrho |v|^2} \left(  |\p_t^2 \mathcal I_2|  +  \kappa  | \nabla_x \p_t^2 \mathcal I | \right)  + e^{ - \varrho |v|^2} \left(  |\mathcal I |  | \p_t^2 \mathcal I |   +  | \p_t \mathcal I |^2 \right)   \label{estR21t} 
\\ &  + \frac{1}{\kappa}  e^{ - \varrho |v|^2} \left( | \mathcal I |^2 + |\mathcal I_2|  + \kappa | \nabla_x \mathcal I | \right)  \left( | \p_t \mathcal I_2| +  \kappa  |\nabla_x \p_t \mathcal I|  + |\mathcal I | |\p_t \mathcal I |    \right) \label{estR22t}
\end{align}
Thus
\Be \label{estfR2last}  \begin{split}
|(\ref{Energy_tlast})| \lesssim &  
\e \kappa \| (\ref{estR21t})\|_{L^2_{t,x,v}}^2 + \frac{\e}{\kappa} \|\kappa (\ref{estR22t})\|_{L^2_{t,x,v}}^2  + \frac{1}{\kappa}   \| \mathbf{P} \p_t f_R\|_{L^2_{t,x,v}}^2  
\\ & + \left(  \kappa^{ 1/2} \e^{3/2} \| (\ref{estR21t})\|_{L^2_{t,x,v}}  + \frac{\e^{3/2}}{\kappa^{1/2}}  \| \kappa (\ref{estR22t})\|_{L^2_{t,x,v}}  \right) \|  \kappa^{-1/2} \e^{-1} (\mathbf{I} -\mathbf{P})\p_t f_R\|_{L^2_{t,x,v}} 
\end{split}\Ee

Lastly we estimate (\ref{Energy_bdry_t}). As in (\ref{est:EB}) we derive that (\ref{Energy_bdry_t}) is bounded from below by
 \Be
 \begin{split}\label{est:EB_t}
 \frac{1}{2 } 
&| \e^{-\frac{1}{2}}
(1- P_{\gamma_+}) \p_t  f_R
|_{L^2((0,T); L^2_{\gamma_+})}^2  
-\frac{1}{8C} | \e^{-\frac{1}{2}}
  P_{\gamma_+} \p_t   f_R
|_{L^2 ((0,T); L^2_{\gamma_+})}^2
\\& - C    |(1- P_{\gamma_+})  (\mathbf{I} -\mathbf{P})  \p_t f_2|_{L^2 ((0,T); L^2_{\gamma_-})}^2  
 \\ \geq  & \frac{1}{2 } 
 | \e^{-\frac{1}{2}}
(1- P_{\gamma_+}) \p_t  f_R
|_{L^2((0,T); L^2_{\gamma_+})}^2  
-\frac{1}{8C} | \e^{-\frac{1}{2}}
  P_{\gamma_+} \p_t   f_R
|_{L^2 ((0,T); L^2_{\gamma_+})}^2\\
&
 -
 C
 \Big\{
    \| e^{ - \varrho |v|^2} ( \kappa |\nabla_x \p_t \mathcal I |+ | \mathcal I | | \p_t \mathcal I | ) \|_{L^2_t L^2(\p\O)} 
  \Big\}
   \end{split}
 \Ee
 for some large $C$.
 
Now we bound $P_{\gamma_+} \p_t f_R$ using (\ref{trace}). Following the argument arriving at (\ref{est:bdry1}) and setting $h=|\p_t f_R|^2$ we derive 
 \Be
 \begin{split}
& \frac{1}{\e} \int^t_0 \int_{\gamma_+} |   \p_t f_R  |^2 \dd \gamma \dd s \\
 &\lesssim_N
 \| \p_t f_R(0) \|_{L^2_{x,v}} +  \| \p_t f_R  \|_{L^2_{t,x,v}} 
 + \int_0^t \iint_{\O \times \R^3} \Big|\Big( - \frac{1}{\e^2 \kappa} L \p_t f_R  + \text{r.h.s of } (\ref{eqtn_fR_t})\Big)\p_t f_R  \Big|\\
 &\lesssim_N \| \p_t f_R(0) \|_{L^2_{x,v}} ^2+  \| P \p_t f_R  \|_{L^2_{t,x}} ^2
 + \| \e^{-1} \kappa^{-1/2} \sqrt{\nu} (\mathbf{I}-\mathbf{P}) \p_t f_R  \|_{L^2_{t,x,v}} ^2
 \\ & \ \ \  \ \ \   + (\ref{est:EG_t}) +  (\ref{est:EG_t2})+
 (\ref{est:I-PR_1R2_t})+ (\ref{estfR2last}).
 \end{split}\Ee

We conclude (\ref{est:Energy_t}) by collecting the terms.
\end{proof}

\subsection{$L^2_tL^3_x$-integrability gain for $\mathbf{P}f_R$}  \label{L2L3section}
The goal for this section is the following proposition:

\begin{proposition}\label{prop:average}
 Assume the same assumptions in Proposition \ref{prop:Hilbert}. Then we have, for $2<p<3$,  %\bcb
\Be\begin{split}\label{average_3D}
&d_3\big\|    {P} f_R
 \big\|_{L^2_t L^p_x }
 \\
\lesssim & \  \| f_R \|_{L^\infty_t L^2 _{x,v}} + \| f_R (0) \|_{L^2_\gamma}    + \frac{1}{\kappa} \| \mathcal I \|_{L^\infty_{t,x}}  \| Pf_R \|_{L^2_{t,x}}
\\
& +\Big\{
\frac{1}{\e \kappa }  + \frac{1}{\kappa} \left( \| \mathcal I \|_{L^\infty_{t,x}} + \e \|  (\ref{est:f2})  \|_{L^\infty_{t,x} } \right)
+ \frac{\e^{1/2}}{\kappa} \| \mathfrak{w}_{\varrho} f_R \|_{L^\infty_{t,x,v} } 
%+   \| \mathfrak{w}_{\varrho, \ss } f_R   \|_{ L^\infty _{t,x,v}}^{\frac{p-2}{p}}
\Big\}
 \| \sqrt{\nu} (\mathbf{I} - \mathbf{P})f_R \|_{L^2_{t,x,v} }
 \\ & + \e^{1/2}  \| (\ref{estR11})  \|_{L^2_{t,x}}  +\frac{\e^{1/2}}{\kappa} \| \kappa \eqref{estR12} \|_{L^2_{t,x}} + \e^{3/2} \| (\ref{estR21})\|_{L^2_{t,x,v}}  + \frac{\e^{3/2}}{\kappa}  \| \kappa (\ref{estR22})\|_{L^2_{t,x,v}},
\end{split}\Ee
with 
\Be\label{d3}
d_{3}:= 1 -  \frac{\e}{\kappa} \|(\ref{est:f2}) \|_{L^\infty_t   L_x^{\frac{2p}{p-2}} } - \frac{\e^{1/2}}{\kappa} \| Pf_R \|_{L^\infty_tL_x^{6} }^{\frac{3(p-2)}{p}} 
\|  \mathfrak{w}_{\varrho}  f_R \|_{L_{t,x,v}^{\infty} }^{\frac{6-2p}{p}},
\Ee
and for $\varrho'<\varrho$ 
\Be\begin{split}\label{average_3Dt}
&d_{3,t} \big\|    {P}  \p_t f_R
 \big\|_{L^2_t L^p_x }
 \\
\lesssim & 
\Big\{ \frac{1}{\kappa} \left( \| \p_t \mathcal I \|_{L^\infty_{t,x,v}}  + \e \| \eqref{estptf2} \|_{L^\infty_{t,x,v } }  \right)  \Big\}  \Big\{ \| Pf_R \|_{L^2_{t,x}}+ \| \sqrt{\nu} (\mathbf{I} -\mathbf{P}) f_R \|_{L^2_{t,x,v}} \Big\}
\\ &+ \Big\{ \frac{1}{\kappa \e} + \frac{\e^{1/2}}{\kappa} \|   \mathfrak{w}f_R\|_{L^\infty_{t,x,v}} +  \frac{1}{\kappa} \left( \| \mathcal I \|_{L^\infty_{t,x}} + \e \|(\ref{est:f2})  \|_{L^\infty_{t,x}}  \right)  \Big\} \| \sqrt{\nu} (\mathbf{I} - \mathbf{P}) \p_t f_R \|_{L^2_{t,x,v}}
\\ & + (\kappa \e ) ^{\frac{2}{p-2}} \|  \mathfrak{w}_{\varrho^\prime} \p_t  f_R \|_{L^2_t  L^\infty_{x,v}  }+ \| \p_t f_R \|_{L^\infty_t L^2_{x,v}}  + \|\p_t  f_R (0) \|_{L^2_\gamma} 
 + \frac{1}{\kappa} \|\mathcal I \|_{L^\infty_{t,x}} \| P \p_t f_R \|_{L^2_tL^2_{x }}
\\  &+  \e^{1/2}   \| (\ref{estR11t})  \|_{L^2_{t,x,v}}  +\frac{\e^{1/2}}{\kappa} \| \kappa \eqref{estR12t} \|_{L^2_{t,x,v}} + \e^{3/2} \| (\ref{estR21t})\|_{L^2_{t,x,v}}^2 + \frac{\e^{3/2}}{\kappa} \|\kappa (\ref{estR22t})\|_{L^2_{t,x,v}}^2,
\end{split}\Ee
with 
\Be\label{d3t}
d_{3,t}:= 1 -  \frac{\e}{\kappa} \|  (\ref{est:f2}) \|_{L^\infty_t   L_x^{\frac{2p}{p-2}}  }-  \frac{\e^{1/2}}{\kappa} \| P f_R \|_{L^\infty_t  L_x^{6} }^{\frac{3(p-2)}{p}} \|  \mathfrak{w}_{\varrho} f_R \|_{L^{\infty} _{t,x,v}}^{\frac{6-2p}{p}},
\Ee
where both bounds are uniform-in-$p$ for $2<p<3$. 
 
\end{proposition}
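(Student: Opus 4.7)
The plan is to adapt the velocity averaging ($TT^*$) method of \cite{EGKM3, GV, JV} to the half-space $\O=\mathbb{T}^2\times \R_+$. First, I extend $f_R$ from $\O$ to a slightly larger domain by reflection across $\{x_3=0\}$, and then to all of $\R^3$ via a cutoff $\chi$ vanishing in a thin layer near $\p\O$. This avoids a direct clash with the nonlocal diffuse boundary condition \eqref{bdry_fR}, at the price of boundary flux terms that I control through the trace estimate \eqref{trace} precisely as in the proof of Proposition \ref{prop:energy}; this accounts for the loss $\|f_R(0)\|_{L^2_\gamma}$ in \eqref{average_3D}.

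Next, I rewrite the extended equation as
\[
(\p_t+\e^{-1}v\cdot\nabla_x+(\e^2\kappa)^{-1}\nu(v))f_R=(\e^2\kappa)^{-1}Kf_R+G,
\]
where $G$ gathers the whole right-hand side of \eqref{eqtn_fR}. Solving by Duhamel along the absorbing transport semigroup and testing against $\varphi_j\sqrt{\mu}$, I apply the $TT^*$ duality to get $L^p_x$ integrability for $2<p<3$ at the cost of $L^2_{t,x,v}$ norms of the forcings and of $\sqrt\nu(\mathbf{I}-\mathbf{P})f_R$. The $K$ contribution is decomposed into a truncated Hilbert--Schmidt piece amenable to averaging and a tail bounded by $\sqrt\nu(\mathbf{I}-\mathbf{P})f_R$, which produces the $(\e\kappa)^{-1}$ coefficient on $\sqrt\nu(\mathbf{I}-\mathbf{P})f_R$ in \eqref{average_3D}. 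Each remaining forcing term is matched with its pointwise bound \eqref{estR11}--\eqref{estR22} together with $|\Gamma(f,g)|\lesssim\nu^{1/2}|f||g|$, producing the explicit linear terms on the right of \eqref{average_3D}.

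The trilinear term $(\e^{1/2}\kappa)^{-1}\Gamma(f_R,f_R)$ is handled by H\"older interpolation: using $|\Gamma(f_R,f_R)|\lesssim\nu(v)|f_R|^2$ and splitting $f_R=\mathbf{P}f_R+(\mathbf{I-P})f_R$, combining the upcoming $\|\mathbf{P}f_R\|_{L^\infty_tL^6_x}$ control with the weighted bound $\|\mathfrak{w}f_R\|_{L^\infty_{t,x,v}}$ yields a contribution proportional to $\|\mathbf{P}f_R\|_{L^\infty_tL^6_x}^{3(p-2)/p}\|\mathfrak{w}f_R\|_{L^\infty_{t,x,v}}^{(6-2p)/p}\|\mathbf{P}f_R\|_{L^2_tL^p_x}$, which is absorbed into the left-hand side under the condition $d_3>0$ in \eqref{d3}. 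Applying the same scheme to \eqref{eqtn_fR_t} with boundary condition \eqref{bdry_fR_t} yields \eqref{average_3Dt}; the additional $L^2_tL^\infty_{x,v}$ term with coefficient $(\kappa\e)^{2/(p-2)}$ arises from a high-velocity-frequency split required to dominate the extra $\e^{-1}$ and $\kappa^{-1}$ factors generated by differentiating in $t$.

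The main obstacle is the extension step in the presence of the nonlocal operator $P_{\gamma_+}$: the $TT^*$ method is cleanest on the whole space, but $f_R|_{\gamma_-}$ is not a prescribed inflow --- it couples back to $P_{\gamma_+}f_R$ on $\gamma_+$. Passing the boundary problem through \eqref{trace} via a cutoff vanishing near $\p\O$ yields the clean explicit loss $\|f_R(0)\|_{L^2_\gamma}$ at the expense of slight suboptimality; a sharper stochastic-cycle analysis of the diffuse kernel as in \cite{Guo10} would be technically more involved and is not needed here.
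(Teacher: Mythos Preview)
Your high-level architecture---extend to the whole space, apply a velocity-averaging/$TT^*$ lemma to the Duhamel representation, and pay for the boundary with a trace estimate---matches the paper's. But two concrete choices in your sketch diverge from what the paper actually does, and one of them is a real issue.

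\textbf{The extension.} You propose reflection across $\{x_3=0\}$ followed by a spatial cutoff ``vanishing in a thin layer near $\p\O$.'' The paper does neither. It first unfolds $\mathbb{T}^2\times\R_+$ to the open box $\tilde\O=(0,2\pi)^2\times(0,\infty)$, then extends $f_R$ outside $\tilde\O$ by \emph{free transport}: $f_E(t,x,v)=f_I(t+\e\tilde t_B,\tilde x_B,v)$, where $(\tilde t_B,\tilde x_B)$ is the backward hitting time/point on $\p\tilde\O$. This extension satisfies $\e\p_t f_E+v\cdot\nabla_x f_E=0$ distributionally, so the cut-off product $\bar f_R=\chi_2(x)\chi_3(v)(\mathbf 1_{\tilde\O}f_I+f_E)$ solves a clean whole-space transport equation with forcing $\bar g$ given explicitly in \eqref{eqtn:bar_f}. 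Here $\chi_2$ equals $1$ on a \emph{neighborhood} of $\tilde\O$ (not vanishing near $\p\O$), and the crucial cutoff is $\chi_3(v)$, which restricts to $|v_i|\ge 1/N$ so characteristics hit $\p\tilde\O$ transversally. Your reflection, by contrast, does not produce a distributional solution of the transport equation under the diffuse condition \eqref{bdry_fR}: the jump across $x_3=0$ is $(1-P_{\gamma_+})f_R-\e^{1/2}(1-P_{\gamma_+})f_2$, not zero, and a spatial cutoff that vanishes near $\p\O$ would simply throw away the region where you need the $L^p$ control. The paper instead pays for the extension through the term \eqref{g2}, which after the change of variables \eqref{bdry_map} becomes a $\p\tilde\O$ integral of $|f_R|^2$ that the trace theorem \eqref{trace10} (applied to $h=|f_R|^2$) converts back into interior norms plus $\|f_R(0)\|_{L^2_\gamma}$.

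\textbf{The equation form.} You move $\nu$ to the left and keep $K$ as a forcing to be decomposed. The paper keeps the full $L=\nu-K$ on the right: the averaging lemma is applied to the pure transport equation $\e\p_t\bar f_R+v\cdot\nabla_x\bar f_R=\bar g$, and $\frac{1}{\e\kappa}Lf_R=\frac{1}{\e\kappa}L(\mathbf I-\mathbf P)f_R$ sits inside $\bar g$ and is bounded directly in $L^2$ by $\frac{1}{\e\kappa}\|\sqrt\nu(\mathbf I-\mathbf P)f_R\|_{L^2}$---no Hilbert--Schmidt truncation of $K$ is needed. Your route is workable but adds an unnecessary layer.

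\textbf{The $(\kappa\e)^{2/(p-2)}$ term.} This does not come from a ``high-velocity-frequency split.'' In the paper it appears in the \emph{lower} bound \eqref{decomp1:L2L3_t}: when you peel off $(\mathbf I-\mathbf P)\p_t f_R$ from $\int \bar f_{R,t}\varphi_i\sqrt{\mu_0}\,dv$, you interpolate $\|(\mathbf I-\mathbf P)\p_t f_R\|_{L^2_tL^p_{x,v}}$ between $L^\infty_{x,v}$ and $L^2_{x,v}$ and apply Young with the splitting parameter $(\kappa\e)^{2/p}$; see \eqref{I-P_infty,2}. This is why the $\p_t$ estimate, unlike \eqref{average_3D}, needs an $L^2_tL^\infty_{x,v}$ input with the weaker weight $\mathfrak w_{\varrho'}$.
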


We prove the proposition by several steps. 

\
 
%We prove the proposition by Lemma \ref{lemma:average} and an extension in (\ref{ext:f+}). 

%\begin{proof}[\textbf{Proof of Proposition \ref{prop:average}}]
\textbf{Step 1: Extension.} We define a subset 
\Be
\tilde{\O} := (0,2\pi)\times (0,2\pi) \times (0,\infty) \subset \R^3.\label{def:tilde_O}
\Ee 
We regard $\tilde{\O}$ as an open subset but not a periodic domain as $\O$. Without loss of generality we may assume that $f_R(0,x,v)$ is defined in $\R^3 \times \R^3$ and $\|f_R(0)
\|_{L^p(\R^3 \times \R^3)} \lesssim \|f_R(0)
\|_{L^p(\tilde{\O} \times \R^3)}$ for all $1 \leq p \leq \infty$.  %We denote $\O^c= (\mathbb{T
%}^2 \times \R)\backslash \O$. 
Then we extend a solution for whole time $t \in \R$ as 
%an initial datum $f_R(0,x,v)$ in $(x,v) \in \R^3 \times \R^3$ as 
\Be\label{ext:f1}
f_I (t,x,v) : = \mathbf{1}_{t \geq 0 } f_R (t,x,v) + \mathbf{1}_{t \leq 0} \chi_1 (t) f_R (0, x%- \frac{t}{\e} v
,v ), 
\Ee
where a smooth non-negative function $\chi_1$ satisfies $\chi_1 (t)\equiv  1$ for $t \in [-1,0]$, $\chi_1 (t) \equiv 0$ for $t<-2$, and $0 \leq \frac{d}{dt}\chi_1 \leq 4$.

 A closure of $\tilde{\O}$ is given as $cl(\tilde{\O})= [0,2\pi]\times [0,2\pi] \times [0,\infty)$. Let us define $\tilde{t}_B(x,v) \in \R$ for $(x,v)  \in  (\R^3 \backslash \tilde{\O}) \times \R^3$. %If $x \in \p \tilde{\O}$ then we set $\tilde{t}_B(x,v)=0$. For $x \in \R^3 \backslash cl(\tilde{\O})= (\R^3 \backslash \tilde{O}) \backslash \p \tilde{\O}$ 
We consider $\tilde{B}(x,v):=\{s \in \R:x+ s v \in \R^3 \backslash cl(\tilde{\O})  \}$. Clearly if $\tilde{B}(x,v) \neq \emptyset$ then $ \{s>0\} \subset \tilde{B} (x,v)$ or $  \{s<0\} \subset \tilde{B} (x,v)$ exclusively. 

If $ \{s>0\} \subset \tilde{B} (x,v)$, let $ I_+$ be the largest interval such that $ \{s > 0 \} \subset I_+ \subset \tilde B(x,v) $. And if $ \{s < 0\} \subset \tilde{B} (x,v)$, let $ I_-$ be the largest interval  such that $ \{s > 0 \} \subset I_-  \subset \tilde B(x,v)$. We define
\Be\label{tB}
\tilde{t}_B(x,v)  = \begin{cases} 0 & \text{if} \ \  x \in \p\tilde{\O}, 
\\
\inf I_+
& \text{if} \ \ x \in  %(\R^3 \backslash \tilde{\O}) \backslash \p \tilde{\O}=
\R^3 \backslash cl(\tilde{\O})
\ \text{and} \ \tilde{B}(x,v) \neq \emptyset \ \text{and} \ 
 \{s>0\}\subset I_+ \subset \tilde{B}(x,v) 
,\\
\sup I_-
&  \text{if} \ \ x \in  %(\R^3 \backslash \tilde{\O}) \backslash \p \tilde{\O}=
\R^3 \backslash cl(\tilde{\O})
\ \text{and} \ \tilde{B}(x,v) \neq \emptyset \ \text{and} \ 
\{s<0\} \subset I_- \subset \tilde{B}(x,v) ,\\
  - \infty& \text{if} \  \ \tilde{B}(x,v) = \emptyset \ \text{and} \  x \notin \p\tilde{\O}. 
\end{cases}
\Ee
Using (\ref{tB}) we define 
\Be\label{ext:f2}
f_E (t,x,v): =  \mathbf{1}_{(x,v) \in (\R^3 \backslash \tilde{\O}) \times \R^3 } f_I (t+ \e \tilde{t}_B(x,v), \tilde{x}_B(x,v),v) \ \ \text{with} \ \ \tilde{x}_B(x,v):= x+\tilde{t}_B(x,v)v.
\Ee
 It is easy to see that $\e \p_t f_E + v\cdot \nabla_x f_E=0$ in the sense of distributions.

%\Be\label{ext_f0}
%\begin{split} 
%f_R(0,x,v): =  \mathbf{1}_{x_3\geq 0}   f_R(0,x_1- 2 \pi n_1,x_2- 2 \pi n_2,x_3,v) +  \mathbf{1}_{x_3 < 0}   f_R(0,x_1- 2 \pi n_1,x_2- 2 \pi n_2,-x_3,v) \\
%\text{for} \ (x_1,x_2) \in [2\pi n_1 - \pi, 2\pi n_1 + \pi] \times [2\pi n_2 - \pi, 2\pi n_2 + \pi]. 
%\end{split}
%\Ee

Next we define two cutoff functions. For any $N>0$ we define smooth non-negative functions as
\Be
\begin{split}
 &\chi_2 (x) \equiv 1 \ \text{for} \  x \in [-\pi, 3\pi] \times [-\pi, 3\pi] \times [-\pi, \infty), \\ 
& \chi_2 (x ) \equiv 0 \ \text{for} \  x \notin [-2\pi, 4\pi] \times [-2\pi, 4\pi] \times [-2\pi, \infty)   , \ \  
 %\chi_1(t) =e^{t}  \ \text{for} \ t \leq 0, \ \ 
|\nabla_{x}\chi_2  |  \leq 10   %\ \text{for} \ t \in [-\e ,0]
,\label{chi1}\\
\end{split}\Ee
\Be
\begin{split}
&\chi _3 (v) \equiv 1 \ \text{for}  \ 
|v| \leq N-1,  \ \text{and} \ |v_i|\geq 2/N  \ \text{for all } i=1,2,3,\\
&\chi_3  (v) \equiv 0 \ \text{for} \ |v| \geq  N  \ \text{or} \ |v_i| \leq 1/N \ \text{for any } i=1,2,3, \ \ |\nabla_v \chi _3  | \leq 10  %\ \text{for} \ v \in \R^3
.\label{chi_v}
\end{split}\Ee
We denote 
\Be\label{def:UV}
U : = [-2\pi, 4\pi] \times [-2\pi, 4\pi] \times [-2\pi, \infty), \ \   V: = 
\{ v\in \R^3: |v| \leq N \} \cap \bigcap_{i=1,2,3} \{ v\in \R^3: |v_i| \geq 1/N  \} 
\Ee

We define an extension of cut-offed solutions   %for $(t,x,v) \in [0,T] \times (\mathbb{T}^2 \times \R) \times \R^3$  %$\mathbf{1}_{|v_3|\geq 1/N}f_R(t,x,v)$ as 
\Be\label{ext:f+}
%\begin{split}
 \bar{f}_{R}(t,x,v) :=%&  
 \chi_2 (x) \chi_3 (v)  \big\{
 % \chi_1(t) 
 %\Big\{
  \mathbf{1}_{\tilde{\O}}(x)% \mathbf{1}_{t\geq 0} 
 f_I(t,x,v)
 +  f_E (t,x,v)\big\}
 \  \ \text{for} \ (t,x,v) \in (-\infty,T] \times \R^3 \times \R^3
 .
 % +  \mathbf{1}_{\O}(x) \mathbf{1}_{t<0}
  %e^{- \frac{\nu t}{\kappa \e^2}}
 % f_R(0,x- \frac{t}{\e} v,v)
%  \\
% &  
% + \mathbf{1}_{\O^c}(x)   \mathbf{1}_{t- \e \frac{x_3}{v_3}
% \geq 0}
%\mathbf{1}_{v_3<0}
% e^{-(t-\tb(x,v)% \e \frac{x_3}{v_3}
% )}
 %e^{- \frac{\nu }{\kappa \e^2 } \frac{ \e x_3}{v_3}}
% f_R(t-\e\frac{x_3}{v_3}%\tb(x,v) 
% ,  x-  \frac{x_3}{v_3} v
% , v)
%\\
 %& \ \ \ \ \ \ \ \ \ 
%  +\mathbf{1}_{\O^c}(x)   \big[ \mathbf{1}_{t-  \e\frac{x_3}{v_3}
% < 0}  \mathbf{1}_{v_3<0}
 %e^{ t-  \e \frac{x_3}{v_3}
 % }
% f_R(0, x- \frac{t}{\e} v %x-    \frac{x_3}{v_3} v
% , v) 
% + 
  %\mathbf{1}_{t- \tb(x,v) %\e\frac{x_3}{v_3}
%< 0} 
 %\mathbf{1}_{\O^c}(x) 
%   \mathbf{1}_{v_3>0}
 %e^{ t- \tb(x,v) %\e \frac{x_3}{v_3}
 % }
%\big] e^{- \frac{\nu t}{\kappa \e^2}} f_R(0, x- \frac{t}{\e} v %x-    \frac{x_3}{v_3} v
 %, v)  \Big\}
 % \end{split}
\Ee
%%%%%%%%%%%%
%%%%%%%%%%%%
%%%%%%%%%%%%
\hide
\Be\label{ext:f+}
 \bar{f}_R(t,x,v) = 
 \begin{cases}
 \mathbf{1}_{|v_3|\geq 1/N}
 \big[\mathbf{1}_{t\geq 0} e^{-t}f_R(t,x,v) 
+\mathbf{1}_{t<0} e^tf_R(0,x- \frac{t}{\e} v,v)
 \big]& \text{if } x_3\geq 0,\\ 
 \mathbf{1}_{|v_3|\geq 1/N}
 \big[ 
 \mathbf{1}_{t- \tb(x,v)%\e \frac{x_3}{v_3}
 \geq 0}
 e^{-(t-\tb(x,v)% \e \frac{x_3}{v_3}
 )}
 f_R(t-\tb(x,v)% \e\frac{x_3}{v_3}
 , \xb(x,v)%x-  \frac{x_3}{v_3} v
 , v)
 +  \mathbf{1}_{t- \tb(x,v) %\e\frac{x_3}{v_3}
 < 0}
 e^{ t- \tb(x,v) %\e \frac{x_3}{v_3}
  }
 f_R(0, 
 x- \frac{t}{\e} v
 %x-    \frac{x_3}{v_3} v
 , v)
 \big]
 & \text{if } x_3<0.%, |v_3|\geq 1/N\\
 %  & \text{if } x_3<0, v_3\leq -1/N\\
% 0  & \text{if } x_3<0, |v_3|<1/N
 \end{cases}
\Ee\unhide
%Then we extend (\ref{ext:f+}) in a time interval $[-T,T]$ by the even extension in $t$ at $t=0$ as 
%\Be\label{ext:f}
%\bar{f}_R (t,x,v):=  \bar{f}_{R,+} (t,x,v) +  \bar{f}_{R,+} (-t,x,v) \ \ \text{for} \  (t,x,v)\in [-T,T] \times (\mathbb{T}^2 \times \R) \times \R^3. 
%\Ee 
\hide

\Be
\begin{split}
f_1(t,x,v): =  \mathbf{1}_{(x_1,x_2) \in [-\pi, \pi]^2}
\Big\{& % \mathbf{1}_{t\geq 0} 
\mathbf{1}_{x_3\geq 0 } f_R(t,x,v)
 % +  \mathbf{1}_{\O}(x) \mathbf{1}_{t<0}
  %e^{- \frac{\nu t}{\kappa \e^2}}
 % f_R(0,x- \frac{t}{\e} v,v)
%  \\
% &  
 + \mathbf{1} _{x_3<0}   \mathbf{1}_{t- \e \frac{x_3}{v_3}
 \geq 0}
\mathbf{1}_{v_3<0}
% e^{-(t-\tb(x,v)% \e \frac{x_3}{v_3}
% )}
 %e^{- \frac{\nu }{\kappa \e^2 } \frac{ \e x_3}{v_3}}
 f_R(t-\e\frac{x_3}{v_3}%\tb(x,v) 
 ,  x-  \frac{x_3}{v_3} v
 , v)
\\
 & 
  +\mathbf{1} _{x_3<0}   \big[ \mathbf{1}_{t-  \e\frac{x_3}{v_3}
 < 0}  \mathbf{1}_{v_3<0}
 %e^{ t-  \e \frac{x_3}{v_3}
 % }
% f_R(0, x- \frac{t}{\e} v %x-    \frac{x_3}{v_3} v
% , v) 
 + 
  %\mathbf{1}_{t- \tb(x,v) %\e\frac{x_3}{v_3}
%< 0} 
 %\mathbf{1}_{\O^c}(x) 
   \mathbf{1}_{v_3>0}
 %e^{ t- \tb(x,v) %\e \frac{x_3}{v_3}
 % }
\big] %e^{- \frac{\nu t}{\kappa \e^2}} 
f_R(0, x- \frac{t}{\e} v %x-    \frac{x_3}{v_3} v
 , v) \Big\}
  \end{split}
\Ee
\Be
\begin{split}
 f_2 (t,x,v ): =    \mathbf{1}_{(x_1,x_2) \notin [-\pi, \pi]^2}\Big\{&
 \end{split}
\Ee
\Be
\begin{split}
\end{split}
\Ee\unhide
%%%%%%%%%%%%
%%%%%%%%%%%%
%%%%%%%%%%%% 
%$\{x_3=0\}$ %, %$\{t=0\}$, 
%and $\{t-\e \frac{x_3}{v_3} =0\}$, and hence continuous in $(0,T) \times (\mathbb{T}^2 \times \R) \times \R^3$. 
We note that in the sense of distributions $\bar{f}_R$ solves%is a weak solution in $\R \times (\mathbb{T}^2 \times \R) \times \R^3$ to an equation 
\Be\label{eqtn:bar_f}
\begin{split}
& \e \p_t \bar{f}_R + v\cdot \nabla_x \bar{f}_R % + \frac{\nu }{\kappa \e} \bar{f}_R
%=& \  + \mathbf{1}_{\O}(x) \chi_1(t)\chi_2(v) \big[
%\mathbf{1}_{t\geq 0}  (\e \p_t   + v\cdot \nabla_x  )f_R(t,x,v) 
% + \mathbf{1}_{t<0}  (\e \p_t   + v\cdot \nabla_x  )f_R(0,x- \frac{t}{\e} v,v)
% \big]\notag
%\\
%&+ \mathbf{1}_{\O^c}(x)\chi_1(t)\chi_2(v)\big[  
% \mathbf{1}_{t- \tb(x,v)%\e \frac{x_3}{v_3}
% \geq 0}
% e^{-(t-\tb(x,v)% \e \frac{x_3}{v_3}
% )}
%(\e \p_t   + v\cdot \nabla_x  ) f_R(t-\tb(x,v)% \e\frac{x_3}{v_3}
 %, \xb(x,v)%x-  \frac{x_3}{v_3} v
% , v)
% + \mathbf{1}_{t- \tb(x,v) %\e\frac{x_3}{v_3}
 %< 0}
 %e^{ t- \tb(x,v) %\e \frac{x_3}{v_3}
 % }
%(\e \p_t   + v\cdot \nabla_x  ) f_R(0,  x- \frac{t}{\e} v %x-    \frac{x_3}{v_3} v
 %, v)
%\big] 
 %
 %\mathbf{1}_{x_3\geq 0} \e \chi_1^\prime (t) \chi_2(v) 
  %\big[\mathbf{1}_{t\geq 0}  
  %f_R(t,x,v) 
 %+\mathbf{1}_{t<0}  f_R(0,x- \frac{t}{\e} v,v)
  %\big]\notag\\
  %&+ 
 % \mathbf{1}_{x_3< 0} \e \chi_1^\prime (t) \chi_2(v)  \big[ 
 % \mathbf{1}_{t- \tb(x,v)%\e \frac{x_3}{v_3}
  %\geq 0}
  %e^{-(t-\tb(x,v)% \e \frac{x_3}{v_3}
 % )}
 % f_R(t-\tb(x,v)% \e\frac{x_3}{v_3}
 % , \xb(x,v)%x-  \frac{x_3}{v_3} v
  %, v)
 % +  \mathbf{1}_{t- \tb(x,v) %\e\frac{x_3}{v_3}
  %< 0}
  %e^{ t- \tb(x,v) %\e \frac{x_3}{v_3}
 %  }
  %f_R(0, \xb(x,v)%x-    \frac{x_3}{v_3} v
  %, v)
 % \big]
% \notag
%\\
=      \bar{g}%:=    \frac{v\cdot \nabla_x \chi_2  }{\chi_2  } \bar{f}_R + \mathbf{1}_{t \geq 0} \mathbf{1}_{\tilde{\O}} (x) \chi_2(x)  \chi_3(v) 
%[\e \p_t + v\cdot \nabla_x] f_R
 \ \ \text{in} \ (-\infty,T] \times \R^3 \times \R^3,\\
 & \bar{g}:=     \frac{v\cdot \nabla_x \chi_2  }{\chi_2  } \bar{f}_R + \mathbf{1}_{t \geq 0} \mathbf{1}_{\tilde{\O}} (x) \chi_2(x)  \chi_3(v) 
[\e \p_t + v\cdot \nabla_x] f_R\\
&  \ \ \ \ \ \ +\mathbf{1}_{t \leq 0} \{
\e\p_t \chi_1(t) f_R (0,x,v) + \chi_1 (t) v\cdot \nabla_x f_R (0,x,v) 
\}
  %,
%\\
%\bar{g}(t,x,v):=&      \mathbf{1}_{t\geq 0}   \mathbf{1}_{\O}(x)\chi_1 (x) \chi_2 (v) %\big[
%\big[\e \p_t   + v\cdot \nabla_x+ \frac{\nu }{\kappa \e}  \big]f_R(t,x,v)
% + \frac{ v\cdot \nabla_x \chi_1 (x) }{\chi_1 (x)} \bar{f}_R (t,x,v )
% :=     \mathbf{1}_{t\geq 0} \chi_1(t)  \mathbf{1}_{\O}(x) \chi_2(v) %\big[
% (\e \p_t   + v\cdot \nabla_x  )f_R(t,x,v) +  [\e {\chi_1^\prime(t)} / {\chi_1 (t)}] \bar{f}_R (t,x,v)
 %+ \mathbf{1}_{t<0}  (\e \p_t   + v\cdot \nabla_x  )f_R(0,x- \frac{t}{\e} v,v)
 %\big]
%\mathbf{1}_{t \geq 0} \mathbf{1}_{x_3\geq 0}\chi_1(t)\chi_2(v) 
% (\e \p_t   + v\cdot \nabla_x   )f_R(t,x,v)
 % 
  \end{split}
\Ee\hide
with
\begin{align}
\bar{g} 
:=& \   \mathbf{1}_{t\geq 0}   \mathbf{1}_{\O}(x) \chi_1 (x) \chi_2 (v) %\big[
 (\e \p_t   + v\cdot \nabla_x+ \frac{\nu }{\kappa \e}  )f_R(t,x,v).
% +   \e  \chi_1^\prime(t)  \chi_2 (v)  \mathbf{1}_{\O}(x) \mathbf{1}_{t\geq 0}  f_R(t,x,v)
\label{etqn_barf1} 
\end{align}\unhide
\hide
  \\
  &+ %\{
  %   \e  \chi_1^\prime(t)  
     \mathbf{1}_{\O}(x)
     \chi%_2
      (v)
       % + \frac{\nu}{\kappa \e} \chi_1(t) \}
%   \chi_2 (v)   
\big[
  \mathbf{1}_{t\geq 0} 
  f_R(t,x,v)
 + 
 \mathbf{1}_{t<0}
e^{- \frac{\nu t}{\kappa \e^2}} 
 f_R(0,x- \frac{t}{\e} v,v) \big]
  \label{etqn_barf2}
    \\ 
  &+% \{  
    \e  \chi_1^\prime(t)   %+ \frac{\nu}{\kappa \e} \chi_1(t)\}  
   \chi_2 (v)   \mathbf{1}_{t-  \e \frac{x_3}{v_3}
 \geq 0}\mathbf{1}_{\O^c}(x)   
\mathbf{1}_{v_3<0}
% e^{-(t-\tb(x,v)% \e \frac{x_3}{v_3}
% )}
 f_R(t-  \e\frac{x_3}{v_3}
 ,  x-  \frac{x_3}{v_3} v
 , v)\label{etqn_barf3}
   \\
    &+  % \{ 
       \e  \chi_1^\prime(t) %  + \frac{\nu}{\kappa \e} \chi_1(t) \}
         \chi_2 (v)   \mathbf{1}_{\O^c}(x)  \big[ \mathbf{1}_{t-  \e\frac{x_3}{v_3}
 < 0}  \mathbf{1}_{v_3<0}
 %e^{ t- \tb(x,v) %\e \frac{x_3}{v_3}
 % }
% f_R(0, x- \frac{t}{\e} v %x-    \frac{x_3}{v_3} v
% , v) 
 + 
  %\mathbf{1}_{t- \tb(x,v) %\e\frac{x_3}{v_3}
%< 0} 
 %\mathbf{1}_{\O^c}(x) 
   \mathbf{1}_{v_3>0}
 %e^{ t- \tb(x,v) %\e \frac{x_3}{v_3}
 % }
\big] f_R(0, x- \frac{t}{\e} v %x-    \frac{x_3}{v_3} v
 , v).  \label{etqn_barf4} 
 %  \\
%\\
%=& \Big(
% -\frac{1}{\e  \kappa}Lf_R+  \frac{\e}{\kappa} \Gamma({f_2}, f_R)
%+    \frac{   \delta }{ \kappa}\Gamma(f_R, f_R)
%-  \frac{( \e  \p_t + 
%  v\cdot \nabla_x) \sqrt{\mu}}{\sqrt{\mu}} f_{R}
%  + 
%\e(\mathbf{I}- \mathbf{P})\mathfrak{R}_1 +\e \mathfrak{R}_2
%\%Big)
\unhide
Here we have used the fact that $\bar{f}_R$ in (\ref{eqtn:bar_f}) is continuous along the characteristics across $\p \tilde{\O}$ and $\{t=0\}$. %in the support of $\chi_2 (x) \chi_3 (v)$. We also have used the fact that $[\e \p_t   + v\cdot \nabla_x   ]f_R(0,x- \frac{t}{\e} v,v)=0$ and $[\e \p_t   + v\cdot \nabla_x   ]f_1 (t+ \e \tilde{t}_B(x,v), x+\tilde{t}_B(x,v)v,v)=0$ in the sense of distribution.  
We derive that, using (\ref{eqtn:bar_f}),
\Be\label{f_R:duhamel}
\bar{f}_R(t,x,v) =
%e^{-   \frac{C_\nu }{\kappa \e^2 }   t} 
%|f_R (0,x- \frac{t}{\e} v , v)|
%+
 \frac{1}{\e} \int^t_{-\infty} 
%e^{-  \frac{C_\nu }{\kappa \e^2 } (t-s) }
 \bar{g} (s, x- \frac{t-s}{\e} v, v)  \dd s \ \ \text{for} \ (t,x,v) \in 
(-\infty,T] \times \R^3%(\mathbb{T}^2 \times \R) 
 \times \R^3.
\Ee

%%%%%%%%%%
%%%%%%%%%%
%%%%%%%%%%
\hide
$\bar{f}_R$ is continuous across $\{x_3=0\}$, $\{t=0\}$, and $\{t-\tb(x,v)=t- \e  \frac{x_3}{v_3}=0\}$, and the fact of $(\e \p_t   + v\cdot \nabla_x   )f_R(t-\e \frac{x_3}{v_3}, x- \frac{x_3}{v_3} v, v)=0$,  %(\e \p_t   + v\cdot \nabla_x   ) f_R(0, x-    \frac{x_3}{v_3} v, v)=0$, and
$(\e \p_t   + v\cdot \nabla_x  )f_R(0,x- \frac{t}{\e} v,v)=0$, and $(\e \p_t + v\cdot \nabla_x) (t-\e \frac{x_3}{v_3})=0$ in the sense of distributions.
\unhide
%%%%%%%%%%
%%%%%%%%%%
%%%%%%%%%%

Recall ${\varphi}_i \in \{{\varphi}_0, \cdots {\varphi}_4\}$ in (\ref{basis}). From (\ref{ext:f+}) we note that   
\Be\begin{split}
 &
\left\|\int_{\R^3} \bar{f}_R(t,x,v) {\varphi}_i (v) \sqrt{\mu(v)}    \dd v \right\|_{L^2_t ((0,T); L^p_x ( \tilde{\O}))} \\ %\notag\\
&= %& \ 
 \left\|\int_{\R^3}
\chi_2 (x) \chi_3 (v)
 f_R(t,x,v)  \tilde{\varphi}_i (v) \sqrt{\mu(v)}    \dd v   \right\|_{L^2_t ((0,T); L^p_x (\tilde{\O}))}\label{decomp1:L2L3}
 \end{split}\Ee
 %%%%%%%%%%%
 %%%%%%%%%%%
 %%%%%%%%%%%
 \hide
 \begin{align}
 \\
&+
 \left\|\int_{\R^3} 
\chi_2 (x) \chi_3 (v)  \mathbf{1}_{t-\e \frac{x_3}{v_3}\geq 0}
 \mathbf{1}_{v_3<0}
 e^{- \frac{\nu }{\kappa \e^2 } \frac{ \e x_3}{v_3}}
 f_R(t-\e\frac{x_3}{v_3}%\tb(x,v) 
 ,  x-  \frac{x_3}{v_3} v
 , v)
%  \chi_1(t) 
%  \chi_2(v) 
%  \mathbf{1}_{t- \tb(x,v)%\e \frac{x_3}{v_3}
 % \geq 0}
% e^{-(t-\tb(x,v)% \e \frac{x_3}{v_3}
% )}
%  f_R(t-\tb(x,v)% \e\frac{x_3}{v_3}
%  , \xb(x,v)%x-  \frac{x_3}{v_3} v
%  , v)  
 \tilde{\varphi}_i (v) \sqrt{\mu_0(v)}    \dd v
 \right\|_{L^2_t ((0,T); L^p_x (\O^c))}\label{decomp2:L2L3}\\  
 &+
 \left\|\int_{\R^3}  
\chi_2 (x) \chi_3 (v) \big[ \mathbf{1}_{t-  \e\frac{x_3}{v_3}
 < 0}  \mathbf{1}_{v_3<0}
 %e^{ t-  \e \frac{x_3}{v_3}
 % }
% f_R(0, x- \frac{t}{\e} v %x-    \frac{x_3}{v_3} v
% , v) 
 + 
  %\mathbf{1}_{t- \tb(x,v) %\e\frac{x_3}{v_3}
%< 0} 
 %\mathbf{1}_{\O^c}(x) 
   \mathbf{1}_{v_3>0}
 %e^{ t- \tb(x,v) %\e \frac{x_3}{v_3}
 % }
\big] e^{- \frac{\nu t}{\kappa \e^2}} f_R(0, x- \frac{t}{\e} v %x-    \frac{x_3}{v_3} v
 , v)  
  \tilde{\varphi}_i (v) \sqrt{\mu(v)}    \dd v
 \right\|_{L^2_t ((0,T); L^p_x (\O^c))}\label{decomp3:L2L3}.%\\ 
 %&+
% \left\|\int_{\R^3}  \chi_1(t) 
% \chi_2(v)  
 %\mathbf{1}_{t-\tb(x,v)<0}
% e^{-(t-\tb(x,v)% \e \frac{x_3}{v_3}
% )}
% f_R(0% \e\frac{x_3}{v_3}
% , \xb(x,v)
 %x-  \frac{x_3}{v_3} v
% , v)  \tilde{\varphi}_i (v) \sqrt{\mu_0(v)}    \dd v
% \right\|_{L^2_t(\R ) L^3_x (\O^c)}. \label{decomp4:L2L3}
\end{align} 
\unhide
%%%%%%%%%%%%%%
%%%%%%%%%%%%%%
%%%%%%%%%%%%%%
From (\ref{P}), we decompose 
%$\Big\{ a + b \cdot v + c \frac{|v|^2-3}{\sqrt{6}} \Big\} \sqrt{\mu_0}$
\Be \label{est:decomp1:L2L3}
\begin{split}
(\ref{decomp1:L2L3})\geq &   \  \Big\| 
\sum_j     \chi_2 (x)  {P}_jf_R(t,x)
\int_{\R^3}  \chi_3 (v)    {\varphi}_j (v)    {\varphi} _i (v)  \mu  (v)    \dd v \Big\|_{L^2_t ((0,T); L^p_x (\tilde{\O}))}
\\& -  \left\|\int_{\R^3}
 \chi_3 (v) 
(\mathbf{I} - \mathbf{{P}}) f_R(t,x,v)  {\varphi}_i (v) \sqrt{\mu(v)}    \dd v   \right\|_{L^2_t ((0,T); L^p_x (\tilde{\O}))}
\\ \ge &   \big\|    {P} f_R
 \big\|_{L^2_t((0,T);L^p_x(\tilde{\O}))} -C_{T,N}  \| (\mathbf{I} - \mathbf{P}) f_R \|_{L^p((0, T) \times\tilde{\O} \times \R^3)}
\\ \ge & \big\|    {P} f_R
 \big\|_{L^2_t((0,T);L^p_x(\tilde{\O}))} -C_{T,N}   \| \mathfrak{w}_{\varrho, \ss } f_R (t) \|_{ L^\infty((0,T) \times \tilde{\O} \times \R^3)}^{\frac{p-2}{p}} \| (\mathbf{I} - \mathbf{P}) f_R \|_{L^2((0, T) \times \tilde{\O} \times \R^3)}^{\frac{2}{p}} ,
\end{split}\Ee
%We consider the right hand side of above terms. From (\ref{diff:P-tP}), $\int \tilde{\varphi}_i\tilde{\varphi}_j \mu_0 = \delta_{ij}$, and (\ref{chi_v}), the first term can be bounded below by $\big(1-O( \e) \|u\|_\infty- O(\frac{1}{N})\big)
% \big\|   \chi_2   {P} f_R
% \big\|_{L^2_t ((0,T); L^p_x (\tilde{\O}))}$. 
 where for the second inequality we use $L^2_t (0,T)  \subset L^p_t( 0,T )$, and $L^1(\{|v|\leq N\}) \subset L^p(\{|v|\leq N\})$.
% $C_{T,N} \| (\mathbf{I} - \mathbf{P}) f_R \|_{L^p((0, T) \times \tilde{\O} \times \R^3)}+ \big( O( \e) \|u\|_\infty+ O(\frac{1}{N})\big)
% \big\|   {P} f_R
% \big\|_{L^2_t( (0,T)  ;L^p_x(\tilde{\O}))}$. Hence we derive 
%\Be
%\begin{split} 
%&(\ref{decomp1:L2L3}) \\
%\geq  & \  \big(1- O(\e )\|u\|_\infty- O(\frac{1}{N})\big)
% \big\|   {P} f_R
% \big\|_{L^2_t( (0,T)  ;L^p_x(\tilde{\O}))} - C_{T,N} \| (\mathbf{I} - \mathbf{P}) f_R \|_{L^p((0, T) \times\tilde{\O} \times \R^3)}
%  \\
% \geq  &  \ \big(1- O(\e) \|u\|_\infty- O(\frac{1}{N})\big)  \big\|    {P} f_R
% \big\|_{L^2_t((0,T);L^p_x(\tilde{\O}))}\\
% &
% -C_{T,N}   \| \mathfrak{w}_{\varrho, \ss } f_R (t) \|_{ L^\infty((0,T) \times \tilde{\O} \times \R^3)}^{\frac{p-2}{p}} \| (\mathbf{I} - \mathbf{P}) f_R \|_{L^2((0, T) \times \tilde{\O} \times \R^3)}^{\frac{2}{p}}
%.\label{est:decomp1:L2L3}
%\end{split}
%\Ee 

\

\textbf{Step 2: Average lemma}. Recall ${\varphi}_i \in \{{\varphi}_0, \cdots {\varphi}_4\}$ in (\ref{basis}). We choose ${\varphi}(v)$ such that 
\Be\label{tilde_varphi}
\begin{split}
 \chi_3 (v) 
 |{\varphi}_i(v)|  \sqrt{\mu_0 (v)}  \leq {\varphi}(v) , \ \ {\varphi}(v) \in  C^\infty_c (\R^3) \\
 \ \ \text{and} \ \ {\varphi}(v)\equiv 0 \ \  \text{for} \ \ |v|\geq N \ \ \text{or} \ \ |v_i| \leq 1/N \ \text{for any } i=1,2,3. 
\end{split}\Ee
%%%%%%%%%%%%%%%%%%%%%
%%%%%%%%%%%%%%%%%%%%%
%%%%%%%%%%%%%%%%%%%%%
\hide
 $
 \bar{f}_R(t,x,v)=
%e^{-   \frac{C_\nu }{\kappa \e^2 }   t} 
%|f_R (0,x- \frac{t}{\e} v , v)|
%+
 \frac{1}{\e} \int^t_{-\infty} 
%e^{-  \frac{C_\nu }{\kappa \e^2 } (t-s) }
 \bar{g} (s, x- \frac{t-s}{\e} v, v)  \dd s$ for $(t,x,v) \in 
 [0,T] \times \tilde{\O}%(\mathbb{T}^2 \times \R) 
 \times \R^3.$ If $|v_i| \leq 1/N$ for $i=1$ or $i=2$ then $\bar{g} (s, x- \frac{t-s}{\e} v, v)=0$ from (\ref{chi_v}) and (\ref{eqtn:bar_f}). Now we consider the case of $|v_i| > 1/N$ for $i=1,2$. For $(t,x) \in [0,T] \times \tilde{\O}$ then $x_i- \frac{t-s}{\e} v_i\notin [-2\pi, 4\pi]$ if $s<t- 10N \e $. On the other hand from (\ref{chi1}) and (\ref{eqtn:bar_f}) we derive that $\bar{g} (s, x- \frac{t-s}{\e} v, v)=0$ if $x_i- \frac{t-s}{\e} v_i \notin [-2\pi, 4\pi]$ for either $i=1$ or $i=2$. Therefore 
\Be\label{f_R:duhamel}
 \bar{f}_R(t,x,v) =
%e^{-   \frac{C_\nu }{\kappa \e^2 }   t} 
%|f_R (0,x- \frac{t}{\e} v , v)|
%+
 \frac{1}{\e} \int^t_{ t-10N \e } 
%e^{-  \frac{C_\nu }{\kappa \e^2 } (t-s) }
  \bar{g} (s, x- \frac{t-s}{\e} v, v)   \dd s \  \  \text{for} \ 
 (t,x,v) \in 
(-\infty,T] \times \tilde{\O}%(\mathbb{T}^2 \times \R) 
 \times \R^3.
\Ee
\unhide
%%%%%%%%%%%%%%%%%%%%%
%%%%%%%%%%%%%%%%%%%%%
%%%%%%%%%%%%%%%%%%%%%

\begin{lemma}\label{lemma:average}
%Let $0<C_0$, $1\ll N$, and $0 \leq T <\infty$.  Suppose $\tilde{\varphi} \in C^\infty_c (\R^3)$ ($\tilde{\varphi}(v) \equiv 0$ for $|v|\geq N$) and $\tilde{\varphi}= \tilde{\varphi}(v)$ is non-increasing in $|v|$. 
We define 
\Be\label{def:S}
S(\bar{g})(t,x):=
\frac{1}{\e} \int^t_{-\infty%- 10N \e 
}\int_{\R^3}
%\mathbf{1}_{t-s< \frac{\e}{N^2}}
% e^{- \frac{C_0(t-s)}{\kappa \e^2 } } 
| \bar{g}(s, x- \frac{t-s}{\e} v, v)  | {\varphi} (v) \dd v \dd s \ \  \text{for} \ (t,x) \in(-\infty , T ] \times 
\R^3. 
\Ee
Then, for $p<3$ and $1\ll N$,
\Be\label{bound:S}
\| S(\bar{g}) \|_{L^2_t((0,T);  L^p_x(\mathbb{T}^2 \times \R))} \lesssim_{N} %(\kappa\e)^{\frac{3}{p}-\frac{1}{2}}
 \| \mathbf{1}_{(t,x,v) \in \mathfrak{D}_T}   \bar{g}\|_{L^2  ((0,T)\times (\mathbb{T}^2 \times \R) \times \{|v| \leq N \})},%+  
%\frac{e^{- \frac{1}{\kappa \e}}}{\e} \| g \|_{L^2_t((-\e,T);  L^p_x(\mathbb{T}^2 \times \R))}. 
% \ \ \text{for} \ 2 \leq p<3.
\Ee
 where the bound (\ref{bound:S}) only depends on $N$ but can be independent on $p<3$. 
\end{lemma}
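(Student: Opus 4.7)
The plan is to prove Lemma \ref{lemma:average} via the standard $TT^*$--duality argument for velocity averaging, following \cite{EGKM3,GV,JV}. The first observation is that $S(\bar g) = T(|\bar g|)$ for the linear operator
\[
Th(t,x) \;:=\; \frac{1}{\e}\int^t_{-\infty}\!\int_{\R^3} h\bigl(s,\, x - \tfrac{t-s}{\e} v,\, v\bigr)\,\varphi(v)\,\dd v\,\dd s,
\]
so it suffices to establish that $T$ is bounded from $L^2_{s,y,v}$ (restricted to the relevant compact support coming from $\bar g$) into $L^2_t((0,T);L^p_x(\mathbb T^2\times\R))$, with constant depending only on $N$. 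By duality against $\psi \in L^2_t L^{p'}_x$ with $p'=p/(p-1)$, this is equivalent to $\|T^*\psi\|_{L^2_{s,y,v}} \lesssim_N \|\psi\|_{L^2_t L^{p'}_x}$, where a Fubini computation together with the change of variable $y \mapsto y - \tfrac{t-s}{\e}v$ yields
\[
T^*\psi(s,y,v) \;=\; \frac{\varphi(v)}{\e}\int_s^\infty \psi\bigl(t, y + \tfrac{t-s}{\e}v\bigr)\,\dd t.
\]

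Next I apply the identity $\|T^*\psi\|_{L^2}^2 = \langle TT^*\psi,\psi\rangle \le \|TT^*\psi\|_{L^2_t L^p_x}\|\psi\|_{L^2_t L^{p'}_x}$, which reduces the task to the convolution-type estimate $\|TT^*\psi\|_{L^2_t L^p_x}\lesssim_N \|\psi\|_{L^2_t L^{p'}_x}$. Substituting the formula for $T^*$ into $T$, the argument of $\psi$ simplifies to $(\tau, x - \tfrac{t-\tau}{\e}v)$, and the inner $s$--integration is harmless because the time supports of $\bar g$ (built from $\chi_1$ and $\mathbf{1}_{t\ge 0}f_R$) and of $\psi$ (restricted to $(0,T)$) are finite. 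Then the spatial change of variables $w = \tfrac{t-\tau}{\e}v$ transforms the delta-like constraint $x - y = \tfrac{t-\tau}{\e}v$ into a kernel
\[
K_{t,\tau}(x-y) \;\sim_N\; \frac{\e^3}{|t-\tau|^{3}}\,\mathbf 1_{\{|x-y|\lesssim N |t-\tau|/\e\}}
\]
supported where $w$ lies in the compact set $V$, which is bounded away from $v=0$ by construction of $\varphi$. A Hardy--Littlewood--Sobolev convolution estimate in $x$ on a fixed fundamental domain of $\mathbb T^2\times\R$ (which accommodates periodicity without issue, since $\varphi$ is compactly supported in $v$) then yields the $L^{p'}\to L^p$ gain in the spatial variable precisely when $\frac{3}{p'}-\frac{3}{p}$ lies in the HLS range, i.e.\ $p<3$. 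Combining this with Schur/Young in the time variables $(t,\tau)$ closes the estimate.

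The main delicate point will be the bookkeeping of the $s$--integration in the $TT^*$ kernel: a priori $\int^{\min(t,\tau)}_{-\infty}\dd s$ is divergent, and one must rigorously exploit the compact supports of $\chi_2$ and $\chi_3$ (which confine $\bar g$ and thus cap $t-s$ by $O(N)$) together with the finite-time support of $\psi$ to truncate this integral; this is exactly the role of the indicator $\mathbf{1}_{(t,x,v)\in\mathfrak D_T}$ on the right-hand side of \eqref{bound:S}. A secondary technical point is the uniformity in $p$ for $2<p<3$: the HLS constant depends continuously on $p$ on any closed sub-interval of $(2,3)$ and never touches the endpoint $p=3$, so a single $N$--dependent constant works throughout. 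Once these two points are addressed, the lemma follows by tracing back through the duality reductions.
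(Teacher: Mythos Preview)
Your proposal is correct and follows essentially the same $TT^*$--duality approach that the paper invokes: the paper does not give its own proof but refers to Lemma~6 of \cite{JK}, which in turn relies on the $TT^*$--method developed in \cite{EGKM3,GV,JV}. You have correctly identified the two technical points (the truncation of the $s$--integral via the compact supports of $\chi_2,\chi_3$ and $\mathbf 1_{\mathfrak D_T}$, and the HLS exponent condition forcing $p<3$), which are precisely what the cited argument handles.
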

We remark that from (\ref{f_R:duhamel}) and (\ref{def:S}) $ \int_{\R^3}\bar{f}_R(t,x,v) {\varphi}_i (v) \dd v \leq S(\bar{g})(t,x)$. For the proof we refer to Lemma 6 in \cite{JK}. 
\hide
Here we have used $\int^T_0 e^{- \frac{C_0 t}{\kappa \e^2 } }\dd t \leq \kappa \e^2 \{1-  e^{- \frac{C_0 T}{\kappa \e^2 } }\} \leq \kappa \e^2$ and 
\Be\begin{split}
\int_0^{T  } s^{\frac{6}{p}-3} e^{- \frac{C_0 s}{\kappa \e^2 } }\dd s
&= \int_0^{T } \frac{1}{\frac{6}{p}-2}\frac{d}{ds} \big(s^{\frac{6}{p}-2}\big) e^{- \frac{C_0 s}{\kappa \e^2 } }\dd s\\
&=\frac{1}{\frac{6}{p}-2}  T ^{\frac{6}{p}-2} e^{- \frac{C_0}{\kappa\e^2}  T }
+ \frac{1}{\frac{6}{p}-2} \int^{T }_0 (\kappa \e^2)^{\frac{6}{p}-2}
\left(\frac{s}{\kappa \e^2}\right)^{\frac{6}{p}-2} \frac{C_0}{\kappa \e^2} e^{- \frac{C_0 s}{\kappa \e^2 } }\dd s\\
& \lesssim  (\kappa \e^2)^{\frac{6}{p}-2} \ \ \text{for} \ \ p<3. \notag
\end{split}\Ee\unhide

%\

%\textbf{Step 3: Applying Lemma \ref{lemma:average}.}  %\smallskip 
%\textit{Step 2.  } 
Now we apply Lemma \ref{lemma:average} to (\ref{f_R:duhamel}) and derive that 
\begin{align}
& \left\| \int_{\R^3}\bar{f}_R (t,x,v) {\varphi} (v)  \dd v \right\|_{L^2_t( (-1,T]; L^p_x (\tilde{\O}))}\notag\\ 
\lesssim& \  \| \mathbf{1}_{(t,x,v) \in \mathfrak{D}_T}  \bar{g} \|_{L^2  ((-1,T]\times U \times V)}\notag \\
\lesssim & \   \| %v\cdot \nabla_x \chi_2 (x) 
 f_R (t,x,v)\|_{L^2  ((0,T]\times \tilde{\O} \times V)}  + \| %v\cdot \nabla_x \chi_2 (x)  
 f_R (0,x,v) \|_{L^2  (  \tilde{\O} \times V)} + \| \nabla_x f_R (0,x,v) \|_{L^2  (  \tilde{\O} \times V)}
 \notag%\label{g1}
 \\
&+ \| %v\cdot \nabla_x \chi_2 (x)  
 \mathbf{1}_{(t,x,v) \in \mathfrak{D}_T}
 f_I (t+ \e \tilde{t}_B(x,v),  \tilde{x}_B(x,v) ,v)
\|_{L^2  ((-1,T]\times (U \backslash  \tilde{\O})\times V)}
\label{g2}\\
&+  \| [\e\p_t + v\cdot \nabla_x ] f_R
\|_{L^2  ((0,T]\times \tilde{\O} \times V)},\label{g3}
\end{align}
where we have used (\ref{ext:f+}), (\ref{ext:f1}), (\ref{ext:f2}), and the fact that $|v\cdot \nabla_x \chi_2 (x)| \lesssim_N 1$ on $v \in V$. 

First we consider (\ref{g2}). We split the cases of (\ref{g2}) according to (\ref{tB}). For $x \in \p\tilde{\O}$, which has a zero measure in $L^2  ((-1,T]\times (U \backslash  \tilde{\O})\times V)$, we have $\tilde{t}_B(x,v)=0$ from the first line of (\ref{tB}). If $\tilde{B}(x,v) = \emptyset$ and $x \notin \p\tilde{\O}$ then $\tilde{t}_B(x,v)=-\infty$ from the last line of (\ref{tB}) and hence $\bar{f}_R(-\infty)=0$ since $\chi_1(-\infty)=0$ in (\ref{ext:f1}). Therefore we derive that 
\begin{align}
(\ref{g2})\leq
%&   \  \| %v\cdot \nabla_x \chi_2 (x)  
%\mathbf{1}_{x \in \p\tilde{\O}}
% \mathbf{1}_{(t,x,v) \in \mathfrak{D}_T} 
% f_1 (t%+ \e \tilde{t}_B(x,v)
% , x%+\tilde{t}_B(x,v)v
% ,v)
%\|_{L^2  ((-1,T]\times (U \backslash  \tilde{\O})\times V)}
%\label{g2:3}\\
&%$%T+
 \   \| %v\cdot \nabla_x \chi_2 (x)  
\mathbf{1}_{\{s<0\} \subset \tilde{B} (x,v)}
 \mathbf{1}_{(t,x,v) \in \mathfrak{D}_T} 
 f_I (t+ \e \tilde{t}_B(x,v), \tilde{x}_B(x,v)
 ,v)
\|_{L^2  ((-1,T]\times (U \backslash  \tilde{\O})\times V)}
\label{g2:1}
\\
&+  \| %v\cdot \nabla_x \chi_2 (x)  
\mathbf{1}_{\{s>0\} \subset \tilde{B} (x,v)}
 \mathbf{1}_{(t,x,v) \in \mathfrak{D}_T} 
 f_I (t+ \e \tilde{t}_B(x,v), \tilde{x}_B(x,v),v)
\|_{L^2  ((-1,T]\times (U \backslash  \tilde{\O})\times V)}.
\label{g2:2} 
%\\
% &+  \| %v\cdot \nabla_x \chi_2 (x)  
%\mathbf{1}_{ \tilde{B}(x,v) = \emptyset} \mathbf{1}_{x \notin \p\tilde{\O}}
% \mathbf{1}_{(t,x,v) \in \mathfrak{D}_T} 
%0
%\|_{L^2  ((-1,T]\times (U \backslash  \tilde{\O})\times V)}\label{g2:4},
\end{align}
We need a special attention to (\ref{g2:1}). Since $(t,x,v) \in \mathfrak{D}_T$ we know that 
%\Be\label{inf:tau}
$\inf \{\tau \geq t: x+  \frac{\tau - t}{\e} v \in cl(\tilde{\O})\} \leq T.$ 
%\Ee 
If $\{s<0\} \subset \tilde{B} (x,v)$ then, from the third line of (\ref{tB}), $\tilde{t}_B(x,v) = \sup \tilde{B} (x,v)= \sup \{s \in \R: x+ sv \in \R^3 \backslash cl (\tilde{\O})\} \leq (T-t)/\e$. Therefore the argument of $f_I$ in (\ref{g2:1}) is confined as
\Be\label{t+tB<}
(t+ \e \tilde{t}_B(x,v), \tilde{x}_B(x,v),v) \in (- \infty,T] \times \p\tilde{\O} \times V .
\Ee
For (\ref{g2:2}), from the second line of (\ref{tB}), $\tilde{t}_B (x,v)= \inf \tilde{B}(x,v) = \inf 
\{s \in \R: x+ sv \in \R^3 \backslash cl (\tilde{\O})\} \leq 0$. Therefore $t+ \e \tilde{t}_B (x,v) \leq t \leq T$ and hence the argument of $f_I$ in (\ref{g2:2}) is confined as in (\ref{t+tB<}). Now we apply the Minkowski's inequality in time, change of variables $t+ \e \tilde{t}_B (x,v) \mapsto t$, and use (\ref{t+tB<}) to derive that 
\Be\label{est1:g2}
(\ref{g2:1}) + (\ref{g2:2}) \lesssim
\Big\|
 \| f_I(t, \tilde{x}_B(x,v),v)\|_{L^2_t ((-1, T])}
 \Big\|_{L^2_{x,v}((U \backslash \tilde{\O}) \times V)}.
\Ee

Let us define an outward normal $\tilde{n}(x)$ on $\p\tilde{\O}$. More precisely 
\Be\label{tilde_n}
\tilde{n}(x)= \begin{cases}
(0,0,-1) &\ \ \text{if} \ x_3=0 \ \text{and} \ x \in \p\tilde{\O},\\
((-1)^{\frac{x_1}{2\pi}+1},0,0) &\ \ \text{if} \ x_1 \in \{0, 2\pi\} \ \text{and} \ x \in \p\tilde{\O},\\
(0,(-1)^{\frac{x_2}{2\pi}+1},0) &\ \ \text{if} \  x_2 \in \{0, 2\pi\} \ \text{and} \ x \in \p\tilde{\O}.
\end{cases}
\Ee
From (\ref{def:UV}) we have therefore $(x ,v) \in (U \backslash \tilde{\O}) \times V$ then $|\tilde{n}( \tilde{x}_B (x,v) ) \cdot v|\geq 1/N$. We consider maps 
\Be\begin{split}\label{bdry_map}
(x_1,x_3) &\mapsto  \tilde{x}_B (x,v) %=x+  \tilde{t}_B (x,v)v 
\in  (0,2\pi) \times (0, 2\pi) \times \{x_3=0\},\\
& \ \ \text{with} \ \ \Big|\det\Big(\frac{\p (  \tilde{x}_{B,1} (x,v) ,     \tilde{x}_{B,2} (x,v) )}{\p (x_1,x_3)}\Big)\Big|= \Big|\frac{ v_2}{v \cdot \tilde{n}}\Big|,\\
(x_i,x_3) &\mapsto  (\tilde{x}_{B,i} (x,v), \tilde{x}_{B,3} (x,v))  %=x+  \tilde{t}_B (x,v)v 
\in  (0,2\pi)   \times (0,\infty), \\
&\ \ \text{with} \ \ \Big|\det\Big(\frac{\p (  \tilde{x}_{B,i} (x,v) ,     \tilde{x}_{B,3} (x,v) )}{\p (x_1,x_3)}\Big)\Big|= \Big|\frac{ v_i }{v \cdot \tilde{n}}\Big|,  \ \ \text{for} \ i =1,2. 
\end{split}\Ee
 Note that if $v \in V$ of (\ref{def:UV}) then $|v_i| \geq 1/N$ for all $i=1,2,3.$ We define 
\Be\label{def:tilde_gamma}
\tilde{\gamma} := \p\tilde{\O} \times \R^3, \ \ \ \tilde{\gamma}^{N} := \p\tilde{\O} \times (\R^3\backslash V). 
\Ee
 %and $(x_1,x_3) \mapsto x+  \tilde{t}_B (x,v)v \in  [0,2\pi] \times [0, 2\pi] \times \{x_3=0\}$
We apply the change of variables (\ref{bdry_map}) to (\ref{est1:g2}):
\Be\label{est2:g2}
\begin{split}
(\ref{est1:g2})&= 
\bigg\| 
\bigg[
\int_{-2\pi}^{4\pi}
\int_{-2\pi}^\infty \int_{-2\pi}^{4\pi}
 \| f_I (t, \tilde{x}_B(x,v),v)\|_{L^2_t ((-1, T])}^2
 \dd x_1 \dd x_3 \dd x_2
\bigg]^{1/2} 
\bigg\|_{L^2_{v}(V)}\\
&\leq   \bigg\|  \bigg[ 5 \times 6 \pi N   \int_{\p\tilde{\O}}\int_{-1}^T  |f_I (t,  y,v) |^2 |v \cdot \tilde{n}(y)| \dd t  \dd y\bigg]^{1/2} \bigg\|_{L^2_{v}(V)}\\
&\lesssim \|f_R\|_{L^2((0,T) \times \tilde{\gamma} \backslash \tilde{\gamma}^{  N   } )}
+ \|f_R(0)\|_{L^2(  \tilde{\gamma} \backslash \tilde{\gamma}^{   N   } )}.
\end{split}
\Ee
We recall the trace theorem:
\Be\label{trace10}
\begin{split}
\int^T_{0} \int_{\tilde{\gamma}\backslash \tilde{\gamma}^{N}} |h| \dd \gamma \dd s 
\lesssim& \sup_{t \in [0,T]} \| h(t) \|_{L^1(\tilde{\O} \times V)}  + \int^T_{0} \| h(s) \|_{L^1(\tilde{\O} \times V)} \dd s  
+ \int^T_{0} \| [\e \p_t + v\cdot \nabla_x ] h \|_{L^1(\tilde{\O} \times V)} \dd s.
\end{split}
\Ee
%We need a following lemma.
We apply (\ref{trace10}) with $h=f^2$ and derive an estimate 
\Be\begin{split}
\label{est:bdry}
 &\|f_R\|_{L^2((0,T) \times \tilde{\gamma} \backslash \tilde{\gamma}^{  N   } )}^2 \\
 &\lesssim \sup_{t \in [0,T]} \| f_R(t) \|_{L^2(\tilde{\O} \times V)}^2  + \int^T_{0} \| f_R(s) \|_{L^2(\tilde{\O} \times V)}^2 \dd s 
+ \int^T_{0}  \iint_{\tilde{\O}\times V}\big| f_R[\e \p_t + v\cdot \nabla_x ] f_R \big| \dd x \dd v  \dd s\\
&\lesssim _T 
 \| f_R \|_{L^\infty ([0,T];L^2( {\O} \times \R^3))}^2
 + \big\|  [\e \p_t+ v\cdot \nabla_x ]   f_R  \big\|_{L^2( [0,T] \times  {\O} \times \R^3)}.
\end{split}\Ee
%%%%%%%%%%
%%%%%%%%%%
\hide
where the last term can be bounded %similarly as (\ref{est:Energy}) 
from (\ref{eqtn_fR}) as 
\Be
\begin{split}
\int^T_{0}  \iint_{\O \times \R^3}
\Big|&
- \frac{1}{\e \kappa} (\mathbf{I} - \mathbf{P}) f_R  L(\mathbf{I} - \mathbf{P}) f_R 
+ \frac{\e}{\kappa} \Gamma(f_2, f_R) (\mathbf{I} - \mathbf{P}) f_R
+ \frac{\delta}{\kappa} \Gamma(f_R, f_R) (\mathbf{I} - \mathbf{P}) f_R
- \frac{(\e \p_t + v\cdot \nabla_x ) \sqrt{\mu}}{\sqrt{\mu}} |f_R|^2\\
&+ \e (\mathbf{I} - \mathbf{P}) \mathfrak{R}_1  (\mathbf{I} - \mathbf{P}) f_R
+ \e   \mathfrak{R}_2  f_R
\Big|
\dd x \dd v  \dd t
\end{split}
\Ee\unhide
%%%%%%%%%%
%%%%%%%%%%
Finally we conclude a bound of (\ref{g2}) as below via (\ref{g2:1}), (\ref{g2:2}), (\ref{est1:g2}), (\ref{est2:g2}), and (\ref{est:bdry}) 
\Be\label{est3:g2}
(\ref{g2})\lesssim \| f_R (0)\|_{L^2_\gamma} + \| f_R \|_{L^\infty ([0,T];L^2( {\O} \times \R^3))} + \underbrace{\big\|  [\e \p_t+ v\cdot \nabla_x ]   f_R  \big\|_{L^2( [0,T] \times  {\O} \times \R^3)}}_{(\ref{est3:g2})_*}. 
\Ee

Next we estimate (\ref{g3}) (and $(\ref{est3:g2})_*$). %Applying Lemma \ref{lemma:average} and 
Using (\ref{eqtn:bar_f}) and (\ref{eqtn_fR}) we conclude that 
\Be\notag
\begin{split}
&(\ref{g3})+ (\ref{est3:g2})_*% &\lesssim  (\kappa \e)^{\frac{6}{p}-1}\big\|      \mathbf{1}_{t\geq 0}   \mathbf{1}_{\O}(x)  
% (\e \p_t   + v\cdot \nabla_x+ \frac{\nu }{\kappa \e}  )f_R(t,x,v)\big\|_{L^2 ((0,T)\times (\mathbb{T}^2 \times \R) \times
% \{|v| \leq N \})}\\
 %&
 \\
& \lesssim  %(\kappa \e)^{\frac{6}{p}-1}
  \bigg\|  % \frac{1}{\e \kappa} Kf_R +
 - \frac{1}{\e \kappa} Lf_R  +  \frac{2}{  \kappa }  \Gamma( f_1 + \e f_2, f_R)  
+    \frac{   \e^{1/2} }{ \kappa}\Gamma(f_R, f_R)
 \\& \ \ \ \ \ \  -  \e^{1/2}  \left \{ (\mathbf{I-P}) (v \cdot \nabla_x f_2) - \frac{2}{\kappa} \Gamma(f_1,f_2) \right \} + \e^{3/2}   \left( -  \p_t f_2  + \frac{1}{\kappa} \Gamma(f_2,f_2) \right)
    \bigg\|_{L^2 ((0,T]\times \O \times
V)}.
 \end{split}
\Ee
Following the arguments of (\ref{est:EG})-(\ref{est:Energy_v^3}), we derive that 
\Be \label{est:bar_f3_1}
\begin{split}
&(\ref{g3})+(\ref{est3:g2})_*
\\ & \lesssim   \  \Big\{
 \frac{\e}{\kappa} \|  (\ref{est:f2})  \|_{L^\infty_t ((0,T); L_x^{\frac{2p}{p-2}}(\O)) } + \frac{\e^{1/2}}{\kappa} \| Pf_R \|_{L^\infty_t ((0,T); L_x^{\frac{2p}{p-2}}(\O)) } \Big\}  \| P f_R \|_{L^2_t ((0,T); L_x^{p}(\O))}
 \\ &  + \frac{1}{\kappa} \| Pf_1 \|_{L^\infty_{t,x}}  \| Pf_R \|_{L^2_{t,x}}
 \\ &+\Big\{ \frac{1}{\e \kappa }  + \frac{1}{\kappa} \left( \| Pf_1 \|_{L^\infty_{t,x}} + \e \|  (\ref{est:f2})  \|_{L^\infty_{t,x} } \right)   + \frac{\e^{1/2}}{\kappa} \| \mathfrak{w}_{\varrho} f_R \|_{L^\infty_t ((0,T) \times \O \times \R^3) } \Big\} \| (\mathbf{I} - \mathbf{P})f_R \|_{L^2_t ((0,T) \times \O \times \R^3) }
 \\ & + \e^{1/2}  \| (\ref{estR11})  \|_{L^2_{t,x}}  +\frac{\e^{1/2}}{\kappa} \| \kappa \eqref{estR12} \|_{L^2_{t,x}} + \e^{3/2} \| (\ref{estR21})\|_{L^2_{t,x,v}}  + \frac{\e^{3/2}}{\kappa}  \| \kappa (\ref{estR22})\|_{L^2_{t,x,v}}
% \\ &+\e \big\|  (\ref{transp:mu}) \big\|_{L^2_t ((0,T); L^\infty_x(\O))} \|  f_R(t) \|_{L^\infty_t ((0,T);L^2(\O \times \R^3))} 
% \\ &+  \e  \{  \|(\ref{est:R1})\| _{L^2_t ((0,T); L_x^{2}(\O))}
% +  \|(\ref{est:R2})\| _{L^2_t ((0,T); L_x^{2}(\O))}  \} 
 ,
\end{split}
\Ee where we further bound 
\Be\label{est:bar_f3_1:1}
\| Pf_R \|_{  L_x^{\frac{2p}{p-2}}(\O)  }
\leq \| Pf_R \|_{L_x^{6}(\O)}^{\frac{3(p-2)}{p}} 
\|  \mathfrak{w}_{\varrho} f_R \|_{L_x^{\infty}(\O)}^{\frac{6-2p}{p}}.
\Ee
%%%%%%%%%%%%%%%%
\hide
and conclude that 
\Be
\begin{split}(\ref{g3})%(\ref{bar_f3})_1
 \lesssim& \ 
%\e ^{\frac{6}{p}-1}\kappa^{\frac{6}{p}-2}
\Big\{
  \frac{\e}{\kappa}%^{\frac{6}{p}} 
 \| \mathfrak{w}_{\varrho} f_2 \|_{L^\infty_t ((0,T); L_x^{\frac{2p}{p-2}}(\O)) } + \frac{ \delta}{\kappa}  \| Pf_R \|_{L^\infty_t ((0,T);L_x^{6}(\O))}^{\frac{3(p-2)}{p}} 
\|  \mathfrak{w}_{\varrho} f_R \|_{L_ {\infty}((0,T) \times\O\times\R^3)}^{\frac{6-2p}{p}}
\Big\}
\| P f_R \|_{L^2_t ((0,T); L_x^{p}(\O))}\\
  &+ %(\kappa \e)^{\frac{6}{p}-2} \Big\{1+
  %\kappa\e^2
\e \big\| |\nabla_x u| + \e |\p_t u| + \e |u| |\nabla_x u| \big\|_{L^2_t ((0,T); L^\infty_x(\O))}
%\Big\}
 \|  f_R(t) \|_{L^\infty_t ((0,T);L^2(\O \times \R^3))}\\
 &+ \frac{\delta}{\kappa}% \delta \kappa ^{\frac{6}{p}-2} \e^{\frac{6}{p}-1}
  \| \mathfrak{w}_{\varrho} f_R \|_{L^\infty_t ((0,T) \times \O \times \R^3) } 
  \| (\mathbf{I} - \mathbf{P})f_R \|_{L^2_t ((0,T) \times \O \times \R^3) }\\
 &+
 \frac{\e}{\delta}
 %\delta^{-1} \kappa^{\frac{6}{p}-1} \e^{\frac{6}{p}}
 \|\mathfrak{q}(|\nabla_x \tilde{u}|, |\nabla_x^2 u|)\| _{L^2_t ((0,T); L_x^{2}(\O))}
 \\
 &  + \frac{\e^2}{\delta \kappa} %\delta^{-1}  \e^{\frac{6}{p}+1} \kappa^{\frac{6}{p}-2}   
 \| \mathfrak{q}(|u|, |\nabla_{x } u |, |\p_{t}u|, |\nabla_{x}^{2} u|, |\nabla_{x} \p_{t} u|, |p|, |\nabla_{x } p |, |\p_{t} p|, |\tilde{u}|, |\nabla_{x } \tilde{u}|, |\p_{t} \tilde{u}|) \|_{L^2_t ((0,T); L_x^{2}(\O))}.
\end{split}
\Ee
\unhide
%%%%%%%%%%%%%%%%%%

\ 

\textbf{Step 3. Proof of (\ref{average_3D}). } First we use (\ref{est:decomp1:L2L3}) and then (\ref{g2}) and (\ref{g3}). We bound (\ref{g2}) via (\ref{est1:g2}) and (\ref{est2:g2}), which are bounded by (\ref{est:bdry}) and (\ref{est:bar_f3_1}) respectively. These conclude that, for $p<3$, 
\Be\begin{split}\label{final_est:3D}
&  \big\|    {P} f_R
 \big\|_{L^2_t((0,T);L^p_x(\tilde{\O}))}  -C_{T,N}   \| \mathfrak{w}_{\varrho, \ss } f_R (t) \|_{ L^\infty((0,T) \times \tilde{\O} \times \R^3)}^{\frac{p-2}{p}} \| (\mathbf{I} - \mathbf{P}) f_R \|_{L^2((0, T) \times \tilde{\O} \times \R^3)}^{\frac{2}{p}}\\
\leq & \  \left\|\int_{\R^3} \bar{f}_R(t,x,v) {\varphi}_i (v) \sqrt{\mu_0(v)}    \dd v \right\|_{L^2_t ((0,T); L^p_x ( \tilde{\O}))}\\
\leq & \  \left\|\int_{\R^3} \bar{f}_R(t,x,v) {\varphi}  (v)     \dd v \right\|_{L^2_t ((0,T); L^p_x ( \tilde{\O}))}\\
\lesssim & \ 
 \| f_R \|_{L^\infty ([0,T];L^2( {\O} \times \R^3))}
+ \| f_R (0) \|_{L^2_\gamma}+ \text{r.h.s. of }  (\ref{est:bar_f3_1}) \text{ with } (\ref{est:bar_f3_1:1}). 
\end{split}\Ee
Then we move a contribution of $\| P f_R \|_{L^2_t ((0,T); L_x^{p}(\O))}$ to the l.h.s and use (\ref{est:bar_f3_1:1}). This concludes (\ref{average_3D}).

\

\textbf{Step 4: Sketch of proof for (\ref{average_3Dt}). } We follow the same argument for (\ref{average_3D}). Thereby we only pin point the difference of the proof of (\ref{average_3Dt}). Recall $\p_t f_R (0,x,v) = f_{R,t}(0,x,v)$ from \eqref{initial_ft}. \hide From (\ref{eqtn_fR}) we define 
\Be\label{f:initial_t}
\begin{split}
 f_{R,t} (0,x,v):= & - \frac{1}{\e} v\cdot \nabla_x   f_R %f_R
  -   \frac{1}{ \e^2\kappa} L%f_R 
  f_R +  \frac{1}{\kappa} \Gamma({f_2} , f_R )
+    \frac{   \delta }{\e\kappa}\Gamma(f_R , f_R )\\
&
-  \frac{( \p_t + 
\e^{-1} v\cdot \nabla_x) \sqrt{\mu}}{\sqrt{\mu}} f_{R }
  + 
(\mathbf{I}- \mathbf{P})\mathfrak{R}_1  + \mathfrak{R}_2 
\Big|_{t=0},
\end{split}
\Ee
where we read all terms in r.h.s at $t=0$. \unhide We regard $\tilde{\O}$ as an open subset but not a periodic domain as $\O$. Without loss of generality we may assume that $f_{R,t}(0,x,v)$ is defined in $\R^3 \times \R^3$ and $\|f_{R,t}(0)
\|_{L^p(\R^3) \times \R^3} \lesssim \|f_{R,t}(0)
\|_{L^p(\tilde{\O}) \times \R^3}$ for all $1 \leq p \leq \infty$. Then we extend a solution for whole time $t \in \R$ as 
%an initial datum $f_R(0,x,v)$ in $(x,v) \in \R^3 \times \R^3$ as 
\Be\label{ext:f1_t}
  f_{  I,t} (t,x,v) : = \mathbf{1}_{t \geq 0 } \p_t f_R (t,x,v) + \mathbf{1}_{t \leq 0} \chi_1 (t)  f_{R,t} (0, x%- \frac{t}{\e} v
,v ).
\Ee
Using $\tilde{t}_B(x,v) $ in (\ref{tB}) we define 
\Be\label{ext:f2_t}
 f_{E,t} (t,x,v): =  \mathbf{1}_{(x,v) \in (\R^3 \backslash \tilde{\O}) \times \R^3 }   f_{I,t} (t+ \e \tilde{t}_B(x,v), \tilde{x}_B(x,v),v).% \ \ \text{with} \ \ \tilde{x}_B(x,v):= x+\tilde{t}_B(x,v)v
\Ee
We define an extension of cut-offed solutions   
\Be\label{ext:f+_t}
%\begin{split}
  \bar{f}_{R,t}(t,x,v) :=%&  
 \chi_2 (x) \chi_3 (v)  \big\{
 % \chi_1(t) 
 %\Big\{
  \mathbf{1}_{\tilde{\O}}(x)% \mathbf{1}_{t\geq 0} 
 f_{I,t}(t,x,v)
 +  f_{E,t} (t,x,v)\big\}
 \  \ \text{for} \ (t,x,v) \in (-\infty,T] \times \R^3 \times \R^3
 . 
\Ee
 We note that in the sense of distributions $\bar{f}_{R,t}$ solves%is a weak solution in $\R \times (\mathbb{T}^2 \times \R) \times \R^3$ to an equation 
\Be\label{eqtn:bar_f_t}
\begin{split}
& \e \p_t \bar{f}_{R,t} + v\cdot \nabla_x \bar{f}_{R,t} =     \   \bar{g}_t 
 \ \ \text{in} \ (-\infty,T] \times \R^3 \times \R^3, \ \ \text{where}\\
& \bar{g}_t :  =     \frac{v\cdot \nabla_x \chi_2  }{\chi_2  } \bar{f}_{R,t} + \mathbf{1}_{t \geq 0} \mathbf{1}_{\tilde{\O}} (x) \chi_2(x)  \chi_3(v) 
[\e \p_t + v\cdot \nabla_x] \p_t f_R \\
 &\quad \ \ +\mathbf{1}_{t \leq 0} 
\chi_2 (x) \chi_3 (v) \{
\e\p_t \chi_1(t) 
f_{R,t} (0,x,v) + \chi_1 (t) v\cdot \nabla_x f_{R,t} (0,x,v) 
\}.
  %,
%\\
%\bar{g}(t,x,v):=&      \mathbf{1}_{t\geq 0}   \mathbf{1}_{\O}(x)\chi_1 (x) \chi_2 (v) %\big[
%\big[\e \p_t   + v\cdot \nabla_x+ \frac{\nu }{\kappa \e}  \big]f_R(t,x,v)
% + \frac{ v\cdot \nabla_x \chi_1 (x) }{\chi_1 (x)} \bar{f}_R (t,x,v )
% :=     \mathbf{1}_{t\geq 0} \chi_1(t)  \mathbf{1}_{\O}(x) \chi_2(v) %\big[
% (\e \p_t   + v\cdot \nabla_x  )f_R(t,x,v) +  [\e {\chi_1^\prime(t)} / {\chi_1 (t)}] \bar{f}_R (t,x,v)
 %+ \mathbf{1}_{t<0}  (\e \p_t   + v\cdot \nabla_x  )f_R(0,x- \frac{t}{\e} v,v)
 %\big]
%\mathbf{1}_{t \geq 0} \mathbf{1}_{x_3\geq 0}\chi_1(t)\chi_2(v) 
% (\e \p_t   + v\cdot \nabla_x   )f_R(t,x,v)
 % 
  \end{split}
\Ee
Here we have used the fact that $\bar{f}_{R,t}$ in (\ref{eqtn:bar_f_t}) is continuous along the characteristics across $\p \tilde{\O}$ and $\{t=0\}$. %in the support of $\chi_2 (x) \chi_3 (v)$. We also have used the fact that $[\e \p_t   + v\cdot \nabla_x   ]f_R(0,x- \frac{t}{\e} v,v)=0$ and $[\e \p_t   + v\cdot \nabla_x   ]f_1 (t+ \e \tilde{t}_B(x,v), x+\tilde{t}_B(x,v)v,v)=0$ in the sense of distribution.  
We derive that, using (\ref{eqtn:bar_f_t}),
\Be\label{f_R:duhamel_t}
\bar{f}_{R,t}(t,x,v) =
%e^{-   \frac{C_\nu }{\kappa \e^2 }   t} 
%|f_R (0,x- \frac{t}{\e} v , v)|
%+
 \frac{1}{\e} \int^t_{-\infty} 
%e^{-  \frac{C_\nu }{\kappa \e^2 } (t-s) }
 \bar{g}_t (s, x- \frac{t-s}{\e} v, v)  \dd s \ \ \text{for} \ (t,x,v) \in 
(-\infty,T] \times \R^3%(\mathbb{T}^2 \times \R) 
 \times \R^3.
\Ee

Now we apply Lemma \ref{lemma:average} to (\ref{f_R:duhamel_t}) and derive that, for $p<3$,
\Be\label{bound:S_t}
\begin{split}
&\| S(\bar{g_t}) \|_{L^2_t((0,T);  L^p_x(\mathbb{T}^2 \times \R))} \\
\lesssim &%(\kappa\e)^{\frac{3}{p}-\frac{1}{2}}
 \| \mathbf{1}_{(t,x,v) \in \mathfrak{D}_T}   \bar{g}_t\|_{L^2  ((0,T)\times (\mathbb{T}^2 \times \R) \times \{|v| \leq N \})}\\
 \lesssim & \ \| f_{R,t} (0)\|_{L^2(\O \times \R^3)} + \| \e \p_t  f_R + v\cdot \nabla_x f_R \|_{L^2 ((0,T) \times \tilde{\O} \times V)} \\
 &+  \| %v\cdot \nabla_x \chi_2 (x)  
 \mathbf{1}_{(t,x,v) \in \mathfrak{D}_T}
 f_{I,t} (t+ \e \tilde{t}_B(x,v),  \tilde{x}_B(x,v) ,v)
\|_{L^2  ((-1,T]\times (U \backslash  \tilde{\O})\times V)}.
 \end{split}
\Ee
Following the same argument of (\ref{est3:g2})-(\ref{est:bar_f3_1}) we deduce that 
\Be\label{est:bar_f3_1_t}
\begin{split}
(\ref{bound:S_t})% &\lesssim  (\kappa \e)^{\frac{6}{p}-1}\big\|      \mathbf{1}_{t\geq 0}   \mathbf{1}_{\O}(x)  
% (\e \p_t   + v\cdot \nabla_x+ \frac{\nu }{\kappa \e}  )f_R(t,x,v)\big\|_{L^2 ((0,T)\times (\mathbb{T}^2 \times \R) \times
% \{|v| \leq N \})}\\
 %&
 \lesssim  &
 \|\p_t  f_R \|_{L^\infty ([0,T];L^2( {\O} \times \R^3))}  +  \|\p_t  f_R (0)\|_{L^2_\gamma} 
 %(\kappa \e)^{\frac{6}{p}-1}
  % \frac{1}{\e \kappa} Kf_R +
\\
& +\big\|  - \frac{1}{\e \kappa}    L(\mathbf{I} - \mathbf{P}) \p_t f_R  + \e \times \text{r.h.s. of } (\ref{eqtn_fR_t}) 
    \big\|_{L^2 ((0,T]\times \O \times
V)}.
 \end{split}
\Ee
From (\ref{est:EG_t})-(\ref{estfR2last}), the last term of (\ref{est:bar_f3_1_t}) is bounded above by 
\Be
\begin{split}\label{est:bar_f3_1_t:last}
&
\Big\{ \frac{1}{\kappa} \left( \| \p_t \mathcal I \|_{L^\infty_{t,x,v}}  + \e \| \eqref{estptf2} \|_{L^\infty_{t,x,v } }  \right)  \Big\}  \Big\{ \| Pf_R \|_{L^2_{t,x}}+ \| \sqrt{\nu} (\mathbf{I} -\mathbf{P}) f_R \|_{L^2_{t,x,v}} \Big\}
\\ &+ \Big\{ \frac{1}{\kappa \e} + \frac{\e^{1/2}}{\kappa} \|   \mathfrak{w}f_R\|_{L^\infty_{t,x,v}} +  \frac{1}{\kappa} \left( \| Pf_1 \|_{L^\infty_{t,x}} + \e \|(\ref{est:f2})  \|_{L^\infty_{t,x}}  \right)    \Big\} \| \sqrt{\nu} (\mathbf{I} - \mathbf{P}) \p_t f_R \|_{L^2_{t,x,v}}
 \\  &+   \Big\{  \frac{\e^{1/2}}{\kappa}\|    {P} f_R \|_{L^\infty_tL^6_{x }}^{\frac{3(p-2)}{p}} \| \mathfrak{w} f_R\|_{L^\infty_{t,x,v}}^{\frac{6-2p}{p}} + \frac{\e}{\kappa}\|  (\ref{est:f2}) \|_{L^\infty_tL^{\frac{2p}{p-2}}_{x }}  \Big\}  \|    {P} \p_t f_R \|_{L^2_tL^p_{x }}
 \\ & + \frac{1}{\kappa} \| Pf_1\|_{L^\infty_{t,x}} \| P \p_t f_R \|_{L^2_tL^2_{x }}
\\  &+  \e^{1/2}   \| (\ref{estR11t})  \|_{L^2_{t,x,v}}  +\frac{\e^{1/2}}{\kappa} \| \kappa \eqref{estR12t} \|_{L^2_{t,x,v}} + \e^{3/2} \| (\ref{estR21t})\|_{L^2_{t,x,v}}^2 + \frac{\e^{3/2}}{\kappa} \|\kappa (\ref{estR22t})\|_{L^2_{t,x,v}}^2
.
\end{split}
\Ee

On the other hand from (\ref{ext:f+_t}) and the argument of (\ref{decomp1:L2L3}) we derive 
\Be\begin{split}\label{decomp1:L2L3_t}
%&
&\| S(\bar{g_t}) \|_{L^2_t((0,T);  L^p_x(\mathbb{T}^2 \times \R))}
   \gtrsim   \left\|\int_{\R^3}  \bar{f}_{R,t}(t,x,v) {\varphi}_i (v) \sqrt{\mu_0(v)}    \dd v \right\|_{L^2_t ((0,T); L^p_x ( \tilde{\O}))} %\notag\\
%= 
 %\left\|\int_{\R^3}
%\chi_2 (x) \chi_3 (v)
%\p_t  f_R(t,x,v)  \tilde{\varphi}_i (v) \sqrt{\mu_0(v)}    \dd v   \right\|_{L^2_t ((0,T); L^p_x (\tilde{\O}))}
\\  \gtrsim &  \  \big\|   {P} \p_t  f_R  \big\|_{L^2_t( (0,T)  ;L^p_x(\tilde{\O}))}
- (\kappa \e ) ^{\frac{2}{p-2}} \|  \mathfrak{w}^\prime \p_t  f_R \|_{L^2_t ((0,T);L^\infty_{x,v} (  \O \times \R^3))}
\\ & - \frac{1}{\kappa \e }\|  (\mathbf{I} - \mathbf{P}) \p_t  f_R  \|_{L^2 ((0,T) \times   \O \times \R^3)}
 %
% C_N \| (\kappa \e)^{\frac{2}{p-2}} \mathfrak{w} \p_t  f_R \|_{L^2_t ((0,T);L^\infty_{x,v} (  \O \times \R^3))}^{\frac{p-2}{p}} (\kappa \e)^{-\frac{2}{p}} \|  (\mathbf{I} - \mathbf{P}) \p_t  f_R  \|_{L^2 ((0,T) \times   \O \times \R^3)}^{\frac{2}{p}}
.
 \end{split}\Ee
Here we have used 
\Be\label{I-P_infty,2}
\begin{split}
& \left\|\int_{\R^3}
\chi_2 (x) \chi_3 (v)
 (\mathbf{I} - \mathbf{P}) \p_t  f_R(t,x,v)  {\varphi}_i (v) \sqrt{\mu_0(v)}    \dd v   \right\|_{L^2_t ((0,T); L^p_x (\tilde{\O}))} \\
 \leq &  \left\|  
 (\mathbf{I} - \mathbf{P}) \p_t  f_R(t,x,v)      \right\|_{L^2_t ((0,T); L^p_{x,v} (\tilde{\O} \times \R^3))} \\
 \lesssim & \ \Big\| \|  \mathfrak{w}^\prime \p_t  f_R \|_{L^\infty_{x,v} (  \O \times \R^3)}^{\frac{p-2}{p}}
 \|  (\mathbf{I} - \mathbf{P}) \p_t  f_R  \|_{L^2_{x,v} (  \O \times \R^3)}^{\frac{2}{p}}
 \Big\|_{L^2_t((0,T))}\\
 \lesssim& \  \Big\| \|  \mathfrak{w} ^\prime\p_t  f_R \|_{L^\infty_{x,v} (  \O \times \R^3)}^{\frac{p-2}{p}}\Big\|_{L^{\frac{2p}{p-2}}_t((0,T))}
 \Big\| \|  (\mathbf{I} - \mathbf{P}) \p_t  f_R  \|_{L^2_{x,v} (  \O \times \R^3)}^{\frac{2}{p}}\Big\|_{L^p_t((0,T))} \\
 \lesssim& \  (\kappa \e)^{\frac{2}{p}} \|  \mathfrak{w}^\prime \p_t  f_R \|_{L^2_t ((0,T);L^\infty_{x,v} (  \O \times \R^3))}^{\frac{p-2}{p}} (\kappa \e)^{-\frac{2}{p}} \|  (\mathbf{I} - \mathbf{P}) \p_t  f_R  \|_{L^2 ((0,T) \times   \O \times \R^3)}^{\frac{2}{p}}\\
 \lesssim & \ (\kappa \e ) ^{\frac{2}{p-2}} \|  \mathfrak{w}^\prime \p_t  f_R \|_{L^2_t ((0,T);L^\infty_{x,v} (  \O \times \R^3))}+ (\kappa \e)^{-1} \|  (\mathbf{I} - \mathbf{P}) \p_t  f_R  \|_{L^2 ((0,T) \times   \O \times \R^3)}.
\end{split}
\Ee 

Combining (\ref{decomp1:L2L3_t}), (\ref{bound:S_t}), (\ref{est:bar_f3_1_t}), and (\ref{est:bar_f3_1_t:last}) and choosing $N\gg1$ we conclude (\ref{average_3Dt}).

\subsection{$L^6_x$-integrability gain for $\mathbf{P}f_R$}
 
We prove an important $L^6_x$-integrability gain for $\mathbf{P}f_R$ in the next proposition.
\begin{proposition}\label{prop:L6}
Under the same assumptions in Proposition \ref{prop:Hilbert}, we have  for all $t  \in [0,T]$
\Be\label{L6}
\begin{split}
&d_6
\|   {P} f_R(t) \|_{L^6_x} \\
\lesssim & \  
 (\e  \kappa^{-1} \|(\ref{est:f2})\|_{L^\infty_{t,x}}) \| f_R(t) \|_{L^2_{x,v}} + \e \| \p_t f_R(t) \|_{ {L^2_{x,v}}}  +o(1)  (\kappa \e)^{1/2} \| \mathfrak{w} f_R (t) \|_{L^\infty_{x,v}} +  \e^{1/2}| (\ref{est:f2})|_{L^4(\p\O)}
\\
&   + \e^{1/2}   \| (\ref{estR11})  \|_{L^2_{t,x}}  +\frac{\e^{1/2}}{\kappa} \| \kappa \eqref{estR12} \|_{L^2_{t,x}}+  \e^{3/2} \| (\ref{estR21})\|_{L^2_{t,x,v}}  + \frac{\e^{3/2}}{\kappa}  \| \kappa (\ref{estR22})\|_{L^2_{t,x,v}}
% \|_{L^2_{x,v}} 
 \\ & + \Big( \frac{1}{\e \kappa }+ \frac{1}{\kappa}\| \mathcal I \|_\infty + \frac{\e}{\kappa} \|(\ref{est:f2})\|_\infty + \frac{\e^{1/2}}{\kappa} \| \mathfrak{w}_{\varrho} f_R(t) \|_{L^\infty_{x,v}} \Big)
\\ & \ \ \  \times \big\{
\| (\mathbf{I} - \mathbf{P}) f_R  \|_{ {L^2_{t,x,v}}} + \| (\mathbf{I} - \mathbf{P})\p_t  f_R  \|_{ {L^2_{t,x,v}}}  \big\}
\\ & + \left(  \frac{1}{\kappa}\| \mathcal I \|_\infty + \frac{\e}{\kappa} \|(\ref{est:f2})\|_\infty \right) \| \mathbf P f_R \|_{L^2_{x,v}}
\\ &+   \| \mathfrak{w}_{\varrho} f_R(t) \|_{L^\infty _{x,v}}^{1/2}
{%\color{red}
\big\{
|  f_R |_{L^2_tL^2({\gamma_+})}^{1/2}
+ |  \p_t f_R  |_{L^2_tL^2({\gamma_+})}^{1/2}
\big\} },
\end{split}
\Ee
where
\Be\label{d6}
d_6:=1-\Big[ \frac{\e^{1/2}}{\kappa} \| Pf_R(t)\|_{L^6_x}^{1/2} \| Pf_R (t) \|_{L^2_x}^{1/2} \Big]^{1/6}.
\Ee

\end{proposition}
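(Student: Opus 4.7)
The plan is to use the macro--micro decomposition of \cite{EGKM, EGKM2} combined with the test-function inversion of $v\cdot\nabla_x \mathbf{P}$. Write $\mathbf{P} f_R = \big(a + b \cdot v + c \tfrac{|v|^2-3}{2}\big)\sqrt\mu$, so $\|\mathbf{P}f_R\|_{L^6_x} \simeq \|a\|_{L^6_x}+\|b\|_{L^6_x}+\|c\|_{L^6_x}$. Since $\mathbf{P}L=0$ and $\mathbf{P}\Gamma=0$, testing \eqref{eqtn_fR} against the collision invariants $\sqrt\mu,\ v\sqrt\mu,\ \tfrac{|v|^2-3}{2}\sqrt\mu$ yields a system of fluid-type local conservation laws for $(a,b,c)$ in which the transport term $\e^{-1}v\cdot\nabla_x$ produces gradients of $(a,b,c)$ plus moments of $(\mathbf{I}-\mathbf{P})f_R$, while the remaining terms on the right-hand side of \eqref{eqtn_fR} appear as sources.

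To extract an $L^6$ bound, following the scheme of \cite{EGKM}, I would introduce for each of $a,b,c$ a test function $\psi = \Psi(v)\sqrt\mu\cdot\nabla_x\phi$, where $\Psi(v)$ is a polynomial whose velocity moments isolate the desired component while annihilating the others, and $\phi$ solves an elliptic problem (of type $-\Delta\phi = a|a|^4$ for the $a$-equation, and analogous problems for $b,c$) on $\Omega$ with boundary conditions compatible with the diffuse-reflection structure on $\p\Omega$. Pairing $\psi$ with \eqref{eqtn_fR} and integrating by parts in $x$ shifts $v\cdot\nabla_x$ onto $\psi$ and produces $\int a^6\,\dd x$ (and analogously $\int |b|^6,\ \int c^6$) on the left-hand side. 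Elliptic regularity gives $\|\phi\|_{W^{2,6/5}} \lesssim \|a\|_{L^6_x}^5$, so after dividing through one obtains a linear bound on $\|a\|_{L^6_x}$ by the right-hand-side pairings.

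The right-hand-side pairings are estimated as follows. The temporal term $\e\p_t f_R$ against $\psi$ contributes $\e\|\p_t f_R\|_{L^2_{x,v}}$. The coupling terms $\frac{2}{\e\kappa}\Gamma(f_1+\e f_2, f_R)$, split via $\mathbf{P} + (\mathbf{I}-\mathbf{P})$, produce the prefactor $\kappa^{-1}(\|\mathcal{I}\|_\infty + \e\|\eqref{est:f2}\|_\infty)$ multiplying both the micro-part $\|\sqrt\nu(\mathbf{I}-\mathbf{P})f_R\|_{L^2_{t,x,v}}$ and $\|\mathbf{P}f_R\|_{L^2_{x,v}}$. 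The nonlinear $\frac{1}{\e^{1/2}\kappa}\Gamma(f_R,f_R)$ term similarly contributes $\e^{1/2}\kappa^{-1}\|\mathfrak{w}_\varrho f_R\|_{L^\infty}$ on the micro-part, and produces the self-referential factor $\big[\e^{1/2}\kappa^{-1}\|\mathbf{P}f_R\|_{L^6}^{1/2}\|\mathbf{P}f_R\|_{L^2}^{1/2}\big]^{1/6}$ from the purely macroscopic piece that, after absorption to the left, yields the correction $d_6$ in \eqref{d6}. The singular forcing terms \eqref{estR11}--\eqref{estR22} appear directly with the indicated $\e$- and $\kappa$-weights.

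The main obstacle is the handling of boundary traces generated by the $x$-integration by parts: because $f_R$ satisfies the diffuse boundary condition \eqref{bdry_fR} with inhomogeneous source $\e^{1/2}(1-P_{\gamma_+})(\mathbf{I}-\mathbf{P})f_2$ rather than a homogeneous one, the elliptic test function $\phi$ must be prescribed with boundary data tuned to the projection $P_{\gamma_+}$ structure so that the boundary pairing reduces to $\|\mathfrak{w}_\varrho f_R\|_{L^\infty_{x,v}}^{1/2}\bigl(|f_R|_{L^2_t L^2(\gamma_+)}^{1/2}+|\p_t f_R|_{L^2_t L^2(\gamma_+)}^{1/2}\bigr)$ plus the boundary forcing $\e^{1/2}|\eqref{est:f2}|_{L^4(\p\Omega)}$, exactly matching the last two lines of \eqref{L6}. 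A secondary but delicate task is the bookkeeping of all $\e$- and $\kappa$-powers: every micro-moment must be rescaled against the dissipation norm $\|\kappa^{-1/2}\e^{-1}\sqrt\nu(\mathbf{I}-\mathbf{P})f_R\|_{L^2_{t,x,v}}$, which produces the composite prefactor $\e^{-1}\kappa^{-1}+\kappa^{-1}\|\mathcal{I}\|_\infty+\e\kappa^{-1}\|\eqref{est:f2}\|_\infty+\e^{1/2}\kappa^{-1}\|\mathfrak{w}_\varrho f_R\|_{L^\infty_{x,v}}$ in the micro-estimate line of \eqref{L6}.
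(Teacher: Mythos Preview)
Your proposal is correct and follows essentially the same approach as the paper: the test-function method of \cite{EGKM,EGKM2} with elliptic potentials $\varphi$ solving $-\Delta\varphi = a^5$ (Neumann), $b_j^5$, $c^5$ (Dirichlet), the specific velocity polynomials $\Psi(v)$ chosen so that the $P_{\gamma_+}f_R$ boundary contribution vanishes identically, and the $L^3$--$L^6$--$L^2$ interpolation on $\Gamma(\mathbf{P}f_R,\mathbf{P}f_R)$ that produces $d_6$. The one mechanism you do not make explicit is how the $\p_t$-terms enter: the weak form \eqref{eqtn_fR} is tested at a \emph{fixed} time $t$, so the micro and boundary pairings first yield $\|(\mathbf{I}-\mathbf{P})f_R(t)\|_{L^2_{x,v}}$ and $|(1-P_{\gamma_+})f_R(t)|_{L^2(\gamma_+)}$; the paper then invokes the one-dimensional embedding $|g(t)|^2 \lesssim \int_0^T |g|^2 + \int_0^T |g'|^2$ to convert these into the $L^2_t$-norms of $f_R$ and $\p_t f_R$ appearing in \eqref{L6}.
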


\begin{proof} 
%For the sake of simplicity we use notations (\ref{short_notation}) throughout this subsection.

We view (\ref{eqtn_fR}) as a weak formulation for a test function $\psi$
\Be\label{weak_form}
\begin{split}
 &\underbrace{ \iint_{\O \times\R^3}    f_R   v\cdot \nabla_x \psi}_{(\ref{weak_form})_1}- \underbrace{ \int_\gamma f_R \psi}_{(\ref{weak_form})_2} -\underbrace{ \iint_{\O \times\R^3}      \e \p_t f_R   \psi }_{(\ref{weak_form})_3}
\\  &=  \iint_{\O \times \R^3} \psi \Bigg\{ \frac{1}{ \e \kappa} L  f_R + \e^{1/2} \left( (\mathbf{I-P}) (v \cdot \nabla_x f_2) - \frac{2}{\kappa} \Gamma(f_1,f_2) \right )  - \e^{3/2} \left(  - \p_t f_2 + \frac{1}{\kappa}  \Gamma(f_2,f_2) \right) 
\\ &  \quad \quad \quad \quad \quad \quad -  \frac{2}{ \kappa} \Gamma(f_1 + \e f_2,f_R)  +    \frac{\e^{1/2} }{\kappa}\Gamma(f_R, f_R) \Bigg \}.   
\end{split}
\Ee
The proof of the lemma is based on a recent test function method in the weak formulation (\cite{EGKM, EGKM2}). We define
\Be\label{P0f}
 \mathbf{\tilde{P}}f_R :=  \Big\{ a + b \cdot v + c \frac{|v|^2-3}{\sqrt{6}} \Big\} \sqrt{\mu} \ \text{and} \  \tilde{P}f_R := (a,b,c),
\Ee
where $a:= \langle f_R, \sqrt{\mu} \rangle, b:=  \langle f_R,v \sqrt{\mu} \rangle$, and $c:= \langle f_R,  \frac{|v|^2-3}{\sqrt{6}}\sqrt{\mu} \rangle$. We choose a family of test functions as %
\begin{align}
 \psi _{a }&:= (|v|^{2}-\beta _{a})v\sqrt{
\mu_0}\cdot \nabla _{x}\varphi_{a}%= \sum_{i=1}^{3}(|v|^{2}-\beta _{a})v_{i}\sqrt{\mu_0}\partial _{i}\varphi_{a,k}%
,  \label{phia}\\
 \psi _{b,1 }^{i,j}&:= (v_{i}^{2}-\beta _{b})\sqrt{\mu_0}\partial _{j}%
\varphi _{b }^{j},\quad i,j=1,2,3,  \label{phibj}\\
\psi^{i,j}_{b,2} &:=|v|^{2}v_{i}v_{j}\sqrt{\mu_0}\partial _{j}\varphi _{b }^{i} ,\quad i\neq
j,  \label{phibij} \\
\psi_{c }&:= (|v|^{2}-\beta _{c})v\sqrt{\mu_0}\cdot \nabla _{x}\varphi%
_{c } ,  \label{phic}
\end{align}
where we choose $\beta_a=10, \beta_b=1, \beta_c=5$ such that 
\Be
0 = \int_{\R^3} (|v|^2 - \beta_a) \frac{|v|^2-3}{\sqrt{6}} (v_1)^2 \mu_0 (v) \dd v 
=
\int_{\R} (v_1^2 - \beta_b) \mu_0 (v_1) \dd v_1 = \int_{\R^3} (|v|^2-\beta_c) v_i^2 \mu_0 (v) \dd v .\label{beta_abc}
\Ee
Here, 
\begin{align}
-\Delta _{x}\varphi_{a }  =  a^{5}&   \ \ \text{with} \ \  \frac{\partial \varphi_{a }}{%
\partial n}\Big|_{\partial \Omega }=0,%
 % \ \text{and} \ 
%\int_\O\varphi_a=0,
\label{phi_a}
\\ 
-\Delta _{x}\varphi _{b }^{j}  =b^{5}_{j}&   \ \ \text{with} \ \  \varphi _{b}^{j}|_{\partial
\Omega }=0, \label{phi_b}\\
 -\Delta _{x}\varphi_{c }  =c^{5}&  \ \ \text{with} \ \  \varphi %
_{c }|_{\partial \Omega }=0.\label{phi_c}
\end{align} 
A unique solvability to the above Poisson equations when $(a,b,c) \in L^6(\O)$ and an estimate 
\Be
\| \nabla_x^2 \varphi_{(a,b,c)}\|_{L^{6/5}(\O)} + \| \nabla_x \varphi _{(a,b,c)}\|_{L^2(\O)}
+ \| \varphi_{(a,b,c)}\|_{ L^6(\O) } \lesssim \| |\tilde{P}f_R|^5\|_{L^{6/5}(\O)}
\lesssim \| \tilde{P}f_R\|_{L^6(\O)}^5
. \label{elliptic:varphi}
\Ee
 is a direct consequence of Lax-Milgram and suitable extension  (extend $a^5$ of (\ref{phi_a}) evenly in $x_3 \in \R$, and $b^5$ and $c^5$ of (\ref{phi_b}) and (\ref{phi_c}) oddly in $x_3 \in \R$, then solve the Poisson equation, and then restrict the whole space solutions to the half space $x_3>0$)  and a standard elliptic estimate $(L^{\frac{6}{5}}(\O) \rightarrow \dot{W}^{2,\frac{6}{5}} (\O) \cap \dot{W}^{1,2} (\O) \cap L^6(\O))$. 

%From $M_{1,\e u , 1}(v) = M_{1,0 , 1}(v) + O( \e) |u| | v-\e u | M_{1,\e u , 1}(v) $ we can easily check that 
%\Be\label{diff:P-tP}
%|\mathbf{P} f_R (t,x,v)- \mathbf{\tilde{P}}f_R (t,x,v)|\lesssim \e |u(t,x)| |v-\e u| \sqrt{\mu }
% |
%f_R(t,x,v)|.
%\Ee
%Therefore we have 
%\Be\label{P-tildeP}
%\begin{split}
%\| P f_R(t) \|_{ L^6_x} \lesssim&
%\| \mathbf{P} f_R (t) \|_{L^6_{x,v}}  \lesssim \| \mathbf{\tilde{P}} f_R (t) \|_{L^6_{x,v}} 
%+ \e \| u(t) \|_{\infty} \{
%\|  {P} f_R (t) \|_{L^6_{x }}  + \| ( \mathbf{I}-\mathbf{P}) f_R (t) \|_{L^6_{x,v}} 
%\}\\
%\lesssim & (1+ \e \| u \|_\infty)  \|  {\tilde{P}} f_R (t) \|_{L^6_{x }} 
%+ \e \| u(t) \|_{\infty}   \| ( \mathbf{I}-\mathbf{P}) f_R (t) \|_{L^6_{x,v}} 
%.
%\end{split}
%\Ee
%Note that $ \| ( \mathbf{I}-\mathbf{P}) f_R (t) \|_{L^6_{x,v}} \leq  \| ( \mathbf{I}-\mathbf{P}) f_R (t) \|_{L^\infty_{x,v}} ^{2/3} \| ( \mathbf{I}-\mathbf{P}) f_R (t) \|_{L^2_{x,v}} ^{1/3}
%\lesssim o(1)  (\kappa \e)^{1/2}  \| \mathfrak{w} f_R (t) \|_{L^\infty_{x,v}}  + (\kappa \e)^{-1} \| ( \mathbf{I}-\mathbf{P}) f_R (t) \|_{L^2_{x,v}}$. 
Clearly to prove the lemma and (\ref{L6}) it suffices to prove the same bound for $\|  \tilde{P}f_R \|_{L^6_{x,v}}:=\| ( a,b,c) \|_{L^6_{x }} $.
%From we prive (\ref{L6_0}). 
%
%On the other hand,, it follows that $$\| P f_R \|_{L^\infty_t L^6_x} \lesssim\| \mathbf{P} f_R \|_{L^\infty_t L^6_{x,v}} \lesssim  \| \mathbf{\tilde{P}} f_R \|_{L^\infty_t L^6_{x,v}}
 %+ \e \|u\|_{L^\infty_{t,x}} \{
% \|  {P} f_R \|_{L^\infty_t L^6_{x }} + \|( \mathbf{I}- \mathbf{P}) f_R \|_{L^\infty_t L^6_{x,v}}
% \}. 
% $$Hence we finally conclude (\ref{L6}) from (\ref{L6_0}). 

Following the direct computations in the proof of Lemma 2.12 in \cite{EGKM2} we derive that 
\begin{equation}\label{est:wf1}  (\ref{weak_form})_1 =
\begin{cases}
 5  \| a(t) \|_{6}^6 +o(1)  \|   \mathbf{\tilde{P}} f_R(t) \|_6^6
+O(1) \| (\mathbf{I} - \mathbf{ {P}}) f_R(t) \|_6^6
& \text{if } \psi=\psi_a,\\ 
-2 \int_\O   b_i  \p_i \p_j  \varphi_b^j %\Delta^{-1} b_j^5
+o(1)  \|   \mathbf{\tilde{P}} f_R(t) \|_6^6 +O(1) \| (\mathbf{I} - \mathbf{ {P}}) f_R(t) \|_6^6  & \text{if } \psi=\psi_{b,1}^{i,j},\\ 
\int_\O b_j \p_{i} \p_j  \varphi_b^i
 + \int_\O b_i \p_{j} \p_j  \varphi_b^i +O(1) \| (\mathbf{I} - \mathbf{ {P}}) f_R(t) \|_6^6
 & \text{if } \psi=\psi_{b,2}^{i,j} \ \text{and} \ i \neq j,\\
 5  \| c(t)\|_6^6 +o(1)  \|   \mathbf{\tilde{P}} f_R(t) \|_6^6 + O(1) \| (\mathbf{I} - \mathbf{ {P}}) f_R(t) \|_6^6  & \text{if } \psi=\psi_c.
    \end{cases}
\end{equation}
For $\|  b_i\|_{6}^6$, using the second and third estimate of (\ref{est:wf1}) we deduce that %and
\Be\label{est:wf1_b}
\begin{split}
\|  b_i\|_{L^6(\O)}^6&= - \int_\Omega b_i \Delta_x\varphi_b^i \dd x =  - \int_\Omega b_i \p_i^2\varphi_b^i \dd x  - 
\sum_{j(\neq i)} \int_\Omega b_i \p_j^2\varphi_b^i \dd x
%\|b_i \p_i^2 \varphi_b^i \|_{L^1(\O)} +\sum_{j(\neq i)} \|  b_i \p_j^2\varphi_b^i \|_{L^1(\O)} 
\\
&= \frac{1}{2}\sum_{j}(\ref{weak_form})_1|_{\psi^{j,i}_{b,1}}  - \sum_{j(\neq i)}{(\ref{weak_form})_1|_{\psi^{i,j}_{b,2}}} %- \sum_{j(\neq i)} \int_\Omega b_j\p_i\p_j\varphi_b^i dx  
+ o(1)  \|   \mathbf{\tilde{P}} f_R(t) \|_6^6 +O(1) \| (\mathbf{I} - \mathbf{ {P}}) f_R(t) \|_6^6.
%&\lesssim  -\sum_{j(\neq i)} \|  b_j \p_i \p_j \varphi_b^i \|_{L^1(\O)} + o(1)  \|   \mathbf{\tilde{P}} f_R(t) \|_6^6 +O(1) \| (\mathbf{I} - \mathbf{ {P}}) f_R(t) \|_6^6\\
%&\lesssim  o(1)  \|  {\tilde{P}} f_R(t) \|_6^6 +O(1) \| (\mathbf{I} - \mathbf{ {P}}) f_R(t) \|_6^6.
\end{split}\Ee
\hide From the second and third estimate of (\ref{est:wf1}) and 
\Be\label{est:wf1_b}
\begin{split}
\|  b_i\|_{L^6(\O)}^6&=
\|b_i \p_i^2 \varphi_b^i \|_{L^1(\O)} +\sum_{j(\neq i)} \|  b_i \p_j^2\varphi_b^i \|_{L^1(\O)} 
\lesssim  -\sum_{j(\neq i)} \|  b_j \p_i \p_j \varphi_b^i \|_{L^1(\O)}
+ o(1)  \|   \mathbf{\tilde{P}} f_R(t) \|_6^6 +O(1) \| (\mathbf{I} - \mathbf{ {P}}) f_R(t) \|_6^6\\
&\lesssim  o(1)  \|  {\tilde{P}} f_R(t) \|_6^6 +O(1) \| (\mathbf{I} - \mathbf{ {P}}) f_R(t) \|_6^6.
\end{split}\Ee\unhide

Now we consider the boundary term $(\ref{weak_form})_2$. % Recall that $P_{\gamma_+} f(v) /  \sqrt{\mu_0(v)}$ is constant in $v$. 
From (\ref{phia})-(\ref{phic}) and (\ref{beta_abc})
\begin{equation}   \label{no_Pg}
  \int_{\gamma}  \psi P_{\gamma_+} f_R=
\begin{cases}
 \int_{\p\O}\p_n \varphi_a  \int_{\R^3} (|v|^2 - \beta_a) (v\cdot n)^2 \mu_0 \dd v \dd S_x  =0
& \text{if } \psi=\psi_a,\\
0& \text{if } \psi=\psi_{b,1}^{i,j} \ \text{or} \ \psi_{b,2}^{i,j},\\
 \int_{\p\O}\p_n \varphi_c  \int_{\R^3} (|v|^2 - \beta_c) (v\cdot n)^2 \mu_0 \dd v \dd S_x  =0
& \text{if } \psi=\psi_c. 
\end{cases}
\end{equation}
Here we have used the Neumann boundary condition of (\ref{phi_a}) for $\psi_a$, and the last identity in (\ref{beta_abc}) for $\psi_c$. For $\psi_{b,1}^{i,j}$ or $\psi_{b,2}^{i,j}$ we used the fact that the integrands are odd in $v$. From (\ref{bdry_fR}),  we decompose $f_R |_{\gamma}= P_{\gamma_+} f_R + \mathbf{1}_{\gamma_+} (1- P_{\gamma_+})f_R - \mathbf{1}_{\gamma_-} \e^{1/2} (1- P_{\gamma_+}) f_2$. From (\ref{no_Pg}) we have  
\Be
\begin{split}\label{est:wf2}
| (\ref{weak_form})_2|  &= \Big| \cancel{\int_{\gamma}  \psi P_{\gamma_+} f_R} +    \int_{\gamma}  \psi\{ \mathbf{1}_{\gamma_+} (1- P_{\gamma_+})f_R - \mathbf{1}_{\gamma_-} \e^{1/2} (1- P_{\gamma_+}) f_2\} \Big|\\
&\lesssim
 |\nabla_x \varphi| _{L^{4/3}(\p\O)} \big\{|(1-P_{\gamma_+}) f_R|_{4, \gamma_+} 
+ \e^{1/2}  |(\ref{est:f2})| _{L^{4}(\p\O) }  \big\}
\end{split}
\Ee
where we have used $|\int_{\gamma_+} \psi (1-P_{\gamma_+}) f_R |\lesssim  |\nabla_x \varphi|_{L^{ {4} / {3}}(\p\O) } |(1-P_{\gamma_+}) f_R|_{4, \gamma_+}$ at the last line. Here $\varphi \in \{  \varphi_a, \varphi_b, \varphi_c\}$. For the first term of (\ref{est:wf2}) we interpolate 
\Be\label{inter:bdry}
|(1-P_{\gamma_+}) f_R|_{4, \gamma_+}\lesssim | \e^{-\frac{1}{2}}(1-P_{\gamma_+}) f_R|_{2, \gamma_+}^{1/2} \e^{\frac{1}{4}} \|\mathfrak{w}_{\varrho} f_R\|_\infty^{1/2}.
\Ee
 For the second term of (\ref{est:wf2}), we use (\ref{elliptic:varphi}) %an elliptic estimate $(L^{\frac{6}{5}}(\O) \rightarrow \dot{W}^{2,\frac{6}{5}} (\O) \cap \dot{W}^{1,2} (\O) \cap L^6(\O))$ 
 and a trace theorem $(\dot{W}^{1,\frac{6}{5}} (\mathbb{T}^2\times \R_+) \cap L^2(\mathbb{T}^2\times \R_+) \rightarrow  {W}^{1- \frac{1}{6/5},\frac{6}{5}} (\mathbb{T}^2 )) $, and the Sobolev embedding $(W^{\frac{1}{6},\frac{6}{5}}( \mathbb{T}^2) \rightarrow L^{4/3} (\mathbb
 {T}^2))$ to conclude that
 \Be\label{4/3bdry}
 |\nabla_x \varphi|_{L^{\frac{4}{3}}( \mathbb{T}^2)}\lesssim  |\nabla_x \varphi |_{W^{\frac{1}{6},\frac{6}{5}}(  \mathbb{T}^2)}\lesssim \|  \nabla_x \varphi \|_{\dot{W}^{1, \frac{6}{5}}( \mathbb{T}^2 \times \R_+)
 \cap L^2(\O)
 }%\lesssim \| |Pf_R|^5\|_{L^{\frac{6}{5}}(\mathbb{T}^2 \times \R_+)} 
 \lesssim \| \tilde{P}f_R \|_{L^6(\mathbb{T}^2 \times \R_+)}^5 
 . 
 \Ee

Next we consider $(\ref{weak_form})_3$. For $\psi$ of (\ref{phia})-(\ref{phic}) and $\varphi$ of (\ref{phi_a})-(\ref{phi_c}), using (\ref{elliptic:varphi}), it follows that 
\Be\begin{split}
|(\ref{weak_form})_3|&\lesssim \e \| \p_t f_R \|_{L^{2}_{x,v}} \| \psi \|_{L^{2}_{x,v}}
\lesssim \e \| \p_t f_R \|_{L^{2}_{x,v}}\| \nabla_x \varphi \|_{L^{2}_{x }}
\lesssim  \e \| \p_t f_R \|_{L^{2}_{x,v}} \| \tilde{P}f_R \|_{L^6_x}^{5}\\
&
\leq O(1) [\e \| \p_t f_R \|_{L^{2}_{x,v}}]^6 + o(1) \|  \tilde{P}f_R \|_{L^6_x}^{6}. \label{est:wf3}
\end{split}\Ee

Lastly we consider the right hand side of (\ref{weak_form}). From (\ref{elliptic:varphi}), it follows 
\Be\label{est:wf_L}
\begin{split}
&\Big|\iint_{\O \times \R^3} \psi \frac{1}{\e \kappa } Lf_R\Big|  =\Big| \iint_{\O \times \R^3} \psi \frac{1}{\e \kappa } L (\mathbf{I} - \mathbf{P})f_R\Big|\\
%\frac{1}{\e^2\kappa} |(\mathbf{\tilde{P}}- \mathbf{P}) L (\mathbf{I} - \mathbf{P})f_R| &  
&\lesssim 
\frac{1}{\e \kappa} \int_\O \int_{\R^3}
|\nabla_x \varphi_{(a,b,c)}(x)|
 \mu(v)^{1/4}
\Big[ \nu(v ) |(\mathbf{I} - \mathbf{P})f_R (x,v ) |\\
& \ \ \ \ \ \ \ 
+\int_{\R^3} k_\vartheta (v,v_*)  |(\mathbf{I} - \mathbf{P})f_R (x,v_*) | \dd v_*\Big]
\dd v \dd x \\
&\lesssim \frac{1}{\e  \kappa}  \| \nabla_x \varphi_{(a,b,c)} \|_{L^2_x} \| (\mathbf{I} - \mathbf{P}) f_R\|_{L^2_{x,v}} \lesssim  \frac{1}{\e  \kappa} \|  \tilde{P}f \|_{L^6_x}^5\| (\mathbf{I} - \mathbf{P}) f_R\|_{L^2_{x,v}}\\
&\leq o(1)  \|  \tilde{P}f \|_{L^6_x}^6+ \big[  \e^{-1}  \kappa^{-1}  \| (\mathbf{I} - \mathbf{P}) f_R\|_{L^2_{x,v}}\big]^6
.
\end{split}
\Ee
%Note that, from (\ref{est_Carl:Gamma}), $|\Gamma(\frac{\e}{\kappa} {f_2}, f_R)
%%+  \Gamma(\frac{   \delta }{ \kappa}f_R, f_R)
%|\lesssim %\{ 
%\frac{\e}{\kappa} \| \mathfrak{w}_{\varrho} f_2 \|_{\infty} %+ \frac{   \delta }{ \kappa} \| \mathfrak{w} _{\varrho, \ss}  f_R \|_\infty  \} 
%\mathfrak{w}_{\varrho}(v)^{-1}
%\big[\nu(v) f_R(v)+
%\int_{\R^3} k_{\vartheta} (v,v_*) f_R(v_*) \dd v_*\big].$ Then from (\ref{est:I-Pf2}) and (\ref{est:Pf2})
We have
\Be\label{est:wf_Gamma1}
\begin{split}
& \Big|\iint_{\O \times \R^3} \psi \frac{1}{\kappa} \Gamma( f_1 + \e {f_2}, f_R) \Big| 
\\  &\lesssim   \| \nabla_x \varphi_{(a,b,c)} \|_{L^2_x} \left( \frac{1}{\kappa}\| \mathcal I \|_\infty + \frac{\e}{\kappa} \|(\ref{est:f2})\|_\infty \right) \| f_R \|_{L^2_{x,v}}
\\ & \leq o(1)  \|  \tilde{P}f \|_{L^6_x}^6+ \big[   \left( \frac{1}{\kappa}\| \mathcal I \|_\infty + \frac{\e}{\kappa} \|(\ref{est:f2})\|_\infty \right) (  \|  \mathbf P f_R\|_{L^2_{x,v}} + \| \mathbf{(I-P)} f_R\|_{L^2_{x,v}} ) \big]^6.
\end{split}
\Ee
For the contribution of $\Gamma(f_R,f_R)$ we decompose $f_R= \mathbf{P} f_R + (\mathbf{I} - \mathbf{P}) f_R$, and we have
\Be
\begin{split}
&|\Gamma(f_R,f_R)(v)|\\
& \lesssim |\Gamma(\mathbf{P}f_R,\mathbf{P}f_R)(v)| +   |\Gamma(( \mathbf{I}-\mathbf{P})f_R,( \mathbf{I}-\mathbf{P})f_R)(v)|\\
&\lesssim\nu(v) %\mu(v)^{1/4}
|Pf_R |^2\\
& \ \ \ \  + \| \mathfrak{w}_{\varrho} f_R \|_\infty 
\Big\{
\nu(v) |( \mathbf{I}-\mathbf{P})f_R)(v)|  + \int_{\R^3} k_\vartheta (v,v_*) |( \mathbf{I}-\mathbf{P})f_R)(v_*)|\dd v_*
\Big\}.
\end{split}
\Ee
Then from (\ref{phia})-(\ref{phic}), and the H\"older's inequality ($1=1/2+ 1/3+ 1/6$)
\Be
\begin{split}\label{est:wf_Gamma2}
&\Big|\iint_{\O \times \R^3} \psi  
%\frac{\e}{\kappa} \Gamma({f_2}, f_R)
   \frac{   \e^{1/2} }{  \kappa}\Gamma(f_R, f_R) 
\Big| \\ &\lesssim \frac{\e^{1/2}}{ \kappa} \| \nabla_x \varphi_{(a,b,c)}\|_{L^2_x}\Big\{ \| Pf_R \|_{L^3_x}  \| Pf_R \|_{L^6_x} 
+
\| \mathfrak{w}_{\varrho} f_R \|_{L^\infty_{x,v}} \|( \mathbf{I}-\mathbf{P})f_R  \|_{L^2_{x,v}}
 \Big\}\\
 & \lesssim \frac{ \e^{1/2}}{  \kappa} 
 \| \tilde{P}f_R \|_{L^6_x}^{5}  \| Pf_R\|_{L_x^6}^{3/2}
  \| Pf_R \|_{L_x^2}^{1/2} +  \frac{\e^{3/2}}{  \kappa^{1/2}} \| \tilde{P}f_R \|_{L^6_x}^{5} \| \mathfrak{w}_{\varrho} f_R \|_{L^\infty_{x,v}} \|\e^{-1} \kappa^{-1/2}( \mathbf{I}-\mathbf{P})f_R  \|_{L^2_{x,v}}, 
\end{split}
\Ee
where we have used an interpolation $\| Pf_R\|_{L^3}\leq \| Pf_R\|_{L^6}^{1/2} \| Pf_R \|_{L^2}^{1/2}$ and (\ref{elliptic:varphi}) at the last step. A contribution of the rest of terms in the r.h.s of (\ref{weak_form}) can be easily bounded as
\Be
\begin{split}\label{est:wf_Gamma3}
&  \iint_{\O \times \R^3} |\psi| \left|  \e^{1/2} \left( (\mathbf{I-P}) (v \cdot \nabla_x f_2) - \frac{2}{\kappa} \Gamma(f_1,f_2) \right )  - \e^{3/2} \left(  - \p_t f_2 + \frac{1}{\kappa}  \Gamma(f_2,f_2) \right)\right|
\\ \lesssim & \  \| Pf_R \|_{L^6_x}^{5}\Big\{ \e^{1/2}   \| (\ref{estR11})  \|_{L^2_{t,x}}  +\frac{\e^{1/2}}{\kappa} \| \kappa \eqref{estR12} \|_{L^2_{t,x}}+  \e^{3/2} \| (\ref{estR21})\|_{L^2_{t,x,v}}  + \frac{\e^{3/2}}{\kappa}  \| \kappa (\ref{estR22})\|_{L^2_{t,x,v}} \Big\}.
\end{split}
\Ee
Finally, from a standard $1$D embedding (see appendix (A.1) in \cite{JK} for the proof): for $T>0$,
\Be\label{Sob_1D}
|g(t)|^2 \lesssim_T \int_0^T |g(s)|^2 \dd s + \int_0^T |g^\prime(s)|^2 \dd s \ \ \text{for } t \in [0,T],
\Ee
we collect the terms from (\ref{est:wf1}) with (\ref{est:wf1_b}), (\ref{est:wf2}) with (\ref{inter:bdry}) and (\ref{4/3bdry}), (\ref{est:wf3}), (\ref{est:wf_L}), (\ref{est:wf_Gamma1}), (\ref{est:wf_Gamma2}), (\ref{est:wf_Gamma3}) to conclude
{%\color{red} 
 \Be \label{I-P:expansion}
 \begin{split}
 \sup_{0 \leq s \leq t} \|(\mathbf{I} -\mathbf{P}) f_R(s ) \|_{L^2_{x,v}} &\lesssim  \|(\mathbf{I} -\mathbf{P}) f_R  \|_{L^2_{t,x,v}} +  \|(\mathbf{I} -\mathbf{P})\p_t f_R  \|_{L^2_{t,x,v}},
 \\ \sup_{0 \leq s \leq t} \| \mathbf P f_R(s ) \|_{L^2_{x,v}} &\lesssim  \| \mathbf P f_R  \|_{L^2_{t,x,v}} +  \| \mathbf P\p_t f_R  \|_{L^2_{t,x,v}},
 \\  \sup_{0 \leq s \leq t}  |(1- P_{\gamma_+}) f_R(s)|_{L^2({\gamma_+})} &\lesssim \sup_{0 \leq s \leq t}   |  f_R(s)|_{L^2({\gamma_+})}  \lesssim    |  f_R |_{L^2_tL^2({\gamma_+})} +  |\p_t  f_R |_{L^2_tL^2({\gamma_+})}.
     \end{split}
 \Ee
} 
This proves (\ref{L6}).
\end{proof}

\subsection{$L^\infty$-estimate}

In this section we develop a unified $L^\infty$-estimate. 

Recall the weight $\mathfrak w$ in \eqref{weight}. We consider 
\Be\label{h}
h(t,x,v) = \mathfrak{w} (v) f_R(t,x,v).
\Ee

An equation for $h$ can be written from (\ref{eqtn_fR}) and (\ref{bdry_fR}) as 
\Be \label{eqtn:h}
\p_t  h+ \frac{1}{\e} v\cdot \nabla_x h+  \frac{\nu  }{\e^2 \kappa}  
 %- \frac{1}{\e} \frac{v \cdot \nabla_x  \mathfrak{w}_{\varrho}}{ \mathfrak{w}_{\varrho}} + \frac{(\p_t + \e^{-1} v\cdot \nabla_x) \sqrt{\mu}}{\sqrt{\mu}}
h =  \frac{1}{\e^2 \kappa}K_{\mathfrak{w}} h  + \mathcal{S}_h
,
\Ee 
 \Be\label{bdry:h}
 h|_{\gamma_-} = \mathfrak{w} P_{\gamma_+} \Big(\frac{h}{\mathfrak{w}}\Big)+ r.
 \Ee
 For (\ref{h}) and \eqref{eqtn_fR}, we have $r=- \e^{1/2} \mathfrak{w} (1- P_{\gamma_+}) f_2$ and
 \[ \begin{split}
 \mathcal{S}_h: = &  \frac{1}{\kappa \e^{1/2}}  \Gamma_{ {\mathfrak{w} }}( {h} ,  {h} )
+\frac{2}{\kappa \e } \Gamma_{\mathfrak{w}} ( \mathfrak w f_1 + \e \mathfrak{w} f_2,h) 
\\ & +   \frac{1}{\e^{1/2}} \left \{ - \mathfrak{w} (\mathbf{I-P}) (v \cdot \nabla_x f_2) + \frac{2}{\kappa} \Gamma_{\mathfrak{w} } (\mathfrak{w} f_1, \mathfrak{w} f_2) \right \}  +  \e^{1/2} \left(  - \mathfrak{w}  \p_t f_2 + \frac{1}{\kappa}  \Gamma_{\mathfrak{w}} (\mathfrak{w} f_2, \mathfrak{w} f_2) \right),
\end{split} \]
where we denote $\Gamma_{\mathfrak{w}}(\cdot, \cdot )(v) := \mathfrak{w} (v)\Gamma(\frac{\cdot}{\mathfrak{w}}, \frac{\cdot}{\mathfrak{w}})(v)$ and $K_{\mathfrak{w}}(\cdot ):= \mathfrak{w} K( \frac{\cdot}{\mathfrak{w}})$. 
 
%If we have
%\Be\label{decay:u}
%\e^{5/2} \kappa |\p_t u| +  
%\e^{1/2}\sup_{x \in \O}(1+ x_3)  |\nabla_x u(t,x)|  <\infty,
%\Ee
%then for sufficiently small $\e, \kappa>0$, from (\ref{est:vw_x}), 
%\Be
%\nu  \geq \nu(v) +\frac{ \e \kappa}{2}   \mathfrak{z}_{\ss} (x_3) |v|^2 
% - \e^2\kappa \{
% \e |\p_t u|  + |\nabla_x u| |v| 
% \}|v-\e u| \geq \frac{\nu(v) }{2} + \frac{ \e \kappa}{4}   \mathfrak{z}_{\ss} (x_3) |v|^2 .
% \label{lower:nu_ss}
% \Ee

 From (\ref{Gamma})
\Be\label{est:Gamma_w}
\begin{split}
 & |\mathfrak{w}(v) \Gamma( \frac{h}{\mathfrak{w}}, \frac{h}{\mathfrak{w}}) (v)| \\
&\leq   \iint_{\R^3 \times \S^2} 
|(v-v_*) \cdot \mathfrak{u}| 
\sqrt{ \mu(v_* )} e^{- \varrho |v_*|^2}
\big\{
|h( v ^\prime)|| h( v_*^\prime)|
%+ \sqrt{\mu(v^\prime)\mu(v_*^\prime)} f(t,v ^\prime)
+| h( v  )||h( v_* )|
%-\sqrt{\mu(v )\mu(v_* )} f(t,v  )
\big\}
\dd \mathfrak{u} \dd v_* \\
&\lesssim_\varrho \nu(v) \|h\|_{L^\infty_v}^2.
\end{split}
\Ee
 From (expression of k) clearly we have 
 \Be\label{k_w}
\mathbf{k}(v,v_*) \frac{\mathfrak{w}_{\varrho}(v)}{\mathfrak{w}_{\varrho}(v
 _*)} \leq   \mathbf{k}_{\mathfrak{w}} (v,v_*): =  \frac{2C_{2}}{|v-v_*|}  e^{- \frac{|v-v_*|^2}{8}
- \frac{1}{8} \frac{(|v-\e u|^2 - |v_*- \e u|^2)^2}{|v-v_*|^2}}
 \frac{\mathfrak{w}_{\varrho}(v)}{\mathfrak{w}_{\varrho}(v
 _*)} .
 \Ee
As in (estimate for k) we derive  
 \Be\label{est:K_w}
 \int_{\R^3}
 \mathbf{k}_{\mathfrak{w}} (v,v_*)
  \dd v_* \lesssim \frac{1}{1+ |v|}.
 \Ee

 \begin{proposition}\label{est:Linfty}Recall $\mathfrak{w}_{\varrho}$ in (\ref{weight}). Assume the same assumptions in Proposition \ref{prop:Hilbert}.  Then
% In addition we assume (\ref{decay:u}), and the conditions of $\varrho$ and $\ss$ in (\ref{weight}). Then  %{\color{red}JJ: below $\e \kappa^2 \| (\ref{est:R2})\|_{L^\infty_{t,x}} $ should be $\e^2 \kappa( \| (\ref{est:R1})\|_{L^\infty_{t,x}}+ \| (\ref{est:R2})\|_{L^\infty_{t,x}})$ ? This term is from \eqref{est:S_h} and \eqref{infty1}}
 \Be\label{Linfty_3D}
 \begin{split}
&d_\infty
 \| \mathfrak{w}_{\varrho} f _R  \|_ {L^\infty_{t,x,v}}  
 \\ \lesssim &  \   \|  \mathfrak{w}_{\varrho} f (0)\|_ {L^\infty_{ x,v}}    +\e^{1/2} \| (\ref{est:f2}) \|_{L^\infty_{t,x}} 
 \\ & +  {\e^{3/2} \kappa  }  \left( \| (\ref{estR11})\|_\infty + \frac{1}{\kappa} \| \kappa \eqref{estR12} \|_\infty  \right)+   \e^{5/2} \kappa \left( \| (\ref{estR21})\|_\infty + \frac{1}{\kappa} \|  \kappa \eqref{estR22} \|_\infty \right)
\\ &+ \frac{1}{\e^{1/2} \kappa^{1/2}}   \|  P  f_R  \|_{L^\infty_tL^6_{x}} + \frac{1}{\e^{3/2} \kappa^{3/2}}\Big\{ \|  \sqrt{\nu}(\mathbf{I} -\mathbf{P}) f_R  \|_{L^2_{t,x,v}}+  \| \sqrt{\nu}(\mathbf{I} -\mathbf{P}) \p_t f_R  \|_{L^2_{t,x,v}}
\Big\}
%\\&
%{%\color{red}+
%+\frac{1}{\e^{1/2}\kappa^{3/2}} \| \p_t u \|_{L^\infty_{t,x}} \| Pf_R \|_{L^2_{t,x}}
%}
,
 \end{split}\Ee%
 where 
\Be\label{dinfty}
%\bcb 
d_{\infty}:= 1-  \e^2 \| (\ref{est:f2}) \|_{L^\infty_{t,x}}  - \e   \|\mathcal I \|_{L^\infty_{t,x}}  -  \e^{3/2}  \| \mathfrak{w}_{\varrho} f_R    \|_{L^\infty_{t,x,v}}  %\ec
 .
\Ee
\end{proposition}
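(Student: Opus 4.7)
The plan is to apply the $L^2$–$L^\infty$ framework of \cite{Guo10, EGKM} to the weighted equation \eqref{eqtn:h}–\eqref{bdry:h}. For fixed $(t,x,v)$ I would write Duhamel's formula along the straight characteristic $X(s) := x - \e^{-1}(t-s)v$, stopping at the earliest backward time $t_1 \in [0,t]$, which is either $0$ (if the trajectory stays in $\Omega$) or the first hitting time $t_{\mathbf b}(x,v)$ at $\partial\Omega$. This yields
\[
h(t,x,v) = e^{-\nu(v)(t-t_1)/(\e^2\kappa)}\,h(t_1, X(t_1), v) + \int_{t_1}^{t} e^{-\nu(v)(t-s)/(\e^2\kappa)}\Bigl(\tfrac{1}{\e^2\kappa} K_{\mathfrak w} h + \mathcal S_h\Bigr)(s,X(s),v)\,ds,
\]
together with the basic gain $\int_{t_1}^t e^{-\nu(v)(t-s)/(\e^2\kappa)}\,ds \lesssim \e^2\kappa/\nu(v)$.

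I would then estimate $\mathcal S_h$ term by term using \eqref{est:Gamma_w}, \eqref{est:f2} and \eqref{estR11}–\eqref{estR22}. The $h$-dependent contributions $\tfrac{\nu(v)}{\kappa\e^{1/2}}\|h\|_\infty^2$ from $\Gamma_{\mathfrak w}(h,h)$ and $\tfrac{\nu(v)}{\kappa}(\e^{-1}\|\mathcal I\|_\infty + \|f_2\|_\infty)\|h\|_\infty$ from $\Gamma_{\mathfrak w}(\mathfrak w f_1 + \e\mathfrak w f_2, h)$, multiplied by $\e^2\kappa/\nu(v)$, produce exactly $\e^{3/2}\|h\|_\infty^2$, $\e\|\mathcal I\|_\infty\|h\|_\infty$ and $\e^2\|f_2\|_\infty\|h\|_\infty$, which I would move to the left-hand side to form $d_\infty$ as in \eqref{dinfty}. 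The remaining inhomogeneous parts of $\mathcal S_h$ carry factors of $\e^{\pm 1/2}$ and no $\nu$, so after the $\e^2\kappa/\nu(v)$ gain they yield the stated source terms $\e^{3/2}\kappa\|\eqref{estR11}\|_\infty$, $\e^{1/2}\kappa\|\kappa\eqref{estR12}\|_\infty$, $\e^{5/2}\kappa\|\eqref{estR21}\|_\infty$, and $\e^{3/2}\kappa\|\kappa\eqref{estR22}\|_\infty$.

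For the boundary value $h(t_1, X(t_1), v)$ when $t_1 = t_{\mathbf b}$, I would iterate the reflection condition \eqref{bdry:h} to set up the Guo stochastic cycles, alternating Duhamel integration on straight segments with diffuse reflection at successive boundary hits. The non-homogeneous part $r = -\e^{1/2}\mathfrak w(1-P_{\gamma_+})f_2$ directly contributes the $\e^{1/2}\|\eqref{est:f2}\|_{L^\infty_{t,x}}$ term, and standard measure estimates on the Gaussian-weighted iterated reflections guarantee that after $O(|\log\e|)$ bounces only $\|\mathfrak w_\varrho f_R(0)\|_\infty$ and the interior source survive, up to an arbitrarily small constant times $\|h\|_\infty$.

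The main obstacle is the gain term $\tfrac{1}{\e^2\kappa}K_{\mathfrak w}h$, where a naive $L^\infty$ bound on $K_{\mathfrak w}h$ would yield a factor $\e^{-2}\kappa^{-1}\|h\|_\infty$ that cannot be absorbed. Following the double-Duhamel idea of \cite{Guo10}, I would substitute the Duhamel representation for $h(s, X(s), v_*)$ back into the $K_{\mathfrak w}$-integral to produce a double space–time integral against the kernel $\mathbf k_{\mathfrak w}(v,v_*)\mathbf k_{\mathfrak w}(v_*, v_{**})$. On the singular region $\{|v_*|\le N^{-1}\}\cup\{|v-v_*|\le N^{-1}\}\cup\{|v_*|+|v_{**}|\ge N\}\cup\{s-s_1\le \delta\e^2\kappa\}$ one uses \eqref{est:K_w} to get an $o(1)\|h\|_\infty$ error, which is absorbed into $d_\infty$. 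On the complementary set, a change of variables $v_{**}\mapsto y := X(s_1;s,X(s),v_*) - \e^{-1}(s-s_1)v_{**}$ with Jacobian $\sim(\e/(s-s_1))^3$ converts the pointwise control of $h(s_1,\cdot,v_{**})$ into a local $L^2_{x,v}$-average of $f_R$, producing the prefactor $\e^{-3/2}\kappa^{-3/2}$ seen in \eqref{Linfty_3D}. Finally, applying \eqref{I-P:expansion} to pass from $\sup_s\|(\mathbf I-\mathbf P)f_R(s)\|_{L^2_{x,v}}$ to $L^2_t L^2_{x,v}$ dissipation norms of $(\mathbf I-\mathbf P)f_R$ and $(\mathbf I-\mathbf P)\p_t f_R$, together with the $L^\infty_t L^6_x$ control of $\mathbf P f_R$ from Proposition \ref{prop:L6}, yields the last line of \eqref{Linfty_3D}. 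Collecting all contributions and absorbing the $h$-dependent coefficients into $d_\infty$ on the left produces the claimed bound.
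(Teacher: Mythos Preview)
Your overall strategy is the same as the paper's and is essentially correct, but two points in your description need correction. First, because $\Omega=\mathbb{T}^2\times\mathbb{R}_+$ is a half-space, a backward trajectory hits $\partial\Omega$ at most once: after one diffuse reflection the new velocity has $\mathfrak v_3<0$ and the characteristic goes to $x_3=+\infty$ without ever returning. Hence the paper expands the boundary term exactly once (see \eqref{bdry:K}) and never invokes stochastic cycles; your proposed $O(|\log\e|)$-bounce iteration is unnecessary here and would only complicate matters. Second, in the double-$K$ term \eqref{double_K} the spatial argument $x-\tfrac{t-s}{\e}v-\tfrac{s-\tau}{\e}v_*$ depends on $v_*$ but \emph{not} on $v_{**}$, so the change of variables that produces the Jacobian $|(s-\tau)/\e|^3$ must be $v_*\mapsto y$ (as in \eqref{COV}), not $v_{**}\mapsto y$; your formula for $y$ in terms of $v_{**}$ is ill-posed. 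With these two fixes your sketch matches the paper's argument, including the $L^6/L^2$ split for $\mathbf Pf_R/(\mathbf I-\mathbf P)f_R$ and the use of \eqref{I-P:expansion} to pass from $\sup_s\|(\mathbf I-\mathbf P)f_R(s)\|_{L^2_{x,v}}$ to $L^2_{t,x,v}$ norms of $(\mathbf I-\mathbf P)f_R$ and $(\mathbf I-\mathbf P)\partial_t f_R$.
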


\begin{proposition}\label{prop:Linfty_t}
Assume the same assumptions of Proposition \ref{est:Linfty}. We denote 
\Be\label{w_prime}
\mathfrak{w}^\prime(v) : =   \mathfrak{w}_{\varrho^\prime}(v) \ \ \text{for} \ \varrho^\prime<\varrho.
\Ee
Let $p<3$. Then 
\Be\label{Linfty_3D_t}
\begin{split}
&d_{\infty, t} \| \mathfrak{w} ^\prime  \p_t  f_R  \|_{L^2_t ((0,T);L^{\infty}_{x,v} (\O \times \R^3))}
\\ \lesssim & \ \e\kappa^{1/2} \| \mathfrak{w} ^\prime  \p_t  f_R(0 ) \|_{ L^{\infty}_{x,v}}+  \frac{1}{\e^{3/p} \kappa^{3/p}} \| P \p_t f  \|_{L^2_t  L^p_{x} } +  \frac{1}{\e^{3/2} \kappa^{3/2} } \| \sqrt{\nu} ( \mathbf{I} - \mathbf{P}) \p_t f  \|_{L^2 _{t,x,v}}
\\ &+ \e^{3/2}  \left(  \kappa \| \eqref{estR11t} \|_{L^2_t L^\infty_x} +  \| \kappa \eqref{estR12t} \|_{L^2_t L^\infty_x} \right) + \e^{5/2} \left( \kappa \| \eqref{estR21t} \|_{L^2_t L^\infty_x} +  \| \kappa \eqref{estR22t} \|_{L^2_t L^\infty_x} \right)
  \\ & + \big(  \e \| \p_t \mathcal I \|_{L^\infty_{t,x} } +  \e^2 \|(\ref{estptf2})\|_{L^\infty_{t,x}} \big) \| \mathfrak{w} f_R\|_{L^\infty_{t,x,v}}, 
\end{split}
\Ee
with 
\Be\label{dinftyt}
d_{\infty, t}: = 
1- \e \|  \mathcal I \|_{L^\infty_{t,x}}  - 
\e^2 \| (\ref{est:f2})\|_{L^\infty_{t,x}} - 
\e^{3/2} \| \mathfrak{w} f_R\|_{L^\infty_{t,x,v}}.
\Ee
\end{proposition}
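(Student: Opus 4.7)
The plan is to parallel the argument for Proposition~\ref{est:Linfty}, applied to $h_t := \mathfrak{w}'\,\p_t f_R$. From (\ref{eqtn_fR_t}) and (\ref{bdry_fR_t}), $h_t$ solves a transport equation of the same shape as (\ref{eqtn:h})--(\ref{bdry:h}), namely
\[
\p_t h_t + \tfrac{1}{\e} v\cdot\nabla_x h_t + \tfrac{\nu}{\e^2\kappa} h_t \;=\; \tfrac{1}{\e^2\kappa}\, K_{\mathfrak{w}'} h_t + \mathcal{S}_{h_t},
\qquad h_t|_{\gamma_-} = \mathfrak{w}' P_{\gamma_+}\!\bigl(h_t/\mathfrak{w}'\bigr) + r_t,
\]
with $r_t = -\e^{1/2}\mathfrak{w}'(1-P_{\gamma_+})\p_t(\mathbf{I}-\mathbf{P})f_2$, and $\mathcal{S}_{h_t}$ collecting the forcing on the right of (\ref{eqtn_fR_t}). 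The linear-in-$f_2$ and $\p_t f_2$ parts of $\mathcal{S}_{h_t}$ are pointwise bounded by (\ref{estR11t})--(\ref{estR22t}) together with (\ref{estptf2}), while the bilinear contributions $\Gamma(f_1+\e f_2,\p_t f_R)$ and $\Gamma(\p_t f_R,f_R)$, thanks to (\ref{est:Gamma_w}), produce exactly the coefficients $\e\|\mathcal I\|_\infty + \e^2\|f_2\|_\infty + \e^{3/2}\|\mathfrak{w}f_R\|_\infty$ that assemble into $d_{\infty,t}$ on the left-hand side. The remaining cross-terms $\Gamma(\p_t f_1,f_R),\Gamma(\e\p_t f_2,f_R)$ yield the source $(\e\|\p_t\mathcal I\|_\infty + \e^2\|(\ref{estptf2})\|_\infty)\|\mathfrak{w} f_R\|_\infty$ on the right.

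First I will write $h_t(t,x,v)$ by the Duhamel formula along the backward characteristic $x-(t-s)v/\e$. When the characteristic strikes $\p\O$, the diffuse reflection identity reseeds $v$ against $c_\mu\mu$, producing the stochastic-cycles expansion of \cite{Guo10, EGKM2}: after $k$ bounces one obtains the initial-data term $e^{-\nu(v)t/(\e^2\kappa)} h_t(0)$, integrals of $\mathcal{S}_{h_t}$ and $r_t$ along the broken trajectory, and a residual $K_{\mathfrak{w}'}h_t$-tail. Choosing $k$ large forces the probability of having all $k$ bounces occur within a window of length $O(\e\kappa)$ to be negligible, so the tail can be dropped; the $L^2_t$-norm of $e^{-\nu(v)t/(\e^2\kappa)}h_t(0)$ gives the factor $\e\kappa^{1/2}\|\mathfrak{w}'\p_t f_R(0)\|_{L^\infty_{x,v}}$, while the characteristic-integrated source contributes $\e^{3/2}\kappa$ times the $L^2_tL^\infty_x$-norms of (\ref{estR11t})--(\ref{estR12t}) and $\e^{5/2}\kappa$ times those of (\ref{estR21t})--(\ref{estR22t}), matching the stated bound.

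The core step is the treatment of the iterated $K_{\mathfrak{w}'}h_t$ contributions. I will apply $K_{\mathfrak{w}'}$ twice, so that by (\ref{est:K_w}) the double kernel $\int k_{\mathfrak{w}'}(v,v_*) k_{\mathfrak{w}'}(v_*,v_{**})\,dv_*$ is integrable, and then exchange pointwise control for an integral of $h_t$ over a space-time window of width $O(\e\kappa)$ via Cauchy--Schwarz. Splitting $\p_t f_R = \mathbf{P}\p_t f_R + (\mathbf{I}-\mathbf{P})\p_t f_R$: the microscopic piece integrated in $(s,y,v_{**})$ yields $\e^{-3/2}\kappa^{-3/2}\|\sqrt{\nu}(\mathbf{I}-\mathbf{P})\p_t f_R\|_{L^2_{t,x,v}}$, while the macroscopic piece, through H\"older on the time-of-flight ball of diameter $O(\e\kappa)$, produces $\e^{-3/p}\kappa^{-3/p}\|\mathbf{P}\p_t f_R\|_{L^2_tL^p_x}$, which is precisely why $p<3$ enters.

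The main obstacle will be the simultaneous management of the $L^2_t$-integrability with the stochastic cycles: unlike the pure $L^\infty_{t,x,v}$ setting of Proposition~\ref{est:Linfty}, the supremum in time cannot be extracted before invoking the $L^p_x$-gain of Proposition~\ref{prop:average}. I will apply Cauchy--Schwarz in $t$ at each iterate, and exploit the strict gap $\varrho'<\varrho$ to convert the loss $\nu(v)\|\mathfrak{w}f_R\|_\infty$ from the $\Gamma_{\mathfrak{w}'}$-bound (\ref{est:Gamma_w}) into the absorbable coefficient $\e^{3/2}\|\mathfrak{w}f_R\|_\infty$ by writing $\mathfrak{w}'\Gamma_{\mathfrak{w}'}(\mathfrak{w}'\cdot,\mathfrak{w}\cdot) \lesssim e^{-(\varrho-\varrho')|v|^2}\nu(v)\|\mathfrak{w}'\p_t f_R\|_\infty\|\mathfrak{w}f_R\|_\infty$. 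Careful tracking of the Jacobians in the change of variables $s\mapsto y = x-(t-s)v/\e$ and over the successive bounces ensures that the $\e,\kappa$-exponents $3/p$ and $3/2$ line up exactly as stated. Collecting all pieces, and moving the bilinear absorption to the left as $d_{\infty,t}$, yields (\ref{Linfty_3D_t}).
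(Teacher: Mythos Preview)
Your overall strategy is the paper's: write $h_t=\mathfrak{w}'\p_t f_R$, run Duhamel along the backward trajectory with one diffuse reflection, iterate $K_{\mathfrak{w}'}$ twice, split into $\mathbf{P}\p_t f_R$ and $(\mathbf{I}-\mathbf{P})\p_t f_R$, and convert the velocity integral into a spatial one. Two technical points are off, however, and they are exactly what distinguishes this proposition from Proposition~\ref{est:Linfty}.

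First, the change of variables producing the $\e^{-3/p}\kappa^{-3/p}$ and $\e^{-3/2}\kappa^{-3/2}$ factors is \emph{not} $s\mapsto y=x-(t-s)v/\e$; that map is one-dimensional and its Jacobian is $|v|/\e$, which carries no $\kappa$. The correct map (see (\ref{COV}) and (\ref{COV_1})) is the three-dimensional $v_*\mapsto y=x-\tfrac{t-s}{\e}v-\tfrac{s-\tau}{\e}v_*$ (or $\mathfrak{v}\mapsto y$ after the bounce), with Jacobian $|(s-\tau)/\e|^3\gtrsim(\e\kappa)^3$ once one has restricted to $s-\tau\ge o(1)\e^2\kappa$. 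Your earlier phrase ``time-of-flight ball of diameter $O(\e\kappa)$'' suggests you have the right picture, but as written the Jacobian statement is wrong.

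Second, ``Cauchy--Schwarz in $t$'' is not the tool that closes the $L^2_t$ estimate. After the double-$K$ iteration and the change of variables above, one has (cf.\ (\ref{est:Kf_t:1})--(\ref{est:Kf_t:2}))
\[
\int_0^t\frac{e^{-C_\nu(t-s)/(\e^2\kappa)}}{\e^2\kappa}\int_0^s\frac{e^{-C_\nu(s-\tau)/(\e^2\kappa)}}{\e^2\kappa}\,g(\tau)\,\dd\tau\,\dd s,
\]
with $g(\tau)=\e^{-3/p}\kappa^{-3/p}\|P\p_t f_R(\tau)\|_{L^p_x}+\e^{-3/2}\kappa^{-3/2}\|(\mathbf{I}-\mathbf{P})\p_t f_R(\tau)\|_{L^2_{x,v}}$. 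Taking $L^2_t$ by Cauchy--Schwarz would cost $\|e^{-C_\nu|\cdot|/(\e^2\kappa)}/(\e^2\kappa)\|_{L^2}\sim(\e^2\kappa)^{-1/2}$ per kernel. What the paper does in (\ref{est:double_K_t}) and (\ref{est:K_bdry_t}) is apply Young's inequality for temporal convolution, $L^1_t*L^2_t\to L^2_t$, twice: the kernel has $L^1_t$ norm $O(1)$, so $\|\text{above}\|_{L^2_t}\lesssim\|g\|_{L^2_t}$ with no loss. The same device handles the source terms and yields the $L^2_tL^\infty_x$ norms of (\ref{estR11t})--(\ref{estR22t}) in the statement.

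A minor point: in the half-space $\O=\mathbb{T}^2\times\mathbb{R}_+$ a backward trajectory hits $\p\O$ at most once and, after diffuse reflection with $\mathfrak{v}_3<0$, never returns. The full stochastic-cycles expansion with $k$ bounces is unnecessary here; the paper's proof uses exactly one reflection, as in Proposition~\ref{est:Linfty}.
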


 In the proof of propositions, for simplicity, we often use $\| \ \cdot \ \|_\infty$ for $\| \ \cdot \ \|_{L^\infty_{t,x,v}}$, $\| \ \cdot \ \|_{L^\infty_{x,v}}$ or $\| \ \cdot \ \|_{L^\infty_{x}}$ if there would be no confusion. 

\begin{proof}[\textbf{Proof of Proposition \ref{est:Linfty}}]
 We define backward exit time and position as 
 \Be\label{tb}
 \tb(x,v) : = \e \frac{x_3}{v_3}, \ 
  \ \ 
  \xb(x,v) := x-   \frac{x_3}{v_3} v \ \ \text{for} \ \ (x,v) \in  \O  \times \R^3. 
 % \ \ \text{for} \  \ v_3> 0.
 \Ee
 %
%Recall the backward exit time $\tb$ in (\ref{tb}). 
Since the characteristics for (\ref{eqtn:h}) are given by $(x- \frac{t-s}{\e}v, v)$, we have, for $0 \leq t-s <  \tb(x,v)$,   
 \Be\label{Duhamel}
 \frac{d}{ds}\Big\{  e^{-\int^t_s \frac{\nu  }{\e^2 \kappa}  } h(s,x- \frac{t-s}{\e}v, v )
\Big\}=e^{-\int^t_s \frac{\nu  }{\e^2 \kappa}  } 
\Big\{ \frac{1}{\e^2 \kappa}K_{\mathfrak{w}} h 
+\mathcal{S}_h
% + \frac{\delta}{\kappa \e}  \Gamma_{ {\mathfrak{w} }}( {h} ,  {h} )
%+\frac{1}{\kappa} \Gamma_{\mathfrak{w}} ( \mathfrak{w} f_2,h)
\Big\}(s,x- \frac{t-s}{\e}v, v ).
 \Ee
Here $e^{-\int^t_s \frac{\nu_{}  }{\e^2 \kappa} }=e^{-\int^t_s \frac{\nu_{} (\tau , x- \frac{t-\tau}{\e}v, v )}{\e^2 \kappa} \dd \tau}$. We regard $(x_1- \frac{t-s}{\e} v_1, x_2- \frac{t-s}{\e} v_2) \in \mathbb{R}^2$ belongs to $\mathbb{T}^2$ without redefining them in $[- \pi, \pi]^2$. 
 
 Now we represent $h$ using (\ref{Duhamel}) and (\ref{bdry:h}) as 
 \begin{align}
 h(t,x,v) =& \mathbf{1}_{t-\tb(x,v)<0}e^{-\int^t_0 \frac{\nu  }{\e^2 \kappa}  } h(0,x- \frac{t }{\e}v, v )
\notag% \label{h:initial}
 \\
 &+ \int^t_{\max\{0, t-\tb(x,v)\}}
 e^{-\int^t_s \frac{\nu  }{\e^2 \kappa}  } 
 \frac{1}{\e^2 \kappa}K_{\mathfrak{w}} h   (s,x- \frac{t-s}{\e}v, v )
 \dd s
 \label{h:K}
  \\
  &+ \int^t_{\max\{0, t-\tb(x,v)\}}
 e^{-\int^t_s \frac{\nu  }{\e^2 \kappa}  } 
\mathcal{S}_h(s,x- \frac{t-s}{\e}v, v )
 \dd s
\notag% \label{h:Gamma}
  \\
& + \mathbf{1}_{t-\tb(x,v)\geq 0}e^{-\int^t_{t-\tb(x,v)} \frac{\nu  }{\e^2 \kappa}  } 
 h(t-\tb(x,v),\xb(x,v), v )
%\mathfrak{w} (\xb(x,v), v )
%c_\mu   \sqrt{\mu(v)}
 %\int_{\mathfrak{v}_3<0} h(t-\tb(x,v),\xb(x,v), \mathfrak{v})  \frac{ \sqrt{\mu(\mathfrak{v})} |\mathfrak{v}_3| }{\mathfrak{w} (\mathfrak v) }\dd \mathfrak{v}
.\label{h:bdry}
 \end{align}
 Since the integrand of (\ref{h:bdry}) reads on the boundary, using the boundary condition (\ref{bdry:h}) and (\ref{Duhamel}) again, we represent it as 
 \begin{align}
&% \mathbf{1}_{t-\tb(x,v)\geq 0}e^{-\int^t_{t-\tb(x,v)} \frac{\nu  }{\e^2 \kappa}  } 
h(t-\tb(x,v),\xb(x,v), v )\notag
\\
=&
\mathfrak{w} (v)
c_\mu   \sqrt{\mu(v)}
 \int_{\mathfrak{v}_3<0} h(t-\tb(x,v),\xb(x,v), \mathfrak{v})  \frac{ \sqrt{\mu(\mathfrak{v})} |\mathfrak{v}_3| }{\mathfrak{w} (\mathfrak v) }\dd \mathfrak{v}
% - \frac{\e}{\delta} \mathfrak{w} (1- P_{\gamma_+}) f_2 
+ r(t-\tb(x,v),\xb(x,v),  v ) 
 \notag
\\
=&%\mathbf{1}_{t-\tb(x,v)\geq 0}e^{-\int^t_{t-\tb(x,v)} \frac{\nu  }{\e^2 \kappa}  }
\mathfrak{w} (v)
c_\mu   \sqrt{\mu(v)}
 \int_{\mathfrak{v}_3<0} 
 e^{
- \int^{t-\tb(x,v)}_0 \frac{\nu  }{\e^2 \kappa}  
} 
 h(0,\xb(x,v)- \frac{t-\tb(x,v)}{\e} \mathfrak{v}, \mathfrak{v})  \frac{ \sqrt{\mu(\mathfrak{v})} |\mathfrak{v}_3| }{\mathfrak{w} (\mathfrak v) }\dd \mathfrak{v}
\notag%\label{bdry:initial}
\\
&+\mathfrak{w} (v)
c_\mu   \sqrt{\mu(v)}
 \int_{\mathfrak{v}_3<0}\notag \\
 & \ \ \ \times 
 \int^{t-\tb(x,v)}_0 
 e^{- \int^{t-\tb(x,v)}_s
 \frac{\nu}{\e^2 \kappa} 
 } \frac{1}{\e^2 \kappa} K_{\mathfrak{w}} h (s, \xb(x,v) -
 \frac{t-\tb(x,v)-s}{\e} \mathfrak{v}, \mathfrak{v}
  )
% e^{
%- \int^{t-\tb(x,v)}_0 \frac{\nu  }{\e^2 \kappa}  
%} 
% h(0,\xb(x,v)- (t-\tb(x,v)) \mathfrak{v}, \mathfrak{v})
   \frac{ \sqrt{\mu(\mathfrak{v})} |\mathfrak{v}_3| }{\mathfrak{w} (\mathfrak v) }
   \dd s
   \dd \mathfrak{v}\label{bdry:K}\\
   &+\mathfrak{w} (v)
c_\mu   \sqrt{\mu(v)}
 \int_{\mathfrak{v}_3<0}\notag \\
 &  \ \ \ \ \ \ \ \ \ \ \ \times
 \int^{t-\tb(x,v)}_0 
 e^{- \int^{t-\tb(x,v)}_s
 \frac{\nu}{\e^2 \kappa} 
 }  \mathcal{S}_h (s, \xb(x,v) -
 \frac{t-\tb(x,v)-s}{\e} \mathfrak{v}, \mathfrak{v}
  )
% e^{
%- \int^{t-\tb(x,v)}_0 \frac{\nu  }{\e^2 \kappa}  
%} 
% h(0,\xb(x,v)- (t-\tb(x,v)) \mathfrak{v}, \mathfrak{v})
   \frac{ \sqrt{\mu(\mathfrak{v})} |\mathfrak{v}_3| }{\mathfrak{w} (\mathfrak v) }
   \dd s 
   \dd \mathfrak{v}\notag% \label{bdry:S}
    %\\
 %   &  - %\frac{\e}{\delta} \mathfrak{w} (1- P_{\gamma_+}) f_2
  \\&  +  r (t-\tb(x,v),\xb(x,v),  v )  ,\notag% \label{bdry:f_2}
 \end{align}
 where $r= - \e^{1/2} \mathfrak{w} (1- P_{\gamma_+}) f_2$ and $e^{- \int^{t-\tb(x,v)}_0 \frac{\nu  }{\e^2 \kappa}  
}:=  e^{-\int^{t-\tb(x,v)}_0 \frac{1}{\e^2 \kappa} 
{\nu (\tau , x- \frac{ \tb(x,v) }{\e}v
- \frac{t-\tb(x,v) -s }{\e} \mathfrak{v}
, \mathfrak{v} )}\dd \tau}$. 

Note that, from (\ref{estR11}), (\ref{estR12}), (\ref{estR21}), (\ref{estR22}), and (\ref{est:f2}), 
\Be\label{est:S_h}
\begin{split}
&  |\mathcal{S}_h  (s, x- \frac{t-s}{\e} v,v)|
\\ &  \lesssim   \nu(v)\frac{1}{\kappa \e^{1/2}}\| h  \|_\infty^2 + \frac{\nu(v)}{\kappa \e}
\left(  \| \mathcal I  \|_\infty + \e \|(\ref{est:f2})\|_\infty \right)
\| h \|_\infty  
\\ & \ \ \ + \frac{1}{\e^{1/2}} \left( \| (\ref{estR11})\|_\infty + \frac{1}{\kappa} \| \kappa \eqref{estR12} \|_\infty  \right)+   \e^{1/2} \left( \| (\ref{estR21})\|_\infty + \frac{1}{\kappa} \|  \kappa \eqref{estR22} \|_\infty \right) ,
 \\ &  | \mathfrak{w} (1- P_{\gamma_+}) f_2|  \lesssim   
\|(\ref{est:f2})\|_\infty.
  \end{split}
\Ee

We derive a preliminary estimate as 
\begin{align}
&|h(t,x,v) | \lesssim e^{- \frac{\nu }{2\e^2 \kappa}t} \| h(0)\|_\infty \notag
\\ &+  \e^{3/2}  \sup_{0 \leq s \leq t}\| h(s)  \|_\infty^2  + \e^2  \sup_{0 \leq s \leq t} \|(\ref{est:f2})\|_\infty \| h(s) \|_\infty + \e \sup_{0 \le s \le t}  \|\mathcal I \|_\infty \| h(s) \|_\infty \notag
 \\ & +\e^{1/2}  \sup_{0 \leq s \leq t} \|(\ref{est:f2})\|_\infty +  {\e^{3/2} \kappa  }  \left( \| (\ref{estR11})\|_\infty + \frac{1}{\kappa} \| \kappa \eqref{estR12} \|_\infty  \right)+   \e^{5/2} \kappa \left( \| (\ref{estR21})\|_\infty + \frac{1}{\kappa} \|  \kappa \eqref{estR22} \|_\infty \right)  \label{infty1} 
  \\ &+  \int^t_0  \frac{e^{- \frac{\nu }{2\e^2 \kappa}(t-s)}}{\e^2 \kappa}
\int_{\R^3} \mathbf{k}_{\mathfrak{w}} (v,v_*) |h(s,x- \frac{t-s}{\e}, v_*)| \dd v_* \dd s \label{K1}
\\ &  +\mathfrak{w} (v)
c_\mu   \sqrt{\mu(v)}  \int_{\mathfrak{v}_3<0}  \int^{t-\tb(x,v)}_0  \frac{ e^{-   \frac{\nu }{ 2\e^2 \kappa} (t-s) } }{\e^2 \kappa}\notag
\\ &  \ \ \ \ \ \times  \int_{\R^3}  \mathbf{k}_{\mathfrak{w}}(\mathfrak{v} ,v_*) |h (s, \xb(x,v) - \frac{t-\tb(x,v)-s}{\e} \mathfrak{v}, v_*  )| \dd v_* \dd s   \frac{ \sqrt{\mu(\mathfrak{v})} |\mathfrak{v}_3| }{\mathfrak{w} (\mathfrak v) }\dd \mathfrak{v}.\label{K2} 
\end{align}
We note that $|h(s,x- \frac{t-s}{\e}, v_*)|$ has the same upper bound. Then we bound (\ref{K1}) by a summation of  $(\ref{infty1})$ and
\Be 
\begin{split}\label{K2_1}
\sup_{
\substack{(\xb, v) \in \p\O \times \R^3  \\
t-\tb\geq 0
}}
\mathfrak{w} (\xb , v )
&c_\mu   \sqrt{\mu(v)}
 \int_{\mathfrak{v}_3<0} 
 \int^{t-\tb}_0 
 \frac{ e^{-  
 \frac{\nu }{ 2\e^2 \kappa} (t-s)
 } }{\e^2 \kappa}\\
 & \times 
 \int_{\R^3}
  \mathbf{k}_{\mathfrak{w}}(\mathfrak{v} ,v_*) |h (s, \xb  -
 \frac{t-\tb -s}{\e} \mathfrak{v}, v_*
  )| \dd v_* \dd s 
% e^{
%- \int^{t-\tb(x,v)}_0 \frac{\nu  }{\e^2 \kappa}  
%} 
% h(0,\xb(x,v)- (t-\tb(x,v)) \mathfrak{v}, \mathfrak{v})
   \frac{ \sqrt{\mu(\mathfrak{v})} |\mathfrak{v}_3| }{\mathfrak{w} (\xb , \mathfrak{v}) }\dd \mathfrak{v},
\end{split}
\Ee
and importantly 
\Be
\begin{split}
\int^t_0 & \frac{e^{- \frac{\nu(v)}{2\e^2 \kappa}(t-s)}}{\e^2 \kappa}
\int_{\R^3} \mathbf{k}_{\mathfrak{w}} (v,v_*) 
\int^s_0
 \frac{e^{- \frac{\nu(v_*)}{2\e^2 \kappa}(s-\tau)}}{\e^2 \kappa}\\ &\times 
 \int_{\R^3}\mathbf{k}_{\mathfrak{w}} (v_*,v_{**}) 
 |h(s,x- \frac{t-s}{\e}v - \frac{s-\tau}{\e}v_* , v_{**})| 
 \dd v_{**}
\dd \tau
\dd v_*
 \dd s. \label{double_K}
\end{split} 
\Ee
We consider (\ref{double_K}). We decompose the integration of $\tau \in [0,s] = [0, s- o(1)\e^2 \kappa] \cup [s- o(1)\e^2 \kappa, s]$. The contribution of $\int^s_{s-o(1) \e^2 \kappa} \cdots \dd \tau$ is bounded as 
\Be\label{bound:small_t}
\frac{2}{\nu(v)}\big(1- e^{- \frac{\nu(v)}{2 \e^2 \kappa}}\big)
 \| \mathbf{k}_{\mathfrak{w}}(v , \cdot)\|_{L^1}
\frac{o(1) \e^2 \kappa}{\e^2 \kappa}
 \| \mathbf{k}_{\mathfrak{w}}(v_*, \cdot)\|_{L^1}
 \sup_{0 \leq s \leq t}\| h(s) \|_\infty\leq o(1) \sup_{0 \leq s \leq t}\| h(s) \|_\infty.
\Ee
For the rest of term we decompose $\mathbf{k}_{\mathfrak{w} }(v_*,v_{**})= \mathbf{k}_{\mathfrak{w},N}(v_*,v_{**}) + \{ \mathbf{k}_{\mathfrak{w} }(v_*,v_{**})-  \mathbf{k}_{\mathfrak{w},N}(v_*,v_{**})\}$ where $\mathbf{k}_{\mathfrak{w},N}(v_*,v_{**}):=\mathbf{k}_{\mathfrak{w}}(v_*,v_{**})$ $\times  
\mathbf{1}_{ \frac{1}{N} <|v_*- v_{**}|< N 
 \  \& 
  \ 
|v_*|< N  
}.
$ From (\ref{est:K_w}), \\ $\int_{\R^3} \mathbf{k}_{\mathfrak{w}}(v_*,v_{**}) \mathbf{1}_{|v_*|\geq N} \dd v_{**} \lesssim 1/N$. Also from the fact $\mathbf{k}_{\mathfrak{w}}(v_*,v_{**}) \leq \frac{e^{-C |v_*- v_{**}|^2}}{|v_*- v_{**}|} \in L^1(\{ v_*- v_{**} \in \R^3 \})$, $\sup_{v_*}\int_{\R^3} \mathbf{k}_{\mathfrak{w}}(v_*,v_{**}) \{\mathbf{1}_{\frac{1}{N}\geq |v_*- v_{**}| } + \mathbf{1}_{  |v_*- v_{**}|\geq N }  \}\dd v_{**} \downarrow 0$ as $N \rightarrow \infty$. Hence for $N\gg1$
\Be\begin{split}
(\ref{double_K})& \leq  \int^t_0  \frac{e^{- \frac{\nu(v)}{2\e^2 \kappa}(t-s)}}{\e^2 \kappa}
\int_{\R^3} \mathbf{k}_{\mathfrak{w} ,N} (v,v_*) 
\int^{s- o(1) \e^2 \kappa}_0
 \frac{e^{- \frac{\nu(v_*)}{2\e^2 \kappa}(s-\tau)}}{\e^2 \kappa}\\
 & \ \ \ \  \times 
% \frac{1}{\e^2 \kappa}%e^{- \frac{\nu(v_*)}{2\e^2 \kappa}(s-\tau)}
 \int_{\R^3}\mathbf{k}_{\mathfrak{w},N } (v_*,v_{**}) 
 |h(s,x- \frac{t-s}{\e}v - \frac{s-\tau}{\e}v_* , v_{**})| 
 \dd v_{**}
\dd \tau
\dd v_*
 \dd s  \\
 &\leq C_N  \int^t_0  \frac{e^{- \frac{\nu(v)}{2\e^2 \kappa}(t-s)}}{\e^2 \kappa}
\int_{|v_*| \leq 2N}  
\int^{s- o(1) \e^2 \kappa}_0
  \frac{e^{- \frac{\nu(v_*)}{2\e^2 \kappa}(s-\tau)}}{\e^2 \kappa}
 %\frac{1}{\e^2 \kappa}%e^{- \frac{\nu(v_*)}{2\e^2 \kappa}(s-\tau)}
\\
 & \ \ \ \  \times  \int_{|v_{**}| < 2N} 
 |f_R(s,x- \frac{t-s}{\e}v - \frac{s-\tau}{\e}v_* , v_{**})| 
 \dd v_{**}
\dd \tau
\dd v_*
 \dd s
 \label{int:Kf}
 \\
 & \ \ \ + o(1) \sup_{0 \leq s \leq t} \| h(s)\|_{L^\infty_{x,v}}, 
\end{split}\Ee 
where we have used the fact $\sup_{x}\mathbf{k}_{\mathfrak{w} } (v_*, v_{**}) \mathfrak{w}_{\varrho}(v_{**}) \leq C_N<\infty$ when $\frac{1}{N} <|v_*- v_{**}|< N$ and $|v_*|< N$ (then $|v_{**}|< 2N$).%, and the fact $\int_{\R^3} |\mathbf{k}_{\mathfrak{w},N}(v_*,v_{**}) - \mathbf{k}_{\mathfrak{w} }(v_*,v_{**})  | \dd v_{**} \downarrow 0$ as $N \rightarrow \infty$. 

Now we decompose $f_R=\mathbf{P}f_R+ (\mathbf{I} -\mathbf{P})f_R$. We first take integrations (\ref{int:Kf}) over $v_{*}$ and $v_{**}$ and use Holder's inequality with $p=6, p=2$ in $1/p+ 1/p^\prime=1$ for $\mathbf{P}f_R , (\mathbf{I} -\mathbf{P})f_R$ respectively to derive 
\Be\label{int:Kf1}
\begin{split}
&(\ref{int:Kf})\\
 \leq& \  (4N)^3C_N
%\int^t_0 \frac{1}{\e^2 \kappa} e^{- \frac{\nu(v)}{2 \e^2 \kappa}  (t-s)}
\frac{1}{\nu(v)}
\sup_{\substack{0 \leq s \leq t\\
0 \leq \tau \leq s- o(1) \e^2 \kappa
}}
% \| \mathbf{k}_{\mathfrak{w}}(v,\cdot )\|_{L^2}
%\int^{s-o(1) \e^2 \kappa}_0
\left(\iint_{|v_*|\leq N,|v_{**}| \leq 2 N}  |\mathbf{P}f_R(s, x- \frac{t-s}{\e} v - \frac{s-\tau}{\e} v_*, v_{**})|^6  \dd v_{**}\dd v_*\right)^{1/6}   \\
& + (4N)^3C_N
%\int^t_0 \frac{1}{\e^2 \kappa} e^{- \frac{\nu(v)}{2 \e^2 \kappa}  (t-s)}
\frac{1}{\nu(v)}\\
& \  \times 
\sup_{\substack{0 \leq s \leq t\\
0 \leq \tau \leq s- o(1) \e^2 \kappa
}}
% \| \mathbf{k}_{\mathfrak{w}}(v,\cdot )\|_{L^2}
%\int^{s-o(1) \e^2 \kappa}_0
\left(\iint_{|v_*|\leq N,|v_{**}| \leq 2 N}  | ( \mathbf{I}-\mathbf{P})f_R(s, x- \frac{t-s}{\e} v - \frac{s-\tau}{\e} v_*, v_{**})|^2  \dd v_{**}\dd v_*\right)^{1/2}  .
\end{split}
\Ee
Now we consider a map
\Be\label{COV}
v_* \in \{\R^3: |v_*| \leq N\} \mapsto y:=x- \frac{t-s}{\e} v - \frac{s-\tau}{\e} v_* \in \O, \ \ \text{where} \ \ 
\Big|\frac{\p y}{\p v_*}\Big|= \Big|\frac{s-\tau}{\e}\Big|^3 \gtrsim \e^3 \kappa^3.
\Ee
We note that this mapping is not one-to-one and the image can cover $\O$ at most $N$ times. Therefore we have 
\begin{align*}
&\left(\iint_{|v_*|\leq N,|v_{**}| \leq N}  |\mathbf{P}f_R(s, x- \frac{t-s}{\e} v - \frac{s-\tau}{\e} v_*, v_{**})|^6  \dd v_{**}\dd v_*\right)^{1/6}\\
&\leq N^{1/6} \left(\iint_{|v_*|\leq N,|v_{**}| \leq N}  |\mathbf{P}f_R(s, y, v_{**})|^6  \dd v_{**}
\frac{\dd y}{\e^3 \kappa^3}\right)^{1/6} \leq \frac{N^{1/6}}{\e^{1/2} \kappa^{1/2}} \| \mathbf{P} f_R(s) \|_{L^6_{x,v}},
\end{align*}
\begin{align*}
&\left(\iint_{|v_*|\leq N,|v_{**}| \leq N}  |( \mathbf{I}-\mathbf{P})f_R(s, x- \frac{t-s}{\e} v - \frac{s-\tau}{\e} v_*, v_{**})|^6  \dd v_{**}\dd v_*\right)^{1/6}\\
&\leq \frac{N^{1/2}}{\e^{3/2} \kappa^{3/2}} \| ( \mathbf{I}-\mathbf{P})f_R(s) \|_{L^2_{x,v}}.
\end{align*}
Therefore we conclude that 
\Be\label{est:double_K}
\begin{split}
&(\ref{double_K})\\ &\leq (4N)^3C_N(\ref{int:Kf1}) + o(1)\sup_{0 \leq s \leq t} \| h(s)\|_{L^\infty_{x,v}} \\
&\leq (4N)^4C_N\left\{
\frac{1}{\e^{1/2}\kappa^{1/2}} \sup_{0 \leq s \leq t}\| \mathbf{P}  f_R(s ) \|_{L^6_{x,v}} 
+
  \frac{1}{\e^{3/2}\kappa^{3/2}}  \sup_{0 \leq s \leq t} \|(\mathbf{I} -\mathbf{P}) f_R(s ) \|_{L^2_{x,v}} \right\}\\
  &  \ \ \ \ + o(1)\sup_{0 \leq s \leq t} \| h(s)\|_{L^\infty_{x,v}} \\
  &\lesssim_N  
\frac{1}{\e^{1/2}\kappa^{1/2}} \sup_{0 \leq s \leq t}\| \mathbf{P}  f_R(s ) \|_{L^6_{x,v}} 
+
  \frac{1}{\e^{3/2}\kappa^{3/2}} 
\Big\{ \|(\mathbf{I} -\mathbf{P}) f_R  \|_{L^2_{t,x,v}}
+  \|(\mathbf{I} -\mathbf{P}) \p_t f_R  \|_{L^2_{t,x,v}}
\Big\} \\
& \ \ \ \  + o(1)\sup_{0 \leq s \leq t} \| h(s)\|_{L^\infty_{x,v}},
\end{split}
\Ee
where we have used (\ref{Sob_1D}) the Sobolev embedding in 1D at the last line.%.  $|g(s )|^2=\big| \int_0^s \frac{d}{d \tau} | g(\tau )|^2 \dd \tau\big|\lesssim \int_0^s   |g(\tau)|^2 \dd \tau +  \int_0^s   |\p_t g(\tau )|^2 \dd \tau$ 

Now we consider (\ref{K2}) and (\ref{K2_1}). We decompose $s \in [0, t-\tb] = [0, t-\tb - o(1) \e^2 \kappa] \cup [t-\tb - o(1) \e^2 \kappa, t-\tb]$. The contribution of $\int^{t-\tb}_{t-\tb- o(1) \e^2 \kappa} \cdots $ is bounded as 
\Be
\frac{o(1) \e^2 \kappa}{\e^2 \kappa } \| \mathbf{k}_{\mathfrak{w}}(\mathfrak{v}, \cdot ) \|_{L^1} \sup_{0 \leq s \leq t} \| h(s) \|_\infty
\leq o(1)\sup_{0 \leq s \leq t} \| h(s) \|_\infty. \label{bound:small_t1}
\Ee
For $s \in  [0, t-\tb - o(1) \e^2 \kappa]$ we consider a map as (\ref{COV})
\Be\label{COV_1}
\mathfrak{v}  \in \{\mathfrak{v}  \in \R^3: \mathfrak{v} _3<0 \} \mapsto y:=\xb - \frac{t-\tb -s}{\e} \mathfrak{v} \in \O, \ \  \text{where} \ \ 
\left|\frac{\p y}{\p \mathfrak{v}}\right| = \left|\frac{t-\tb -s}{\e}\right|^3\gtrsim \e^3 \kappa^3. 
\Ee
Following the argument to have (\ref{int:Kf1}) we bound 
\Be\begin{split}
&\text{the contribution of} \ \int^{t-\tb- o(1) \e^2 \kappa}_0 \cdots \ \text{of } \ (\ref{K2_1})\\
& 
\lesssim_N \frac{1}{\e^{1/2} \kappa^{1/2}} \| \mathbf{P} f_R(s) \|_{L^6_
{x,v}} + \frac{1}{\e^{3/2} \kappa^{3/2}} \Big\{ \|(\mathbf{I} -\mathbf{P}) f_R  \|_{L^2_{t,x,v}}
%\\ & \ \ \ \ \ \ \ 
+  \|(\mathbf{I} -\mathbf{P}) \p_t f_R  \|_{L^2_{t,x,v}}
\Big\}   . \label{est:K_bdry}
\end{split}\Ee

In conclusion, we bound $|h(t,x,v)|$ by $(\ref{infty1})$, (\ref{est:double_K}), (\ref{bound:small_t1}), (\ref{est:K_bdry}) and conclude (\ref{Linfty_3D}) by choosing small enough $o(1)$ in (\ref{est:double_K}) and (\ref{bound:small_t1}). 
\end{proof}
\hide

 From (\ref{est:k}) and (\ref{est_infty:L_t}) it follows easily that  
 \Be\label{eqtn:h}
|  \p_t h +  \e^{-1} v\cdot \nabla_x h +\frac{\nu}{2\e^2 \kappa }  h | \leq \frac{1}{\e^2 \kappa }   \int_{\R^3}
k_{\vartheta, \varrho} (v,v_*)
|h(v_*)| \dd v_* + g, \ \ \ |h||_{\gamma_-} \leq  \mathfrak{w}P_{\gamma_+} | \mathfrak{w}^{-1}h| + r. 
 \Ee
 where $k_{\vartheta, \varrho}(v,v_*)$ has been defined in (\ref{k_the_rho}).
 
  \Be\notag
 K_\varsigma h:= \int_{\R^3} k_\varsigma (v,v_*)h(v_*) \dd v_*, \ \ \ where \ \   k_\varsigma (v,v_*):= C \frac{e^{- \varsigma |v-v_*|^2}}{|v-v_*|}.
 \Ee

 For some later purpose we set a notation 
\Be\label{k_the_rho}
k_{\vartheta, \varrho } (v,v_*) : = e^{- \vartheta{|v-v_*|^2} 
-  \vartheta\frac{(|v|^2 - |v_*|^2)^2}{|v-v_*|^2}}
\frac{e^{\varrho|v|^2}}{e^{\varrho |v_*|^2}} . 
\Ee

  The (scaled) characteristics $[Y,W]$ are given by 
 \Be\label{trajectory_e}\Bs
Y(s;t,x,v) &= X(t- \frac{t-s}{\e};t,x,v),\\
W(s;t,x,v) &= V(t- \frac{t-s}{\e};t,x,v).
\end{split} \Ee
Along the characteristics,
\Be\Bs\label{Duhamel_linear}
&\frac{d}{ds}\Big\{ h(s , Y(s;t,x,v), W(s;t,x,v))
e^{- \frac{\nu(v)}{\e^2 \kappa} (t-s) }
\Big\}\\
=& \  \frac{1}{\e^2 \kappa}K h (s , Y(s;t,x,v), W(s;t,x,v))e^{-\frac{\nu(v)}{\e^2 \kappa}  (t-s) }.
\end{split}\Ee
Then 
\Be\Bs\notag
h(t,x,v) =& \ h_0 (Y(0;t,x,v), W(0;t,x,v)) e^{-  
 \frac{\nu(v)}{\e^2 \kappa}  t  }
 \\
&+ \int^t_0 
\frac{ e^{
-  \frac{\nu(v)}{\e^2 \kappa} (t-s) }}{\e^2 \kappa }
 \int_{\R^3} k_w(W(s;t,x,v),u )
  h (s , Y(s;t,x,v), u) \dd u
 \dd s\\
 & +\cdots 
\end{split}\Ee
Without the boundary we would have 
\Bes
&&h(t,x,v) \\
&\sim&  
\int^t_0 
\frac{e^{-  \frac{\nu_0 (t-s)}{\e^{2 } \kappa} }}{\e^{2 } \kappa}
 \int_{|u| \leq N } 
 \int^s_0 \frac{e^{-  \frac{\nu_0 (s-s^\prime)}{\e^{2 } \kappa} }}{\e^{2 } \kappa} \int_{\R^3}
k_w ( \cdots , u^\prime) h(s^\prime,x- \frac{t-s}{\e}v - \frac{s-s^\prime}{\e}u,u^\prime)
\dd u^\prime \dd  s^\prime
  \dd u
 \dd s\\
 &\sim&\int \int \int^s_{s- o(1) \e^{2 } \kappa} \int + \int \int \int^{s- o(1) \e^{2 } \kappa }_0 \int\\
 &\sim& O(1)\| h\|_\infty+ 
 \int^t_0 
\frac{e^{-  \frac{\nu_0 (t-s)}{\e^{2 } \kappa} }}{\e^{2 } \kappa}\int^{s-o(1) \e^{2 } \kappa}_0 \frac{e^{-  \frac{\nu_0 (s-s^\prime)}{\e^{2 } \kappa} }}{\e^{2 } \kappa} 
\left\| h(s^\prime,x- \frac{t-s}{\e}v - \frac{s-s^\prime}{\e}u,u^\prime)\right\|_{L^p_{u,u^\prime}
(\{|u| \leq N \} \times \R^3)
}
 \dd s^\prime \dd s .
\Ees

Here the dimension enters the business. Assume that the function only depends on $(t, {x},v) \in \R \times \R^3 \times \R^3$. 
Now we consider a change of variables, for $s^\prime \in [0, s- \kappa \e^{2 } \kappa]$,
\Be
 {u} 
\mapsto  {X}: =  {x}- \frac{t-s}{\e} {v} - \frac{s-s^\prime}{\e} {u}.
\Ee
We compute 
\Be
\dd  {u} = \frac{1}{\left|\frac{s-s^\prime}{\e}\right|^3}\dd {X} \lesssim
\frac{1}{\e^3 \kappa^3 }\dd \munderbar{X}.
\Ee
Then 
\Bes
&&\left\| h(s^\prime, {x}- \frac{t-s}{\e} {v} - \frac{s-s^\prime}{\e} {u},u^\prime)\right\|_{L^p_{u,u^\prime}}\\
&\lesssim& \frac{1}{ \e^{3/p} \kappa^{3/p}} \| h(s^\prime, \cdot, \cdot) \|_{L^p}.
\Ees
Therefore for $(x,v) \in \{ |\nabla \xi(x) \cdot v|> o(1) \} \cup \{|v| \leq \frac{1}{o(1)}\}$ we have $C=C(\O, o(1))>0$ such that 
\Be
|h(t,x,v)| \leq
e^{-\f{\nu(v) t}{\e^{2 }\kappa }} |h_0 (Y(0;t,x,v), W(0;t,x,v))|
+
\frac{ C}{ \e^{3/p} \kappa^{3/p}} 
 \int^t_0 
\frac{e^{-  \frac{\nu_0 (t-s)}{\e^{2 }\kappa } }}{\e^{2 }\kappa}
\| h(s) \|_p 
 \dd s 
 .
\Ee
Roughly we will have 
\Be
\| h \|_\infty \lesssim \frac{\e \kappa^{1/2}}{(\e \kappa)^{3/2}} \| \e^{-1} \kappa^{1/2}(\mathbf{I} - \mathbf{P}) f_R \|_2 + \frac{\kappa^{-1/2}}{(\e \kappa)^{3/6}} \| \kappa^{1/2} \mathbf{P}f_R \|_{L^6} \lesssim \frac{1}{\e^{1/2} \kappa}. 
\Ee

\hide

Then, for each time interval $[N, N+1]$, we obtain that 
\Be
\| h(N+1) \|_\infty \leq e^{- \frac{\nu_0}{\e^{2+\kappa}}} \| h(N) \|_\infty + \frac{C}{ \e^{2(1+ \kappa)/p}}
\int^{N+1}_N \frac{e^{- \frac{\nu_0}{\e^{2+ \kappa}}(N+1-s) }}{\e^{2+ \kappa}} \| h(s) \|_p \dd s + C,
\Ee
where $C>0$ does not depend on $N$.

Then inductively we derive 
\Be
\Bs
&\| h(N+1) \|_\infty \\
\leq & \ e^{- \frac{\nu_0 2}{\e^{2+\kappa}}} \| h(N-1) \|_\infty+ \frac{C}{ \e^{2(1+ \kappa)/p}} 
\int^{N+1}_{N-1} \frac{e^{- \frac{\nu_0}{\e^{2+ \kappa}}(N+1-s) }}{\e^{2+ \kappa}} \| h(s) \|_p \dd s 
+ C \sum_{j=0}^1 e^{- \frac{\nu_0}{\e^{2+\kappa}}j}\\
&\cdots
\\
\leq &  \ e^{- \frac{\nu_0 (N+1)}{\e^{2+ \kappa}}} \| h(0) \|_\infty + \f{C}{ \e^{2(1+ \kappa)/p}}
\int^{N+1}_0  \frac{e^{- \frac{\nu_0}{\e^{2+ \kappa}}(N+1-s) }}{\e^{2+ \kappa}} \| h(s) \|_p \dd s 
+ \frac{C}{1- e^{- \frac{\nu_0}{\e^{2+\kappa}}}}.
\end{split}
\Ee

\unhide

\unhide

\begin{proof}[\textbf{Proof of Proposition \ref{prop:Linfty_t}}]
Since many parts of the proof are overlapped with the proof of Proposition \ref{est:Linfty} we only pin point the differences. An equation for $\mathfrak{w}^\prime \p_t f_R$ takes the similar form of (\ref{eqtn:h}) and (\ref{bdry:h}). We can read (\ref{eqtn_fR_t}) for 
\Be\label{h_t}
h(t,x,v) = \mathfrak{w} ^\prime(x,v) \p_t  f_R(t,x,v), \ \ \text{for} \ \varrho^\prime<\varrho,
\Ee
 as (\ref{eqtn:h}) and (\ref{bdry:h}) replacing  
\Be\label{S_h_t}
\begin{split}
\mathcal{S}_h & =  \frac{2}{\kappa \e} \Gamma_{\mathfrak{w}^\prime}( \mathfrak{w'} f_1 +  \e {\mathfrak{w}^\prime}{ f_2},h) 
+    \frac{ 2  }{\e^{1/2} \kappa}\Gamma_{\mathfrak{w}^\prime}( {\mathfrak{w}^\prime} f_R,h)+  \frac{2}{\kappa \e } \Gamma_{\mathfrak{w}^\prime}( \mathfrak{w'} \p_t f_1 +  \e {\mathfrak{w}^\prime}\p_t {f_2}, {\mathfrak{w}^\prime}f_R)  
\\ & - \frac{1}{\e^{1/2}} \left \{ \mathfrak{w'} (\mathbf{I-P}) (v \cdot \nabla_x \p_t f_2) - \frac{2}{\kappa} \Gamma_{\mathfrak{w'}} (\mathfrak{w'} \p_t f_1, \mathfrak{w'} f_2) - \frac{2}{\kappa} \Gamma_{\mathfrak{w'}} ( \mathfrak{w'} f_1, \mathfrak{w'} \p_t f_2) \right \} 
 \\ & + \e^{1/2}  \left( -\mathfrak{w'}  \p_t^2 f_2  + \frac{2 }{\kappa} \Gamma_{\mathfrak{w'}} (\mathfrak{w'} \p_t f_2, \mathfrak{w'} f_2)  \right),
\\
 r&= - \e^{1/2} {\mathfrak{w}^\prime}  (1-P_{\gamma_+})  \p_tf_2.
\end{split}
\Ee

We have the same equality of (\ref{h:K}), (\ref{h:bdry}) with (\ref{bdry:K}) for $h$ of (\ref{h_t}) but replacing $S_h$ and $r$ of (\ref{S_h_t}). From \eqref{estptf2}, \eqref{estR11t}, \eqref{estR12t}, \eqref{estR21t}, \eqref{estR22t}, and (\ref{est:Gamma_w}), we bound terms of (\ref{S_h_t}) 
  \Be
\begin{split}
|S_h|& \lesssim     \nu(v)
\big\{\frac{1}{\kappa \e } \left( \|\mathcal I \|_\infty  + \e \| (\ref{est:f2}) \|_\infty \right) +\frac{1}{\kappa^{1/2} \e } \| \mathfrak{w} f_R\|_\infty  \big\} \| h\|_\infty   
\\ & \ \  + \frac{\nu(v)}{\kappa \e} \big\{ \|  \p_t \mathcal I \|_\infty + \e \| \eqref{estptf2} \|_\infty \big \} \| \mathfrak w f_R \|_\infty
 \\& \ \ + \frac{1}{\e^{1/2}} \left( \| \eqref{estR11t} \|_\infty + \frac{1}{\kappa} \| \kappa \eqref{estR12t} \|_\infty \right) + \e^{1/2} \left( \| \eqref{estR21t} \|_\infty + \frac{1}{\kappa} \| \kappa \eqref{estR22t} \|_\infty \right)
 ,
  \label{est:S_h_t1}
  \end{split}
  \Ee%  +  \frac{1}{\delta}

\Be \label{est:S_h_t2}
\begin{split}
|r| \lesssim & \  \e^{1/2} \| \eqref{estptf2} \|_\infty.
\end{split}\Ee 
  
  \hide
\Be
\begin{split}
|S_h| \lesssim& \   \nu(v)
\big\{\frac{1}{\kappa} (\| p \|_\infty + \| \tilde{u} \|_\infty + \kappa \| \nabla_x u \|_\infty) +\frac{\delta}{\kappa \e } \| \mathfrak{w} f_R\|_\infty 
 \big\}
 \| h\|_\infty\\
 & 
+ \frac{\nu(v)}{\kappa} \big\{
|\p_t p|  + |\p_t \tilde{u}| + \e |\p_t u| (|p| + |\tilde{u}|) + \kappa (|\nabla_x \p_t u| + \e |\p_t u| |\nabla_x u|)
\big\}\| \mathfrak{w} f_R\|_\infty\\
&+  \e (|\p_t^2 u| + |u||\nabla_x \p_t u| + |\p_t u| |\nabla_x u| )+ \e^2 |\p_t u| (|\p_t u| + |u||\nabla_x u|) \| \mathfrak{w} f_R\|_\infty +  \frac{1}{\delta}
\mathfrak{q}(
|\nabla_{x} \p_t \tilde{u}|, 
|\nabla_x^2 \p_t u | 
)\\
&
+  \frac{\e}{\delta \kappa}\{1+ 
|\p_t^2 p| + |\nabla_x \p_t^2 u| 
\} 
\\
& \ \ \   \ \ \  \times 
 \mathfrak{q} (
|p|, |\nabla_x p|, |\p_t p|,|\nabla_x \p_t p | , |u| , |\nabla_x u|, |\p_t u|, |\nabla_x \p_t u|, |\nabla_x^2 u|, |\nabla_x^2 \p_t u|, 
|\tilde{u}|, |\nabla_x \tilde{u}|,|\p_t \tilde{u}| ,|\nabla_x \p_t \tilde{u}|, |\p_t ^2 \tilde{u}|
) ,\\
|r| \lesssim & \ \frac{\e}{\delta}\{ |\p_t p| +|\p_t \tilde{u}| + \kappa |\nabla_x \p_t u |\}
+\e  |\p_t u | \{ \| \mathfrak{w} f_R\|_\infty+ 
 |  p| +|  \tilde{u}| + \kappa |\nabla_x   u |
\}.\label{est:S_h_t}
\end{split}
 \Ee
 \unhide
 
 Then as in  (\ref{infty1})-(\ref{double_K}) we derive a preliminary estimate as 
 \begin{align}
& |h(t,x,v) |
\notag
\\
& \lesssim    \ e^{- \frac{\nu }{2\e^2 \kappa}t} \| h(0)\|_\infty+ 
 \frac{\e^2 \kappa }{\nu(v)}
 (\ref{est:S_h_t1})
 + (\ref{est:S_h_t2})
 \label{infty1_t}
  \\
&+  \int^t_0  \frac{e^{- \frac{\nu }{2\e^2 \kappa}(t-s)}}{\e^2 \kappa}
\int_{\R^3} \mathbf{k}_{\mathfrak{w}^\prime} (v,v_*) |h(s,x- \frac{t-s}{\e}, v_*)| \dd v_*
 \dd s
 \label{K1_t}
 \\
 &  +\mathfrak{w}^\prime (\xb(x,v), v )
c_\mu   \sqrt{\mu(v)}
 \int_{\mathfrak{v}_3<0} 
 \int^{t-\tb(x,v)}_0 
 \frac{ e^{-  
 \frac{\nu }{ 2\e^2 \kappa} (t-s)
 } }{\e^2 \kappa}\notag\\
 & \ \ \ \ \     \times 
 \int_{\R^3}
  \mathbf{k}_{\mathfrak{w}^\prime}(\mathfrak{v} ,v_*) |h (s, \xb(x,v) -
 \frac{t-\tb(x,v)-s}{\e} \mathfrak{v}, v_*
  )| \dd v_* \dd s 
% e^{
%- \int^{t-\tb(x,v)}_0 \frac{\nu  }{\e^2 \kappa}  
%} 
% h(0,\xb(x,v)- (t-\tb(x,v)) \mathfrak{v}, \mathfrak{v})
   \frac{ \sqrt{\mu(\mathfrak{v})} |\mathfrak{v}_3| }{\mathfrak{w}^\prime (\xb(x,v), \mathfrak{v}) }\dd \mathfrak{v}.\label{K2_t} 
\end{align} 
\hide\begin{align}
&|h(t,x,v) |\notag
\\
 \lesssim &  \ e^{- \frac{\nu }{2\e^2 \kappa}t} \| h(0)\|_\infty
 + \frac{\e}{\delta}\{ |\p_t p| +|\p_t \tilde{u}| + \kappa |\nabla_x \p_t u |\}
+\e  |\p_t u | \{ \| \mathfrak{w} f_R\|_\infty+ 
 |  p| +|  \tilde{u}| + \kappa |\nabla_x   u |
\}
\label{infty1_t}
\\
&
 + \e^2  \kappa \big\{ \frac{1}{\kappa}
  (\| p \|_\infty + \| \tilde{u} \|_\infty + \kappa \| \nabla_x u \|_\infty) +\frac{\delta}{\kappa \e } \| \mathfrak{w} f_R\|_\infty 
 \big\}
 \sup_{0 \leq s \leq t}\| h(s)  \|_\infty
 \label{infty2_t}
  \\
 & +  \e^2
  \big\{
|\p_t p|  + |\p_t \tilde{u}| + \e |\p_t u| (|p| + |\tilde{u}|) + \kappa (|\nabla_x \p_t u| + \e |\p_t u| |\nabla_x u|)
\big\}\| \mathfrak{w} f_R\|_\infty
\label{infty3_t}\\
&+  \e^3 \kappa  (|\p_t^2 u| + |u||\nabla_x \p_t u| + |\p_t u| |\nabla_x u| )+ \e^4 \kappa  |\p_t u| (|\p_t u| + |u||\nabla_x u|) \| \mathfrak{w} f_R\|_\infty +  \frac{\e^2 \kappa}{\delta}
\mathfrak{q}(
|\nabla_{x} \p_t \tilde{u}|, 
|\nabla_x^2 \p_t u | 
)
\label{infty4_t} \\
 &+
  \frac{\e^3}{\delta }\{1+ 
|\p_t^2 p| + |\nabla_x \p_t^2 u| 
\} \notag
\\
& \ \ \   \ \ \  \times 
 \mathfrak{q} (
|p|, |\nabla_x p|, |\p_t p|,|\nabla_x \p_t p | , |u| , |\nabla_x u|, |\p_t u|, |\nabla_x \p_t u|, |\nabla_x^2 u|, |\nabla_x^2 \p_t u|, 
|\tilde{u}|, |\nabla_x \tilde{u}|,|\p_t \tilde{u}| ,|\nabla_x \p_t \tilde{u}|, |\p_t ^2 \tilde{u}|
)\label{infty5_t}
 \\
&+  \int^t_0  \frac{e^{- \frac{\nu }{2\e^2 \kappa}(t-s)}}{\e^2 \kappa}
\int_{\R^3} \mathbf{k}_{\mathfrak{w}^\prime} (v,v_*) |h(s,x- \frac{t-s}{\e}, v_*)| \dd v_*
 \dd s
 \label{K1_t}
 \\
 &  +\mathfrak{w}^\prime (\xb(x,v), v )
c_\mu   \sqrt{\mu(v)}
 \int_{\mathfrak{v}_3<0} 
 \int^{t-\tb(x,v)}_0 
 \frac{ e^{-  
 \frac{\nu }{ 2\e^2 \kappa} (t-s)
 } }{\e^2 \kappa}
 \int_{\R^3}
  \mathbf{k}_{\mathfrak{w}^\prime}(\mathfrak{v} ,v_*) |h (s, \xb(x,v) -
 \frac{t-\tb(x,v)-s}{\e} \mathfrak{v}, v_*
  )| \dd v_* \dd s 
% e^{
%- \int^{t-\tb(x,v)}_0 \frac{\nu  }{\e^2 \kappa}  
%} 
% h(0,\xb(x,v)- (t-\tb(x,v)) \mathfrak{v}, \mathfrak{v})
   \frac{ \sqrt{\mu(\mathfrak{v})} |\mathfrak{v}_3| }{\mathfrak{w}^\prime (\xb(x,v), \mathfrak{v}) }\dd \mathfrak{v}.\label{K2_t} 
\end{align} \unhide
As (\ref{K2_1}) and (\ref{double_K}), 
%We note that $|h(s,x- \frac{t-s}{\e}, v_*)|$ has the same upper bound. Then 
we bound (\ref{K1_t}) by a summation of  $(\ref{infty1_t})$ and 
\begin{align}
&\int^t_0  \frac{e^{- \frac{C_\nu%\nu(v)
}{2\e^2 \kappa}(t-s)}}{\e^2 \kappa}
\int^{s-o(1) \e^2 \kappa}_0
 \frac{e^{- \frac{C_\nu%\nu(v_*)
 }{2\e^2 \kappa}(s-\tau)}}{\e^2 \kappa}
\int_{|v_*| \leq 2N} %\mathbf{k}_{\mathfrak{w}^\prime} (v,v_*) 
\notag\\
& \ \ \ \ \ \times \int_{|v_{**} | \leq 2N}%\mathbf{k}_{\mathfrak{w}^\prime} (v_*,v_{**}) 
 |h(s,x- \frac{t-s}{\e}v - \frac{s-\tau}{\e}v_* , v_{**})| 
 \dd v_{**}
\dd v_*
\dd \tau
 \dd s , \label{double_K_t}\\
&+\sup_{
\substack{(\xb, v) \in \p\O \times \R^3  \\
t-\tb\geq 0
}}
\mathfrak{w}^\prime (\xb , v )
c_\mu   \sqrt{\mu(v)}
 \int_{\mathfrak{v}_3<0} 
 \int^{t-\tb -o(1) \e^2 \kappa}_0 
 \frac{ e^{-  
 \frac{\nu }{ 2\e^2 \kappa} (t-s)
 } }{\e^2 \kappa}\notag\\
 &\ \ \ \ \ \  \ \ \ \ \ \  \ \ \ \ \ \ \ \ \ \  \times 
 \int_{|v_*| \leq 2N}
 % \mathbf{k}_{\mathfrak{w}^\prime}(\mathfrak{v} ,v_*)
   |h (s, \xb  -
 \frac{t-\tb -s}{\e} \mathfrak{v}, v_*
  )| \dd v_* \dd s 
% e^{
%- \int^{t-\tb(x,v)}_0 \frac{\nu  }{\e^2 \kappa}  
%} 
% h(0,\xb(x,v)- (t-\tb(x,v)) \mathfrak{v}, \mathfrak{v})
   \frac{ \sqrt{\mu(\mathfrak{v})} |\mathfrak{v}_3| }{\mathfrak{w}^\prime (\xb , \mathfrak{v}) }\dd \mathfrak{v}
   \label{K2_1_t} 
   \\
   &+ o(1) \sup_{0 \leq s \leq t} \| h(s) \|_{L^{\infty}_{x,v}}
   .\label{bound:small_t1_t}
\end{align} 
Then we follow the argument of (\ref{int:Kf1})-(\ref{est:double_K}) to derive that, for $p<3$,
\begin{align}
|(\ref{double_K_t})| \lesssim&\  \int^t_0  \frac{e^{- \frac{C_\nu%\nu(v)
}{2\e^2 \kappa}(t-s)}}{\e^2 \kappa}
\int^{s-o(1) \e^2 \kappa}_0
 \frac{e^{- \frac{C_\nu%\nu(v_*)
 }{2\e^2 \kappa}(s-\tau)}}{\e^2 \kappa}
 \frac{N^{1/3}}{\e^{3/p} \kappa^{3/p}}
 \| \mathbf{P} \p_t f (\tau) \|_{L^p_{x,v}}
 \dd \tau \dd s\label{est:Kf_t:1}
 \\
 &+ \int^t_0  \frac{e^{- \frac{C_\nu%\nu(v)
}{2\e^2 \kappa}(t-s)}}{\e^2 \kappa}
\int^{s-o(1) \e^2 \kappa}_0
 \frac{e^{- \frac{C_\nu%\nu(v_*)
 }{2\e^2 \kappa}(s-\tau)}}{\e^2 \kappa}
 \frac{N^{1/2}}{\e^{3/2} \kappa^{3/2}}
 \| \mathbf{P} \p_t f (\tau) \|_{L^2_{x,v}}
 \dd \tau \dd s.\label{est:Kf_t:2}
\end{align}
Now we use the Young's inequality for temporal convolution twice to derive that, for $p<3$,  
\Be
\begin{split}\label{est:double_K_t}
&\|(\ref{double_K_t})\|_{L^2_t(0,T)}\\
\lesssim& \ 
\bigg\|\frac{e^{- \frac{C_\nu%\nu(v)
}{2\e^2 \kappa}|s |}}{\e^2 \kappa}  \bigg\|_{L^1_s(\R)}
\bigg\|\\
& \times 
\int^s_0
 \frac{e^{- \frac{C_\nu%\nu(v_*)
 }{2\e^2 \kappa}(s-\tau)}}{\e^2 \kappa}
 \bigg(
 \frac{N^{1/3}}{\e^{3/p} \kappa^{3/p}}
 \| \mathbf{P} \p_t f (\tau) \|_{L^p_{x,v}}
 +  \frac{N^{1/2}}{\e^{3/2} \kappa^{3/2}}
 \| (\mathbf{I}- \mathbf{P}) \p_t f (\tau) \|_{L^2_{x,v}}
 \bigg)
 \dd \tau
\bigg\|_{L^2_s (\R)} \\
\lesssim & \ 
\bigg\|\frac{e^{- \frac{C_\nu%\nu(v)
}{2\e^2 \kappa}|s |}}{\e^2 \kappa}  \bigg\|_{L^1_s(\R)}
\bigg\|
\frac{e^{- \frac{C_\nu%\nu(v)
}{2\e^2 \kappa}|\tau |}}{\e^2 \kappa} \bigg\|_{L^1_\tau (\R)} \\
& \times
\bigg(
\frac{N^{1/3}}{\e^{3/p} \kappa^{3/p} }
 \| P \p_t f  \|_{L^2_t ((0,T);L^p_{x}(\O))}
 + \frac{N^{1/2}}{\e^{3/2} \kappa^{3/2} }
 \|  (\mathbf{I}- \mathbf{P}) \p_t f  \|_{L^2 ((0,T) \times \O\times \R^3)}
 \bigg)\\
 \lesssim_N  & \ 
 \frac{1}{\e^{3/p} \kappa^{3/p} }
 \| P \p_t f  \|_{L^2_t ((0,T);L^p_{x}(\O))}
 +  \frac{1}{\e^{3/2} \kappa^{3/2} }
 \| ( \mathbf{I} - \mathbf{P}) \p_t f  \|_{L^2 ((0,T) \times \O\times \R^3)}
.
\end{split}
\Ee 

As in (\ref{est:K_bdry}), for (\ref{K2_1_t}) we use (\ref{COV_1}) to derive that, for $p<3$,  
\Be\label{est:K_bdry_t}
\begin{split}
&\|(\ref{K2_1_t})\|_{L^2_t (0,T)}\\
 \lesssim & \ 
\bigg\|\int^t_0 \frac{e^{- \frac{C_\nu}{2\e^2\kappa} (t-s)}}{\e^2 \kappa}
\bigg(
\frac{1}{\e^{3/p} \kappa^{3/p} }
 \| \mathbf{P} \p_t f (s) \|_{L^p_{x,v}}
 +  \frac{1}{\e^{3/2} \kappa^{3/2}}
 \| (\mathbf{I}- \mathbf{P}) \p_t f (s) \|_{L^2_{x,v}}
\bigg)
\dd s\bigg\|_{L^2_t (0,T)}
\\
 \lesssim & \ 
 \bigg\|\frac{e^{- \frac{C_\nu%\nu(v)
}{2\e^2 \kappa}|s |}}{\e^2 \kappa}  \bigg\|_{L^1_s(\R)}
\Big\{\frac{1}{\e^{3/p} \kappa^{3/p} }
 \| P \p_t f  \|_{L^2_t ((0,T);L^p_{x}(\O))}
 +  \frac{1}{\e^{3/2} \kappa^{3/2} }
 \| ( \mathbf{I} - \mathbf{P}) \p_t f  \|_{L^2 ((0,T) \times \O\times \R^3)}
\Big\}\\
\lesssim & \ 
\frac{1}{\e^{3/p} \kappa^{3/p} }
 \| P \p_t f  \|_{L^2_t ((0,T);L^p_{x}(\O))}
 +  \frac{1}{\e^{3/2} \kappa^{3/2} }
 \| ( \mathbf{I} - \mathbf{P}) \p_t f  \|_{L^2 ((0,T) \times \O\times \R^3)},
\end{split}
\Ee
where we have used the Young's inequality for temporal convolution. 

In conclusion, we bound $\|h\|_{L^2_t L^\infty_{x,v}}$ by $\|(\ref{infty1_t})\|_{L^2_t L^\infty_{x,v}},$ (\ref{est:double_K_t}), (\ref{bound:small_t1_t}), (\ref{est:K_bdry_t}) and conclude (\ref{Linfty_3D_t}) by choosing small enough $o(1)$ in (\ref{bound:small_t1_t}). 
\end{proof}

\subsection{Uniform controls of the Boltzmann remainder $f_R$ (Theorem \ref{main_theorem:conditional})}\label{sec:B1}

Inspired by the energy structure of the PDE and the coercivity of the linear operator $L$ in (\ref{s_gap}), we define an energy and a dissipation as 
\Be\label{ED}
\begin{split}
\mathcal{E} (t):= & \ \| f_R (t) \|_{L^2 (\O \times \R^3)}^2 + \| \p_t  f_R (t) \|_{L^2 (\O \times \R^3)}^2 ,\\
 \mathcal{D} (t) 
:=&  \ \int^t_0 \| \kappa^{-\frac{1}{2}} \e^{-1}
\sqrt{\nu}
 (\mathbf{I} - \mathbf{P}) f_R (s) \|_{L^2 (\O \times \R^3)}^2\dd s \\
 &  +\int^t_0  \|
 \kappa^{-\frac{1}{2}} \e^{-1} \sqrt{\nu}  (\mathbf{I} - \mathbf{P})\p_t  f_R (s) \|_{L^2 (\O \times \R^3)}^2
  \dd s \\
 &
 + \int^t_0 \Big(   | \e^{-\frac{1}{2}}
%(1- P_{\gamma_+}) 
f_R(s)
|_{L^2_\gamma  }^2 +  | \e^{-\frac{1}{2}}
%(1- P_{\gamma_+}) 
\p_t f_R(s)
|_{   L^2_\gamma}^2\Big) \dd s
.
\end{split}
\Ee
As explained in Section 1.2, the temporal derivative gets involved mainly in order to access the $L^6$-bound of the hydrodynamic part $\mathbf{P}f_R$, while we will control the following auxiliary norm to be used in order to handle the nonlinearity: for $p<3$ and $t>0$ 
\Be
\begin{split}
\mathcal{F}_p(t):= \sup_{0 \leq s \leq t} \Big\{&  \| \kappa^{1/2} Pf_R(s)\|_{L^6 (\O)} ^2
+ \| \kappa^{1/2} P   f_R\|_{L^2((0,s); L^p(\O))}^2\\
&+ \|  \kappa^{ \mathfrak{P}+1/2}   P \p_t f_R\|_{L^2((0,s); L^p(\O))}^2
+ \| \e^{1/2}\kappa \mathfrak{w}_{\varrho} f_R(s) \|_{L^{\infty}(\O \times \R^3) }^2
\\
&
+ \|   (\e  \kappa)^{3/p}  \kappa^{ \frac{1}{2}+\mathfrak{P}} \mathfrak{w}_{\varrho^\prime} f_R(s) \|_{L^2((0,s);L^{\infty}(\O \times \R^3)) }^2
\Big\}.
\end{split}
\Ee

We will use the norms of the initial data:
\Be\label{E} 
\mathcal{E}(0):=\mathcal{E} (f_{R,0}):=   \ \| f_{R,0}\|_{L^2 (\O \times \R^3)}^2 + \| \p_t  f_{R,0}  \|_{L^2(\O \times \R^3)}^2 ,\\
\Ee
\Be \label{initial_F}
\begin{split}
\mathcal{F}_p(0):=% \mathcal{F}_p(f_{R,in}):=
 &\big\{\kappa^{\frac{1}{2}} |  f_{R,0}  |_{L^2_\gamma}
+ \kappa^{\mathfrak{P}+ \frac{1}{2}}   | \p_t f_{R,0}   |_{L^2_\gamma}\\
&+
\e^{\frac{1}{2}} \kappa \| \mathfrak{w} f_
{R,0}\|_{L^\infty (\bar{\O} \times \R^3)}
+ (\e \kappa)^{1+ \frac{3}{p}}  \kappa^{\mathfrak{P}} \| \mathfrak{w}^\prime \p_t f_{R,0}\|_{L^\infty(\bar{\O} \times \R^3)}\big\}^2.
\end{split}\Ee

%\subsection{Proof of Theorem \ref{main_theorem:conditional}: a conditional statement of Theorem \ref{main_theorem}}

\ 

\begin{theorem}[Uniform controls of the Boltzmann remainder $f_R$]
\label{main_theorem:conditional}
Recall $\mathcal I$ in \eqref{HH2}. Suppose for $T>0$ and $\mathfrak{P}\geq 1/2$ 
\Be \label{mathfrak_C'}
  \sum_{\ell=0,1} \|\nabla_x\p_t^\ell  \mathcal I\|_{L^\infty ([0,T]\times \bar{\O})}+
 \frac{1}{\kappa^{1/2}} \sum_{\ell=0,1,2}\|  \p_t^\ell \mathcal I\|_{L^\infty ([0,T]\times \bar{\O})}
%+ \|  p\|_{L^\infty ([0,T]\times \bar{\O})} 
+ \frac{1}{\kappa^{1/2}}  \| p \|_{L^\infty ([0,T]\times \bar{\O})}
\lesssim 
\frac{1}{ \kappa^{  \mathfrak{P}}}.
\Ee  
We further assume that, for $0 \leq \mathfrak{P}^\prime < \mathfrak{P}$, 
\Be\label{condition:theorem}
\begin{split}
& 
\sum_{\ell=1,2}\| \p_t ^\ell \mathcal I \|_{L^\infty ([0,T]; L^\infty(\bar{\O})\cap L^2(  {\O})  )}
+\sum_{
 \substack{0 \leq \ell \leq 1 \\
  1 \leq  |\beta| \leq 2
 } } \|\nabla_x^\beta \p_t^\ell  \mathcal I\|_{L^\infty ([0,T]; L^\infty( \bar{\O})
 \cap L^2( {\O}) )}\\
 &+ \sum_{|\beta|=1} \|\nabla_x^\beta \p_t^2 \mathcal I \|_{L^2([0,T]; L^\infty( \bar{\O})
 \cap L^2( {\O}) )}
 \\
 &+  \| \p_t^2 p \|_{L^2 ([0,T]; L^\infty(\bar{\O}) \cap L^2(\O)   )}
% +  \| \p_t p \|_{L^\infty ([0,T]; L^\infty(\bar{\O}) \cap L^2(\O))}
  +
  \sum_{|\beta|=0,1} \| \nabla_x ^\beta \p_t  p \|_{L^\infty ([0,T]; L^\infty(\bar{\O}) \cap L^2(\O))}
 \lesssim  \exp \Big({\frac{1}{\kappa^{\mathfrak{P}^\prime }}}\Big)
 . 
\end{split}\Ee
For given such $T>0$, let us choose $\e$ and $\kappa$ as, for some $\mathfrak{C}\gg 1$, 
\Be\label{choice:delta}
\e =\exp\Big(  \frac{  - \mathfrak{C}  (T+1)}{ \kappa^{ 2 \mathfrak{P}} }\Big). 
% \Big\{1+    %\mathfrak{C}^\prime \kappa^{-\mathfrak{P}} T 
% \Big(  \frac{  \mathfrak{C}_2 T}{ \kappa^{ \mathfrak{P}} }\Big)^2   \exp\Big(  \frac{  \mathfrak{C}_2 T}{ \kappa^{ \mathfrak{P}} }\Big) 
   % \Big\}^{-1}    
\Ee
Assume that an initial datum for the remainder $f_{R,in }$ satisfies,  for some $p<3$ and $|p-3|\ll 1$,  
\Be\label{initial_EF}
 \sqrt{\mathcal{E}(0) }+ \sqrt{\mathcal{F} _p (0) } \lesssim1.
% \exp\Big( \frac{1}{\kappa^{\mathfrak{P}}}\Big)
\Ee
  Then we construct a unique solution $f_R(t,x,v)$ of the form of %:
 \Be\notag
 F = \mu+ \e f_1\sqrt{\mu}+ \e^2 f_2 \sqrt{\mu }  + \e^{3/2} f_R\sqrt{\mu}  \ \ \text{in} \ \ [0,T] \times \O \times \R^3,
 \Ee
which solves the Boltzmann equation (\ref{Boltzmann}) and the diffuse reflection boundary condition (\ref{diffuse_BC}) with the scale of (\ref{StMae}), (\ref{Knke}) and (\ref{choice:delta}), and satisfies the initial condition $F|_{t=0}=  \mu+ \e f_1|_{t=0} \sqrt{\mu}+ \e^2 f_2 |_{t=0}  \sqrt{\mu }  + \e^{3/2} f_R |_{t=0} \sqrt{\mu} 
  +  \delta \e f_{R,in}$, in a time interval $t \in [0,T]$. Moreover, we have 
\Be\label{est:E}
\e^{ \frac{1}{4}- \frac{3}{2p} (1- \frac{p}{3})} \sup_{0 \leq t \leq T}   \big\{ \sqrt{\mathcal{E}(t)} +  \sqrt{\mathcal{D}(t)}+  \sqrt{\mathcal{F}_{ p } (t)}\big\} 
 \lesssim 1.
\Ee
 
 \end{theorem}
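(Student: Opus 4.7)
The plan is to run a continuity/bootstrap argument built on the four families of estimates already established in Propositions \ref{prop:energy}, \ref{prop:average}, \ref{prop:L6}, \ref{est:Linfty}, and \ref{prop:Linfty_t}. Define the combined quantity
\[
\mathcal{M}(t) := \sqrt{\mathcal{E}(t)} + \sqrt{\mathcal{D}(t)} + \sqrt{\mathcal{F}_p(t)},
\]
and set an a priori hypothesis $\mathcal{M}(t) \leq M_\star \e^{-1/4 + \frac{3}{2p}(1-p/3)}$ for a large but fixed constant $M_\star$. Under this hypothesis, the choice of $\e$ in \eqref{choice:delta} ensures that all quantities of the form $\e^a \kappa^{-b}\mathcal{M}^c$ appearing in the small denominators $d_2, d_{2,t}, d_3, d_{3,t}, d_6, d_\infty, d_{\infty,t}$ can be made smaller than $1/2$ (absorbing cross terms like $\e\|\mathcal I\|_\infty$, $\e^{3/2}\|\mathfrak{w}f_R\|_\infty$, $\e^{1/2}\kappa^{-1}\|Pf_R\|_{L^6}^{1/2}\|Pf_R\|_{L^2}^{1/2}$, etc.), using \eqref{mathfrak_C'}, \eqref{condition:theorem}, and the exponential smallness of $\e$ relative to any polynomial of $\kappa$.

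Once these denominators are stabilized, I would chain the estimates in the order energy $\to$ $L^2_tL^p_x$ $\to$ $L^6_x$ $\to$ $L^\infty$. Specifically: Proposition \ref{prop:energy} gives control of $\mathcal{E}$ and $\mathcal{D}$ in terms of data plus $\|\mathbf{P}f_R\|_{L^\infty_tL^6}\|\mathbf{P}f_R\|_{L^2_tL^3}$ and $\frac{1}{\kappa}\int_0^t\|Pf_R(s)\|_{L^2}^2ds$; Proposition \ref{prop:average} upgrades the $L^2_tL^p_x$ pieces using $\mathcal{E}$, $\mathcal{D}$, and the $L^\infty$ of $h = \mathfrak{w}f_R$; Proposition \ref{prop:L6} bounds $\|\mathbf{P}f_R(t)\|_{L^6}$ by $\mathcal{E}^{1/2}(t)$, $\|\p_t f_R\|_{L^2}$, $\mathcal{D}^{1/2}$, and boundary traces, the latter being reabsorbed via \eqref{I-P:expansion}; and Propositions \ref{est:Linfty}, \ref{prop:Linfty_t} close the chain by bounding $\|h\|_\infty$ and $\|\mathfrak{w}'\p_t f_R\|_{L^2_tL^\infty}$ by the lower-order dissipation/hydrodynamic norms already controlled. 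Substituting through, one obtains a single inequality of Gronwall type
\[
\mathcal{M}(t)^2 \lesssim \mathcal{M}(0)^2 + C(\e,\kappa)\int_0^t \mathcal{M}(s)^2 \, ds + \mathcal{R}(\e,\kappa)(1+\mathcal{M}(t)^2),
\]
where $C(\e,\kappa) \lesssim \kappa^{-1}(1+\|\mathcal I\|_\infty^2) \lesssim \kappa^{-1-2\mathfrak{P}}$ comes from the singular term $\frac{2}{\e\kappa}\Gamma(f_1, f_R)$, and $\mathcal{R}(\e,\kappa)$ collects all forcing contributions from the fluid residuals \eqref{estR11}--\eqref{estR22t}.

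The main obstacle, inherited from the global-Maxwellian framework, is exactly the prefactor $C(\e,\kappa) \sim \kappa^{-1-2\mathfrak{P}}$: Gronwall gives a growth factor $\exp(CT/\kappa^{1+2\mathfrak{P}})$, which is precisely what dictates the super-exponential choice \eqref{choice:delta} of $\e$. Provided $\mathfrak{C}$ is taken large enough that $\e \cdot \exp(CT/\kappa^{1+2\mathfrak{P}}) \ll \e^{1/2 - 3/p(1-p/3)}$, every factor of the form $\e^a \exp(CT/\kappa^{1+2\mathfrak{P}})$ arising in $\mathcal{R}$ and in the bootstrap step is of order $\e^{1/2-3/p(1-p/3)}$ or smaller. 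This beats the smallness threshold required by the a priori assumption, so we recover
\[
\mathcal{M}(t) \leq \tfrac{1}{2} M_\star \e^{-1/4+\frac{3}{2p}(1-p/3)} \quad \text{for all } t\in[0,T],
\]
closing the bootstrap and yielding \eqref{est:E}.

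For existence and uniqueness, I would set up a standard iteration scheme $f_R^{n+1}$ solving the linearization of \eqref{eqtn_fR} around $f_R^n$ (i.e., keeping the quadratic $\Gamma(f_R^n, f_R^{n+1})$ and $\Gamma(f_R^n, f_R^n)$ as given forcing), with diffuse boundary condition \eqref{bdry_fR} and initial datum $f_{R,in}$. The same chain of estimates applied uniformly in $n$ (thanks to \eqref{initial_EF}) shows the sequence is bounded in $\mathcal{M}$; a parallel estimate on the differences $f_R^{n+1} - f_R^n$, which satisfy the same equation with one lower power of $\e^{-1/2}\kappa^{-1}$ in the cross-term compared to the a priori estimate, produces a contraction in a slightly weaker norm, yielding a unique fixed point. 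Uniqueness of the full solution follows from the same difference argument applied to any two candidate solutions sharing the initial data, exploiting once more that the a priori bound on $\mathcal{M}$ makes the quadratic terms subcritical.
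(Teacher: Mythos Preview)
Your overall strategy --- bootstrap on a combined quantity, stabilize the small denominators $d_2, d_{2,t}, d_3, d_{3,t}, d_6, d_\infty, d_{\infty,t}$ under the bootstrap hypothesis, chain the auxiliary estimates back into the energy inequality, and close by Gronwall --- is exactly the approach the paper takes. The paper also treats existence/uniqueness by citing prior work on the diffuse-boundary Boltzmann equation (\cite{EGKM,EGKM2,EGKM3,EGM}) rather than running an iteration, so your iteration sketch is extra work but not wrong.

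There is, however, one concrete quantitative error that, as written, breaks the closure. You compute the Gronwall coefficient as $C(\e,\kappa) \lesssim \kappa^{-1}(1+\|\mathcal I\|_\infty^2) \lesssim \kappa^{-1-2\mathfrak{P}}$. But hypothesis \eqref{mathfrak_C'} reads $\kappa^{-1/2}\|\mathcal I\|_\infty \lesssim \kappa^{-\mathfrak{P}}$, i.e.\ $\|\mathcal I\|_\infty \lesssim \kappa^{1/2-\mathfrak{P}}$, hence $\kappa^{-1}\|\mathcal I\|_\infty^2 \lesssim \kappa^{-2\mathfrak{P}}$. The correct coefficient is $\kappa^{-2\mathfrak{P}}$ (cf.\ \eqref{multiplier} in the paper), and this is precisely what matches the exponent in the prescribed choice \eqref{choice:delta}, $\e = \exp(-\mathfrak{C}(T+1)/\kappa^{2\mathfrak{P}})$. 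With your exponent $\kappa^{-1-2\mathfrak{P}}$, the Gronwall growth $\exp(CT/\kappa^{1+2\mathfrak{P}})$ is \emph{not} absorbed by any positive power of $\e$ as given in \eqref{choice:delta}: for small $\kappa$ one has $\mathfrak{C}(T+1)/\kappa^{2\mathfrak{P}} \ll CT/\kappa^{1+2\mathfrak{P}}$, so $\e^{a}\exp(CT/\kappa^{1+2\mathfrak{P}}) \to \infty$ for every fixed $a>0$. Fixing this exponent (by reading \eqref{mathfrak_C'} correctly) is essential; once corrected, your argument goes through and coincides with the paper's.
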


\begin{proof}[\textbf{Proof of Theorem \ref{main_theorem:conditional}}]
An existence of a unique global solution $F$ for each $\e>0$ can be found in \cite{EGKM,EGKM2,EGKM3,EGM}. Thereby we only focus on the (a priori) estimates (\ref{est:E}).  
 
 \

\textbf{Step 1. }  
Fix $ \delta_0 >0 , \mathfrak C_1 >0,  \mathfrak C_2 >0$ such that $0< \frac{1}{ \sqrt{ \mathfrak C_1 }}  \ll \delta_0 \ll 1 $. And we choose $\e$ with
\Be\label{choice:delta1}
\e  \leq\left[  \frac{
\kappa^{  \frac{5}{2} + 3 \mathfrak{P} + \frac{3}{  p} (1- \frac{p}{3}) }
}{\mathfrak{C}_1  \big(
\mathcal{E}(0) + \mathcal{F}_p(0)+ 1  \big) }  
\exp\Big(  \frac{  -2\mathfrak{C}_2 (T+1)}{ \kappa^{ 2 \mathfrak{P}} }\Big) 
    \right]^{\frac{1}{\frac{1}{2}- \frac{3}{p} (1- \frac{p}{3})}}. 
\Ee
Then we define $T_*>0$ as  
\Be\label{condition:close}
\begin{split}
T_*=\sup   \Big\{ & t\geq 0:  \  
\min \{d_2, d_{2,t}, d_6, d_3, d_{3,t}, d_\infty, d_{\infty, t}\}
\geq \frac{\sigma_0}{4}, 
\\ &  \frac{\e}{\kappa} \sqrt{\mathcal{D}(s)} +  \e^{3/2} \| \mathfrak{w}_{\varrho} f(s) \|_{L^\infty_{x,v}} 
+ \frac{\e}{\kappa^{1+ \mathfrak{P} }}  \| Pf_R(s) \|_{L^2_x} < \delta_0, \text{ and }
\\ &   \e^{\frac{1}{4}- \frac{3}{2p} \big(1- \frac{p}{3}\big) } \kappa^{- \frac{3}{4} - \frac{\mathfrak{P}}{2} - \frac{3}{2 p} (1- \frac{p}{3}) } \left(  \sqrt{\mathcal{E}(s)} +  \sqrt{\mathcal{D}(s)} +   \frac{1}{\kappa^{1/2+ \mathfrak P}} \| f_R (s) \|_{L^2_{x,v}} \right) < \delta_0,
\\ & \text{ for all}  \ 0 \leq s \leq t 
 \Big\},
\end{split}
\Ee
where $d_2, d_{2,t}, d_6, d_3, d_{3,t}, d_\infty, d_{\infty, t}$ are defined in (\ref{d2}), (\ref{d2t}), (\ref{d6}), (\ref{d3}), (\ref{d3t}), (\ref{dinfty}) and (\ref{dinftyt}). Note that from \eqref{choice:delta1} we have $ \e^{\frac{1}{4}- \frac{3}{2p} \big(1- \frac{p}{3}\big) } \kappa^{- \frac{3}{4} - \frac{\mathfrak{P}}{2} - \frac{3}{2 p} (1- \frac{p}{3}) } < \delta_0$, so $T_* >0 $ is well defined.

First from \eqref{mathfrak_C'} and \eqref{condition:theorem}, 
\[ \begin{split}
& \| \eqref{est:f2} \|_{L^\infty_{t,x} } + \| \eqref{estR11} \|_{L^\infty_{t,x} } + \| \kappa  \eqref{estR12} \|_{L^\infty_{t,x} } + \| \eqref{estR21} \|_{L^\infty_{t,x} } + \| \kappa \eqref{estR22} \|_{L^\infty_{t,x} } 
\\ & + \| \eqref{estR11t} \|_{L^\infty_{t,x} } + \| \kappa \eqref{estR12t} \|_{L^\infty_{t,x} } + \| \eqref{estR21t} \|_{L^\infty_{t,x} } + \| \kappa \eqref{estR22t} \|_{L^\infty_{t,x} } + \| \eqref{estptf2} \|_{L^\infty_{t,x} }  \lesssim \exp \left( \frac{2}{ \kappa^{\mathfrak{P}^\prime} } \right).
\end{split} \]

Now from (\ref{condition:theorem}) and (\ref{condition:close}) we read all the estimates of Proposition \ref{prop:energy}, Proposition \ref{prop:L6}, Proposition \ref{est:Linfty}, Proposition \ref{prop:average}, and Proposition \ref{prop:Linfty_t} in terms of $\mathcal{E} (t)$ and $\mathcal{D} (t)$ as follows.

From (\ref{Linfty_3D}), (\ref{condition:close}), and (\ref{condition:theorem}) 
\Be 
 \begin{split}
 \label{Linfty_3D_ED}
 %(1- (o(1) + \frac{\e^{1/2} \delta}{\kappa } \mathcal{D}^{1/2} ))
 \sup_{0 \leq s \leq t} \| \mathfrak{w}_{\varrho} f _R(s)  \|_{L^\infty_{x,v}} 
 \lesssim  \  & %e^{- \frac{\nu }{2\e^2 \kappa}t}
%(o(1) + \frac{\e^{1/2} \delta}{\kappa } \mathcal{D}^{1/2} ) \sup_{0 \leq s \leq t} \| \mathfrak{w}_{\varrho} f _R(s) \|_\infty
% \frac{1}{\e^{1/2} \kappa^{1/2}
 %}
  %\sup_{0 \leq s  \leq t} \| \mathbf{P} f_R(s) \|_{L^6_
%{x,v}} 
 \frac{1}{\e^{1/2} \kappa^{1/2}
 } \sup_{0 \leq s  \leq t} \| \mathbf{P} f_R(s) \|_{L^6_
{x,v}}
+
 \frac{1}{\e^{1/2} \kappa } \sqrt{ \mathcal{D}(t) } 
 {+ \frac{1}{\e^{1/2}\kappa^{1+ \mathfrak{P}}} %\| \p_t u \|_{L^\infty_{t,x}} 
 \| P f _R\|_{L^2_{t,x}}
 }
 \\
&  
 + \|  \mathfrak{w}_{\varrho} f (0)\|_\infty
 + \e^{1/2} \exp \Big(\frac{3}{\kappa^{\mathfrak{P}^\prime}}\Big).
%
%
% +\frac{\e}{\delta}\| (\ref{est:f2}) \|_{L^\infty_{t,x}} + \e^2 \kappa\{  \| (\ref{est:R1})\|_{L^\infty_{t,x}}+\| (\ref{est:R2})\|_{L^\infty_{t,x}}\} .
%
%
%
\end{split}
 \Ee
 Now applying (\ref{Linfty_3D_ED}) to (\ref{L6}) we derive that  
\Be\label{L6_ED}
\begin{split}
\sup_{0 \leq s \leq t}\|   {P} f_R(s) \|_{L^6_x}\lesssim & \ 
\frac{\e}{\kappa} \exp \Big(\frac{1}{\kappa^{\mathfrak{P}^\prime}}  \Big)\sup_{0 \leq s \leq t} \sqrt{\mathcal{E}(s)}+
\frac{1}{\kappa^{1/2}} \sqrt{\mathcal{D}(t)} {+ \frac{1}{ \kappa^{1/2 + \mathfrak{P}} }% \| \p_t u \|_{L^\infty_{t,x}} 
\| P f _R\|_{L^2_{t,x}}} 
\\
&+ ( \e   \kappa)^{\frac{1}{2}}    \|  \mathfrak{w}_{\varrho} f (0)\|_{L^\infty_{x,v}} +\e^{1/2} \exp \Big(\frac{3}{\kappa^{\mathfrak{P}^\prime}}\Big) + \frac{1}{\kappa^{1+ \mathfrak P}} \sup_{0 \le s \le t} \| f_R \|_{L^2_{x,v}} .
 \end{split}
\Ee

   From (\ref{Linfty_3D_ED}), (\ref{L6_ED}), and (\ref{condition:close}) and (\ref{condition:theorem}) we conclude that
 \Be\label{L6Linfty_ED}
\begin{split}
& \sup_{0 \leq s \leq t} \big\{ \kappa^{\frac{1}{2}} \|   {P} f_R(s) \|_{L^6_x}  + 
 \e ^{\frac{1}{2}}\kappa \| \mathfrak{w}_{\varrho} f _R(s) \|_{L^\infty_{x,v}}\big\}  \\
 &
 \lesssim   \underbrace{  
\e^{1/2} \kappa^{1/2} \exp \Big(\frac{3}{\kappa^{\mathfrak{P}^\prime}}\Big)
 +  \sqrt{\mathcal{F}_{   p  }  (0)}   +  
\sup_{0 \leq s \leq t} \big\{\sqrt{\mathcal{E}(s)}
 +  \sqrt{\mathcal{D}(s)}
\big\}
{+
\frac{1}{\kappa^{\mathfrak{P}}}  \| P f_R \|_{L^2_{t,x}}  + \frac{1}{\kappa^{1/2+ \mathfrak P}} \sup_{0 \le s \le t} \| f_R \|_{L^2_{x,v}}
} 
 }_{(\ref{L6Linfty_ED})_*}.
\end{split}
\Ee

 From (\ref{average_3D}), (\ref{L6Linfty_ED}), (\ref{condition:close}) and (\ref{condition:theorem})
 \Be\begin{split}\label{average_3D_ED}
\kappa^{\frac{1}{2}} \|    {P} f_R
 \|_{L^2_t((0,t);L^p_x )}
 %-C_{T,N}   \| \mathfrak{w}_{\varrho, \ss } f_R (t) \|_{ L^\infty((0,T) \times \tilde{\O} \times \R^3)}^{\frac{p-2}{p}} \| (\mathbf{I} - \mathbf{P}) f_R \|_{L^2((0, T) \times \tilde{\O} \times \R^3)}^{\frac{2}{p}} 
\lesssim  
\underbrace{
(\ref{L6Linfty_ED})_* \left( 1 +  \frac{\e}{\kappa}(\ref{L6Linfty_ED})_*\right)
% + \Big( \e^{\frac{p+2}{2(p-2)} }\kappa^{\frac{2}{p-2}}  \Big)^{\frac{p-2}{p}} (\ref{L6Linfty_ED})_*^{2-\frac{2}{p}}
}_{(\ref{average_3D_ED})_*}.
\end{split}\Ee

Using (\ref{L6Linfty_ED}) and (\ref{mathfrak_C'}), from (\ref{average_3Dt}) and (\ref{Linfty_3D_t}), we deduce that, for $p<3$ and $\varrho^\prime<\varrho$,
\Be\begin{split}\label{average_3Dt_ED}
&  \kappa^{ \frac{1}{2}+\ss} \big\|    {P}  \p_t f_R
 \big\|_{L^2_t((0,t); L^p_x) }
 + (\e  \kappa)^{3/p}  \kappa^{ \frac{1}{2}+\ss} \| \mathfrak{w} _{\varrho^\prime}  \p_t  f_R  \|_{L^2_t((0,t);  L^{\infty}  _{x,v})} 
 %-C_{T,N}   \| \mathfrak{w}_{\varrho, \ss } f_R (t) \|_{ L^\infty((0,T) \times \tilde{\O} \times \R^3)}^{\frac{p-2}{p}} \| (\mathbf{I} - \mathbf{P}) f_R \|_{L^2((0, T) \times \tilde{\O} \times \R^3)}^{\frac{2}{p}} 
 \lesssim    (\ref{average_3D_ED})_*
%\underbrace{ (\ref{L6Linfty_ED})_*}_{(\ref{average_3Dt_ED})_*}.
\end{split}\Ee

%%%%%%%%%%%
 %%%%%%%%%%%
 %%%%%%%%%%%
 %%%%%%%%%%%
%%%%%%%%%%%
%%%%%%%%%%%
%%%%%%%%%%%
%%%%%%%%%%%
%%%%%%%%%%%

%%%%%%%%%%%%%
%%%%%%%%%%%%
%%%%%%%%%%%%
%%%%%%%%%%%%
%%%%%%%%%%%%
%%%%%%%%%%%%
%%%%%%%%%%%%
%%%%%%%%%%%%
%%%%%%%%%%%%%%%%%%%%%%%%

\

%\smallskip

\textbf{Step 2. } Using the estimates of the previous step we will close the estimate ultimately in the basic energy estimates (\ref{est:Energy}) and (\ref{est:Energy_t}) via the Gronwall's inequality. We note that from (\ref{mathfrak_C'}) the multipliers of $\int^t_0 \| Pf_R(s) \|_{L^2_x}^2 \dd s$ in (\ref{est:Energy}) and $\int^t_0 \| P \p_t f_R(s) \|_{L^2_x}^2 \dd s$ in (\ref{est:Energy_t}) are bounded above by 
\Be\label{multiplier}
O(1)  \kappa^{-2 \mathfrak{P}} \big(
1+ \e \kappa^{\frac{1}{2}-\mathfrak{P}} + ( \e \kappa^{\frac{1}{2}-\mathfrak{P}})^2
\big) \lesssim  \kappa^{-2 \mathfrak{P}} ,
\Ee
 where we have used (\ref{choice:delta}). 

In (\ref{est:Energy}) and (\ref{est:Energy_t}) we bound
\Be\label{interp_3}
\begin{split}
\| \kappa^{1/2} Pf_R \|_{L^2_t L^3_x}&\lesssim 
\kappa^{\frac{1}{2}(1- \frac{p}{3})}
\|   P f_R \|_{L^2_t L^\infty_x}^{1- \frac{p}{3}}
\| \kappa^{1/2} P  f_R \|_{L^2_t L^p_x}^{\frac{p}{3}}
\lesssim _T (\e\kappa)^{-\frac{1}{2} (1- \frac{p}{3})} 
|(\ref{L6Linfty_ED})_*|^{1- \frac{p}{3}} |(\ref{average_3D_ED})_*|^{\frac{p}{3}}
,\\
\|   P\p_t f_R \|_{L^2_t L^3_x}&\lesssim
\|   P\p_t f_R \|_{L^2_t L^\infty_x}^{1- \frac{p}{3}}
\|  P\p_t f_R \|_{L^2_t L^p_x}^{\frac{p}{3}}
\lesssim 
\e^{-\frac{3}{p}(1-\frac{p}{3}) } \kappa^{
 %  \bcb
   - \frac{1}{2} - \mathfrak{P} - \frac{3}{p} (1- \frac{p}{3})
  % \ec
   }
|(\ref{average_3D_ED})_*|.
\end{split}
\Ee

%We can check that the multiplier of $ \| \e^{-1}\kappa^{-1/2} \sqrt{\nu} (\mathbf{I} -\mathbf{P}) f_R \|_{L^2_{t,x,v}}^2$ in (\ref{est:Energy_t}) is bounded as,  from (\ref{condition:theorem}) and (\ref{L6Linfty_ED}), 
%\Be\begin{split}\notag
%& \Big\{
% \e (1+ \e \| (\ref{est:f2})\|_{L^\infty_{t,x}}) \| \p_t u \|_{L^\infty_{t,x}}
% + \e \kappa \| \nabla_x \p_t u \|_{L^\infty_{t,x}}
% + (\e \kappa^{1/2} \| (\ref{transp:mu_t})_*\|_{L^\infty_{t,x}})^2
% +( \e \delta \| \mathfrak{w} f_R \|_{L^\infty_{t,x,v}}) ^2 \Big\}\\
%&\lesssim  \e\kappa^{1/2-\mathfrak{P}}+ \e \delta^2 \kappa^{-2} |(\ref{L6Linfty_ED})_*|^2.   \end{split}\Ee

Applying (\ref{L6Linfty_ED}), (\ref{average_3D_ED}), (\ref{average_3Dt_ED}), (\ref{interp_3}) to $(\ref{est:Energy})+o(1)(\ref{est:Energy_t})$,  using the above bound and (\ref{choice:delta}),  and collecting the terms, we derive that    
\Be \begin{split}\label{energy_final1}
&\sup_{0 \leq s \leq t}\mathcal{E} (s) + (d_2 - o(1)) \mathcal{D}(t) 
\\ \lesssim & \      \mathcal{E}(0) + \mathcal{F} (0) + \exp \Big( \frac{6}{\kappa^{\mathfrak{P}^\prime}}\Big) + (\ref{multiplier})
\int^T_0 \mathcal{E} (s) \dd s
\\ &+ \e^{ 1 - (1- \frac{p}{3})}\kappa^{- 4 + \frac{p}{3}}
|(\ref{L6Linfty_ED})_*|^{4 - \frac{2p}{3}}  | \eqref{average_3D_ED}_* |^{\frac{2p}{3}}
%|(\ref{average_3Dt_ED})_*|^{\frac{2p}{3} }
  +  \e^{ 1 - \frac{6}{p} (1- \frac{p}{3})  }    \kappa^{-3 - 2\mathfrak{P} - \frac{6}{p}  (1- \frac{p}{3})}   |(\ref{L6Linfty_ED})_*|^{2 }  | \eqref{average_3D_ED}_* |^2.
%|(\ref{average_3Dt_ED})_*|^{2}.
  \end{split}\Ee  
Now from the last inequality in \eqref{condition:close},
\Be
\begin{split}\label{assump_Gronwall}
% &  \e^{\frac{p+2}{2(p-2)} }\kappa^{\frac{2}{p-2}} (\ref{L6Linfty_ED})_*
%\ll 1,  
 [ \e^{ 1 - (1- \frac{p}{3})}\kappa^{- 4 + \frac{p}{3}}]^{1/4}
 (\ref{L6Linfty_ED})_* \ll1,  \ \ 
  [ \e^{1 - \frac{6}{p} (1- \frac{p}{3})  }   
    \kappa^{-3 - 2\mathfrak{P} - \frac{6}{p}  (1- \frac{p}{3})}
 %   \ec
% \kappa^{-1 - \frac{6}{p}- 2 \mathfrak{P} }
]^{1/4}
    (\ref{L6Linfty_ED})_*  \ll1, 
\end{split}
\Ee
thus  we derive that, for $\mathfrak{C}_1>0$ and $\mathfrak{C}_2>0$ large enough,  
\Be\label{gronwall}
\sup_{0 \leq s\leq t}\mathcal{E}(s)  + \mathcal{D}(t) 
\leq \mathfrak{C}_1
\Big(\mathcal{E}(0) + \mathcal{F}_p(0) +\exp\Big(\frac{6}{\kappa^{\mathfrak{P}^\prime}}\Big) \Big)+  \mathfrak{C}_2 \kappa^{- 2 \mathfrak{P}} \int^t_0\mathcal{E}(s)  \dd s  .
\Ee  

%From (\ref{relat:kappa_delta}) and (??) we bound (\ref{final_E0}) by $o(1) \times \big[\sup_{0 \leq t \leq T}\mathcal{E} (t)+ \mathcal{D}(T) \big]$ which can be absorbed in the left hand side. From (??) there exists a constant $\mathfrak{C}>0$ independent of $\e,\kappa, \delta$, satisfying 
%\Be
%(\ref{final_E1})+(\ref{final_E2}) \ll \mathfrak{C}.\label{mathfrak_C}
%\Ee
% Note that the second condition condition in (\ref{assump_Gronwall}) in stronger, which is
%\Be\label{assump_Gronwall_S}
%\e^{\frac{1}{4}- \frac{3}{2p} \big(1- \frac{p}{3}\big) }
%\kappa^{- \frac{3}{4} - \frac{\mathfrak{P}}{2} - \frac{3}{2 p} (1- \frac{p}{3}) }
% (\ref{L6Linfty_ED})_* \ll 1.
%\Ee

Applying the Gronwall's inequality to (\ref{gronwall}) (we may redefine $\mathcal{E}(t)$ as $\sup_{0 \leq s \leq t}\mathcal{E}(s)$ if necessary), 
%we derive that 
%\Be\notag
%\begin{split}
%\sup_{0 \leq s \leq t}\mathcal{E} (s)%\lesssim (\mathcal{E}(0) + C)
%%+  (\mathcal{E}(0) + C) (\| \nabla_x u\|_\infty + \| \p_t u \|_{\infty}) 
%%t e^{(\| \nabla_x u\|_\infty + \| \p_t u \|_{\infty})t}
%%\leq  ( \mathfrak{C}_1\mathcal{E}(0) + \mathfrak{C}_2)
%%\Big\{1 +
%%\frac{\mathfrak{C}_3t}{\kappa^p}e^{\frac{\mathfrak{C}_3 t}{\kappa^p}}\Big\}
% &\leq 
%  \mathfrak{C} _1\Big(\mathcal{E}(0) + \mathcal{F}_p(0)+ \exp\Big(\frac{6}{\kappa^{\mathfrak{P}^\prime}}\Big)\Big) 
%  \Big\{
% 1+ \frac{\mathfrak{C}_2 t }{\kappa^{\mathfrak{P}}}
% \exp\Big(  \frac{\mathfrak{C}_2  t }{\kappa^{\mathfrak{P}}} \Big)\Big\} 
% .
% \end{split}
% \Ee 
% Applying this estimate to the last term of (\ref{gronwall}) 
and using the fact $\mathfrak{P}^\prime< \mathfrak{P}$ we derive that, after redefining $\mathfrak{C}_1$ if necessary,  
\Be 
\sup_{0 \leq s \leq t}\mathcal{E} (s) +   \mathcal{D}(t) + \mathcal{F}_p (t)  \leq      \mathfrak{C}_1  \big(
\mathcal{E}(0) + \mathcal{F}_p(0)+ 1 \big) 
     %\mathfrak{C}^\prime \kappa^{-\mathfrak{P}} T
%\bcb
  \exp\Big(  \frac{  2\mathfrak{C}_2 (t+1)}{ \kappa^{2  \mathfrak{P}} }\Big)
% \ec
    \ \ \text{for all } \ t \leq T_*,
  \label{est:E_1}
\Ee
under the assumptions of (\ref{condition:theorem}), (\ref{condition:close}), and (\ref{assump_Gronwall}).

%\smallskip

%\
%
%\textbf{Step 3. }   Now we find out the ranges of $ \kappa, \e$ satisfying the assumptions of (\ref{condition:close}) and (\ref{assump_Gronwall}). 
%From (\ref{initial_EF}) and (\ref{est:E_1}), if we choose $\e$ as
%\Be\label{choice:delta1}
%\e  \leq\left[  \frac{
%\kappa^{  \frac{5}{2} + 3 \mathfrak{P} + \frac{3}{  p} (1- \frac{p}{3}) }
%}{\mathfrak{C}_1  \big(
%\mathcal{E}(0) + \mathcal{F}_p(0)+ 1  \big) }  
%\exp\Big(  \frac{  -2\mathfrak{C}_2 (T+1)}{ \kappa^{ 2 \mathfrak{P}} }\Big) 
%% \Big\{1+    %\mathfrak{C}^\prime \kappa^{-\mathfrak{P}} T 
%% \Big(  \frac{  \mathfrak{C}_2 T}{ \kappa^{ \mathfrak{P}} }\Big)^2   \exp\Big(  \frac{  \mathfrak{C}_2 T}{ \kappa^{ \mathfrak{P}} }\Big) 
%   % \Big\}^{-1}    
%    \right]^{\frac{1}{\frac{1}{2}- \frac{3}{p} (1- \frac{p}{3})}}. 
%\Ee
%then we can achieve (\ref{assump_Gronwall_S}) and hence all conditions of (\ref{assump_Gronwall}). Clearly (\ref{choice:delta}) and (\ref{initial_EF}) ensure (\ref{choice:delta1}).

Now from (\ref{est:E_1}) and (\ref{choice:delta}) we derive (\ref{est:E}), 
%\Be
% \sup_{0 \leq s \leq t} \sqrt{\mathcal{E} (s)} +   \sqrt{\mathcal{D}(t)}   + \sqrt{\mathcal{F}_p (t)}
% \lesssim \delta^{-\frac{1}{2}+ \frac{3}{p} (1- \frac{p}{3})} ,\notag
%\Ee
which implies 
\Be
\begin{split}
&\sup_{0 \leq s \leq t} \Big\{ 
   \| \kappa^{1/2} Pf_R(s)\|_{L^6_x}  
 + \| \e^{1/2}\kappa \mathfrak{w}_{\varrho,\ss} f_R(s) \|_{L_{x,v}^{\infty} }
+ \|   (\e  \kappa)^{3/p}  \kappa^{ \frac{1}{2}+\mathfrak{P}} \mathfrak{w}_{\varrho^\prime,\ss} f_R(s) \|_{L^2((0,s);L_{x,v}^{\infty}) } 
\Big\}\\
&\lesssim \e^{-\frac{1}{4}+ \frac{3}{2p} (1- \frac{p}{3})}.\notag
\end{split}\Ee
These imply $\min \{d_2, d_{2,t}, d_6, d_3, d_{3,t}, d_\infty, d_{\infty, t}\}
\geq \frac{1}{4}$ and $\frac{ \e^{}}{\kappa} \sqrt{\mathcal{D}(t)}\ll \delta_0$ from (\ref{d2}), (\ref{d2t}), (\ref{d6}), (\ref{d3}), (\ref{d3t}), (\ref{dinfty}) and (\ref{dinftyt}). Moreover, we have 
\[ \e^{\frac{1}{4}- \frac{3}{2p} \big(1- \frac{p}{3}\big) } \kappa^{- \frac{3}{4} - \frac{\mathfrak{P}}{2} - \frac{3}{2 p} (1- \frac{p}{3}) } \left(  \sqrt{\mathcal{E}(s)} +  \sqrt{\mathcal{D}(s)} +   \frac{1}{\kappa^{1/2+ \mathfrak P}} \| f_R (s) \|_{L^2_{x,v}} \right) < \frac{1}{\sqrt \mathfrak C} \ll \delta_0.
\] 

Then by the standard continuation argument we can verify all assumptions (\ref{condition:close}) up to $t\leq T$ and $T=T_*$. The estimate (\ref{est:E}) follows easily.  
\end{proof}

\section{Fluid estimate} \label{fluidestimate}

We denote the vorticity by 
\Be
\o=\nabla \times   u, \ \ \ u =\nabla \times (- \Delta)^{-1} {\o},\label{vorticity}
\Ee
while the second identity  is the famous Biot-Savart law. Here $(- \Delta)^{-1}$ denotes the inverse of $-\Delta$ with the zero Dirichlet boundary condition on $\p\O$.

Our analysis of the Navier-Stokes-Fourier system is based on the vorticity formulation of the velocity field in 3D (\cite{Mae13,Mae14}):
  \begin{align}
\p_t \o - \kappa \eta_0 \Delta\o = - u \cdot \nabla \o
+ \o \cdot \nabla u \ \ &\text{in} \ \ \O
,  \quad \label{NS_o} \\
\o \,|_{t=0}   = \o_{in}  \ \ &\text{in} \ \ \O, \label{NSI_o} \\
\kappa \eta_0 (\p_{x_3} + \sqrt{- \Delta_h})\o_h \,    = [\p_{x_3}(- \Delta)^{-1} (-u \cdot \nabla \o_h 
+ \o \cdot \nabla  u_h
) ] \,  , \ \ \o_3 =0 \ \ &\text{on} \ \ \p\O, \label{NSB_o}
\\ \p_t \theta + u \cdot \nabla_x \theta  - \kappa \eta_c \Delta \theta   =0 \ \ & \text{in} \  \ \O,
\label{heat_NS}
\\  \ \ \theta = 0 \ \ & \text{on} \ \ \p\O \label{thetaB}
\end{align} 
 % at $\{x_3=0\}$, 
where $\sqrt{- \Delta_h}=|\nabla_h|$ is defined as 
\Be\label{sqrt_D}
 \sqrt{- \Delta_h}g(x_h,x_3)= \sum_{\xi \in \mathbb{Z}^2} |\xi| g_\xi (x_3)  e^{i x_h \cdot \xi}. 
\Ee Here, $g_{\xi } (x_3)= \frac{1}{(2\pi)^2} \iint_{\mathbb{T}^2} e^{-i x_h \cdot \xi} g(x_h,x_3) \dd x_h \in \mathbb C \text{ with } \xi= (\xi_1, \xi_2) \in \mathbb{Z}^2$ denotes the Fourier transform in the horizontal variables, which 
%in our domain $\O=\mathbb T^2\times \mathbb R_+$.   % of (\ref{domain}), 
%the Fourier transform in the horizontal variables 
%and $x_h=(x_1,x_2)$, 
 satisfies $g(x_1,x_2,x_3)%=g(x_h,x_3) 
 = \sum_{\xi \in \mathbb{Z}^2} g_{\xi} (x_3) e^{i x_h \cdot \xi }.$ The Fourier transform can be regarded as a function $g_{\xi} (z)$ where $z$ is sitting in a pencil-like complex domain: for any $\lambda> 0$, 
\Be\label{complex_domain}
\mathcal{H}_\lambda:=\Big\{ z \in \mathbb C : \text{Re}\,z\geq 0, \; | \text{Im}\, z| < \lambda \min \{ \text{Re}\,z, 1\} \Big\}.
\Ee

%Write
%
%\begin{align} 
%\p_t \theta - \kappa \eta_c \Delta \theta = - u \cdot \nabla_x \theta \ \ &\text{in} \ \ \O, \label{theta}
%\\ \theta(x,0 ) = \theta_0(x) \ \ &\text{in} \ \ \O, \label{theta0}
%\\ \theta(x) = 0 \ \ &\text{on} \  \p \O. \label{thetabdry}
%\end{align}
%Follow the argument to treat $\o_3$ and get the estimate without initial-boundary layer (something similar to $\o_3$).

\subsection{Higher regularity of Navier-Stokes-Fourier system in the inviscid limit (Theorem \ref{thm_bound})} 
In this section, we will use the following notations: $x=(x_h,x_3)=(x_1,x_2,x_3)\in \mathbb T^2\times \mathbb R_+=\Omega$, $\nabla_x=\nabla=(\nabla_h,\p_3)=(\p_{x_1},\p_{x_2},\p_{x_3})$;  for a vector valued function $g\in \mathbb R^3$, $g=(g_h,g_3)=(g_1,g_2,g_3)$. 

We define analytic function spaces without the boundary layer, $\mathfrak{L}^{p,\lambda}$, for holomorphic functions with a finite norm,  for $p\geq 1$,  
\Be\label{norm_L1}
\| g \|_{p,\lambda } := \sum_{\xi\in \mathbb Z^2} e^{\lambda |\xi|} \| g_{\xi } \|_{\mathcal{L}^p_\lambda}   \ \ \text{where} \ \ 
\|g_{\xi }\|_{\mathcal{L}^p_\lambda} := \sup_{0\leq \sigma\leq \lambda} 
\left(
\int_{\p \mathcal{H}_\sigma} | g_{\xi } (z)  |^p  |\dd z|
\right)^{1/p}.% \ \ \text{for} \ 1\leq p \leq \infty.
\Ee

Next we introduce an $L^\infty$-based analytic boundary layer function space, for $\lambda>0$ and $\kappa \geq 0$, that consists of holomorphic functions in $\mathcal{H}_\lambda$ with a finite norm 
\Be\label{norm_BL}
\begin{split}
\|g\|_{\infty,\lambda,\kappa}  = \sum_{\xi\in {\mathbb Z^2}} e^{\lambda |\xi|} \| g_{\xi } \|_{\mathcal{L}^\infty_{\lambda, \kappa}}  ,
%\sup_{z\in \Omega_\lambda}  e^{\bar{\alpha} \text{Re}\,z}  |g_{\xi } (z)| 
%\ \text{and} \  \ 
 % \| g_{\xi} \|_{L^\infty_{\lambda, \kappa}}  = 
 % \bigg\| \frac{e^{\bar{\alpha} \text{Re}\,z}}{ 1+ \phi_\kappa(z)} g_{\xi } (z) \bigg\|_{L^\infty_\lambda}
 % \sup_{z\in \Omega_\lambda} \frac{e^{\bar{\alpha} \text{Re}\,z}}{ 1+ \phi_\kappa(z)}  |g_{\xi} (z)|
%  \ \text{for} \ \kappa>0.
\end{split}
\Ee
where $\| g_{\xi } \|_{\mathcal{L}^\infty_{\lambda,0}} : = 
\| e^{\bar{\alpha} \text{Re}\,z}  g_{\xi } (z) \|_{\mathcal{L}^\infty_\lambda}
:= \sup_{z \in \mathcal{H}_\lambda}e^{\bar{\alpha} \text{Re}\,z}  g_{\xi } (z) 
$ and 
\Be\notag
 \| g_{\xi} \|_{\mathcal{L}^\infty_{\lambda, \kappa}}  := 
  \bigg\| \frac{e^{\bar{\alpha} \text{Re}\,z}}{ 1+ \phi_\kappa(z)} g_{\xi } (z) \bigg\|_{\mathcal{L}^\infty_\lambda}
  := \sup_{z \in \mathcal{H}_\lambda}\frac{e^{\bar{\alpha} \text{Re}\,z}}{ 1+ \phi_\kappa(z)} |g_{\xi } (z)| 
  .
\Ee
Here, a boundary layer weight function is defined  as 
\Be\label{BL}
\phi_\kappa (z):= \frac{1}{\sqrt\kappa} \phi ( \frac{z}{\sqrt\kappa}) 
 \ \ \text{with} \ \ \phi (z) = \frac{1}{ 1+|\text{Re}\,z|^\mathfrak{r}} \ \text{for some  } \mathfrak{r}>1.
\Ee
We define $\mathfrak B^{\lambda, \kappa}$ for holomorphic functions $g= (g_1,g_2,g_3)$ with a finite norm 
 \Be\label{[]}
[[g ]]_{ \infty, \lambda,\kappa}=\sum_{i=1,2} \| g_i \|_{\infty, \lambda, \kappa} + \| g_3 \|_{\infty, \lambda, 0}.
\Ee
And we say a scalar valued function $\tilde g: \O \to \mathbb R$ is in $\mathfrak B^{\lambda, \kappa}$ if $\| \tilde g \|_{\infty, \lambda, \kappa} < 0 $.

We note that $\mathfrak B^{\lambda, \kappa } \subset \mathfrak{L}^{1,\lambda}$, but $\mathfrak{B}^{\lambda, 0} \subsetneqq\mathfrak{L}^{\infty,\lambda}$ if $\bar{\alpha}>0$.

\smallskip

Due to its singular nature of the Navier-Stokes flow in the inviscid limit, we introduce the conormal derivatives
\Be
 D= (D_h, D_3)= (\nabla_h, \zeta(x_3) \p_3) \ \ \text{where}  \ \ \zeta(z) = \frac{z}{1+z}. 
\Ee
%while $\nabla_x = (\nabla_h, \p_3)= (\p_{x_1}, \p_{x_2}, \p_{x_3})$. 
With the multi-indices $\beta=(\beta_h,\beta_3):=(\beta_1,\beta_2,\beta_3)\in \mathbb N_0^3$, the higher derivatives are denoted by
%multi-index $a= (a_1,a_2,a_3) \in \R^3$ and $a_h= (a_1,a_2)$ {\color{red}\texttt{...multi-index $\beta=(\beta_1,\beta_2,\beta_3)\in \mathbb N_0^3$, and $\beta_h=(\beta_1,\beta_2)$?...}}
 $D^\beta = \p_1^{\beta_1} \p_2 ^{\beta_2} D_3^{\beta_3}$ and 
 $D^\beta_\xi = (i \xi_1)^{\beta_1} (i \xi_2) ^{\beta_2} D_3^{\beta_3}$. 
%We recall $[[ \ \cdot  \  ]]_{ \infty, \lambda,\kappa}$ in (\ref{[]}), which is the norm of the space $\mathfrak{B}^{\lambda, \kappa}$; and the norm of $\mathfrak{L}^{p,\lambda}$, $\| \cdot\|_{p,\lambda}$ in (\ref{norm_L1}) for $1 \leq p \leq \infty$. We note that $\mathfrak{B}^{\lambda, 0} \subsetneqq\mathfrak{L}^{\infty,\lambda}$ if $\bar{\alpha}>0$. 

Now we define, for $\lambda_0>0$, $\gamma_0>0$, $\alpha>0$, $\kappa \geq 0$, and $t \in (0, \frac{\lambda_0}{2 \gamma_0})$
 \Be\label{norm_BLT}
 \vertiii{g}_{\infty,\kappa}= \sup_{\lambda<\lambda_0-\gamma_0 t} \bigg\{ \sum_{ 0 \leq |\beta| \leq 1}  %\p_{x_1}^i \p_{x_2}^j(\zeta(x_3)\p_{x_3})^k 
[[
D^\beta g ]]_{%\infty,\lambda, \kappa
\infty, \lambda, \kappa } +  \sum_{ %i+j+k
|\beta| =2}(\lambda_0-\lambda-\gamma_0 t)^\alpha [[D^\beta g%\p_{x_1}^i \p_{x_2}^j(\zeta(x_3)\p_{x_3})^k g 
]]_{
%\infty,\lambda, \kappa
 {\infty, \lambda, \kappa}
}  \bigg\},
\Ee
\Be\label{norm_L1T}
\begin{split}
\vertiii{g}_1 
= \sup_{\lambda<\lambda_0-\gamma_0 t} \bigg\{& \sum_{0\leq |\beta|\leq 1} \| %\p_{x_1}^i \p_{x_2}^j(\zeta(x_3)\p_{x_3})^k
D^\beta (1+|\nabla_h|) g \|_{1,\lambda} \\
& \ \ \ \ \  \ \ \ \ \ +  (\lambda_0-\lambda-\gamma_0 t)^\alpha\sum_{ |\beta|=2} \| D^\beta (1+|\nabla_h|) g \|_{1,\lambda}  \bigg\}.
\end{split}\Ee
And for a scalar valued function $\tilde g$, define
\Be\label{normtheta}
 \vertiii{ \tilde g}_{\infty,0}= \sup_{\lambda<\lambda_0-\gamma_0 t} \bigg\{ \sum_{ 0 \leq |\beta| \leq 1}  \|D^\beta \tilde g \|_{\infty, \lambda, 0 } +  \sum_{ |\beta| =2}(\lambda_0-\lambda-\gamma_0 t)^\alpha \| D^\beta \tilde g \|_{ {\infty, \lambda, 0} }  \bigg\},
\Ee

\Be \label{px3thetanorm}
\vertiii{\tilde g}_z :=  \sup_{\lambda < \lambda_0 - \gamma_0 t }  \left\{ (\lambda_0-\lambda-\gamma_0 t)^\alpha   \sum_{0 \le |\beta| \le 1} \| D_h^{\beta_h} \p_{x_3} g \|_{ {\infty, \lambda, 0} } \right\}.
\Ee

With an initial-boundary layer weight function as in \cite{NN2018}
\Be
\phi_{\kappa t} (z)= \frac{1}{\sqrt{\kappa t}} \phi ( \frac{z}{\sqrt{\kappa t}}) 
%= \sqrt{\kappa t}^{p-1} \frac{1}{\sqrt{\kappa t}^{p } + z^p}
,\label{IB_layer}
 \Ee
we define an initial-boundary layer function space $\mathfrak B^{\lambda, \kappa t}$ for holomorphic functions $g= (g_1,g_2,g_3)$ with a finite norm 
\Be\label{[[]]}
[[g ]]_{ \infty, \lambda,\kappa t}=\sum_{i=1,2} \| g_i \|_{\infty, \lambda, \kappa t} + \| g_3 \|_{\infty, \lambda, 0} ,
\Ee
where an $L^\infty$-based analytic norm with the initial-boundary layer is defined as
\Be\label{norm_IBL}
%{\color{red} \ [[g]] ? \ }
\|g\|_{\infty,\lambda,\kappa t} = \sum_{\xi\in \mathbb Z^2} e^{\lambda |\xi|} \| g_{\xi } \|_{\mathcal{L}^\infty_{\lambda, \kappa t}}, \quad 
\| g_{\xi} \|_{\mathcal{L}^\infty_{\lambda, \kappa t}} = %\sup_{z\in \Omega_\lambda}
\bigg\| \frac{e^{\bar{\alpha} \text{Re}\,z}}{ 1+ \phi_\kappa(z)+ \phi_{\kappa t} (z)}   g_{\xi } (z)\bigg\|_{\mathcal{L}^\infty_\lambda} . 
\Ee
We finally define, for $t \in (0, \frac{\lambda_0}{2\gamma_0})$, 
 \Be\label{norm_IBLT}
 \vertiii{g}_{\infty, \kappa t}= \sup_{\lambda<\lambda_0-\gamma_0 t} \bigg\{
  \sum_{%0\leq i+j+k\leq 1
 0 \leq |\beta| \leq 1
  }% \|   \p_{x_1}^i \p_{x_2}^j(\zeta(x_3)\p_{x_3})^k  
 % g \|_{\infty,\lambda, \kappa t} 
 [[
D^\beta g]]_{\infty,\lambda, \kappa t}+ \sum_{|\beta|=2} 
 (\lambda_0-\lambda-\gamma_0 t)^\alpha
  [[
D^\beta g]]_{\infty,\lambda, \kappa t}
% +  \sum_{ i+j+k =2}(\lambda_0-\lambda-\gamma t)^\alpha \| \p_{x_1}^i \p_{x_2}^j(\zeta(x_3)\p_{x_3})^kg \|_{\infty,\lambda, \kappa t} 
 \bigg\}.
\Ee
In this section, $\alpha$, $\bar{\alpha}$ are  given  positive small constants, $\lambda_0$ is a given positive constant, and $\gamma_0$ is a sufficiently large constant to be determined in Theorem \ref{thm_bound}.

Next we discuss the initial data $u_{in}$, $\theta_{in}$, and the corresponding vorticity $\o_{in}= \nabla_x \times u_{in}$. Inspired by the PDEs, let 
\Be\begin{split}\label{idata}
 \o_0:=\o_{in},\quad \p_t\o_0:= \kappa\eta_0\Delta \o_0 - u_0\cdot\nabla \o_0+ \o_0 \cdot \nabla u_0,\\
 \quad
 u_0 := \nabla \times (-\Delta)^{-1} \o_0 ,
  \quad
  \p_t u_0 := \nabla \times (-\Delta)^{-1} \p_t \o_0 ,
 \\
\p_t^2\o_0:= \kappa\eta_0 \Delta \p_t\o_0 - u_{0} \cdot \nabla \p_t\o_0  - \p_t u_0  \cdot \nabla \o_0 + \o_0 \cdot \nabla \p_t u_0 + \p_t\o_0 \cdot \nabla u_0,
\\ \p_t \theta_0 := \kappa \eta_c \Delta \theta_0 - u_0 \cdot \nabla \theta_0,
\\ \p_t^2 \theta_0:= \kappa \eta_c \Delta \p_t \theta_0 - u_0 \cdot \nabla \p_t \theta_0 - \p_t u_0 \cdot \nabla \theta_0.
\end{split}\Ee

\hide

%Let $t\in (0,\frac{\lambda_0}{2\gamma})$. 

%We will adopt the framework and the methodology 
%of Nguyen-Nguyen \cite{NN2018} to study the solution to \eqref{NS}-\eqref{NSB} in analytic function spaces and use a number of results proved in \cite{NN2018} with slight modifications of notations. We will also use a number of results in Wang \cite{FW} for the 3D counter part. 

%Then $C^1$ norm is bounded above by 
%\Be
%\|  \sum_{\xi \in \mathbb{Z}} \xi    f_\xi (x_2) e^{i x_1 \xi} \| _{L^\infty_{x_1,x_2}}
%+ \|  \sum_{\xi \in \mathbb{Z}}   \p_{x_2} f_\xi (x_2) e^{i x_1 \xi} \| _{L^\infty_{x_1,x_2}}
%\Ee

\begin{definition}[Definition of $\mathfrak B^{\lambda, \kappa}$]
%[Weighted $L^\infty$ based analytic norms]\label{def:B}
Define for

For  we define 

Let $\mathfrak B^{\lambda, \kappa}$ denote the space of analytic functions $g$ with $[[g]]_{\infty,\lambda,\kappa} <\infty$.

We also define, for $t \in (0, \frac{\lambda_0}{2\gamma})$,

\end{definition}

\begin{definition}

We define, for $t \in (0, \frac{\lambda_0}{2\gamma})$, 
\Be\label{norm_L1T}
\vertiii{g}_1= \sup_{\lambda<\lambda_0-\gamma t} \bigg\{ \sum_{0\leq |\beta|\leq 1} \| %\p_{x_1}^i \p_{x_2}^j(\zeta(x_3)\p_{x_3})^k
D^\beta (1+|\nabla_h|) g \|_{1,\lambda} +  (\lambda_0-\lambda-\gamma t)^\alpha\sum_{ |\beta|=2} \| D^\beta (1+|\nabla_h|) g \|_{1,\lambda}  \bigg\}
\Ee
where $ = \sqrt{-\Delta_h}$. 
\end{definition}

 \begin{remark} 
% $L^p$ based analytic function spaces $\mathfrak{L}^{p,\lambda}$ and their norm $\| \cdot \|_{p,\lambda } $ for $1\leq p\leq \infty$ can be defined in the same fashion as in \eqref{norm_L1} by replacing $L^1$ by $L^p$. 
 \end{remark}

%\subsection{Norms and Function spaces} 
Consider analytic functions $g=g(x_1,x_2,x_3)$. %For notational convenience, we often suppress the time dependence unless confusion arises. 
The following symbols and notations will be used throughout the rest of the section. 

%Notation. 
\begin{itemize}
\item 
\item 
\item 
\item 
\end{itemize}

\

\begin{definition}[$L^\infty$ based analytic norms including initial layers] %In the presence of initial layers, we define the following analytic norms
\end{definition}

%For the proof of Lemma \ref{lem_embedding}, we refer to \cite{NN2018}. 

\

\begin{theorem}\label{thm_NS} Let $\lambda_0>0$ and $\o  \in \mathfrak B^{\lambda_0,\kappa}$ with
\Be
 \sum_{0\leq |\beta|\leq 2}  \|D^\beta \p_t^\ell\o_0  \|_{1,\lambda_0}+\sum_{0\leq |\beta|\leq 2} \|D^\beta \p_t^\ell\o_0 \|_{\infty,\lambda_0, \kappa}  <\infty \  \text{ for } \ \ell=0,1,2. \label{initial_norm}
\Ee
Further assume that $\o_0$ satisfies the compatibility conditions 
\Be\label{CC}
\begin{split}
\kappa \eta_0 (\p_{x_3} + \sqrt{- \Delta_h})\o_{0,h} \, |_{x_3=0} &= [\p_{x_3} (-\Delta)^{-1} (-u_0 \cdot \nabla \o_{0, h} + \o_0 \cdot \nabla u_{0,h}) ] \, |_{x_3=0}\\
\o_{0,3} |_{x_3=0} &=0, \quad \p_t\o_{0,3} |_{x_3=0} = 0 . 
\end{split}
\Ee

Then there exists a $\gamma>0$ and a time $T>0$ depending only on $\lambda_0$ and the size of the initial data such that the solution $\o(t)$ to the Navier-Stokes equations \eqref{NS}-\eqref{NSB} exists in $C^1([0,T]; \mathfrak B^{\lambda, \kappa})$ with $\p_t^2\o $ in $C(0,T; \mathfrak B^{\lambda, \kappa t})$ for $0<\lambda<\lambda_0$ satisfying 
\Be\label{norm_bound}
\sup_{t\in [0,T]} \left[\sum_{\ell=0}^2 \vertiii{\p_t^\ell \o(t)}_1 + \sum_{\ell=0}^1 \vertiii{\p_t^\ell \o(t)}_{\infty,\kappa} + \vertiii{\p_t^2\o(t)}_{\infty,\kappa t}   \right]<\infty 
\Ee
\end{theorem}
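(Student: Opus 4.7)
The plan is to combine the Green's function/Duhamel approach of \cite{NN2018,Mae14} with an abstract Cauchy--Kowalewski iteration in the scale of analytic spaces $\mathfrak{L}^{1,\lambda}$ and $\mathfrak{B}^{\lambda,\kappa}$ introduced above. First, I would convert the vorticity system \eqref{NS_o}--\eqref{NSB_o} to an integral equation by Duhamel against the heat semigroup on $\O$ supplemented with the mixed boundary operator $\kappa\eta_0(\p_{x_3}+\sqrt{-\Delta_h})$ on $\o_h$ and the Dirichlet condition on $\o_3$, following the boundary-vorticity formulation of \cite{Mae14}; in parallel, the heat equation \eqref{heat_NS}--\eqref{thetaB} gets its own Dirichlet-type Duhamel formula for $\theta$. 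Since $\o_3$ and $\theta$ both solve the same type of scalar Dirichlet problem with convective forcing, essentially the same scheme handles both, once a bilinear analytic-recovery lemma (pairing the tangential Fourier mode $\xi$ with conormal $x_3$-derivatives, and absorbing the derivative loss via the shrinking radius $\lambda_0-\lambda-\gamma_0 t$) is in place.

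Next, I would estimate each piece of the Duhamel formulas mode-by-mode in the $L^1$-based analytic norm $\vertiii{\cdot}_1$ of \eqref{norm_L1T} and the $L^\infty$-based boundary-layer norm $\vertiii{\cdot}_{\infty,\kappa}$ of \eqref{norm_BLT}, using the shrinking analyticity radius to pay for the full horizontal derivative cost of the nonlinearity $u\cdot\nabla\o-\o\cdot\nabla u$ and $u\cdot\nabla\theta$. The quantities $\p_t\o_0,\p_t^2\o_0,\p_t\theta_0,\p_t^2\theta_0$ specified in \eqref{idata} are exactly the compatible initial data needed so that the time-differentiated integral representations of $\p_t\o,\p_t\theta$ possess no initial layer; the same framework therefore covers $\p_t\o$ and $\p_t\theta$, while only $\p_t^2\o$ and $\p_t^2\theta$ need the initial-boundary layer weight $\phi_{\kappa t}$ built into the norm $\vertiii{\cdot}_{\infty,\kappa t}$ of \eqref{norm_IBLT}, reflecting the genuinely singular time dependence of the heat kernel at $t=0$ when two time derivatives act. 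A standard Picard-type contraction in these time-weighted analytic spaces, for $\gamma_0$ sufficiently large and $T$ sufficiently small relative to $\lambda_0/\gamma_0$, then closes the bootstrap and delivers uniform-in-$\kappa$ bounds on $\p_t^\ell\o$ and $\p_t^\ell\theta$ for $\ell=0,1,2$ in their respective norms.

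The genuinely new step, and the main obstacle, concerns the normal derivative of $\theta$ measured in the norm $\vertiii{\cdot}_z$ of \eqref{px3thetanorm}. Because the conormal derivative $D_3=\zeta(x_3)\p_{x_3}$ degenerates at the boundary, a bound on $\vertiii{\theta}_{\infty,0}$ only rules out an $x_3^{-1}$ singularity for $\p_{x_3}\theta$ inside the $\sqrt\kappa$-thick boundary layer, which is too weak to feed into the Hilbert expansion, where $\nabla_x^2\theta$ must be controlled in $L^\infty$ up to the rate $1/\kappa$ demanded by \eqref{mathfrak_C'}. To overcome this, I would differentiate the $\theta$-Duhamel formula directly in $x_3$ and then integrate by parts inside the source integral so as to move the singular $\p_{x_3}$ off the Dirichlet heat kernel and onto the convective term, producing a bilinear integrand of schematic form $G_D\cdot(\p_{y_3}u\cdot\nabla\theta+u\cdot\nabla\p_{y_3}\theta)$ whose kernel no longer blows up at $x_3=0$. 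Feeding this back into the analytic-recovery lemma then yields a bound on $\vertiii{\p_{x_3}\theta}_z$, at the cost of the growth $\|\nabla_x^2\theta\|_{L^\infty}\sim\kappa^{-1}$; this is precisely the rate that remains affordable under the scaling $\e=\exp(-\mathfrak{C}(T+1)\kappa^{-2\mathfrak{P}})$ chosen in \eqref{choice:delta}.

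Finally, the same differentiation and integration-by-parts trick, applied now to $\p_t$ and $\p_t^2$ of the Duhamel formula, gives $\vertiii{\p_{x_3}\p_t^\ell\theta}_z$ for $\ell=0,1$, and the Biot--Savart law \eqref{vorticity} lifts every bound on $\o$ and $\p_t^\ell\o$ to the corresponding bound on $u,\p_t^\ell u,\nabla_x u,\ldots$ that is required to feed back into both the analytic iteration for $\theta$ and the remainder estimates of Theorem \ref{main_theorem:conditional}. Closing the loop requires checking that the constants produced by the Gronwall argument match the growth hypotheses \eqref{mathfrak_C'} and \eqref{condition:theorem} on $\mathcal{I}$; I expect this to follow transparently once $\lambda_0$ is fixed and $T$ is taken small relative to $\lambda_0/\gamma_0$, since all losses enter only through the polynomial weights $(\lambda_0-\lambda-\gamma_0 t)^{-\alpha}$ and through factors of $\kappa^{-1/2}$ coming from the boundary-layer weight $\phi_\kappa$ of \eqref{BL}.
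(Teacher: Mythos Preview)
Your strategy is essentially the same as the paper's: the paper also uses the Duhamel representation \eqref{o_xi} against the Stokes Green's function of Lemma \ref{lem_G}, proves propagation of the $\vertiii{\cdot}_1$ and $\vertiii{\cdot}_{\infty,\kappa}$ norms via the convolution bounds of Lemma \ref{lem_Gc}, the trace bounds of Lemma \ref{lem_Gt}, and the bilinear/nonlinear estimates of Lemmas \ref{lem_bilinear}--\ref{lem_est:N}, and closes by choosing $\gamma_0$ large (the existence itself is imported from \cite{NN2018,FW}).

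Two small points of calibration. First, the statement you are asked to prove concerns only $\o$; the third and fourth paragraphs of your proposal, about $\vertiii{\theta}_z$ and $\p_{x_3}\theta$, lie outside its scope (they belong to the larger Theorem \ref{thm_bound}). Second, your explanation of why $\p_t^2\o$ carries the initial-boundary layer weight is slightly off: it is not that ``two time derivatives act on the heat kernel'', but rather that after one time differentiation of \eqref{o_xi} and integration by parts, the boundary term $G_{\xi h}(t,x_3,0)\big[\kappa\eta_0(|\xi|+\p_{x_3})\o_{0\xi,h}(0)-B_\xi(0)\big]$ is killed exactly by the first line of the compatibility condition \eqref{CC}; repeating the procedure for $\p_t^2\o$ produces the analogous term $G_{\xi h}(t,x_3,0)\big[\kappa\eta_0(|\xi|+\p_{x_3})\p_t\o_{0\xi,h}(0)-\p_tB_\xi(0)\big]$, and since no second-order compatibility on $\o_h$ is assumed, this term survives and contributes the $\phi_{\kappa t}$ layer. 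The vertical component $\p_t\o_{0,3}|_{x_3=0}=0$ in \eqref{CC} plays the parallel role for $\o_3$.
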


The following point-wise bounds for $\o$, $u$, and $p$ are the consequences of Theorem \ref{thm_bound}. 
\unhide

\begin{theorem}\label{thm_bound} Let $\lambda_0>0$ and $\o_{in}, \, \theta_{in}  \in \mathfrak B^{\lambda_0,\kappa}$ with (\ref{idata}) satisfy for $\ell = 0,1,2$,
\Be \label{initial_norm}
 \sum_{0\leq |\beta|\leq 2}  \|D^\beta \p_t^\ell\o_0  \|_{1,\lambda_0}+\sum_{0\leq |\beta|\leq 2} \|D^\beta \p_t^\ell\o_0 \|_{\infty,\lambda_0, \kappa}  <\infty,
 % \  \text{ for } \ \ell=0,1,2, 
\Ee
\Be \label{inital_norm_theta}
 \sum_{0\leq |\beta|\leq 2}  \|D^\beta \p_t^\ell\theta_0  \|_{1,\lambda_0}+\sum_{0\leq |\beta|\leq 2} \|D^\beta \p_t^\ell\theta_0 \|_{\infty,\lambda_0, 0} + \sum_{0 \le | \beta_h | \le 1 } \| D_h^{\beta_h} \p_{x_3} \p_t^\ell \theta_0(t,x_3) \|_{\infty, \lambda_0 ,0 }  <\infty.
\Ee
Further assume that $\o_{in}=\o_0$, $\theta_{in} = \theta_0$, and (\ref{idata}) satisfies the compatibility conditions on $\p\O$  
\Be \label{CC}
\begin{split}
\kappa \eta_0 (\p_{x_3} + \sqrt{- \Delta_h})\o_{0,h}  = [\p_{x_3} (-\Delta)^{-1} (-u_0 \cdot \nabla \o_{0, h} + \o_0 \cdot \nabla u_{0,h}) ]  , \\ 
\o_{0,3}  =0,  \  \  \p_t\o_{0,3}   = 0, \ \  \theta_0  =0,  \  \  \p_t\theta_0   = 0, \  \  \p_t^2 \theta_0 = 0.
\end{split}
\Ee
Then there exist a constant $\gamma_0>0$ and a time $T>0$ depending only on $\lambda_0$ and the size of the initial data such that the solution $\o(t)$ to the vorticity formulation of the Navier-Stokes equations \eqref{NS_o}-\eqref{NSB_o} exists in $C^1([0,T]; \mathfrak B^{\lambda, \kappa})$ with $\p_t^2\o $ in $C(0,T; \mathfrak B^{\lambda, \kappa t})$ for $0<\lambda<\lambda_0$ satisfying  
\Be\label{norm_bound}
\sup_{t\in [0,T]} \left[\sum_{\ell=0}^2 \vertiii{\p_t^\ell \o(t)}_1 + \sum_{\ell=0}^1 \vertiii{\p_t^\ell \o(t)}_{\infty,\kappa} + \vertiii{\p_t^2\o(t)}_{\infty,\kappa t}   \right]<\infty .
\Ee
And the solution $\theta(t)$ to \eqref{heat_NS}-\eqref{thetaB} exists in $C^2([0,T]; \mathfrak B^{\lambda, 0})$  satisfying
\Be \label{thetanorm_bound}
\sup_{t\in [0,T]} \left[\sum_{\ell=0}^2 \vertiii{\p_t^\ell \theta(t)}_1 + \sum_{\ell=0}^2 \vertiii{\p_t^\ell \theta(t)}_{\infty,0} \right]<\infty,
\Ee
and
\Be \label{thetapx3_bound}
\sqrt \kappa \left( \sup_{ t \in [0,T]} \sum_{\ell=0}^2 \vertiii{\p_t^\ell \theta(t)}_z \right) < \infty.
\Ee

Furthermore, for each $(t,x)\in [0,T]\times\Omega $, 
\begin{enumerate}
\item (Bounds on the vorticity and its derivatives) $\o(t,x)$ enjoys the following bounds: 
\begin{align}
| \nabla_{h}^i \p_t^\ell \o_h (t,x) |& \lesssim e^{-\bar{\alpha} x_3} \left( 1 + \phi_\kappa (x_3) \right), \ \  | \nabla_{h}^i \p_t^\ell \o_3 (t,x) | \lesssim e^{-\bar{\alpha} x_3} \text{ for } i,\ell =0,1, \label{b1} \\
%&{\color{red} | \nabla_{h}^i \p_t^\ell \o_h (t,x_h,x_3) | \lesssim e^{-\bar{\alpha} x_3} \left( 1 + \phi_\kappa (x_3) \right), \quad | \nabla_{h}^i \p_t^\ell \o_3 (t,x_h,x_2) | \lesssim e^{-\bar{\alpha} x_3}  \ ? } \notag\\
| \p_t^2 \o_h (t,x) |& \lesssim e^{-\bar{\alpha} x_3} \left( 1 + \phi_\kappa (x_3) + \phi_{\kappa t} (x_3) \right), \ \ | \p_t^2 \o_3 (t,x) | \lesssim e^{-\bar{\alpha} x_3}, \label{b2}\\
| \p_{x_3} \p_t^\ell \o_h (t,x) |& \lesssim {\kappa}^{-1} e^{-\bar{\alpha} x_3}, \ \  | \p_{x_3} \p_t^\ell \o_3 (t,x) | \lesssim  e^{-\bar{\alpha} x_3} \left( 1 + \phi_\kappa (x_3) \right) \text{ for } \ell =0,1. \label{b3}
%| \p_{x_2} \p_t \o (t,x_1,x_2) |& \lesssim \label{b4}
%\\ &{\color{red} | \p_{x_3} \p_t^\ell \o_h (t,x_h,x_3) | \lesssim {\kappa}^{-1} e^{-\bar{\alpha} x_3} , \quad | \p_{x_3} \p_t^\ell \o_h (t,x_h,x_3) | \lesssim {\kappa}^{-1/2} e^{-\bar{\alpha} x_3} \ ? } \notag
\end{align}

 \item (Bounds on the velocity and its derivatives) The corresponding velocity field $u(t,x)$ satisfies the following: %for $ t \in [0,T]$ % \texttt{...we present the estimates  of $u$ and its derivatives necessary  for $f_R$ here or have it in a separate theorem...} 
%{\color{red}\texttt{...below, when $|\beta|=1$, $\kappa^{-1/2}$ and when $|\beta|=2$, $\kappa^{-1}$ ? ... Need the pressure estimates...}}
 \begin{align}
  \label{est:u_t}
|\p_t^\ell u (t,x)| &\lesssim 1 \ \ \text{for} \ \ell=0,1,2, \\ %, \ \text{and} \  t \in [0,T].
 \label{est:u1}
\sum_{1 \leq |\beta | \leq 2}  |\nabla ^{\beta } \p_t^\ell u(t,x )|%\lesssim  \sup_{t \in [0,T]}\vertiii{\p_t^\ell \o (t) } 
& \lesssim \big(1+ \phi_\kappa (x_3) + (|\beta|-1) {\kappa}^{-1}  \big)
 e^{-\min (1, \frac{\bar{\alpha}}{2} )x_3}  \ \ \text{for} \ \ell=0,1, \\
 \label{est:u2}
\sum_{ |\beta | =1}  |\nabla ^{\beta } \p_t^2 u(t,x )|%\lesssim  \sup_{t \in [0,T]}\vertiii{\p_t^\ell \o (t) } 
& \lesssim  \big(1+ \phi_\kappa (x_3) + \phi_{\kappa t} (x_3)\big)e^{-\min (1, \frac{\bar{\alpha}}{2} )x_3} . 
\end{align}
Moreover, we have the decay estimate for $\p_t^\ell u$: %at the expense of the loss of $\kappa$: 
\Be\label{ut}
|\p_t^\ell u|\lesssim \kappa^{-\frac{1}{2}} e^{-\min(1,\frac{\bar\alpha}{2}) x_3}  \ \ \text{for} \ \ell=1,2. 
\Ee

\item (Bounds on the temperature and its derivatives)
The temperature $\theta(t,x)$ satisfies the following:
 \begin{align}
  \label{est:theta_t}
\sum_{ 0 \le \beta_h \le 2 } | \nabla_h^{\beta_h} \p_t^\ell \theta (t,x)| &\lesssim e^{-\bar \alpha x_3} \ \ \text{for} \ \ell=0,1,2,
 \\  \label{est:theta1}
\sum_{0 \leq |\beta_h | \leq 1}  |\nabla_h ^{\beta_h } \p_{x_3} \p_t^\ell \theta(t,x )| & \lesssim \kappa^{-\frac{1}{2}} e^{- \bar \alpha x_3 }  \ \ \text{for} \ \ell=0,1, 2,
 \\ \label{est:theta2}
 |\p_{x_3}^2 \p_t^\ell \theta(t,x )| & \lesssim  \kappa^{-1} e^{-\bar \alpha x_3}   \ \ \text{for} \ \ell=0,1.
\end{align}

\item (Bounds on the pressure and its derivatives) 
Choosing the pressure such that $p(t,x) \to 0$ as $x_3 \to \infty$, then $p$ satisfies the following: %for $ t \in [0,T]$
\begin{align}
|\p_t^\ell p (t,x)|& \lesssim 1 \ \ \text{for} \ \ell=0,1,2,  \label{est:p}\\
\label{est:pdecay}
\sum_{0 \leq |\beta | \leq 1}  |\nabla ^{\beta } \p_t^\ell p(t,x )|
& \lesssim \kappa^{-\frac{1}{2}} 
 e^{-\min (1, \frac{\bar{\alpha}}{2} )x_3}   \ \ \text{for} \ \ell=0,1, \\
\label{est:pt2}
|\p_t^2 p|   &\lesssim ( \kappa^{-\frac12}  +\phi_{\kappa t}(x_3) )e^{-\min (1, \frac{\bar{\alpha}}{2} )x_3}  . 
\end{align}
%\Be
%|\nabla  p (t,x)| \lesssim \kappa^{-\frac12} e^{-\min(1,\frac{\bar\alpha}{2}) x_3}.  \label{est:p1}
%\Ee
\end{enumerate}
\end{theorem}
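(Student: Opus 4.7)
The plan is to adapt the Green's function/vorticity formulation strategy of \cite{NN2018,Mae14}, as already carried out for the Navier--Stokes part in \cite{JK}, and extend it to include the heat convection equation. I would first establish integral representations for $\omega$ and $\theta$. For $\omega$, one uses the half-space Stokes semigroup associated with the nonlocal Robin-type boundary condition \eqref{NSB_o}, which gives a Duhamel formula where the bilinear term $-u\cdot\nabla\omega+\omega\cdot\nabla u$ is convolved against a Green's kernel encoding both the $\phi_\kappa$ boundary layer (for $\omega_h$) and no layer (for $\omega_3$, which satisfies a Dirichlet condition). For $\theta$, since $\theta|_{\p\Omega}=0$, the representation is simply the Dirichlet heat semigroup applied to $-u\cdot\nabla\theta$, which structurally mirrors the $\omega_3$ equation. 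To obtain time derivatives without producing spurious initial layers, I use the compatibility conditions \eqref{CC} exactly as in \cite{JK}, so that $\p_t^2\omega_0$ and $\p_t^2\theta_0$ computed via \eqref{idata} are consistent with the boundary condition and the integral formula for $\p_t\omega$, $\p_t\theta$ carries no initial layer.

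With these representations in hand, I would close the analytic bounds \eqref{norm_bound} and \eqref{thetanorm_bound} by iteration in the Fourier side, controlling the horizontal convolution $\sum_{\xi'}$ using the standard analytic recovery lemma of \cite{NN2018}, which trades the $(\lambda_0-\lambda-\gamma_0 t)^\alpha$ weight at the top order for one derivative. The $\vartriii\cdot_1$ and $\vertiii\cdot_{\infty,\kappa}$ norms for $\omega$ are then handled exactly as in \cite{JK}; the $\theta$-estimates go through unchanged because the structure is the same as for $\omega_3$, and the nonlinearity $u\cdot\nabla\theta$ is bilinear of the same type. Choosing $\gamma_0$ sufficiently large absorbs the top-order loss and closes the estimates on a time interval $[0,T]$.

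The main obstacle, and the only genuinely new step, is the sharp normal-derivative bound \eqref{thetapx3_bound}. The issue is that the conormal derivative $D_3=\zeta(x_3)\p_{x_3}$ in the $\vertiii\cdot_{\infty,0}$ norm only controls $|\p_{x_3}\theta|\lesssim 1/x_3$, which is useless as $x_3\downarrow 0$ and in particular does not give the Prandtl-scaling $\kappa^{-1/2}$ expected from a boundary layer. To fix this, I would apply $\p_{x_3}$ directly to the Green's representation for $\theta$ (denoted schematically \eqref{thetaphi}), integrate by parts to move $\p_{x_3}$ onto the heat kernel, and observe that the kernel $\p_{x_3} G$ produces exactly the $\kappa^{-1/2}$ scaling via the usual heat-kernel derivative bound. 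The remaining bilinear integrand $u\cdot\nabla\theta$ is then controlled by the already established bounds on $u$ and on $\nabla_h\theta$, and one final application of the analytic recovery lemma gives \eqref{thetapx3_bound} with the $\sqrt\kappa$ factor. The norm $\vertiii\cdot_z$ in \eqref{px3thetanorm} is designed precisely so that this argument closes with no $\phi_\kappa$ weight on $\p_{x_3}\theta$ itself.

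Finally, the pointwise bounds are read off from the norms: \eqref{b1}--\eqref{b3} are immediate from the definitions of $\vertiii\cdot_{\infty,\kappa}$ and $\vertiii\cdot_{\infty,\kappa t}$, while \eqref{est:theta_t}--\eqref{est:theta2} combine \eqref{thetanorm_bound} (for $\nabla_h^{\beta_h}\theta$) with \eqref{thetapx3_bound} (for $\p_{x_3}\theta$). The velocity estimates \eqref{est:u_t}--\eqref{ut} follow from Biot--Savart $u=\nabla\times(-\Delta)^{-1}\omega$: the half-space Neumann/Dirichlet Green's function for $-\Delta$ converts the $\phi_\kappa$ singularity of $\omega_h$ into a $\phi_\kappa$ singularity in $\nabla u$ and, after one further integration in $x_3$, into a $\kappa^{-1/2}$ bound on $u$ itself. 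The pressure bounds \eqref{est:p}--\eqref{est:pt2} follow from $-\Delta p=\nabla\cdot(u\cdot\nabla u)$ with $\p_n p|_{\p\Omega}$ determined by the momentum equation, using the $u$-estimates as input; the only slightly delicate point is \eqref{est:pt2}, where the initial-boundary layer $\phi_{\kappa t}$ in $\p_t^2 u$ propagates to $\p_t^2 p$ via the Neumann Green's function.
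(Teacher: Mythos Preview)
Your overall framework matches the paper's: the vorticity part \eqref{norm_bound} is indeed carried out exactly as in \cite{JK} via the Stokes Green's function, the $\theta$-estimates \eqref{thetanorm_bound} are structurally identical to those for $\omega_3$, and the pointwise bounds in (1)--(4) are read off from the norms in the way you describe. The one place your proposal diverges from the paper --- and where the description contains an actual gap --- is the key new estimate \eqref{thetapx3_bound}.

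You write that you would ``integrate by parts to move $\partial_{x_3}$ onto the heat kernel, and observe that the kernel $\partial_{x_3}G$ produces exactly the $\kappa^{-1/2}$ scaling''. These two statements are in tension. After applying $\partial_{x_3}$ to the representation \eqref{thetaphi}, the derivative already sits on $G_{\xi 3}$; integration by parts in $y$ moves it \emph{off} the kernel and \emph{onto} the data $\theta_0$ and $M=-u\cdot\nabla\theta$. This is precisely what the paper does: using $\partial_{x_3}G_{\xi 3}=-\partial_y\big(H_\xi(t,x_3-y)+H_\xi(t,x_3+y)\big)$ together with the boundary vanishing $\theta_0|_{x_3=0}=0$ and $M|_{x_3=0}=0$ (the latter from $u|_{\partial\Omega}=0$), one lands on an integral of the \emph{undifferentiated} Neumann heat kernel against $\partial_y\theta_0$ and $\partial_y M$. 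The $\kappa^{-1/2}$ then does \emph{not} come from the kernel at all; it enters through $\partial_{x_3}M$, specifically the piece $\partial_{x_3}u_h\cdot\nabla_h\theta$, since $\|\partial_{x_3}u_h\|_{\infty,\lambda}\lesssim\|\omega_h\|_{\infty,\lambda}\lesssim\kappa^{-1/2}\|\omega_h\|_{\infty,\lambda,\kappa}$. The remaining piece $u\cdot\nabla\partial_{x_3}\theta$ is closed against the $\vertiii{\cdot}_z$-norm itself via analytic recovery, yielding a factor $\gamma_0^{-1}$ that is absorbed.

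If instead you genuinely intend to keep $\partial_{x_3}$ on the kernel and exploit $\int|\partial_{x_3}G_{\xi 3}(t-s)|\,dy\lesssim(\kappa(t-s))^{-1/2}$, then the Duhamel term does produce $\kappa^{-1/2}\sqrt{t}\,\|M\|_\infty$, and $M$ can be bounded using only $\vertiii{\theta}_{\infty,0}$ (write $u_3\partial_{x_3}\theta=(u_3/\zeta)D_3\theta$). But the initial-data term $\int \partial_{x_3}G_{\xi 3}(t,x_3,y)\theta_{0\xi}(y)\,dy$ then scales like $(\kappa t)^{-1/2}\|\theta_0\|$, which is not uniform as $t\downarrow 0$. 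You cannot avoid integrating by parts on the initial term using $\theta_0(0)=0$, at which point you need $\|\partial_{x_3}\theta_0\|_{\infty,\lambda_0,0}$ --- exactly the extra hypothesis in \eqref{inital_norm_theta}. Your write-up does not address this, and ``controlled by the already established bounds on $u$ and on $\nabla_h\theta$'' omits the essential $D_3\theta$ contribution. So either route works, but the mechanism you describe for the origin of $\kappa^{-1/2}$ and the role of integration by parts needs to be corrected to close the argument.
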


The proof of the theorem relies on the integral representation of the solution to the Navier-Stokes-Fourier system using the Green's function for the Stokes problem in the same spirit of \cite{NN2018}. 
%The velocity and pressure estimates are also crucial for the kinetic part and they are obtained by utilizing elliptic regularity results  and Biot-Savart law in analytic setting. 

 \subsection{Elliptic estimates and Nonlinear estimates
 % in a setting of local Maxwellian
}

\begin{lemma}[\cite{NN2018,FW}, Embeddings and Cauchy estimates] The following holds\label{lem_embedding}
\begin{enumerate}
\item $\mathfrak B^{\lambda, \kappa t} \subset \mathfrak{L}^{1,\lambda}$ and 
$\mathfrak B^{\lambda, \kappa } \subset \mathfrak{L}^{1,\lambda}$. 
\item $\| g_1 g_2\|_{\ast,\lambda}\lesssim \|g_1\|_{\infty,\lambda} \|g_2\|_{\ast,\lambda}$. 
\item $ \sum_{|\beta|=1} \|D^\beta g\|_{\ast,\lambda}   %\|\p_{x_1} g\|_{\ast,\lambda}+\|\p_{x_2} g\|_{\ast,\lambda} + \|D_3 g\|_{\ast,\lambda} 
\lesssim \frac{\|g\|_{\ast,\tilde{\lambda}}}{\tilde\lambda -\lambda}$, for any $0< \lambda <\tilde\lambda$. %{\color{red}\texttt{...replace l.h.s by $\|Dg\|_{\ast,\lambda}$ ? ...}}
\end{enumerate}
For (2) and (3), $\|\cdot \|_{\ast,\lambda}$ can be either $\|\cdot\|_{\infty,\lambda,\kappa}$ or $  \|\cdot \|_{\infty,\lambda,\kappa t}$ or $\| \cdot \|_{\infty, \lambda , 0}$ or $ \|\cdot\|_{1,\lambda}$. 
\end{lemma}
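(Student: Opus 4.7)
The three assertions are standard-type estimates for exponential-weighted analytic function spaces, analogous to the corresponding lemma in \cite{NN2018}; my plan is to prove each one by unwinding the definitions and reducing to (i) an elementary integration of the boundary-layer weight $\phi_\kappa$ (or $\phi_{\kappa t}$) against the exponential factor, (ii) the algebra structure of the Fourier weight $e^{\lambda|\xi|}$ under convolution, and (iii) classical Cauchy estimates, applied separately to the Fourier frequency $\xi$ and to the complex vertical variable $z\in\mathcal{H}_\lambda$.

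For part (1), I would start from the pointwise bound $|g_\xi(z)|\le (1+\phi_\kappa(z))\,e^{-\bar\alpha\,\mathrm{Re}\,z}\,\|g_\xi\|_{\mathcal{L}^\infty_{\lambda,\kappa}}$ and integrate along $\partial\mathcal{H}_\sigma$ for any $0\le\sigma\le\lambda$. Parametrising $\partial\mathcal{H}_\sigma$ by $\mathrm{Re}\,z$ gives $|dz|\le C\,d(\mathrm{Re}\,z)$ with $C$ independent of $\sigma$, so the key input is
\[
\int_0^\infty \phi_\kappa(s)\,ds \;=\; \int_0^\infty \frac{ds}{\sqrt\kappa\,(1+|s/\sqrt\kappa|^{\mathfrak r})} \;=\; \int_0^\infty \frac{dy}{1+|y|^{\mathfrak r}} \;<\; \infty
\]
uniformly in $\kappa$, since $\mathfrak r>1$. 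Exactly the same scaling gives $\int_0^\infty \phi_{\kappa t}(s)\,ds$ bounded uniformly in $\kappa t$. Summing over $\xi$ with the weight $e^{\lambda|\xi|}$ yields $\|g\|_{1,\lambda}\lesssim [[g]]_{\infty,\lambda,\kappa}$, and the $\kappa t$-version is identical.

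For part (2), I would use $(g_1 g_2)_\xi=\sum_{\eta\in\mathbb{Z}^2}(g_1)_{\xi-\eta}(g_2)_\eta$; since pointwise multiplication by an $L^\infty$ factor preserves both $\mathcal{L}^\infty_\lambda$ and $\mathcal{L}^1_\lambda$ bounds on every contour $\partial\mathcal{H}_\sigma$, one has $\|(g_1)_{\xi-\eta}(g_2)_\eta\|_{\ast}\le\|(g_1)_{\xi-\eta}\|_{\mathcal L^\infty_\lambda}\|(g_2)_\eta\|_\ast$, and the triangle inequality $e^{\lambda|\xi|}\le e^{\lambda|\xi-\eta|}e^{\lambda|\eta|}$ followed by a Fubini/Young factorisation produces $\|g_1 g_2\|_{\ast,\lambda}\lesssim\|g_1\|_{\infty,\lambda}\|g_2\|_{\ast,\lambda}$. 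For part (3) I would separate horizontal and vertical derivatives. For horizontal directions one uses $|D^{\beta_h}_\xi g_\xi|\le|\xi|^{|\beta_h|}|g_\xi|$ together with the elementary inequality $|\xi|^k e^{\lambda|\xi|}\le k!\,(\tilde\lambda-\lambda)^{-k} e^{\tilde\lambda|\xi|}$. For $D_3=\zeta(x_3)\p_{x_3}$ I would apply the Cauchy integral formula on a disc centred at $z\in\mathcal{H}_\lambda$ of radius $\sim(\tilde\lambda-\lambda)\min(\mathrm{Re}\,z,1)$, which still lies inside $\mathcal{H}_{\tilde\lambda}$ by the geometry \eqref{complex_domain}, giving
\[
|\p_z g_\xi(z)|\;\lesssim\;\frac{1}{(\tilde\lambda-\lambda)\min(\mathrm{Re}\,z,1)}\,\sup_{\mathcal{H}_{\tilde\lambda}}|g_\xi|.
\]
The conormal factor $\zeta(x_3)=x_3/(1+x_3)\lesssim \min(\mathrm{Re}\,z,1)$ then cancels the singular $1/\min(\mathrm{Re}\,z,1)$ exactly.

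The main technical obstacle I expect is the compatibility of the Cauchy-in-$z$ estimate with the boundary-layer weight $1+\phi_\kappa$ (respectively $1+\phi_{\kappa t}$). Precisely, one must verify that
\[
\sup_{|w-z|\le C(\tilde\lambda-\lambda)\min(\mathrm{Re}\,z,1)}\frac{1+\phi_\kappa(w)}{1+\phi_\kappa(z)}\;\lesssim\;1,
\]
so that taking the supremum of $|g_\xi(w)|(1+\phi_\kappa(w))^{-1}e^{\bar\alpha\,\mathrm{Re}\,w}$ over the Cauchy disc differs from its value at $z$ only by a universal multiplicative constant. This reduces to checking that $\mathrm{Re}\,w$ is comparable to $\mathrm{Re}\,z$ on the disc (immediate from $|w-z|\lesssim \min(\mathrm{Re}\,z,1)$) and that $\phi_\kappa$ is slowly varying on this scale, both of which follow from the explicit form \eqref{BL}. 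The same observation applies to $\phi_{\kappa t}$ since the argument is purely scaling-invariant in the parameter. Once these weight comparisons are in place, the Cauchy estimate transfers from $\mathcal{L}^\infty_{\tilde\lambda}$ to $\mathcal{L}^\infty_{\tilde\lambda,\kappa}$, and the three statements of the lemma follow.
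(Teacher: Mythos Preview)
The paper does not give its own proof of this lemma; it is simply stated with a citation to \cite{NN2018,FW}. Your sketch is the standard argument one finds in those references and is essentially correct: the embedding (1) reduces to $\int_0^\infty \phi_\kappa(s)\,ds<\infty$ uniformly in $\kappa$ by scaling, the product estimate (2) is the usual convolution/triangle-inequality argument for the exponential Fourier weight, and the Cauchy estimate (3) is exactly the pencil-domain geometry of $\mathcal{H}_\lambda$ combined with the conormal cancellation you describe.

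One small point worth tightening in (3): your Cauchy-disc argument as written produces a pointwise bound in terms of $\sup_{\mathcal{H}_{\tilde\lambda}}|g_\xi|$, which handles the $\mathcal{L}^\infty_{\lambda,\kappa}$ and $\mathcal{L}^\infty_{\lambda,\kappa t}$ cases cleanly (and your weight-comparison observation is exactly what is needed there). For the $\mathcal{L}^1_\lambda$ case, however, you want $\|D_3 g_\xi\|_{\mathcal{L}^1_\lambda}\lesssim(\tilde\lambda-\lambda)^{-1}\|g_\xi\|_{\mathcal{L}^1_{\tilde\lambda}}$, and a pure sup-norm Cauchy bound does not give this directly. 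The fix is standard: integrate the Cauchy integral representation of $\p_z g_\xi$ along $\partial\mathcal{H}_\sigma$ and apply Fubini, so that the $L^1$ norm on the inner contour is controlled by the $L^1$ norm on the outer contour $\partial\mathcal{H}_{\tilde\sigma}$ with $\tilde\sigma-\sigma\sim\tilde\lambda-\lambda$. This is how the $L^1$ Cauchy estimate is carried out in \cite{NN2018}.
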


%We also record the elliptic estimates. 

%Let $\|g\|_{\infty,\lambda}=\sum_{\xi\in \mathbb Z} e^{\lambda|\xi|} \| g_\xi \|_{L^\infty_\lambda}$ where $ \|g_\xi\|_{L^\infty_\lambda} = \sup_{0\leq \sigma<\lambda} \| g_\xi  \|_{L^\infty(\p \Omega_{\sigma})}$. Recall that $\zeta(z)= \frac{z}{1+z}$. 

\begin{lemma}[\cite{NN2018,FW}, Elliptic estimates] \label{lem_elliptic} Let $\phi$ be the solution of $-\Delta\phi = \o$ with the zero Dirichlet boundary condition, and let $u=\nabla \times \phi$. Then 
\Be
\begin{split}\label{est:elliptic}
\| u  \|_{\infty,\lambda}
+ \|\nabla u \|_{1,\lambda} 
%+ \| u_2 \|_{\infty,\lambda}
 &\lesssim \|\o\|_{1,\lambda} , \\ 
\| \nabla_h u  \|_{\infty,\lambda} + \| \nabla u_3\|_{\infty,\lambda} %+ \|\zeta^{-1} u_3\|_{\infty,\lambda}
  &\lesssim \sum_{0 \leq |\beta| \leq1}
\| \nabla_h^\beta\o\|_{1,\lambda} , \\
\|   \p_3 u _h\|_{\infty, \lambda }  &\lesssim  \sum_{0 \leq |\beta|\leq 1}
\| \nabla_h^\beta\o\|_{1,\lambda}
+ \|   \o_h \|_{\infty, \lambda },\\
\| \zeta^{-1} \nabla_h^{\beta^\prime} u_3 \|_{\infty, \lambda} &\lesssim \sum_{0 \leq |\beta|\leq 1} \|   \nabla_h ^{\beta+ \beta^\prime} \o_h \|_{1, \lambda}.
\end{split}
\Ee 
\end{lemma}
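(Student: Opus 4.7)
The plan is to reduce the Poisson problem $-\Delta\phi = \omega$ on $\Omega = \mathbb{T}^2 \times \mathbb{R}_+$ with zero Dirichlet boundary condition to a one-parameter family of second-order ODEs in $x_3$ via the horizontal Fourier transform, solve each ODE explicitly by a Green's function, and read off the required estimates on $u = \nabla \times \phi$ from pointwise and integral bounds on the kernel. Concretely, for $\xi \in \mathbb{Z}^2$ with $|\xi|>0$, the horizontal Fourier coefficient $\phi_\xi(x_3)$ satisfies $(|\xi|^2 - \p_{x_3}^2)\phi_\xi = \omega_\xi$ with $\phi_\xi(0) = 0$ and decay at infinity, yielding
\[
\phi_\xi(x_3) = \int_0^\infty G_\xi(x_3,y_3)\,\omega_\xi(y_3)\,dy_3, \quad G_\xi(x_3,y_3) = \tfrac{1}{2|\xi|}\bigl(e^{-|\xi||x_3-y_3|} - e^{-|\xi|(x_3+y_3)}\bigr),
\]
with the mean-zero mode $\xi = 0$ handled by direct double integration. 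Because $G_\xi$ is holomorphic in $x_3 \in \mathcal{H}_\lambda$ and is dominated in modulus by its real-variable expression, all kernel bounds derived in the real variable carry over to the complex strip uniformly in $\sigma \in [0,\lambda]$, so the analytic weights in $\|\cdot\|_{\infty,\lambda}$ and $\|\cdot\|_{1,\lambda}$ add no additional difficulty.

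For the first inequality, I would write $u$ and $\nabla u$ in Fourier as combinations of $i\xi_j\phi_\xi$ and $\p_{x_3}\phi_\xi$ paired with further Fourier multipliers, and observe that $|\xi|G_\xi$ and $\p_{x_3} G_\xi$ satisfy the uniform (in $\xi$) bounds $\sup_{x_3}\|\cdot\|_{L^1_{y_3}} \lesssim 1$ and $\sup_{y_3}\|\cdot\|_{L^\infty_{x_3}} \lesssim 1$. Young's inequality in $y_3$ followed by summing in $\xi$ with the weight $e^{\lambda|\xi|}$ gives $\|u\|_{\infty,\lambda} + \|\nabla u\|_{1,\lambda} \lesssim \|\omega\|_{1,\lambda}$; the second inequality follows by absorbing an extra horizontal Fourier multiplier $i\xi_j$ either onto $\omega_\xi$ or into $|\xi|G_\xi$.

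The main obstacle is the third inequality. A naive expansion of $\p_{x_3} u_h$ contains $\p_{x_3}^2\phi_{h,\xi}$, which would demand two horizontal derivatives of $\omega$ not present on the right-hand side. The key algebraic trick is to invoke the ODE itself to substitute $\p_{x_3}^2\phi_\xi = |\xi|^2\phi_\xi - \omega_\xi$: the first piece $|\xi|^2 G_\xi * \omega_\xi$ is controlled by $\sum_{|\beta|\leq 1}\|\nabla_h^\beta\omega\|_{1,\lambda}$ via the bound $\||\xi|^2 G_\xi(x_3,\cdot)\|_{L^\infty_{y_3}} \lesssim |\xi|$, and the residual $-\omega_{h,\xi}$ contributes exactly the $\|\omega_h\|_{\infty,\lambda}$-term on the right. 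For the last inequality I exploit $\phi_{h,\xi}(0) = 0$: writing $u_3 = i\xi_1\phi_{2,\xi} - i\xi_2\phi_{1,\xi}$, splitting $\zeta^{-1}(x_3) = 1 + 1/x_3$, using the already-proved $L^\infty$-bound on $u_3$ to handle the constant part, and applying the mean-value bound $|u_3(x_3)|/x_3 \lesssim \|\p_{y_3} u_3\|_{\infty,\lambda}$ near the boundary (with the extra horizontal derivative $\nabla_h^{\beta'}$ absorbed into the multiplier) delivers the claimed estimate.
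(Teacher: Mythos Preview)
Your proposal is correct and follows the standard approach: the paper does not give an independent proof of this lemma but cites \cite{NN2018,FW}, and the Green's function representation $\phi_\xi(z)=\int G_\pm(y,z)\omega_\xi(y)\,dy$ together with the ODE substitution $\p_{x_3}^2\phi_\xi=|\xi|^2\phi_\xi-\omega_\xi$ that you describe is exactly what the paper itself invokes later in Step~7 of the proof of Theorem~\ref{thm_bound} (see \eqref{phi_xi}--\eqref{est:phi2}). Your treatment of the fourth estimate via $\phi_{h,\xi}(0)=0$ and a mean-value bound is also the intended mechanism; just be sure, when writing out the details, that the mean-value argument is carried out along paths in $\mathcal{H}_\sigma$ so that the bound is uniform in $\sigma\le\lambda$ as required by the definition of $\|\cdot\|_{\infty,\lambda}$.
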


As a consequence of Lemma \ref{lem_elliptic}, we have the following nonlinear estimates. 

%\begin{lemma}[\cite{NN2018}]\label{NN_product}
%\begin{align}
%\| fg \|_{\infty, \lambda, \delta} \lesssim \| f \|_{ \infty,\lambda} \| g \|_{\infty, \lambda, \delta}
%\end{align}
%\end{lemma}

\begin{lemma}[\cite{NN2018,FW}]\label{lem_bilinear} Let $u$ and $\tilde u$ be the velocity field associated with $\o= \nabla_x \times u$ and $\tilde\o= \nabla_x \times \tilde u$ respectively. Then 
\Be
\begin{split}\label{nonlinear1}
\| u \cdot \nabla \tilde\o \|_{1,\lambda }&\lesssim \|\o\|_{1,\lambda}  \|\nabla_h \tilde\o \|_{1,\lambda} + 
%\Big(\sum_{|\beta|=0,1} \| \nabla_h^\beta\o\|_{1,\lambda}  \Big) 
\| (1+ |\nabla_h|) \o \|_{1, \lambda}
\| \zeta \p_z \tilde \o \|_{1,\lambda}, \\
\|\o \cdot \nabla \tilde u_3\|_{1, \lambda} &\lesssim \| \o_h \|_{1, \lambda} \| \nabla_h \tilde{u}_3 \|_{\infty, \lambda} + \| \o_3 \|_{1,\lambda} \| \p_3 \tilde{u}_3 \|_{\infty, \lambda}
\lesssim \| \o \|_{1,\lambda} 
\| (1+ |\nabla_h|) \tilde \o \|_{1, \lambda}
%\Big(\sum_{|\beta|=0,1} \| \nabla_h^\beta \tilde \o\|_{1,\lambda}  \Big)
 ,  \\
\|\o \cdot \nabla \tilde u_h\|_{1, \lambda} &\lesssim \| \o_h \|_{1, \lambda} \| \nabla_h \tilde{u}_h \|_{\infty, \lambda} + \| \o_3 \|_{\infty,\lambda} \| \p_3 \tilde{u}_h \|_{1, \lambda}
\lesssim \| \o \|_{1,\lambda} \big(
\| \tilde \o_3 \|_{\infty, \lambda} + 
\| (1+ |\nabla_h|) \o \|_{1, \lambda}
%\sum_{|\beta|=0,1} \| \nabla_h^\beta\tilde \o\|_{1,\lambda} 
\big)
 .
\end{split}
\Ee

 Moreover
\Be\label{nonlinear2}
\begin{split}
\| u \cdot \nabla \tilde\o_h \|_{*,\lambda }  &\lesssim \|\o\|_{1,\lambda} \|\nabla_h \tilde \o_h\|_{*,\lambda} + 
\big(
\| (1+ |\nabla_h|) \o \|_{1, \lambda}
%\sum_{|\beta|=0,1} \| \nabla_h^\beta\o\|_{1,\lambda}
+ \| \zeta \p_z  \o_3 \|_{\infty, \lambda}
\big)   \| \zeta \p_z \tilde \o_h\|_{*,\lambda}, \\
\| \o \cdot \nabla \tilde u_h \|_{*,\lambda }
 % \lesssim  \|  \o_3\|_{\infty,\lambda } \| \p_3 \tilde{u}_h \|_{\infty, \lambda}+ \|\o_h \|_{*,\lambda }\| \nabla_h \tilde{u}_h\|_{\infty,\lambda}
&\lesssim 
\| \o_3 \|_{\infty, \lambda, 0}
\big( 
\| (1+ |\nabla_h|)\tilde \o \|_{1, \lambda}
% \sum_{|\beta|=0,1}
%\| \nabla_h^\beta \tilde \o\|_{1,\lambda}
+ \|  \tilde  \o_h \|_{*, \lambda } \big)
+ \|\o_h \|_{*,\lambda }
 \sum_{0\leq |\beta|\leq 1}
\| \nabla_h^\beta\tilde \o\|_{1,\lambda} 
,
\end{split}
\Ee
where $\| \cdot \|_{*, \lambda}$ can be either $\| \cdot \|_{\infty, \lambda , \kappa}$ or $\| \cdot \|_{\infty, \lambda , \kappa t}$.

Furthermore 
\Be
\begin{split}\label{nonlinear2_3}
\| u \cdot \nabla \tilde\o_3 \|_{\infty,\lambda,0} 
%&\lesssim \| u_h \|_{\infty, \lambda} \| \nabla_h \tilde{\o}_3 \|_{\infty, \lambda}
%+  \| \zeta(z) ^{-1} u_3 
% \|_{\infty, \lambda}
%\| \zeta(z)  \p_3 \tilde{\o}_3 \|_{\infty, \lambda}
%\\
%&
 &\lesssim \|\o\|_{1,\lambda} \|\nabla_h\tilde\o_3\|_{\infty,\lambda,0} + 
\| (1+ |\nabla_h|) \o \|_{1, \lambda}
%  \sum_{  |\beta| =0, 1}\| \nabla_h^\beta \o\|_{1,\lambda}  
    \| \zeta \p_3 \tilde \o_3\|_{\infty,\lambda,0 },\\
\| \o  \cdot \nabla  \tilde u_3 \|_{\infty,\lambda,0} &\lesssim 
   \| \o_h \|_{*, \lambda } 
   \| (1+ |\nabla_h|^2) \tilde \o_h \|_{1, \lambda}
  % \sum_{|\beta|=1,2} \| \nabla_h^\beta \tilde \o_h \|_{1,\lambda}
+ \| \o_3 \|_{\infty, \lambda, 0}  
\| (1+ |\nabla_h|) \tilde \o_h \|_{1, \lambda},
%\sum_{|\beta|=0,1} \| \nabla_h^\beta \tilde \o_h \|_{1,\lambda}.
\end{split}
\Ee
where $\| (1+ |\nabla_h|^k) g \|_* = \sum_{\ell=0}^k \| \nabla_h^\ell g \|_*$. 
\end{lemma}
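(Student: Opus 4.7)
The plan is to reduce everything to the algebra property and derivative estimates in Lemma \ref{lem_embedding} together with the elliptic estimates in Lemma \ref{lem_elliptic}, by carefully splitting $v \cdot \nabla = v_h \cdot \nabla_h + v_3 \p_{x_3}$ and introducing the conormal weight $\zeta(x_3) = x_3/(1+x_3)$ whenever a bare $\p_{x_3}$ appears. The analytic norms defined in \eqref{norm_L1}--\eqref{[[]]} are multiplicative in the sense $\|g_1 g_2\|_{*,\lambda} \lesssim \|g_1\|_{\infty,\lambda}\|g_2\|_{*,\lambda}$ after expanding in Fourier modes, and the decomposition $\nabla = (\nabla_h, \zeta^{-1}\zeta\p_{x_3})$ shifts the burden of the singular $\p_{x_3}$ onto quantities already controlled by \eqref{est:elliptic}.

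For the first family \eqref{nonlinear1}, I would write
\[
 u \cdot \nabla \tilde\o = u_h \cdot \nabla_h \tilde\o + \frac{u_3}{\zeta(x_3)} \cdot \zeta(x_3) \p_{x_3} \tilde\o.
\]
The horizontal term is controlled by $\|u_h\|_{\infty,\lambda} \|\nabla_h \tilde\o\|_{1,\lambda} \lesssim \|\o\|_{1,\lambda}\|\nabla_h\tilde\o\|_{1,\lambda}$ via the first line of \eqref{est:elliptic}. For the vertical term I would use the last line of \eqref{est:elliptic}, namely $\|\zeta^{-1} u_3\|_{\infty,\lambda} \lesssim \sum_{0\le|\beta|\le 1}\|\nabla_h^\beta \o_h\|_{1,\lambda}$, noting that no-penetration ensures $u_3/\zeta$ is regular up to the boundary. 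Multiplication by $\zeta \p_{x_3}\tilde\o = D_3\tilde\o$ then yields the second term in the first line of \eqref{nonlinear1}. The estimates for $\o \cdot \nabla \tilde u_h$ and $\o\cdot\nabla\tilde u_3$ are obtained by distributing $\o=(\o_h,\o_3)$ and $\nabla=(\nabla_h,\p_{x_3})$ component by component and choosing which factor is measured in $\|\cdot\|_{1,\lambda}$ and which in $\|\cdot\|_{\infty,\lambda}$ according to \eqref{est:elliptic}; the key input is $\|\p_{x_3}\tilde u_h\|_{\infty,\lambda} \lesssim \sum_{|\beta|\le 1}\|\nabla_h^\beta\o\|_{1,\lambda} + \|\o_h\|_{\infty,\lambda}$.

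The estimates \eqref{nonlinear2} in $\|\cdot\|_{*,\lambda} \in \{\|\cdot\|_{\infty,\lambda,\kappa}, \|\cdot\|_{\infty,\lambda,\kappa t}\}$ are proved by the same decomposition, but now the algebra step requires one factor to be in $\|\cdot\|_{\infty,\lambda}$ (without boundary-layer weight) so that the boundary-layer weight carried by the other factor survives the product. This is why $\o_3$, which has no boundary layer (cf.\ \eqref{NSB_o}), is measured in $\|\cdot\|_{\infty,\lambda,0}$ whenever it appears as the $L^\infty$ factor; for the vertical-derivative term I would write $u_3 \p_{x_3}\tilde\o_h = (u_3/\zeta)(\zeta \p_{x_3}\tilde\o_h)$ and bound $\|u_3/\zeta\|_{\infty,\lambda}$ by $\sum_{|\beta|\le 1}\|\nabla_h^\beta \o_h\|_{1,\lambda}$. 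For the $\o\cdot\nabla\tilde u$ estimate I split $\o\cdot\nabla\tilde u_h = \o_h\cdot\nabla_h\tilde u_h + \o_3 \p_{x_3}\tilde u_h$, putting the boundary-layer-free $\o_3$ into $L^\infty$ and using $\|\p_{x_3}\tilde u_h\|_{*,\lambda} \lesssim \sum_{|\beta|\le 1}\|\nabla_h^\beta\tilde\o\|_{1,\lambda} + \|\tilde\o_h\|_{*,\lambda}$, which follows from the third line of \eqref{est:elliptic} combined with the algebra of the norm.

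Finally, \eqref{nonlinear2_3} for the vertical-vorticity equation is handled analogously, with the additional observation that $\tilde\o_3$ carries no boundary-layer weight, so one extra horizontal derivative can be absorbed via the Cauchy estimate of Lemma \ref{lem_embedding}(3); for $\o\cdot\nabla\tilde u_3$ I would use $\|(1+|\nabla_h|^2)\tilde\o_h\|_{1,\lambda}$ to recover the second horizontal derivative of $\tilde u_3$ through the second line of \eqref{est:elliptic}. The main obstacle throughout is purely bookkeeping: for each of the many product terms one must simultaneously (i) place the factor carrying the boundary-layer weight on the side of $\|\cdot\|_*$, (ii) avoid putting a bare $\p_{x_3}$ on a factor not controlled by the conormal elliptic bound, and (iii) keep the total regularity consistent with the norms in \eqref{norm_BLT}--\eqref{norm_IBLT}. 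Once this scheme is fixed, each individual inequality follows from a one-line application of Lemma \ref{lem_embedding}(2) and one of the lines of \eqref{est:elliptic}.
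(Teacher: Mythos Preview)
Your proposal is correct and follows the standard route. Note, however, that the paper does not actually prove this lemma: it is stated with a citation to \cite{NN2018,FW} and no proof environment follows. So there is no ``paper's own proof'' to compare against beyond the fact that the cited works use exactly the decomposition you describe---split $v\cdot\nabla = v_h\cdot\nabla_h + (v_3/\zeta)(\zeta\p_{x_3})$, invoke the algebra estimate of Lemma~\ref{lem_embedding}(2), and close with the elliptic bounds of Lemma~\ref{lem_elliptic}. Your outline captures this accurately, including the key bookkeeping point that $\o_3$ (having no boundary layer by \eqref{NSB_o}) is the factor placed in $\|\cdot\|_{\infty,\lambda,0}$ so that the boundary-layer weight on the other factor is preserved under multiplication.

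One small remark: in your last paragraph you mention absorbing an extra horizontal derivative for \eqref{nonlinear2_3} via the Cauchy estimate Lemma~\ref{lem_embedding}(3). That is not needed here; the extra $|\nabla_h|^2$ on $\tilde\o_h$ in the bound for $\|\o\cdot\nabla\tilde u_3\|_{\infty,\lambda,0}$ comes directly from the last line of \eqref{est:elliptic} applied with $|\beta'|=1$ (i.e., $\|\zeta^{-1}\nabla_h\tilde u_3\|_{\infty,\lambda}\lesssim \sum_{|\beta|\le 2}\|\nabla_h^\beta\tilde\o_h\|_{1,\lambda}$), not from a Cauchy-type loss of analyticity. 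Otherwise your plan is sound.
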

%{\color{red}\texttt{...How about making it Proof? ...}} 

%\begin{proof}
% Again we refer to Proposition 2.3 in \cite{NN2018} for 2D and Section 4 of \cite{FW} for the full justification. The bounds (\ref{nonlinear1}) and (\ref{nonlinear2}) directly follow from Lemma \ref{lem_elliptic}. The proof of the first estimate of (\ref{nonlinear2_3}) is an outcome of applying (\ref{est:elliptic}) to an easy bound 
%\Be
%\| u \cdot \nabla \tilde\o_3 \|_{\infty,\lambda,0} 
% \lesssim \| u_h \|_{\infty, \lambda} \| \nabla_h \tilde{\o}_3 \|_{\infty, \lambda,0}
%+  \| \zeta(z) ^{-1} u_3 
% \|_{\infty, \lambda}
%\| \zeta(z)  \p_3 \tilde{\o}_3 \|_{\infty, \lambda,0}.\notag
%\Ee 
%For the second estimate of (\ref{nonlinear2_3}) it suffices to prove the bound for $\o_h \cdot \nabla_h \tilde u_3$. From $|\zeta(z)(1+ \phi_\kappa (z)
% )|\lesssim 1$ or $|\zeta(z)(1+ \phi_\kappa (z)
%+ \phi_{\kappa t} (z))|\lesssim 1$,
%\begin{align*}
%\| \o_h \cdot  \nabla_h \tilde u_3 \|_{\infty, \lambda, 0} & \lesssim \|  \o_h   \| _{*, \lambda } \Big\|  \zeta(z)(1+ \phi_\kappa (z)
%+ \phi_{\kappa t} (z)
%)  \frac{\nabla_h \tilde u_3}{\zeta(z)} \Big\|_{\infty, \lambda}
%\lesssim \|  \o_h   \| _{*, \lambda }
% \|
%  \zeta^{-1}{\nabla_h \tilde u_3} 
% \|_{\infty, \lambda}.
%\end{align*} 
%Then we use the last bound of (\ref{est:elliptic}) to finish the proof. 
%\end{proof}

 We finally record the crucial estimate of nonlinear forcing terms $N=-u\cdot\nabla \o+ \o \cdot \nabla u$, as an outcome of {Lemma} \ref{lem_bilinear}, that will be also crucially used to control  $B= [\p_{x_3} (-\Delta)^{-1} (-u \cdot \nabla \o + \o \cdot \nabla u ) ] \, |_{x_3=0}$ in the vorticity formulation  \eqref{NS} and \eqref{NSB}. 

 \begin{lemma}[\cite{NN2018,FW}, Nonlinear estimate]\label{lem_est:N} {Let $\lambda\in(0,\lambda_0-\gamma s)$ be given. We have the following: }
  \Be
 \begin{split}\label{est:N_1}
\| (1+ |\nabla_h |) N \|_{1, \lambda }&\lesssim\big( \| (1+ |\nabla_h| ) \o \|_{1, \lambda} 
+ \|(1+ |\nabla_h| ) \o_3\|_{\infty, \lambda, 0}\big)
 \| (1+ |\nabla_h|^2 ) \o \|_{1, \lambda} \\
& \ \  + \sum_{|\beta|=1}
 \| (1+ |\nabla_h|) D^\beta \o\|_{1, \lambda}  \| (1+ |\nabla_h|^2) \o \|_{1, \lambda} ,
 \end{split}
\Ee
\Be
\begin{split}\label{est:DN_1}
&\sum_{|\beta|=1}\|D^\beta (1+ |\nabla_h |) N \|_{1, \lambda }\\
&\lesssim
 \sum_{|\beta  | \leq 1}  \| D^\beta (1+ |\nabla_h| ) \o \|_{1, \lambda} 
 \bigg(
  \sum_{|\beta| \leq 2}\| D^\beta(1+ |\nabla_h| ) \o\|_{1,\lambda})
  + \| (1+ |\nabla_h|) \o \|_{\infty, \lambda, 0}
  \bigg)
  \\
  & \ \ + \sum_{|\beta | \leq 1} \| D^\beta (1+ |\nabla_h| ) \o_3 \|_{\infty, \lambda, 0}
  \| (1+ |\nabla_h|)^2 \o \|_{1,\lambda}. 
\end{split}
\Ee
 
% Recall the norm $[[ \ \cdot \ ]]_{\infty, \lambda, \kappa }$ and  in (\ref{[]}). 
 For $[[ \ \cdot \ ]]_{*,\lambda}$ to be either $[[ \ \cdot \ ]]_{\infty, \lambda, \kappa }$ or $[[ \ \cdot \ ]]_{\infty, \lambda, \kappa t}$,
\Be\label{est:N_infty}
[[N]]_{*,\lambda}
%\| N_h \|_{\infty, \lambda, \kappa t}
%+ \| N_3(s)\|_{\infty, \lambda,0}
\lesssim \| (1+ |\nabla_h|^2) \o \|_{1, \lambda}
[[\o]]_{*,\lambda}
%( \| \o_h\|_{\infty, \lambda, \kappa t} + \| \o_3\|_{\infty, \lambda, 0} ) 
+
 \| (1+ |\nabla_h| )\o \|_{1, \lambda}
 [[
 D \o 
 ]]_{*,\lambda},
 % (\| D \o_h \|_{\infty, \lambda, \kappa t}+ \| D \o_3 \|_{\infty, \lambda, 0})
\Ee
\Be\label{est:DN_infty}
\begin{split}
 \sum_{|\beta|=1}
[[
D^\beta N
]]_{*,\lambda}
%\big( \|D^\beta N_h \|_{\infty, \lambda, \kappa t}
%+ \|D^\beta  N_3(s)\|_{\infty, \lambda,0}\big)
&\lesssim \sum_{|\beta|=1}  \| (1+ |\nabla_h|^{|\beta_h|+2}) \o \|_{1, \lambda}
[[ \o ]]_{*,\lambda}\\
%( \| \o_h\|_{\infty, \lambda, \kappa t} + \| \o_3\|_{\infty, \lambda, 0} 
%) 
%+ \| (1+ |\nabla_h| )\o \|_{1, \lambda} (\| D \o_h \|_{\infty, \lambda, \kappa t}+ \| D \o_3 \|_{\infty, \lambda, 0})
& \ \ + \sum_{|\beta|=1} [[D^\beta \o ]]_{*,\lambda} (\| (1+ |\nabla_h|^2) \o \|_{1, \lambda} + \beta_3[[D_3^{\beta_3} \o ]]_{*,\lambda}) \\
& \ \  
 + \sum_{|\beta|=2} [[ D^\beta \o ]]_{*,\lambda} \| (1+ |\nabla_h| )\o\|_{1,\lambda} .
\end{split}\Ee
\end{lemma}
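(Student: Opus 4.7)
The plan is to reduce each of the four estimates to a direct application of Lemma \ref{lem_bilinear}, combined with Leibniz's rule and the Cauchy/embedding facts in Lemma \ref{lem_embedding}. Writing $N = -u\cdot\nabla\omega + \omega\cdot\nabla u$, I will split component-wise into horizontal and vertical parts because the available bilinear estimates have different structure on the two factors $(u\cdot\nabla\omega)_h$, $(u\cdot\nabla\omega)_3$, $(\omega\cdot\nabla u)_h$, $(\omega\cdot\nabla u)_3$, and take into account that $D_3 = \zeta(x_3)\partial_3$ introduces the weight $\zeta$ that is already adapted to the bilinear identities in \eqref{nonlinear1}--\eqref{nonlinear2_3}.

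For \eqref{est:N_1}, I would first apply the three estimates in \eqref{nonlinear1} to each of the four pieces of $N$ in the $\|\cdot\|_{1,\lambda}$ norm, then commute $(1+|\nabla_h|)$ past the bilinear expression via Leibniz, and apply \eqref{nonlinear1} again to each resulting term. The horizontal derivative $|\nabla_h|$ either lands on $\omega$ or on $u$/$\tilde u$; when it lands on $u$ one rewrites $\nabla_h u$ in terms of $\omega$ by Biot--Savart (Lemma \ref{lem_elliptic}), so every factor in the final bound is expressed in terms of $\omega$. Collecting the terms and using $\|\omega\|_{1,\lambda}\leq\|(1+|\nabla_h|)\omega\|_{1,\lambda}$ will exhibit the desired structure: one factor carrying at most one horizontal derivative and a ``worst'' factor carrying up to $(1+|\nabla_h|^2)$. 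The $\omega_3$ term $\|(1+|\nabla_h|)\omega_3\|_{\infty,\lambda,0}$ appears precisely from the third line of \eqref{nonlinear1}, where $\|\tilde\omega_3\|_{\infty,\lambda}$ is the natural control on the boundary-layer-free component of $\omega$.

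For \eqref{est:DN_1}, I would apply $D^\beta$ with $|\beta|=1$ and use Leibniz once more. If $D^\beta = \nabla_h$ the argument is parallel to the previous step, just with one extra horizontal derivative distributed on either factor. If $D^\beta = D_3$, I use the $\zeta\partial_z$ structure that already appears on the right-hand side of \eqref{nonlinear1}, together with the identity $D_3(fg) = (D_3 f)g + f(D_3 g)$. Each resulting bilinear piece is bounded by \eqref{nonlinear1} or \eqref{nonlinear2}, and the highest-order factor $\|(1+|\nabla_h|)^2\omega\|_{1,\lambda}$ arises in the cross term where the two derivatives land on the same factor.

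For \eqref{est:N_infty} and \eqref{est:DN_infty} the strategy is identical but with the $*$-norms in place of $\|\cdot\|_{1,\lambda}$, invoking \eqref{nonlinear2} and \eqref{nonlinear2_3} for the horizontal and vertical components of $N$ respectively. Leibniz in $D^\beta$ again produces terms where either one factor carries all the derivatives (giving the $[[D^\beta\omega]]_{*,\lambda}\,\|(1+|\nabla_h|^2)\omega\|_{1,\lambda}$ and $[[D^2\omega]]_{*,\lambda}\,\|(1+|\nabla_h|)\omega\|_{1,\lambda}$ contributions), or the derivatives are split (giving the $\|(1+|\nabla_h|^{|\beta_h|+2})\omega\|_{1,\lambda}\,[[\omega]]_{*,\lambda}$ contribution after using Biot--Savart on $\nabla u$). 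I expect the main bookkeeping obstacle to be \eqref{est:DN_infty}: one must carefully identify, for $|\beta|=2$ splittings of Leibniz, whether the two derivatives are purely horizontal, purely vertical, or mixed, and in the vertical and mixed cases use \eqref{nonlinear2_3} for $N_3$ so that the limiting $\|\zeta\partial_3\tilde\omega_3\|_{\infty,\lambda,0}$ factor matches the $D_3$ in $[[D_3\omega]]_{*,\lambda}$ rather than producing an unweighted $\partial_3$ term that would blow up in the boundary layer.
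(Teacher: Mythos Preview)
Your approach is correct and matches the paper's: the paper does not give a detailed proof either, but simply states that it relies on Lemma \ref{lem_bilinear} and refers to Lemma 4.2 and Lemma 4.5 of \cite{FW} for the details. Your outline---Leibniz plus the bilinear estimates \eqref{nonlinear1}--\eqref{nonlinear2_3} with Biot--Savart to rewrite $u$-terms in terms of $\o$---is exactly the intended route.
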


The proof relies on Lemma \ref{lem_bilinear}. We refer to Lemma 4.2 and Lemma 4.5 in \cite{FW} for the detailed proof.

\subsection{Green's function and integral representation}

By taking the Fourier transform of \eqref{NS}-\eqref{incomp_NS} in $x_h \in \mathbb{T}^2$, we obtain 
 \begin{align}
\p_t \o_\xi - \kappa \eta_0 \Delta_\xi \o_\xi &= N_\xi  \quad \text{in }\mathbb R_+, \label{NS_f} \\
\kappa \eta_0 (\p_{x_3} +|\xi|)\o_{\xi ,h}  &= B_\xi, \ \ \o_{\xi,3} =0 \quad \text{on } x_3=0 ,\label{NSB_f}
\end{align} 
with $\o_\xi|_{t=0} = {\o_0}_\xi$ for $\xi\in {\mathbb Z^2}$. Here 
\Be 
\Delta_\xi = - |\xi|^2 + \p_{x_3}^2,
\Ee
and
\Be
N_\xi = N_\xi(t,x_3):=  \left( -u \cdot \nabla \o + \o \cdot \nabla u  \right)_\xi (t,x_3), \quad B_\xi=B_\xi(t):=  (\p_{x_3} (-\Delta_\xi)^{-1} N_{\xi,h} (t))|_{x_3=0}.
\Ee
Here $(-\Delta_\xi)^{-1}$ denotes the inverse of $-\Delta_\xi$ with the zero Dirichlet boundary condition at $x_3=0$.

We give the integral representation and present key estimates on Green's function for the Stokes problem. As shown in \cite{NN2018,FW}, letting $G_{\xi}(t,x_3,y)$ be the Green's function for \eqref{NS_f}-\eqref{NSB_f}, the solution can be represented by the integral formula via Duhamel's principle: 
\Be
 \begin{split}\label{o_xi}
 \o_\xi (t,x_3) =& {\int^\infty_0 G_{\xi } (t,x_3, y) \o_{0\xi} (y) \dd y} 
 + {\int^t_0\int^\infty_0 G_{\xi } (t-s, x_3, y) N_\xi (s,y ) \dd y \dd s}\\
 &
- {\int^t_0  G_{\xi } (t-s, x_3, 0) (B_\xi (s) ,0)\dd s} ,
 \end{split}
 \Ee
 where \Be\label{G}
 G_{\xi }  = \begin{bmatrix}
 G_{\xi h} & 0 & 0\\
 0 &  G_{\xi h} & 0 \\
 0& 0&  G_{\xi 3}
 \end{bmatrix},
 \Ee
 with $G_{\xi h}$ of (\ref{G_xi}) and $G_{\xi  3}$ of (\ref{G_xi2}): for $G_{\xi *}$ can be either $G_{\xi h}$ or $G_{\xi  3}$
\begin{align}
\p_ t G_{\xi *} (t, x_3, y) - \kappa\eta_0 \Delta_\xi G_{\xi *} (t, x_3, y) =0, \ \ x_3>0,\label{eqtn:G_xi2}\\
\kappa\eta_0 (\p_{x_3} + |\xi|) G_{\xi h} (t, x_3, y)=0, \ \ x_3=0, \label{bdry:G_xi2}\\
 G_{\xi 3} (t, x_3, y)=0, \ \ x_3=0. \label{bdry:G_xi3}
\end{align}
%Clearly  $\p_t \o_{\xi, 3}(t,x_3)$ takes the form of the third component of (\ref{o_xi2t}). 

%\subsection{Inhomogeneous Stokes equations} The following Stokes problem 
 %\begin{align}
%\p_t W_\xi - \kappa \eta_0 \Delta_\xi W_\xi &= P_\xi \label{S_f} \\
%\kappa \eta_0 (\p_{x_2} + |\xi| ) W_\xi  &= Q_\xi   \label{SB_f}
%\end{align} 
%with the initial data $W_\xi(0,x_2) = {W_0}_\xi$ can be solved by using the Green's function and Duhamel's principle \cite{NN2018}: 

Similarly for $\theta$, by taking the Fourier transform of \eqref{heat_NS}, \eqref{thetaB} in $x_h$, we have $\theta_{\xi}(t,x_3)$ solves
\Be \begin{split}
\p_t \theta_\xi - \kappa \eta_c \Delta_\xi \theta_\xi = & M_\xi \text{ in } \mathbb R_+,
\\ \theta_\xi = & 0 \text{ on } x_3 = 0,
\end{split} \Ee
with $\theta_\xi |_{t=0} = \theta_{0\xi} $ for $\xi \in \mathbb Z^2$. Here $M_\xi = M_\xi(t,x_3) : =  (- u \cdot \nabla \theta)_\xi(t,x_3) $. Thus the integral representation is
\Be \label{thetaphi}
\theta_\xi(t,x_3) = \int_0^\infty G_{\xi3} (t,x_3, y) \theta_{0 \xi} (y) dy + \int_0^t \int_0^\infty G_{\xi3}(t-s,x_3,y) M_\xi (s,y) dyds,
\Ee
here by abuse of notation $G_{\xi 3} $ is the same Green's function solving \eqref{eqtn:G_xi2} and \eqref{bdry:G_xi3}, with $\eta_0$ replaced by $\eta_c$. Note that since $u = 0 $ on $\p \O$, we have
\Be \label{ptellMequal0}
\p_t^\ell M_\xi(t, 0 ) = 0, \, \ell = 0,1,2.
\Ee

The following estimates and properties for $G_{\xi}$ will be useful to show the propagation of analytic norms of $\p_t^\ell \o$, $\p_t^\ell \theta$, for $\ell = 0,1,2$. % from \cite{NN2018}. 

 \begin{lemma}[\cite{NN2018,FW}] \label{lem_G}
 \begin{enumerate}
 
 \item (Bounds on $G_{\xi h}$)
% For any $T>0$ and for any $P_\xi \in L^\infty(0,T; L^1(\mathbb R_+))$ and $Q_\xi \in L^\infty(0,T)$, the unique solution to  \eqref{S_f}-\eqref{SB_f} with the initial data $W_\xi(0,x_2) = {W_0}_\xi$ in $L^1(\mathbb R_+)$ satisfies 
% \Be
% \begin{split}\label{W_xi}
% W_\xi (t,x_2) = {\int^\infty_0 G_{\xi,1} (t,x_2, y) W_{0\xi} (y) \dd y} 
% + {\int^t_0\int^\infty_0 G_{\xi,1} (t-s, x_2, y) P_\xi (s,y ) \dd y \dd s}
%- {\int^t_0  G_{\xi,1} (t-s, x_2, 0) Q_\xi (s) \dd s} 
 %\end{split}
 %\Ee
The Green's function $G_{\xi h}$ for the Stokes problem (\ref{eqtn:G_xi2}) and (\ref{bdry:G_xi2}) is given by 
  \Be\label{G_xi}
 G_{\xi h} = \tilde{H}_\xi + R_\xi,
 \Ee
 where $\tilde{H}_\xi$ is the one dimensional Heat kernel in the half-space with  the homogeneous Neumann boundary condition which takes the form of 
\Be
 \tilde{H}_\xi (t,x_3,y)= H_\xi(t,x_3-y) + H_\xi(t,x_3+y) = \frac{1}{\sqrt{\kappa\eta_0 t}} \bigg(
 e^{- \frac{|x_3-y|^2}{4 \kappa \eta_0 t}} +  e^{- \frac{|x_3+y|^2}{4 \kappa\eta_0 t}}
 \bigg) e^{- \kappa\eta_0 |\xi|^2 t},
 \label{H_xi}
 \Ee
and the residual kernel $R_\xi$ due to the boundary condition satisfies 
 \Be
 |\p_{x_3}^k R_\xi(t,x_3, y)| \lesssim b^{k+1} e^{- \theta_0 b (x_3 + y)} + 
 \frac{1}{(\kappa \eta_0 t)^{(k+1)/2}} e^{- \theta_0 \frac{|x_3+y|^2}{\kappa\eta_0 t}} e^{- \frac{\kappa\eta_0 |\xi|^2 t}{8}} 
 ,\label{R_xi}
\Ee
 with $b= |\xi| + \frac{1}{\sqrt{\kappa\eta_0}}$  and $R_\xi (t,x_3,y) = R_\xi (t, x_3 + y)$. 
 
  \item  (Formula of $G_{\xi 3}$) The Green's function $G_{\xi 3}$ for the Stokes problem (\ref{eqtn:G_xi2}) and (\ref{bdry:G_xi3}) is given by one dimensional Heat kernel in the half-space with the homogeneous Dirichlet boundary condition as   \Be
  \label{G_xi2}
G_{\xi 3} (t,x_3,y) = H_\xi(t,x_3-y) -H_\xi(t,x_3+y) = \frac{1}{\sqrt{\kappa\eta_0 t}} \bigg(
 e^{- \frac{|x_3-y|^2}{4 \kappa \eta_0 t}} -  e^{- \frac{|x_3+y|^2}{4 \kappa\eta_0 t}}
 \bigg) e^{- \kappa\eta_0 |\xi|^2 t}.
  \Ee

 \item (Complex extension) The Green's function $G_{\xi}$ has a natural extension to the complex domain $\mathcal{H}_\lambda$ for small $\lambda>0$ with similar bounds in terms of $\text{Re}\, y$ and $\text{Re}\, z$ (cf. (3.16) in \cite{NN2018}). 
 The solution $\o_\xi$ to \eqref{NS_f}-\eqref{NSB_f} in $\mathcal{H}_\lambda$ has a similar representation: for any $z\in\mathcal{H}_\lambda$, let $\sigma$ be the positive constant so that $z\in \p \mathcal{H}_\lambda$, then $\o_\xi$ satisfies
 \Be
 \begin{split}\notag
 \o_\xi (t,z) =& {\int_{\p \mathcal{H}_\sigma} G_{\xi } (t,z, y) \o_{0\xi} (y) \dd y} 
 + {\int^t_0\int_{\p \mathcal{H}_\sigma} G_{\xi } (t-s, z, y) N_\xi (s,y ) \dd y \dd s}\\
 &
- {\int^t_0  G_{\xi } (t-s, z, 0) (B_\xi (s), 0) \dd s} .
 \end{split}
 \Ee
 \end{enumerate}
 \end{lemma}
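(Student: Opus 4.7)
The plan is to treat the three parts in sequence, since part (3) will follow from the explicit formulas derived in parts (1) and (2) by analytic continuation.

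For part (2), the formula for $G_{\xi 3}$ is the classical half-space heat kernel with Dirichlet boundary condition. I would verify directly that $H_\xi(t, x_3 - y) - H_\xi(t, x_3 + y)$ solves \eqref{eqtn:G_xi2} away from $y$, has the correct singular behaviour as $t \to 0^+$ (reproducing the delta function on the diagonal), and satisfies $G_{\xi 3}(t, 0, y) = 0$ by the antisymmetric placement of the images. The Gaussian factor $e^{-\kappa \eta_0 |\xi|^2 t}$ accounts for the $-|\xi|^2$ term in $\Delta_\xi$.

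For part (1), the strategy is to write $G_{\xi h} = \tilde H_\xi + R_\xi$, where $\tilde H_\xi$ is the Neumann heat kernel \eqref{H_xi}. The function $\tilde H_\xi$ solves \eqref{eqtn:G_xi2} with $\partial_{x_3} \tilde H_\xi|_{x_3=0} = 0$, so $R_\xi$ must solve the homogeneous heat equation with inhomogeneous Robin data $(\partial_{x_3} + |\xi|) R_\xi|_{x_3=0} = -|\xi|\, \tilde H_\xi(t,0,y)$ and zero initial data. Since the data depends only on $y$ through the parameter, $R_\xi$ inherits the form $R_\xi(t, x_3+y)$. The next step is to solve this boundary value problem by Fourier/Laplace transform in $x_3$, or equivalently by the ansatz $R_\xi = \int_0^t e^{-\kappa\eta_0 |\xi|^2(t-s)} \, \Phi(t-s, x_3+y) \, \tilde H_\xi(s,0,y) \, ds$ where $\Phi$ is the half-line kernel generated by the Robin operator. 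This produces the superposition of the two scales visible in \eqref{R_xi}: the parabolic scale coming from the Gaussian propagator and the elliptic scale $b = |\xi| + 1/\sqrt{\kappa \eta_0}$ coming from the Robin parameter.

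The main obstacle is obtaining the bound \eqref{R_xi} with the correct matching between the two scales and with uniform constants as $\kappa \to 0$. The parabolic part dominates for $x_3 + y \lesssim \sqrt{\kappa t}$ while the elliptic boundary layer dominates for $x_3 + y \gtrsim 1/b$; near the crossover one must control the interpolation carefully. I would split the representation integral at $s = t/2$, use $|\tilde H_\xi(s,0,y)| \lesssim (\kappa s)^{-1/2} e^{-\theta_0 y^2/(\kappa s)} e^{-\kappa |\xi|^2 s}$, and exploit the explicit resolvent of the Robin operator on the half-line (whose kernel has exponential decay at rate $b$ in space) to absorb the $|\xi|$ factor. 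Differentiating in $x_3$ yields the extra power $b^{k+1}$ or $(\kappa t)^{-(k+1)/2}$ respectively, and translation invariance of the ansatz in $x_3 + y$ shows $R_\xi = R_\xi(t, x_3 + y)$.

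For part (3), the complex extension is straightforward once the explicit formulas are in hand: the Gaussians $e^{-|x_3 \pm y|^2/(4\kappa\eta_0 t)}$ extend as holomorphic functions of $(z, y) \in \mathcal H_\sigma \times \partial \mathcal H_\sigma$, and the definition of $\mathcal H_\lambda$ in \eqref{complex_domain} was chosen precisely so that the real parts of $(z \pm y)^2$ remain comparable to $|\mathrm{Re}\, z \pm \mathrm{Re}\, y|^2$, up to a constant depending on $\lambda$. Replacing $x_3, y$ by $\mathrm{Re}\, z, \mathrm{Re}\, y$ throughout \eqref{H_xi}, \eqref{R_xi}, \eqref{G_xi2} then yields the analogous bounds. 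The integral representation of $\omega_\xi$ on $\partial \mathcal H_\sigma$ follows by contour deformation from the real axis, using the decay of the Green's function and the holomorphy of $N_\xi, B_\xi, \omega_{0\xi}$ inside $\mathcal H_\lambda$.
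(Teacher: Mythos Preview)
The paper does not give its own proof of this lemma: it simply records that ``The proof of Lemma \ref{lem_G} can be found in Proposition 3.3 and Section 3.3 of \cite{NN2018}.'' Your plan is a reasonable outline of the standard construction that \cite{NN2018} carries out, namely writing $G_{\xi h}$ as the Neumann heat kernel plus a residual solving a Robin boundary value problem, extracting the two scales $b$ and $(\kappa t)^{1/2}$ from the resulting representation, and then extending to $\mathcal{H}_\lambda$ by analytic continuation of the explicit Gaussians.

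One point to be careful about in your sketch: the ansatz you write for $R_\xi$ as a time-convolution with $\tilde H_\xi(s,0,y)$ is not quite how the construction in \cite{NN2018} proceeds. There the residual kernel is obtained via the resolvent (Laplace transform in $t$) of the Stokes operator with the Robin condition, which gives a contour integral representation; the two terms in \eqref{R_xi} then arise from separating the contributions of the pole of the resolvent (elliptic scale $b$) and the branch cut of the heat symbol (parabolic scale $(\kappa t)^{1/2}$). Your Duhamel-in-time ansatz would eventually lead to the same bounds, but the resolvent route is cleaner for isolating the $e^{-\theta_0 b(x_3+y)}$ term with the correct prefactor $b^{k+1}$ uniformly in $\kappa$. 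If you pursue your version, the ``main obstacle'' you flag---matching the two scales near the crossover---is genuine, and you should expect to need the explicit form of the Robin half-line propagator rather than just its decay rate.
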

 
 The proof of Lemma \ref{lem_G} can be found in Proposition 3.3 and Section 3.3 of \cite{NN2018}.  The next lemma concerns the convolution estimates.

\begin{lemma} \label{lem_Gc} Let $T>0$ be given. For any $0\leq s < t\leq T$ and $k\geq 0$, there exists a constant $C_T>0$ so that the following estimates hold: for $G_{\xi *}$ can be either $G_{\xi h}$ or $G_{\xi 3}$
\begin{enumerate}
\item ($\mathcal{L}^1_\lambda$ estimates)  
\begin{align}
\sum_{j=0}^k \left\| (\zeta(z)\p_{z})^j \int_0^\infty G_{\xi*}(t, z, y ) g_\xi (y) \dd y \right\|_{\mathcal{L}^1_\lambda} \leq C_T \sum_{j=0}^k \left\|   (\zeta(z)\p_{z})^j g_\xi \right\|_{\mathcal{L}^1_\lambda},\label{est:L1G1} \\
\sum_{j=0}^k \left\|  (\zeta(z)\p_{z})^j \int_0^\infty G_{\xi *}(t-s,z, y ) g_\xi (y) \dd y \right\|_{\mathcal{L}^1_\lambda} \leq C_T \sum_{j=0}^k \left\|  (\zeta(z)\p_{z})^j g_\xi \right\|_{\mathcal{L}^1_\lambda}.\label{est:L1G2} 
\end{align}

\item ($\mathcal{L}^\infty_{\lambda,\kappa t}$ estimates)
\begin{align}
\sum_{j=0}^k \left\| (\zeta(z)\p_{z})^j \int_0^\infty G_{\xi *} (t, z, y ) g_\xi (y) \dd y \right\|_{\mathcal{L}^\infty_{\lambda,\kappa t}} \leq C_T \sum_{j=0}^k \left\|   (\zeta(z)\p_{z})^j g_\xi \right\|_{\mathcal{L}^\infty_{\lambda,\kappa }}, \label{est:Linfty_tG1}  \\
\sum_{j=0}^k \left\|  (\zeta(z)\p_{z})^j \int_0^\infty G_{\xi *}(t-s, z, y ) g_\xi (y) \dd y \right\|_{\mathcal{L}^\infty_{\lambda,\kappa t}} \leq C_T \sum_{j=0}^k \sqrt{\frac{t}{s}}  \left\|  (\zeta(z)\p_{z})^j g_\xi \right\|_{\mathcal{L}^\infty_{\lambda,\kappa s}}.  \label{est:Linfty_tG2}
\end{align}

\item ($\mathcal{L}^\infty_{\lambda,\kappa}$ estimates) For either $\kappa=0$ or $\kappa>0$
\begin{align}
\sum_{j=0}^k \left\| (\zeta(z)\p_{z})^j \int_0^\infty G_{\xi *}(t,z, y ) g_\xi (y) \dd y \right\|_{\mathcal{L}^\infty_{\lambda,\kappa}} \leq C_T \sum_{j=0}^k \left\|   (\zeta(z)\p_{z})^j g_\xi \right\|_{\mathcal{L}^\infty_{\lambda,\kappa}},  \label{est:LinftyG1} \\
\sum_{j=0}^k \left\|  (\zeta(z)\p_{z})^j \int_0^\infty G_{\xi *}(t-s, z, y ) g_\xi (y) \dd y \right\|_{\mathcal{L}^\infty_{\lambda,\kappa}} \leq C_T \sum_{j=0}^k \left\|  (\zeta(z)\p_{z})^j g_\xi \right\|_{\mathcal{L}^\infty_{\lambda,\kappa}}.  \label{est:LinftyG2}
\end{align}

\end{enumerate}
\end{lemma}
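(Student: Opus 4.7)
\medskip

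\noindent\textbf{Proof proposal for Lemma \ref{lem_Gc}.} My plan is to exploit the explicit representations from Lemma \ref{lem_G}, splitting $G_{\xi h} = \tilde H_\xi + R_\xi$ and writing $G_{\xi 3} = H_\xi(t,z-y) - H_\xi(t,z+y)$, and treating each piece separately. The crucial point is that once the complex extension in Lemma \ref{lem_G}(3) is in hand, all the Green's function bounds remain valid with $x_3$ replaced by $\operatorname{Re} z$ and $y$ replaced by $\operatorname{Re} y$, so every estimate reduces to a one-dimensional real-variable convolution estimate on $\partial\mathcal{H}_\sigma$ for $\sigma\le \lambda$. Because the kernels carry the Gaussian factor $e^{-c\kappa\eta_0|\xi|^2 t}$, no summation loss over $\xi$ arises, and the weight $e^{\lambda|\xi|}$ simply passes through; hence it suffices to prove the corresponding estimates fiber by fiber in $\xi$ with uniform constants. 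The bookkeeping for conormal derivatives $(\zeta(z)\partial_z)^j$ will be treated by the standard observation that $\zeta\partial_z$ can be traded, at the cost of lower order terms, with $y$-derivatives under the convolution, using integration by parts together with the fact that $\zeta(z)\sim z$ near $0$ and $\zeta(z)\sim 1$ at infinity (this is the same trick used in \cite{NN2018,FW} and is essentially what forces the weight $\zeta$ rather than a plain $\partial_z$).

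For the $\mathcal{L}^1_\lambda$ bounds \eqref{est:L1G1}--\eqref{est:L1G2}, I would apply Young's convolution inequality in $\operatorname{Re} y$. The heat kernel pieces $H_\xi(t,z\pm y)$ and $\tilde H_\xi$ have $\|H_\xi(t,\cdot)\|_{L^1_y}\lesssim e^{-c\kappa\eta_0|\xi|^2 t}$ uniformly in $t\in[0,T]$, while the residual $R_\xi$ is bounded by $b e^{-\theta_0 b(\operatorname{Re} z+\operatorname{Re} y)}$ plus a Gaussian term, both of which have uniformly bounded $L^1_y$ norm (in fact $\int_0^\infty b e^{-\theta_0 b y}\,dy = 1/\theta_0$). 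Thus convolution in $y$ is bounded on $\mathcal L^1_\lambda$ with a $T$-dependent constant, and the same argument with time shift $t\mapsto t-s$ gives \eqref{est:L1G2}.

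For the $\mathcal L^\infty_{\lambda,\kappa}$ and $\mathcal L^\infty_{\lambda,\kappa t}$ bounds, the heart of the matter is to quantify how heat convolution interacts with the singular weights $\phi_\kappa$ and $\phi_{\kappa t}$. For \eqref{est:LinftyG1}--\eqref{est:LinftyG2} (the case $\kappa$ fixed, the layer already present in the data) I would estimate pointwise
\[
\Bigl|\int_0^\infty G_{\xi*}(t,z,y)\,g_\xi(y)\,dy\Bigr|
\;\le\; \|g_\xi\|_{\mathcal L^\infty_{\lambda,\kappa}}\int_0^\infty |G_{\xi*}(t,z,y)|\,e^{-\bar\alpha\operatorname{Re} y}(1+\phi_\kappa(y))\,dy,
\]
and then check, using the Gaussian tails of $\tilde H_\xi$ together with $\int_0^\infty e^{-(\operatorname{Re} z-y)^2/(4\kappa\eta_0 t)}\phi_\kappa(y)\,dy\lesssim (1+\phi_\kappa(\operatorname{Re} z))\sqrt{\kappa t}$ (by splitting according to whether $y\lesssim \sqrt\kappa$ or not), that the output still inherits the $1+\phi_\kappa(\operatorname{Re} z)$ factor on the left. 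For \eqref{est:Linfty_tG1}--\eqref{est:Linfty_tG2}, which is where the initial-layer weight $\phi_{\kappa t}$ appears on the output side, I would use the stronger Gaussian concentration at time $t$: since $H_\xi(t,\cdot)$ has width $\sqrt{\kappa t}$, for any bounded datum we have $|H_\xi * g|(z)\lesssim\|g\|_{L^\infty}(1+\phi_{\kappa t}(\operatorname{Re} z))$ after accounting for the boundary reflection. The time-shifted factor $\sqrt{t/s}$ in \eqref{est:Linfty_tG2} arises because the weight $\phi_{\kappa s}$ on the data has mass concentrated in a strip of width $\sqrt{\kappa s}$, and redistributing it via the heat kernel of width $\sqrt{\kappa(t-s)}$ onto the output weight $\phi_{\kappa t}$ produces exactly the ratio $\sqrt{t/s}$ after a direct Gaussian computation.

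The main obstacle I foresee is controlling the conormal derivatives $(\zeta\partial_z)^j$ together with the boundary-layer weights. One cannot simply commute $\zeta\partial_z$ past the $y$-convolution, and for $R_\xi$ (which only depends on $z+y$) one must instead trade $\partial_z$ for $\partial_y$ and integrate by parts, producing boundary terms at $y=0$ that must be absorbed. The degeneracy of $\zeta$ at the boundary is essential here, because each $\partial_z$ falling on $R_\xi$ gains a factor $b\sim\kappa^{-1/2}$, and only the factor $\zeta$ tames this to something compatible with the $\phi_\kappa$ weight; this is the sole reason the estimates are stated with the conormal vector field rather than plain $\partial_z$. Once this interplay is handled via induction on $j$, the three families of bounds follow from the real-variable arguments sketched above, exactly as in the Navier--Stokes setting of \cite{NN2018,FW}.
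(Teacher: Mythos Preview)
The paper does not actually prove this lemma: it simply states ``The proof of (1) and (2) can be found in Propositions 3.7 and 3.8 of \cite{NN2018}. For the proof of (3) we refer to Lemma 12 of \cite{JK}.'' Your sketch is a faithful outline of the argument carried out in those references --- decomposing $G_{\xi h}=\tilde H_\xi+R_\xi$, using Young's inequality for the $\mathcal L^1_\lambda$ part, Gaussian convolution against the boundary-layer weights for the $\mathcal L^\infty$ parts, and handling the conormal derivatives by trading $\partial_z$ for $\partial_y$ and inducting on $j$ --- so there is no substantive discrepancy to discuss.
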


The proof of (1) and (2) can be found in Propositions 3.7 and 3.8 of \cite{NN2018}. For the proof of (3) we refer to Lemma 12 of \cite{JK}.

The next result concerns the estimates for the trace kernel. 

\begin{lemma}\label{lem_Gt} Let $a_\xi(s)= [\p_{z}(- \Delta_\xi)^{-1} g_\xi ] \, |_{z=0}$. Then for any $0\leq s<t\leq T$ and $k\geq 0$, we have the following
\begin{align}
\sum_{j=0}^k \left\|  (\zeta(z)\p_{z})^j  G_{\xi h }(t-s,z, 0 ) a_\xi (s) \right\|_{\mathcal{L}^1_\lambda} &\lesssim  \left\|  g_\xi \right\|_{\mathcal{L}^1_\lambda}, \label{trace1} \\
\sum_{j=0}^k \left\|  (\zeta(z)\p_{z})^j  G_{\xi  h}(t-s,z, 0 ) a_\xi (s)  \right\|_{\mathcal{L}^\infty_{\lambda,\kappa}}  &\lesssim \frac{1}{\sqrt{t-s}} \left\| g_\xi \right\|_{\mathcal{L}^1_\lambda}. %+  \sum_{j=0}^k \left\|  (\zeta(z)\p_{z})^j g_\xi \right\|_{L^\infty_{\lambda,\kappa}}
\label{trace2}
\end{align}
\end{lemma}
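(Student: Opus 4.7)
The plan is to factor the product $G_{\xi h}(t-s,z,0)\,a_\xi(s)$ and estimate the trace value $a_\xi(s)$ and the boundary-evaluated Green kernel separately. First, I would compute $a_\xi(s)$ using the explicit one-dimensional Poisson Green function for $-\Delta_\xi=|\xi|^2-\p_z^2$ with zero Dirichlet data at $z=0$, namely
\[
(-\Delta_\xi)^{-1}g_\xi(z)=\int_0^\infty \frac{1}{2|\xi|}\bigl(e^{-|\xi|\,|z-y|}-e^{-|\xi|(z+y)}\bigr)g_\xi(y)\,\dd y.
\]
Differentiating in $z$ and evaluating at $z=0$ yields $a_\xi(s)=\int_0^\infty e^{-|\xi|y}g_\xi(s,y)\,\dd y$, from which the pointwise (and complex-extended) bound $|a_\xi(s)|\lesssim \|g_\xi(s)\|_{\mathcal L^1_\lambda}$ follows uniformly in $\xi$. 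This reduces both estimates to proving that the kernel $(\zeta(z)\p_z)^j G_{\xi h}(t-s,z,0)$ is bounded in $\mathcal L^1_\lambda$ and $\mathcal L^\infty_{\lambda,\kappa}$ by $1$ and $1/\sqrt{t-s}$ respectively.

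Next, I would insert the splitting $G_{\xi h}=\tilde H_\xi+R_\xi$ from Lemma \ref{lem_G}(1) and work on each piece. At $y=0$ the Neumann heat kernel collapses to $\tilde H_\xi(t-s,z,0)=\tfrac{2}{\sqrt{\kappa\eta_0(t-s)}}e^{-z^2/(4\kappa\eta_0(t-s))}e^{-\kappa\eta_0|\xi|^2(t-s)}$, while $R_\xi$ is controlled by the bound \eqref{R_xi} evaluated at $y=0$. For the $\mathcal L^1_\lambda$ bound \eqref{trace1}, I would apply the conormal derivative $(\zeta(z)\p_z)^j$ and observe that each $\p_z$ generates a factor proportional to $z/(\kappa(t-s))$, which the multiplier $\zeta(z)=z/(1+z)$ keeps bounded in $z$; then the scaling $z=\sqrt{\kappa(t-s)}\,w$ reduces the $z$-integral to a Gaussian integral of a polynomial in $w$, giving a bound $\lesssim 1$. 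The $R_\xi$ contribution is handled analogously using the Gaussian/exponential decay in \eqref{R_xi}. The extension to the pencil domains $\mathcal H_\sigma$, $\sigma\le\lambda$, is provided by Lemma \ref{lem_G}(3), which yields the same bounds with $\mathrm{Re}\,z$ in place of $z$.

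For the weighted $\mathcal L^\infty_{\lambda,\kappa}$ bound \eqref{trace2}, the key observation is that the weight $1/(1+\phi_\kappa(z))$ absorbs exactly one factor of $1/\sqrt{\kappa}$ near the boundary. With the same rescaling $z=\sqrt{\kappa(t-s)}\,w$, one finds
\[
\frac{\tilde H_\xi(t-s,z,0)}{1+\phi_\kappa(z)}\lesssim \frac{e^{-w^2/4}\bigl(1+(w\sqrt{t-s})^{\mathfrak r}\bigr)}{\sqrt{t-s}},
\]
which is uniformly bounded in $w,\kappa$ and small $t-s\le T$, and therefore yields the desired $(t-s)^{-1/2}$ bound. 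Each application of $\zeta(z)\p_z$ produces additional powers of $w$ and factors $1/\sqrt{\kappa(t-s)}$ that are again tamed by the weight (near $z=0$) and the Gaussian (away from it), so the full sum $\sum_{j=0}^k$ enjoys the same bound. The residual kernel $R_\xi$ is treated the same way after noting that its conormal derivatives decay exponentially in $\mathrm{Re}\,z$ with rate at least $\tfrac{1}{\sqrt{\kappa}}$ near the boundary, which is again absorbed by $1/(1+\phi_\kappa)$.

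The main technical obstacle I anticipate is the precise bookkeeping of the conormal derivatives $(\zeta\p_z)^j$ acting on the residual kernel $R_\xi$: the cutoff scale $b=|\xi|+1/\sqrt{\kappa\eta_0}$ interacts non-trivially with the boundary layer weight $\phi_\kappa$, and one needs to verify that the $b^{k+1}e^{-\theta_0 b(\mathrm{Re}\,z)}$ term produces both the right $\sqrt{t-s}^{\,-1}$ pickup (via $|\xi|\le b$ and Gaussian-in-time decay of $e^{-\kappa\eta_0|\xi|^2 t/8}$) and fits into the uniform $L^1$ bound. Once this is done term-by-term, the stated estimates follow by combining the bound on $a_\xi(s)$ with the kernel estimates.
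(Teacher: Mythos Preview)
The paper does not give its own proof here; it simply refers to Lemma~13 of \cite{JK}. Your sketch is precisely the natural route that such a proof takes: bound the trace $a_\xi(s)$ by $\|g_\xi\|_{\mathcal L^1_\lambda}$ via the explicit Poisson kernel $e^{-|\xi|y}$, then estimate $(\zeta\p_z)^j G_{\xi h}(t-s,z,0)$ in $\mathcal L^1_\lambda$ and $\mathcal L^\infty_{\lambda,\kappa}$ using the decomposition $G_{\xi h}=\tilde H_\xi+R_\xi$ of Lemma~\ref{lem_G} and the rescaling $z=\sqrt{\kappa(t-s)}\,w$. This matches the approach in \cite{JK} and \cite{NN2018}.

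One remark on the obstacle you flag: for the first piece $b^{k+1}e^{-\theta_0 b\,\mathrm{Re}\,z}$ of the residual bound \eqref{R_xi}, the stated estimate in Lemma~\ref{lem_G} suppresses a time-decay factor of the form $e^{-c\,b^2\kappa(t-s)}$ that is in fact present in the sharper versions of the bound (cf.\ Proposition~3.3 in \cite{NN2018}). With that factor restored, the quantity $\sqrt{\kappa}\,b\,e^{-c\,b^2\kappa(t-s)}$ is controlled by $\tfrac{1}{\sqrt{t-s}}\sup_{x\ge 0}x\,e^{-cx^2}\lesssim \tfrac{1}{\sqrt{t-s}}$ after setting $x=b\sqrt{\kappa(t-s)}$, which is exactly the mechanism you are looking for to make the $R_\xi$ contribution uniform in $\xi$. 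Once you insert this refinement, your bookkeeping for the conormal derivatives goes through as written.
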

We refer the proof to Lemma 13 of \cite{JK}.

%\begin{proof} Note that from \eqref{G_xi}, \eqref{H_xi} and \eqref{R_xi}, the conormal derivatives $(\zeta(z)\p_z)^j$ of  $G_{\xi h}(t-s,z,0)$ enjoy the same bounds as $G_{\xi  h}(t-s,z,0)$: for some small constant $c_0$,
%\Be\label{est:Con_G}
%| (\zeta(z)\p_{z})^j  G_{\xi  }(t-s,z, 0 ) | \lesssim b e^{-c_0 b z} + \frac{1}{\sqrt{\kappa(t-s)}} e^{-c_0 \frac{|z|^2}{\kappa (t-s)}}.
%\Ee
%Therefore, it suffices to show the bounds for $k=0$. 
%We first recall the representation formula for $a_\xi$ 
%(cf. (4.29) of \cite{KVW} or (4.2) of \cite{FW}): 
%\Be\notag
%a_\xi(s) = \int_0^\infty e^{-|\xi| y} g_\xi (y) \dd y ,
%\Ee
%from which we have $\|a_\xi\|_{\mathcal{L}^\infty_\lambda} \lesssim  \left\|  g_\xi \right\|_{\mathcal{L}^1_\lambda}$. Since the above upper bound of $G(t-s,z,0)$ is integrable in $z$,  \eqref{trace1} follows. To show \eqref{trace2}, we compute $\| G_{\xi h}(t-s,z, 0 ) \|_{\mathcal{L}^\infty_{\lambda,\kappa}}$: 
%%{\color{red}\texttt{...below $\beta$ to $\bar\alpha$ ?...}}
%\begin{align*}
%\| G_{\xi h}(t-s,z, 0 ) \|_{\mathcal{L}^\infty_{\lambda,\kappa}} \lesssim  \sup_z \left[\frac{ be^{(\bar\alpha-c_0 b) z}}{1+\phi_\kappa(z)}\right]   +  \frac{1}{\sqrt{t-s}} \sup_z\left[ \frac{e^{ \bar\alpha z-c_0 \frac{|z|^2}{\kappa (t-s)}}}{\sqrt\kappa+\sqrt\kappa\phi_\kappa(z)}  \right].
%\end{align*}
%It is a routine to check that both supremum norms are uniformly bounded in $\kappa$ and $|\xi|$. Therefore \eqref{trace2} is obtained. 
%\end{proof}

%\bigskip 

 \subsection{Proof of Theorem \ref{thm_bound}} 
Our goal is to show that $\o(t)$ indeed belongs to $C^1([0,T];\mathfrak B^{\lambda, \kappa})$ without the initial layer under the compatibility condition \eqref{CC}, and that $\p_t^2\o$ in $\mathfrak B^{\lambda, \kappa t}$ with the initial layer.
And $\theta(t)$ belongs to $C^2([0,T]; \mathfrak B^{\lambda, 0})$ under the compatibility condition \eqref{CC}. The existence of $\o(t)$ and $\theta(t)$ in their corresponding spaces under the assumption of Theorem \ref{thm_bound} can be proved by following the argument of \cite{NN2018} and \cite{FW}. For the 2D case, Theorem 1.1 of \cite{NN2018} indeed ensures the existence of $\o(t)$ in $C^1([0,T];\mathfrak B^{\lambda, \kappa t})$ under the assumption of Theorem \ref{thm_bound}.  Such a result 
follows from Lemma \ref{lem_embedding}, Lemma \ref{lem_G},  Lemma \ref{lem_Gc},  Lemma \ref{lem_elliptic},  Lemma \ref{lem_bilinear}.  A 3D result can be obtained analogously. Hence, it suffices to show the propagation of the analytic norms in \eqref{norm_bound}. 

The propagation estimate for the vorticity $\o$ is analogous to %is similar to
 \cite{JK}; we provide steps for reader's convenience and refer to \cite{JK} for the detailed proof of some estimates when the same proof holds. The detailed proof will be given for the new estimate for the temperature fluctuation $\theta(t)$.  

\

{\bf Step 1: Propagation of analytic norms for $\o$.} It is convenient to define  
\Be\label{|||}
 \vertiii{\o(t)}_t:=  \vertiii{\o(t)}_{\infty, \kappa } + \vertiii{\o(t)}_{1}.
\Ee 
From the nonlinear iteration using the representation formula \eqref{o_xi}, by using Lemma \ref{lem_embedding} and Lemma \ref{lem_est:N},  we obtain
for sufficiently large $\gamma_0$
\Be\label{est:|||_t}
\sup_{0 \leq t <\frac{\lambda_0}{2\gamma_0}}\vertiii{ \o(t)}_{t}\lesssim  \sum_{0 \leq  |\beta|  \leq 2} \| D^\beta\o_{0} \|_{\infty, \lambda_0, \kappa}+  \sum_{0 \leq |\beta|\leq 2} \| D^\beta (1+ |\nabla_h|) 
  \o_{0} \|_{1, \lambda_0 }.
\Ee
We refer to \cite{JK} for the detailed proof.

 \

{\bf Step 2: Propagation of analytic norms for $\p_t\o$.}   The continuity of $\o(t)$ in $t$ follows from the mild solution form \eqref{o_xi} of $\o_\xi(t)$.  We claim that $\o(t) \in C^1([0,T]; \mathfrak B^{\lambda,\kappa})$ and moreover 
$\vertiii{ \p_t\o(t)}_{t}% + \vertiii{ \p_t\o(t)}_{\infty,\kappa} 
$ is bounded. To this end, we first derive the mild form of $\p_t\o_\xi$ from \eqref{o_xi}: 
\Be
 \begin{split}\label{o_xi2t}
 \p_t\o_\xi (t,x_3) =& {\int^\infty_0 G_\xi (t,x_3, y) \p_t \o_{0 \xi} (y) \dd y} 
 + {\int^t_0\int^\infty_0 G_\xi (t-s, x_3, y) \p_s N_\xi (s,y ) \dd y \dd s}\\
 &
- {\int^t_0  G_\xi (t-s, x_3, 0) (\p_s B_\xi (s), 0) \dd s} ,
 \end{split}
 \Ee
where we recall $\p_t \o_0$ in \eqref{idata}. To justify this formula, we first recall (\ref{eqtn:G_xi2})-(\ref{bdry:G_xi3}). We start with the horizontal part of the formula (\ref{o_xi2t}) for %the first two components for 
$\p_t\o_{\xi,h}$.  
From Lemma \ref{lem_G}, $G_{\xi h} (t,x_3,y) = H_\xi (t, x_3-y) +H_\xi (t,x_3+y)+ R_\xi (t,x_3+y)$. Then by using the fact that $H^\prime_\xi (t,  \cdot)$ is an odd function, we see 
$
\p_{x_3} G_{\xi h} (t,x_3,y)|_{x_3=0} %&= ( H^\prime_\xi (t,x_2-y_2) +H^\prime_\xi (t,y_2+x_2)+ R^\prime_\xi (t,x_2+y_2))|_{x_2=0}\\
%&=  H^\prime_\xi (t,  -y_2) +H^\prime_\xi (t,y_2 )+ R^\prime_\xi (t, y_2) \\
= R^\prime_\xi (t, y)$. 
Now we read (\ref{bdry:G_xi2}) as 
\Be\begin{split}
\label{bdry:G_xi12}
\kappa \eta_0 R^\prime_\xi (t,y ) + \kappa\eta_0 |\xi| G_{\xi h} (t,0,y)=0, \quad \kappa\eta_0 R^\prime_\xi (t,x_3) + \kappa\eta_0 |\xi| G_{\xi h} (t,x_3,0)=0 ,
\end{split}\Ee
where we have used that $H_\xi (t, \cdot)$ is an even function for the second relation. On the other hand, since we also have 
$
\p_{y_3} G_{\xi h} (t,x_3,y)|_{y=0} %&= (- H^\prime_\xi (t,x_2-y_2) +H^\prime_\xi (t,y_2+x_2)+ R^\prime_\xi (t,x_2+y_2))|_{y_2=0}\\
%&=  -H^\prime_\xi (t,  x_2) +H^\prime_\xi (t,x_2 )+ R^\prime_\xi (t, x_2) \\
= R^\prime_\xi (t, x_3)$, we deduce that 
\Be\label{bdry:G_xiy2}
\kappa\eta_0 (\p_{y_3}  +|\xi|) G_{\xi h} (t,x_3,y_3) =0, \ \ y_3=0.
\Ee
It is straightforward to see $\Delta_\xi G_{\xi h} = \p_{x_3}^2G_{\xi h} - |\xi|^2G_{\xi h} = \p_y^2 G_{\xi h} - |\xi|^2 G_{\xi h}.$  

We now take $\p_t$ of \eqref{o_xi} to obtain%Note that the formal computation below is justified from the regularity and decay property of $G_\xi(t,\cdot,\cdot)$ for $t>0$. 
\begin{align*}
& \p_t \int^\infty_0 G_{\xi h} (t,x_3,y)\o_{0\xi,h}(y) \dd y = \int^\infty_0 \p_t G_{\xi, h} (t,x_3,y)\o_{0\xi, h}(y) \dd y \\
% &\quad= \int^\infty_0 \kappa\eta_0 (\p_{y}^2 - |\xi|^2) G_{\xi h} (t,x_3,y)\o_{0\xi,h}(y) \dd y \\
% &\quad= \big[ \kappa\eta_0 \p_y G_{\xi h} (t,x_3,y)\o_{0\xi, h}(y) \big]_{y=0}^{y=\infty}
% -  \int^\infty_0 \kappa\eta_0  |\xi|^2  G_{\xi h} (t,x_3,y)\o_{0\xi, h }(y) \dd y\\
% &\quad \ \ \ 
% -  \int^\infty_0 \kappa \eta_0  \p_y G_{\xi h} (t,x_3,y)\p_y\o_{0\xi, h}(y) \dd y
% \\
%  &\quad= \big[ \kappa\eta_0 \p_y G_{\xi h} (t,x_3,y)\o_{0\xi, h }(y) \big]_{y=0}^{y=\infty}  -  
% \big[
%  \kappa\eta_0    G_{\xi h} (t,x_3,y)\p_y\o_{0\xi, h}(y) 
% \big]_{y=0}^{y=\infty} \\
% &\quad \ \ \ 
%+ \int^\infty_0   G_{\xi h} (t,x_3,y)\kappa \eta_0 \Delta_\xi \o_{0\xi, h}(y) \dd y\\
& \quad =- \kappa\eta_0 \p_y G_{\xi h} (t,x_3,0)\o_{0\xi,h}(0)  +  
  \kappa\eta_0    G_{\xi h} (t,x_3,0)\p_y\o_{0\xi,h}(0)  \\
  & \quad \ \ \ \ 
+ \int^\infty_0   G_{\xi h} (t,x_3,y)\kappa\eta_0  \Delta_{\xi h} \o_{0\xi,h}(y) \dd y,
\end{align*}
and 
\begin{align*}
\p_t  {\int^t_0\int^\infty_0 G_{\xi h} (t-s, x_3, y) N_{\xi, h} (s,y ) \dd y \dd s}
  &=\int_0^\infty G_{\xi h} (t,x_3,y)  N_{\xi,h} (0,y)\dd y\\
  & \ \ \ 
+ \int^t_0 \int^\infty_0
G_{\xi h} (s,x_3, y) \p_t N_{\xi, h} (t-s,y)
 \dd y \dd s,\\
 \p_t  {\int^t_0  G_{\xi h} (t-s, x_3, 0) B_\xi (s) \dd s} &=
 G_{\xi h} (t,x_3,0) B_\xi (0)
+\int^t_0 G_{\xi h} (t-s,x_3,0) \p_s B_\xi( s) \dd s.
\end{align*}
Therefore we obtain 
\Be\begin{split}
\p_t \o_{\xi, h} (t,x_3) =&{- \kappa\eta_0 \p_y G_{\xi h} (t,x_3,0)\o_{0\xi,h}(0)  +  
  \kappa\eta_0    G_{\xi h} (t,x_3,0)\p_y\o_{0\xi, h}(0)  - G_{\xi h} (t,x_3,0) B_\xi (0)} \\
  &+ \int^\infty_0   G_{\xi h} (t,x_3,y) \{ \kappa \eta_0\Delta_\xi \o_{0\xi,h}(y)
+ N_{\xi, h} (0,y) \}
 \dd y\\
 &+ \int^t_0 \int^\infty_0
G_{\xi h} (t-s,x_3, y) \p_s N_{\xi, h} (s,y) 
 \dd y \dd s - \int^t_0 G_{\xi h} (t-s,x_3,0) \p_s B_\xi(s) \dd s\label{pt_o2}.
 \end{split}
\Ee
Next we show that the first line in the right-hand side is 0. From (\ref{bdry:G_xiy2})
\begin{align*} 
- \kappa\eta_0 \p_y G_{\xi h} (t,x_3,0)\o_{0\xi, h}(0)  +  
  \kappa\eta_0    G_{\xi h} (t,x_3,0)\p_y\o_{0\xi,h}(0)=G_{\xi h} (t,x_3,0) \kappa\eta_0(|\xi | +  \p_y) \o_{0\xi, h}(0),%=G_\xi (t,x_2,0) B_\xi(0).
\end{align*}
and hence the first line of \eqref{pt_o2} reads 
\Be\label{initial_layer0}
G_{\xi h} (t,x_3,0) \left[ \kappa\eta_0(|\xi | +  \p_{x_3}) \o_{0\xi, h}(0) - B_\xi (0) \right]  , 
\Ee
which is zero due to the first compatibility condition of \eqref{CC}. Recalling $\p_t \o_0$ in \eqref{idata}, the formula \eqref{o_xi2t} for $\p_t\o_{\xi,h}$ has been established. We may follow the same procedure to verify the vertical part of the formula \eqref{o_xi2t} for $\p_t\o_{\xi,3}$ by noting  that the second compatibility condition of \eqref{CC} removes the term $- \kappa\eta_0 \p_y G_{\xi 3} (t,x_3,0)\o_{0\xi,3}(0)$ which would create the initial layer otherwise because $ \p_y G_{\xi 3} (t,x_3,0)$ does not vanish.  

We may now repeat Step 1 for $\p_t\o$ using the representation formula \eqref{o_xi2t}. The estimates are obtained in the same fashion. For the nonlinear terms, since $\p_t N = - u \cdot \nabla \p_t \o - \p_t u\cdot \nabla \o
+ \o \cdot \nabla \p_t  u+\p_t \o  \cdot \nabla  u
$, the structure of $\p_t N$ with respect to $\p_t\o$ is consistent  with the one of $N$ with respect to $\o$ and we can use the bilinear estimates \eqref{nonlinear1} and \eqref{nonlinear2}. In summary, one can derive that for $t < \frac{\lambda_0}{2 \gamma_0}$
\begin{align}
\vertiii{ \p_t\o(t)}_{1} &\lesssim \sum_{0\leq |\beta|\leq 2} \| D^\beta (1+|\nabla_h|) \p_t \o_0  \|_{1, \lambda_0}  
 +( t+  \frac{1}{\gamma_0}) \sup_{0 \leq s \leq t}\vertiii{ \o(s)}_{s} \sup_{0 \leq s \leq t} \vertiii{\p_t \o(s)}_s,
 \label{est:t_1}
 \\
\vertiii{ \p_t\o(t)}_{\infty,\kappa} &\lesssim \sum_{0\leq|\beta|\leq 2} \| D^\beta \p_t \o_{0} \|_{\infty, \lambda_0, \kappa} +( \sqrt{t}+  \frac{1}{\gamma_0}) \sup_{0 \leq s \leq t}\vertiii{ \o(s)}_{s} \sup_{0 \leq s \leq t} \vertiii{\p_t \o(s)}_s, \label{est:t_infty}
\end{align}
which lead to the desired bounds for $\p_t\o(t)$ by choosing sufficiently large $\gamma_0$.  

\hide
\begin{align}
\vertiii{ \p_t\o(t)}_{1} &\lesssim \sum_{0\leq i+j\leq 2} \| \p_{x_1}^i (\zeta(z)\p_{z})^j \o_{1\xi} \|_{1, \lambda_0}  
 + \frac{1}{\gamma_0}\vertiii{ \o(t)}_{1} \vertiii{\p_t \o(t)}_1\\
\vertiii{ \p_t\o(t)}_{\infty,\kappa} &\lesssim \sum_{0\leq i+j\leq 2} \| \p_{x_1}^i (\zeta(z)\p_{z})^j \o_{1\xi} \|_{\infty, \lambda_0, \kappa}  + \vertiii{ \o(t)}_{1} \vertiii{\p_t \o(t)}_1 
 + \frac{1}{\gamma_0}\vertiii{ \o(t)}_{\infty,\kappa}\vertiii{ \p_t\o(t)}_{\infty,\kappa}
\end{align}\unhide
 
 \

 {\bf Step 3: Propagation of analytic norms for $\p_t^2\o_\xi$.}  As a consequence of Step 2, $\p_t\o_\xi(t,x_3)$ solves the following system
  \begin{align}
\p_t^2 \o_\xi - \kappa \eta_0 \Delta_\xi \p_t \o_\xi =\p_t N_\xi  \quad &\text{in }\mathbb R_+, \label{NS_f1} \\
\kappa \eta_0 (\p_{x_3} + |\xi| )\p_t\o_{\xi,h}   = \p_tB_\xi \quad &\text{on } x_3=0, \label{NSB_f1}\\
\p_t \o_{\xi, 3}  = 0 \quad &\text{on } x_3=0, \label{NSB_f3}
\end{align} 
with $\p_t\o_\xi|_{t=0} = \p_t \o_{0 \xi}$ for $\xi\in \mathbb Z^2$ where $\p_t \o_0$ is defined in (\ref{idata}). Then as done in Step 2, by using the properties of $G_\xi$ and  integration by parts and by the last compatibility condition of \eqref{CC}, we can derive the representation formula for $\p_t^2\o$: 
 \Be
 \begin{split}\label{o_xi2tt}
& \p_t^2\o_\xi (t,x_3) =( G_{\xi h} (t,x_3,0) \left[ \kappa\eta_0(|\xi | +  \p_{x_3})  \p_t \o_{ 0\xi, h}(0) - \p_tB_\xi (0) \right] , 0 )  \\
&\quad+  {\int^\infty_0 G_\xi (t,x_3, y) \p_t^2 \o_{0\xi} (y) \dd y} 
 + {\int^t_0\int^\infty_0 G_\xi (t-s, x_3, y) \p_s^2 N_\xi (s,y ) \dd y \dd s}\\
 &\quad 
- {\int^t_0  G_\xi (t-s, x_3, 0)( \p_s^2  B_\xi (s),0) \dd s} ,
 \end{split}
 \Ee
where we recall $\p_t^2\o_0$ in \eqref{idata}. 

Similar to the estimate in Step 2, the ${L}^1$-based analytic norm is easily obtained as 
\Be\label{est:tt_1}
\begin{split}
&\vertiii{\p_t^2 \o (t)} _1\\&\lesssim
 \kappa \eta_0
\| (1+ |\nabla_h|^3) \nabla \p_t \o_{0 } \|_{1, \lambda} + 
\| (1+|\nabla_h|^4) \p_t \o_{0} \|_{1, \lambda}
\sum_{0 \leq |\beta| \leq1} \| D^\beta (1+ |\nabla_h|^3) \p_t \o_{0}\|_{1, \lambda}
 \\
& \ \ 
 +  \sum_{0\leq |\beta|\leq 2} \| D^\beta (1+|\nabla_h|) \p^2_t \o_0  \|_{1, \lambda_0}  
 +( t+  \frac{1}{\gamma_0})
  \sup_{0 \leq s \leq t}\vertiii{ \o(s)}_{s} \sup_{0 \leq s \leq t} \vertiii{\p_t^2 \o(s)}_s
 \\
 &\ \  +( t+  \frac{1}{\gamma_0})
  \sup_{0 \leq s \leq t}\vertiii{ \p_t \o(s)}_{s}  ^2.
\end{split}
\Ee

For the $L^\infty$-based analytic norm bound, as we do not require higher order compatibility condition for the horizontal vorticity, a new term representing the initial-boundary layer emerges and we obtain
\Be\label{est:tt_infty}
\begin{split}
&\vertiii{\p_t^2 \o (t)} _{\infty, \kappa t}\\
&\lesssim
\kappa \eta_0 \sum_{0 \leq |\beta| \leq 2} \| \nabla_h^\beta \nabla \p_t \o_{0} \|_{\infty, \lambda } + \| (1+|\nabla_h|^3) \p_t \o_{0} \|_{1,\lambda}
\sum_{0 \leq |\beta| \leq1} \| D^\beta (1+ |\nabla_h|^2) \p_t \o_{0}\|_{1,\lambda}
 \\
& \ \ 
 +  \sum_{0\leq |\beta|\leq 2} \| D^\beta  \p^2_t \o_0  \|_{\infty, \lambda_0, \kappa t}  
 +( \sqrt{t}+  \frac{1}{\gamma_0})
  \sup_{0 \leq s \leq t}\vertiii{ \o(s)}_{s} \sup_{0 \leq s \leq t} \vertiii{\p_t^2 \o(s)}_s
 \\
 & \ \  +(\sqrt{ t}+  \frac{1}{\gamma_0})
  \sup_{0 \leq s \leq t}\vertiii{ \p_t \o(s)}_{s}  ^2.
\end{split}
\Ee
We refer the detailed proof to \cite{JK}.

Finally combining (\ref{est:tt_1}) and (\ref{est:tt_infty}) and then choosing sufficiently large $\gamma_0$ we derive a desired estimate for $\vertiii{\p_t^2 \o(t)}_t$ for $t \in (0, \frac{\lambda_0}{2\gamma_0})$.

Altogether from (\ref{est:|||_t}), (\ref{est:t_1}), (\ref{est:t_infty}), (\ref{est:tt_1}), and (\ref{est:tt_infty}), we finish the proof of the estimate (\ref{norm_bound}).

  \
  
   {\bf Step 4: Propagation of analytic ${\vertiii \cdot }_t$ norms for $\p_t^\ell \theta$, $\ell = 0,1,2$. }

Recall the norm in \eqref{normtheta}. We define
\Be
\vertiii{ \theta(t)}_t := \vertiii{\theta(t) }_{\infty,0} + \vertiii{ \theta(t)}_1.
\Ee

Recall \eqref{thetaphi}. We now follow the argument in Step 1, from (\ref{est:L1G1}), (\ref{est:L1G2}), and Lemma \ref{lem_est:N} we have
 \Be\label{theta1_1}
 \begin{split}
& \sum_{0 \leq |\beta| \leq 2} \| D^\beta (1+ |\nabla_h|) \theta (s)\|_{1,\lambda}
\\ & \lesssim 
\sum_{0 \leq |\beta| \leq 2} \| D^\beta (1+ |\nabla_h|) \theta_0\|_{1,\lambda} + \int^t_0 \sum_{0 \leq |\beta| \leq 2} \| D^\beta (1+ |\nabla_h|) M(s) \|_{1,\lambda} \dd s .
\end{split}
 \Ee
Similar to the computation in Lemma \ref{lem_est:N}, and using Lemma \ref{lem_bilinear}  we bound the nonlinear $M$ term as
\Be \begin{split}
&  \sum_{0 \le | \beta | \le 1 } \| D^\beta ( 1 + | \nabla_h | ) M (s) \|_{1, \lambda} 
 \\ & \lesssim \sum_{0 \le | \beta | \le 1 } \left( \| \nabla_h ( 1 + | \nabla_h | ) \theta \|_{1,\lambda} \| D^\beta ( 1 + | \nabla_h | ) u_h \|_{\infty, \lambda} + \| \nabla_h D^\beta ( 1 + | \nabla_h | ) \theta \|_{1,\lambda} \| ( 1 + | \nabla_h | ) u_h \|_{\infty, \lambda}  \right.
\\   &  \left. \quad \quad \quad   +   \| \zeta \p_z ( 1 + | \nabla_h | ) \theta \|_{1, \lambda} \| D^\beta ( 1 + | \nabla_h | ) \frac{u_3}{\zeta} \|_{\infty,\lambda}  + \| D^\beta \zeta \p_z ( 1 + | \nabla_h | ) \theta \|_{1,\lambda} \| ( 1 + | \nabla_h | ) \frac{u_3}{\zeta} \|_{\infty, \lambda}  \right).
\end{split} \Ee
Using Lemma \ref{lem_elliptic} and the definition of $\vertiii{ \cdot }$ we have
\Be \label{theta1_1M}
 \begin{split}
\int_0^t  \sum_{0 \le | \beta | \le 1 } \| D^\beta ( 1 + | \nabla_h | ) M (s) \|_{1, \lambda}  ds \lesssim &  \int_0^t \vertiii{ \theta(s) }_s \vertiii{ \o (s) }_s ( 1 + (\lambda_0 -\lambda - \gamma_0 s )^{-\alpha} ) ds 
\\ \lesssim & \left( t + \frac{1}{\gamma_0} \right) \sup_{0 \le s \le t }  \vertiii{ \theta(s) }_s   \sup_{0 \le s \le t } \vertiii{ \o (s) }_s.
\end{split} \Ee

Next, by applying the analyticity recovery estimate $(3)$ of Lemma \ref{lem_embedding}, and choosing $\tilde \lambda = \frac{ \lambda + \lambda_0 - \gamma_0 s}{2} $ we get for $|\beta| =2$,
 \Be \label{thetarecovery_1}
 \begin{split}
\sum_{|\beta|=2}\| D^\beta (1+ |\nabla_h|) M (s) \|_{1,\lambda} \lesssim & \frac{1}{\tilde\lambda-\lambda} \sum_{0 \leq |\beta|\leq1}\| D^\beta (1+ |\nabla_h|)  M (s) \|_{1,\tilde\lambda} 
\\ \lesssim &\left( 1 + ( \lambda_0 - \lambda - \gamma_0 s )^{-(\alpha + 1 ) }  \right)  \vertiii{ \theta(s) }_s \vertiii{ \o (s) }_s ,
\end{split} 
\Ee

Therefore we derive that for $t< \frac{\lambda_0}{2\gamma_0}$ and $\lambda< \lambda_0 - \gamma_0 t$
\Be\label{theta1_2}
 \begin{split}
\int_0^t \sum_{|\beta|=2}\| D^\beta (1+ |\nabla_h|) M (s) \|_{1,\lambda} ds \lesssim &  \int_0^t  \left( 1 + ( \lambda_0 - \lambda - \gamma_0 s )^{-(\alpha + 1 ) }  \right)  \vertiii{ \theta(s) }_s \vertiii{ \o (s) }_s  ds
\\ \lesssim &   \left(  ( \lambda_0 - \lambda - \gamma_0 t )^{- \alpha }  \frac{1}{\gamma_0} + t  \right) \sup_{0 \le s \le t}  \vertiii{ \theta(s) }_s \sup_{0 \le s \le t } \vertiii{ \o (s) }_s  ds .
\end{split}
\Ee

Therefore, we conclude that, from (\ref{theta1_1}) with (\ref{theta1_1M}), and (\ref{theta1_2}), for $ t< \frac{\lambda_0}{2 \gamma_0}$,
\Be\label{thetaest:1}
 \vertiii{\theta (t)}_1\lesssim \sum_{0 \leq |\beta|\leq 2} \| D^\beta (1+ |\nabla_h|) 
  \theta_{0} \|_{1, \lambda_0 } + (t+ \frac{1}{\gamma_0}) \sup_{0 \leq s \leq t} \vertiii{\theta(s)}_s  \sup_{0 \leq s \leq t}\vertiii{ \o(s)}_s . 
\Ee

The propagation of the $L^\infty$-based norm $\vertiii{\theta(t) }_\infty $ can be shown analogously. Again follow the argument in Step 1, from (\ref{est:LinftyG1}), (\ref{est:LinftyG2}), and Lemma \ref{lem_est:N} we have
 \Be\label{theta1_12}
 \begin{split}
  \sum_{0 \leq |\beta| \leq 2} \| D^\beta \theta (s)\|_{\infty,\lambda,0}
  \lesssim 
\sum_{0 \leq |\beta| \leq 2} \| D^\beta  \theta_0\|_{\infty,\lambda,0} + \int^t_0 \sum_{0 \leq |\beta| \leq 2} \| D^\beta  M(s) \|_{\infty,\lambda,0} \dd s .
\end{split}
 \Ee
Similar to the computation in Lemma \ref{lem_est:N}, and using Lemma \ref{lem_bilinear}  we again bound
\Be \begin{split}
   \sum_{0 \le | \beta | \le 1 } \| D^\beta  M (s) \|_{\infty,\lambda,0} 
  & \lesssim \sum_{0 \le | \beta | \le 1 } \left( \| \nabla_h \theta \|_{\infty,\lambda,0} \| D^\beta u_h \|_{\infty, \lambda} + \| \nabla_h D^\beta  \theta \|_{\infty,\lambda,0} \|  u_h \|_{\infty, \lambda}  \right.
\\   &  \left. \quad \quad \quad   +   \| \zeta \p_z  \theta \|_{\infty,\lambda,0} \| D^\beta \frac{u_3}{\zeta} \|_{\infty,\lambda}  + \| D^\beta \zeta \p_z \theta \|_{\infty,\lambda,0} \|  \frac{u_3}{\zeta} \|_{\infty, \lambda}  \right).
\end{split} \Ee
Using Lemma \ref{lem_elliptic} and the definition of $\vertiii{ \cdot }$ we have
\Be \label{theta1_1M2}
 \begin{split}
\int_0^t  \sum_{0 \le | \beta | \le 1 } \| D^\beta M (s) \|_{\infty,\lambda,0}  ds \lesssim &  \int_0^t \vertiii{ \theta(s) }_s \vertiii{ \o (s) }_s ( 1 + (\lambda_0 -\lambda - \gamma_0 s )^{-\alpha} ) ds 
\\ \lesssim & \left( t + \frac{1}{\gamma_0} \right) \sup_{0 \le s \le t }  \vertiii{ \theta(s) }_s   \sup_{0 \le s \le t } \vertiii{ \o (s) }_s.
\end{split} \Ee
 
Next, again by applying the analyticity recovery estimate $(3)$ of Lemma \ref{lem_embedding}, and choosing $\tilde \lambda = \frac{ \lambda + \lambda_0 - \gamma_0 s}{2} $ we get for $|\beta| =2$,
 \Be \label{thetarecovery_1}
 \begin{split}
\sum_{|\beta|=2}\| D^\beta  M (s) \|_{\infty,\lambda,0} \lesssim & \frac{1}{\tilde\lambda-\lambda} \sum_{0 \leq |\beta|\leq1}\| D^\beta  M (s) \|_{\infty,\tilde \lambda,0} 
\\ \lesssim &\left( 1 + ( \lambda_0 - \lambda - \gamma_0 s )^{-(\alpha + 1 ) }  \right)  \vertiii{ \theta(s) }_s \vertiii{ \o (s) }_s ,
\end{split} 
\Ee

Therefore we derive that for $t< \frac{\lambda_0}{2\gamma_0}$ and $\lambda< \lambda_0 - \gamma_0 t$
\Be\label{theta1_22}
 \begin{split}
\int_0^t \sum_{|\beta|=2}\| D^\beta  M (s) \|_{\infty,\lambda,0} ds \lesssim &  \int_0^t  \left( 1 + ( \lambda_0 - \lambda - \gamma_0 s )^{-(\alpha + 1 ) }  \right)  \vertiii{ \theta(s) }_s \vertiii{ \o (s) }_s  ds
\\ \lesssim &   \left(  ( \lambda_0 - \lambda - \gamma_0 t )^{- \alpha }  \frac{1}{\gamma_0} + t  \right) \sup_{0 \le s \le t}  \vertiii{ \theta(s) }_s \sup_{0 \le s \le t } \vertiii{ \o (s) }_s  ds .
\end{split}
\Ee

Therefore, we conclude that, from (\ref{theta1_12}) with (\ref{theta1_1M2}), and (\ref{theta1_22}), for $ t< \frac{\lambda_0}{2 \gamma_0}$,
\Be\label{thetaest:2}
 \vertiii{\theta (t)}_{\infty, 0} \lesssim \sum_{0 \leq |\beta|\leq 2} \| D^\beta  \theta_{0} \|_{\infty,\lambda_0,0 } + (t+ \frac{1}{\gamma_0}) \sup_{0 \leq s \leq t} \vertiii{\theta(s)}_s  \sup_{0 \leq s \leq t}\vertiii{ \o(s)}_s . 
\Ee
From \eqref{thetaest:1} and \eqref{thetaest:2} and a standard continuity argument we conclude for sufficiently large $\gamma_0$,
\Be\label{thetaest:|||_t}
\sup_{0 \leq t <\frac{\lambda_0}{2\gamma_0}}\vertiii{ \theta (t)}_{t}\lesssim  \sum_{0 \leq  |\beta|  \leq 2} \| D^\beta\theta_{0} \|_{\infty, \lambda_0, 0}+  \sum_{0 \leq |\beta|\leq 2} \| D^\beta (1+ |\nabla_h|) 
  \theta_{0} \|_{1, \lambda_0 }.
\Ee

\

  {\bf Step 5: Propagation of analytic ${\vertiii \cdot }_z$ norms for $\p_t^\ell \theta$, $\ell = 0,1,2$. }

Next let's investigate $\p_{x_3} \theta$. 
%Let's define a norm:
%\Be \label{px3thetanorm}
%\vertiii{\theta(t)}_z :=  \sup_{\lambda < \lambda_0 - \gamma_0 t }  \left\{ (\lambda_0-\lambda-\gamma_0 t)^\alpha   \sum_{0 \le |\beta| \le 1} \| D_h^{\beta_h} \p_{x_3} \theta \|_{ {\infty, \lambda, 0} } \right\}.
%\Ee
Taking $\p_{x_3}$ to \eqref{thetaphi} gives
\Be \label{px3thetaphi}
\p_{x_3} \theta_\xi(t,x_3) = \int_0^\infty \p_{x_3} G_{\xi3} (t,x_3, y) \theta_{0 \xi} (y) dy + \int_0^t \int_0^\infty \p_{x_3} G_{\xi3}(t-s,x_3,y) M_\xi (s,y) dyds.
\Ee
Now using $G_{\xi 3} (t,x_3,y) = H_\xi(t,x_3-y) -H_\xi(t,x_3+y)$ as in \eqref{G_xi2} (with $\eta_0$ replaced by $\eta_c$), we have $\p_{x_3} G_{\xi 3} (t,x_3,y) = - \p_y H_\xi(t,x_3-y) - \p_y H_\xi(t,x_3+y)$. From the compatibility condition \eqref{CC}, $\theta_{0\xi} (0 ) = 0$. And from \eqref{ptellMequal0}, $M_{\xi} (s,0 ) = 0 $. Thus from integration by parts we get
\Be \label{px3thetaphi2}
\begin{split}
\p_{x_3} \theta_\xi(t,x_3) = & \int_0^\infty  \left(  H_\xi(t,x_3-y) +  H_\xi(t,x_3+y) \right)  \p_y  \theta_{0 \xi} (y) dy 
\\ & + \int_0^t \int_0^\infty \left(  H_\xi(t-s,x_3-y) +  H_\xi(t-s,x_3+y) \right) \p_y M_\xi (s,y) dyds.
\end{split}
\Ee
From the inequality
\Be \label{alphaweight}
e^{-\frac{|y-z|^2}{2M\kappa(t-s)}} e^{-\bar\alpha y} = e^{-\frac{1}{2}|\frac{y-z}{\sqrt{M\kappa(t-s)}} + \bar\alpha \sqrt{M\kappa(t-s)} |^2}  e^{\frac{M}{2}\bar\alpha^2\kappa(t-s)} e^{-\bar\alpha z} \leq e^{\frac{M}{2}\bar\alpha^2\kappa(t-s)} e^{-\bar\alpha z} \lesssim e^{-\bar\alpha z},
\Ee
we bound
\Be \label{Hpytheta0xi}
\begin{split}
& \left|   \int_0^\infty  \left(  H_\xi(t,x_3-y) +  H_\xi(t,x_3+y) \right)  \p_y  \theta_{0 \xi} (y) dy  \right|
\\  \lesssim &  \int_0^\infty \frac{1} { \sqrt{\kappa t }} e^{ - \frac{ |y-z|^2}{ M \kappa  t } } e^{- \bar \alpha y } e^{\bar \alpha y}  \p_y \theta_{0 \xi} (y) dy 
\\ \lesssim & e^{-\bar \alpha x_3 }  \| \p_{x_3} \theta_{ 0 \xi} \|_{L^\infty_{\lambda, 0 }} \int_0^\infty \frac{1} { \sqrt{\kappa t }} e^{ - \frac{ |y-z|^2}{ 2 M \kappa  t } } dy \lesssim  e^{-\bar \alpha x_3 }  \| \p_{x_3} \theta_{ 0 \xi} \|_{L^\infty_{\lambda, 0 }}.
\end{split}
\Ee
And similarly
\Be \label{HpyMxi}
 \left|   \int_0^\infty  \left(  H_\xi(t-s,x_3-y) +  H_\xi(t-s,x_3+y) \right)  \p_y  M_\xi (s,y) dy  \right| \lesssim   e^{-\bar \alpha x_3 } \| \p_{x_3} M_{\xi}(s) \|_{L^\infty_{\lambda,0} } 
\Ee
Therefore from \eqref{px3thetaphi2}, \eqref{Hpytheta0xi}, \eqref{HpyMxi}, and taking summation in $\xi \in \mathbb Z^2$, we get $\| \p_{x_3} \theta(t,x_3) \|_{\infty, \lambda ,0 } \lesssim  \| \p_{x_3} \theta_{0 } \|_{\infty, \lambda, 0 } + \int_0^t  \| \p_{x_3} M_{}(s) \|_{\infty,\lambda,0} ds$. And similarly
\Be \label{px3thetaest}
\sum_{0 \le | \beta_h | \le 1 } \| D_h^{\beta_h} \p_{x_3} \theta(t,x_3) \|_{\infty, \lambda ,0 } \lesssim   \sum_{0 \le | \beta_h | \le 1 } \| D_h^{\beta_h} \p_{x_3} \theta_{0 } \|_{\infty, \lambda, 0 } + \int_0^t   \sum_{0 \le | \beta_h | \le 1 } \| D_h^{\beta_h} \p_{x_3} M_{}(s) \|_{\infty,\lambda,0} ds.
\Ee
We have
\Be \label{px3Msplit}
 \| D_h \p_{x_3} M(s) \|_{\infty, \lambda, 0 }   \le \underbrace{  \| D_h \p_{x_3} u \cdot \nabla \theta +  \p_{x_3} u \cdot D_h \nabla \theta \|_{\infty, \lambda, 0 } }_{\eqref{px3Msplit}_1} + \underbrace{ \| D_h u \cdot \nabla \p_{x_3} \theta + u \cdot D_h \nabla \p_{x_3} \theta \|_{\infty, \lambda, 0 } }_{\eqref{px3Msplit}_2},
\Ee
and we bound the two parts separately. 

From \eqref{est:elliptic}, and that $\| \nabla_h \o_h \|_{\infty, \lambda} \lesssim (1 + \frac{1}{\sqrt \kappa} ) \| \nabla_h \o_h \|_{\infty, \lambda, \kappa}  \lesssim \frac{1}{\sqrt \kappa}$, we have
\Be \label{px3M1}
 \begin{split}
%\| &  ( D_h \p_{x_3} u \cdot \nabla \theta +  \p_{x_3} u_h \cdot D_h \nabla \theta  )(s)\|_{\infty, \lambda, 0 }
\eqref{px3Msplit}_1  \lesssim 
%& \left( \sum_{0 \le | \beta_h | \le 1 } \| D_h^{\beta_h} \p_{x_3} u_h(s) \|_{\infty, \lambda} \right) \left( \sum_{0 \le |\beta_h | \le 2 }  \| D_h^{\beta_h} \theta(s) \|_{\infty, \lambda, 0 } \right) 
%\\ & +  \left( \sum_{0 \le | \beta_h | \le 1 } \| D_h^{\beta_h} \p_{x_3} u_3(s) \|_{\infty, \lambda} \right) \left(  \sum_{0 \le |\beta_h | \le 1 } \| D_h^{\beta_h} \p_{x_3} \theta(s) \|_{\infty, \lambda, 0 } \right)
%\\  \lesssim &\left( \sum_{0 \le | \beta_h | \le 1 } \| D_h^{\beta_h} \p_{x_3} u(s) \|_{\infty, \lambda} \right) \left( \sum_{0 \le |\beta_h | \le 2 }  \| D_h^{\beta_h} \theta(s) \|_{\infty, \lambda, 0 } + \sum_{0 \le |\beta_h | \le 1 } \| D_h^{\beta_h} \p_{x_3} \theta(s) \|_{\infty, \lambda, 0 } \right)
%\\ 
%\lesssim 
& \left( \sum_{ 0 \le | \beta | \le 2 } \| \nabla_h^\beta \o(s) \|_{1,\lambda}  + \sum_{0 \le |\beta_h | \le 1 } \| D_h^{\beta_h} \o_h(s) \|_{\infty,\lambda} \right)   \sup_{0 \le s \le t } \vertiii{ \theta (s) }_s 
\\ & +  \left( \sum_{ 0 \le | \beta | \le 2 } \| \nabla_h^\beta \o(s) \|_{1,\lambda}   \right)   (\lambda_0 - \lambda - \gamma_0 s )^{-\alpha}   \sup_{0 \le s \le t } \vertiii{ \theta (s) }_z
\\ \lesssim &   \frac{1}{\sqrt \kappa} \sup_{0 \le s \le t } \vertiii{ \o (s) }_s    \sup_{0 \le s \le t } \vertiii{ \theta (s) }_s  +  (\lambda_0 - \lambda - \gamma_0 s )^{-\alpha}  \sup_{0 \le s \le t } \vertiii{ \o (s) }_s    \sup_{0 \le s \le t } \vertiii{ \theta (s) }_z
\\ \lesssim &  \frac{1}{\sqrt \kappa}  +  (\lambda_0 - \lambda - \gamma_0 s )^{-\alpha}     \sup_{0 \le s \le t } \vertiii{ \theta (s) }_z,
\end{split} \Ee
where we have used \eqref{norm_bound}, \eqref{thetanorm_bound}, and from previous steps $\sup_{0 \le s \le t } \vertiii{ \o (s) }_s < \infty$, $\sup_{0 \le s \le t } \vertiii{ \theta (s) }_z < \infty$. Next for $\eqref{px3Msplit}_2$, we use \eqref{est:elliptic}, and the analytic recovery lemma (Lemma \ref{lem_embedding} $(3)$) to get
\Be \label{px3M2}
\begin{split}
%\| &  (D_h u \cdot \nabla \p_{x_3} \theta + u \cdot D_h \nabla \p_{x_3} \theta)(s) \|_{\infty, \lambda, 0 } 
\eqref{px3Msplit}_2 \lesssim &  \sum_{i=1,2} \left(  \|  (D_h u_i \p_{x_i} \p_{x_3} \theta )(s) \|_{\infty, \lambda, 0 } +  \|  ( u_i D_h \p_{x_i} \p_{x_3} \theta )(s) \|_{\infty, \lambda, 0 } \right) 
\\ &  + \| ( \zeta^{-1} D_h u_3 \zeta  \p_{x_3} \p_{x_3} \theta )(s) \|_{\infty, \lambda, 0 } + \| ( \zeta^{-1}  u_3 \zeta  \p_{x_3}  D_h \p_{x_3} \theta )(s) \|_{\infty, \lambda, 0 }
\\ \lesssim & \left(  \sum_{0 \le |\beta_h | \le 1}  \|  D_h^{\beta_h} u_h (s) \|_{\infty, \lambda }  \right) \left( \sum_{ 0 \le |\beta_h | \le 1 } \|  \nabla_h D_h^{\beta_h }   \p_{x_3} \theta (s) \|_{\infty, \lambda, 0 } \right)  
\\ & + \left( \sum_{ 0 \le |\beta_h | \le 1 }   \|  \zeta^{-1}  D_h^{\beta_h } u_3 (s) \|_{\infty, \lambda } \right)  \left( \sum_{ 0 \le |\beta_h | \le 1 } \| \zeta \p_{x_3}  D_h^{\beta_h } \p_{x_3} \theta (s) \|_{\infty, \lambda, 0 } \right)
\\ \lesssim &   \left( \sum_{ 0 \le | \beta | \le 2 } \| \nabla_h^\beta \o_h(s) \|_{1,\lambda} + \sum_{ 0 \le |\beta_h | \le 1 }  \| D_h^{\beta_h} \o(s) \|_{1,\lambda} \right) \frac{1} { \tilde \lambda - \lambda }  \sum_{ 0 \le |\beta_h | \le 1 } \| D_h^{\beta_h}  \p_{x_3} \theta (s) \|_{\infty, \tilde \lambda, 0 } 
\\ \lesssim &  ( \lambda_0 - \lambda - \gamma_0 s)^{-(\alpha +1) }  \sup_{0 \le s \le t } \vertiii{ \o (s) }_s  \sup_{0 \le s \le t } \vertiii{ \theta (s) }_z 
\\  \lesssim &  ( \lambda_0 - \lambda - \gamma_0 s)^{-(\alpha +1) }  \sup_{0 \le s \le t } \vertiii{ \theta (s) }_z,
\end{split}
\Ee
where as before, we have chosen $\tilde \lambda = \frac{ \lambda + \lambda_0 -\gamma_0 s }{2}$, so $\tilde \lambda - \lambda = \frac{ \lambda_0 -\gamma_0 s - \lambda}{2} = \lambda_0 - \gamma_0s - \tilde \lambda$. 
%Thus \eqref{px3M2} gives
%\Be
%\|  (u \cdot \nabla \p_{x_3} \theta)(s) \|_{\infty, \lambda, 0 } \lesssim C ( \lambda_0 - \lambda - \gamma_0 s)^{-(\alpha +1) } \sup_{0 \le s \le t} \sup_{ \tilde \lambda < \lambda_0 - \gamma_0 s } ( \lambda_0 - \gamma_0s - \tilde \lambda)^\alpha \|  \p_{x_3} \theta (s) \|_{\infty, \tilde \lambda, 0 }.
%\Ee
%Now let $ \vertiii{ \p_{x_3} \theta (t) }_{r} : = \sup_{  \lambda < \lambda_0 - \gamma_0 t } ( \lambda_0 - \gamma_0t -  \lambda)^\alpha \|  \p_{x_3} \theta (t) \|_{\infty,  \lambda, 0 }$, 
Thus combining \eqref{px3thetaest}, \eqref{px3M1}, and \eqref{px3M2} we get
\Be \label{thetap3bd}
\begin{split}
&\sum_{0 \le | \beta_h | \le 1 } \| D_h^{\beta_h} \p_{x_3} \theta(t,x_3) \|_{\infty, \lambda ,0 }
\\  \lesssim &\sum_{0 \le | \beta_h | \le 1 } \| D_h^{\beta_h} \p_{x_3} \theta_{0 } \|_{\infty, \lambda, 0 }  + \int_0^t  \frac{1}{\sqrt \kappa} ds + \left( \int_0^t  ( \lambda_0 - \lambda - \gamma_0 s)^{-\alpha }  ds  \right)      \sup_{0 \le s \le t } \vertiii{ \theta (s) }_z
\\ &
 + \left( \int_0^t  ( \lambda_0 - \lambda - \gamma_0 s)^{-(\alpha +1) }  ds \right)    \sup_{0 \le s \le t } \vertiii{ \theta (s) }_z
\\ \lesssim &\sum_{0 \le | \beta_h | \le 1 } \| D_h^{\beta_h} \p_{x_3} \theta_{0 } \|_{\infty, \lambda, 0 } +  \frac{t}{\sqrt \kappa}   + \left( \frac{1}{\gamma_0} + ( \lambda_0 - \lambda - \gamma_0 t )^{-\alpha} \frac{1}{\gamma_0 } \right)   \sup_{0 \le s \le t } \vertiii{ \theta (s) }_z.
\end{split}
\Ee
Therefore we conclude that from \eqref{px3thetanorm}, \eqref{thetap3bd}, for sufficiently large $\gamma_0$,
%for sufficiently large $\gamma_0$,
%\Be
% \sup_{0 \le t \le \frac{\lambda_0}{2 \gamma_0}}   \vertiii{ \theta (t) }_{t} \lesssim  \| \p_{x_3} \theta_{0 } \|_{\infty, \lambda_0, 0 } < \infty.
%\Ee
%
%Therefore, we conclude that, from (\ref{theta1_12}) with (\ref{theta1_1M2}), and (\ref{theta1_22}), for 
\Be\label{thetaest:2}
\sup_{ 0\le t \le \frac{\lambda_0}{2 \gamma_0} } \vertiii{\theta (t)}_{z} \lesssim \sum_{0 \le | \beta_h | \le 1 } \| D_h^{\beta_h} \p_{x_3} \theta_{0 } \|_{\infty, \lambda_0, 0 }  +    \frac{1}{\sqrt \kappa}.
\Ee
%From \eqref{thetaest:1} and \eqref{thetaest:2} and a standard continuity argument we conclude for sufficiently large $\gamma_0$,
%\Be\label{thetaest:|||_t}
%\sup_{0 \leq t <\frac{\lambda_0}{2\gamma_0}}\vertiii{ \theta (t)}_{t}\lesssim   \sum_{0 \leq |\beta_h|\leq 2} \| D_h^{\beta_h}  \theta_{0} \|_{\infty,\lambda_0,0 } + \sum_{0 \le | \beta_h | \le 1 } \| D_h^{\beta_h} \p_{x_3} \theta_{0 } \|_{\infty, \lambda, 0 }+  \sum_{0 \leq |\beta|\leq 2} \| D^\beta (1+ |\nabla_h|) 
%  \theta_{0} \|_{1, \lambda_0 }.
%\Ee
This proves the bound for $\p_{x_3} \theta$ in \eqref{thetapx3_bound}.

Next, we look at $\p_t \theta$. For the propagation of analytic norms $\vertiii{\p_t \theta}_t$, we follow the argument in Step $2$. Taking $\p_t$ of \eqref{thetaphi} and using integration by parts, from \eqref{eqtn:G_xi2}, \eqref{bdry:G_xi3}, we achieve
\Be
\begin{split}
\p_t \theta_\xi(t,x_3) = & \kappa \eta_c \p_y G_{\xi 3 } (t,x_3, 0 ) \theta_{0 \xi } (0) +  \int_0^\infty G_{\xi 3 }  (t,x_3,y) \left(  \kappa \eta_c ( - |\xi|^2 + \p_y^2) \theta_{0\xi} + M_\xi (0,y) \right) dy
\\ & + \int_0^t \int_0^\infty G_{\xi 3 } (t-s, x_3 , y) \p_s M_\xi(s,y) dy ds.
\end{split}
\Ee
Recalling $\p_t \theta_{0}$ in \eqref{idata}, and the compatibility condition \eqref{CC}. we get
\Be \label{pttheta}
\p_t \theta_\xi(t,x_3) =   \int_0^\infty G_{\xi 3 }  (t,x_3,y)  \p_t \theta_{0 \xi} dy  + \int_0^t \int_0^\infty G_{\xi 3 } (t-s, x_3 , y) \p_s M_\xi(s,y) dy ds.
\Ee
With this representation formula we can repeat the estimate earlier in this step in the same fashion. For the nonlinear terms, since $\p_t M = - u \cdot \nabla \p_t \theta - \p_t u  \cdot \nabla \theta$, we can use the bilinear estimates \eqref{nonlinear1}, \eqref{px3M1}, and \eqref{px3M2} in the same way to derive that for $t < \frac{\lambda_0}{2 \gamma_0}$
\begin{align}
 \notag \vertiii{ \p_t\theta(t)}_{1} \lesssim & \sum_{0\leq |\beta|\leq 2} \| D^\beta (1+|\nabla_h|) \p_t \theta_0  \|_{1, \lambda_0}  
\\ &  +( t+  \frac{1}{\gamma_0}) \left(  \sup_{0 \leq s \leq t}\vertiii{ \o(s)}_{s} \sup_{0 \leq s \leq t} \vertiii{\p_t \theta(s)}_s  +  \sup_{0 \leq s \leq t}\vertiii{ \p_t \o(s)}_{s} \sup_{0 \leq s \leq t} \vertiii{ \theta(s)}_s  \right) ,
 \label{thetaest:t_1}
 \\
\notag \vertiii{ \p_t\theta(t)}_{\infty,0} \lesssim 
%&  \sum_{0 \leq |\beta_h|\leq 2} \| D_h^{\beta_h}  \theta_{0} \|_{\infty,\lambda_0,0 } + \sum_{0 \le | \beta_h | \le 1 } \| D_h^{\beta_h} \p_{x_3} \theta_{0 } \|_{\infty, \lambda, 0 }
&  \sum_{0\leq|\beta|\leq 2} \| D^\beta \p_t \theta_{0} \|_{\infty, \lambda_0, 0} 
 \\ & +( t+  \frac{1}{\gamma_0}) \left(\sum_{\ell = 0,1}  \sup_{0 \leq s \leq t} \vertiii{ \p_t^\ell \o(s)}_{s} \right) \left(\sum_{\ell = 0,1}  \sup_{0 \leq s \leq t} \vertiii{\p_t^\ell \theta(s)}_s  \right),
 \label{thetaest:t_infty}
\end{align}
which lead to the desired bounds for $\p_t\theta(t)$ in \eqref{thetanorm_bound} by choosing sufficiently large $\gamma_0$.

For the propagation of the norm $\vertiii{\p_t \theta }_z$, we follow the argument in the estimate of $\vertiii{\theta }_z$. Taking $\p_{x_3}$ to \eqref{pttheta} and from \eqref{CC}, \eqref{ptellMequal0}, we have $\p_t \theta_{0\xi} (0) = 0$, $\p_t M_{\xi}(0) = 0$. Thus from integration by parts we can follow the argument from \eqref{px3thetaphi}-\eqref{px3thetaest} to get
\[
\sum_{0 \le | \beta_h | \le 1 } \| D_h^{\beta_h} \p_{x_3} \p_t \theta(t,x_3) \|_{\infty, \lambda ,0 } \lesssim   \sum_{0 \le | \beta_h | \le 1 } \| D_h^{\beta_h} \p_{x_3} \p_t \theta_{0 } \|_{\infty, \lambda, 0 } + \int_0^t   \sum_{0 \le | \beta_h | \le 1 } \| D_h^{\beta_h} \p_{x_3} \p_t M_{}(s) \|_{\infty,\lambda,0} ds.
\]
Then following the argument from \eqref{px3M1}-\eqref{thetap3bd} we get
\[
\begin{split}
&\sum_{0 \le | \beta_h | \le 1 } \| D_h^{\beta_h} \p_{x_3} \p_t \theta(t,x_3) \|_{\infty, \lambda ,0 }
\\  \lesssim &\sum_{0 \le | \beta_h | \le 1 } \| D_h^{\beta_h} \p_{x_3} \p_t \theta_{0 } \|_{\infty, \lambda, 0 }  + \int_0^t  \frac{1}{\sqrt \kappa} ds \left( \sup_{0 \le s\le t }  \sum_{\ell = 0,1} \vertiii{ \p_t^\ell \o(s) }_s \right) \left( \sup_{0 \le s\le t }  \sum_{\ell = 0,1} \vertiii{ \p_t^\ell \theta(s) }_s \right)  
\\ & + \left( \int_0^t  ( \lambda_0 - \lambda - \gamma_0 s)^{-\alpha }  ds  \right)  \left( \sup_{0 \le s\le t }  \sum_{\ell = 0,1} \vertiii{ \p_t^\ell \o(s) }_s \right)  \left(     \sup_{0 \le s \le t } \sum_{\ell = 0,1} \vertiii{\p_t^\ell \theta (s) }_z \right)
\\ &
 + \left( \int_0^t  ( \lambda_0 - \lambda - \gamma_0 s)^{-(\alpha +1) }  ds \right)\left( \sup_{0 \le s\le t }  \sum_{\ell = 0,1} \vertiii{ \p_t^\ell \o(s) }_s \right)  \left(  \sum_{\ell=0,1}   \sup_{0 \le s \le t } \vertiii{ \p_t^\ell \theta (s) }_z \right)
\\ \lesssim &\sum_{0 \le | \beta_h | \le 1 } \| D_h^{\beta_h} \p_{x_3} \p_t \theta_{0 } \|_{\infty, \lambda, 0 } +  \frac{t}{\sqrt \kappa}   + \left( \frac{1}{\gamma_0} + ( \lambda_0 - \lambda - \gamma_0 t )^{-\alpha} \frac{1}{\gamma_0 } \right)   \sup_{0 \le s \le t } \vertiii{  \p_t \theta (s) }_z.
\end{split}
\]
which lead to the desired bounds for $\p_{x_3} \p_t\theta(t)$ in \eqref{thetapx3_bound} by choosing sufficiently large $\gamma_0$.

The propagation of analytic norms for $\p_t^2 \theta$ follows in the same way. From the compatibility condition \eqref{CC} we get
\Be \label{pt2theta}
\p_t^2 \theta(t,x_3) =   \int_0^\infty G_{\xi 3 }  (t,x_3,y)  \p_t^2 \theta_{0 \xi} dy  + \int_0^t \int_0^\infty G_{\xi 3 } (t-s, x_3 , y) \p_s^2 M_\xi(s,y) dy ds.
\Ee
Using the bilinear estimate on $\p_t^2 M$ we derive that for $t < \frac{\lambda_0}{2 \gamma_0}$
\begin{align}
&\vertiii{ \p_t^2 \theta(t)}_{1} \lesssim  \sum_{0\leq |\beta|\leq 2} \| D^\beta (1+|\nabla_h|) \p_t^2 \theta_0  \|_{1, \lambda_0} +( t+  \frac{1}{\gamma_0}) \sup_{0 \le s \le t } \vertiii{ \p_t \o(s) }_s   \sup_{0 \le s \le t } \vertiii{ \p_t \theta (s) }_s \notag
\\ 
&  +( t+  \frac{1}{\gamma_0}) \left(  \sup_{0 \leq s \leq t}\vertiii{ \o(s)}_{s} \sup_{0 \leq s \leq t} \vertiii{\p_t^2 \theta(s)}_s  +  \sup_{0 \leq s \leq t}\vertiii{ \p_t \o(s)}_{s} \sup_{0 \leq s \leq t} \vertiii{ \p_t \theta(s)}_s  \right) ,
 \label{thetaest:tt_1}
 \\
&\vertiii{ \p_t^2 \theta(t)}_{\infty,0} \lesssim  \sum_{0\leq|\beta|\leq 2} \| D^\beta \p_t^2 \theta_{0} \|_{\infty, \lambda_0, 0} +( t+  \frac{1}{\gamma_0}) \sup_{0 \le s \le t } \vertiii{ \p_t \o(s) }_s   \sup_{0 \le s \le t } \vertiii{ \p_t \theta (s) }_s \notag
 \\ & +( t+  \frac{1}{\gamma_0}) \left( \sup_{0 \leq s \leq t}\vertiii{ \o(s)}_{s} \sup_{0 \leq s \leq t} \vertiii{\p_t^2 \theta(s)}_s  +  \sup_{0 \leq s \leq t}\vertiii{ \p_t \o(s)}_{s} \sup_{0 \leq s \leq t} \vertiii{\p_t \theta(s)}_s \right), \label{thetaest:tt_infty}
\end{align}
which lead to the desired bounds for $\p_t^2\theta(t)$ in \eqref{thetanorm_bound} by choosing sufficiently large $\gamma_0$.

Taking $\p_{x_3}$ to \eqref{pt2theta} and using \eqref{CC}, \eqref{ptellMequal0}, we have $\p_t^2 \theta_{0\xi} (0) = 0$, $\p_t^2 M_{\xi}(0) = 0$. Thus from integration by parts we again follow the argument from \eqref{px3thetaphi}-\eqref{px3thetaest} to get
\[
\sum_{0 \le | \beta_h | \le 1 } \| D_h^{\beta_h} \p_{x_3} \p_t^2 \theta(t,x_3) \|_{\infty, \lambda ,0 } \lesssim   \sum_{0 \le | \beta_h | \le 1 } \| D_h^{\beta_h} \p_{x_3} \p_t^2 \theta_{0 } \|_{\infty, \lambda, 0 } + \int_0^t   \sum_{0 \le | \beta_h | \le 1 } \| D_h^{\beta_h} \p_{x_3} \p_t^2 M_{}(s) \|_{\infty,\lambda,0} ds.
\]
Then following the argument from \eqref{px3M1}-\eqref{thetap3bd} we get
\[
\begin{split}
& \sum_{0 \le | \beta_h | \le 1 }  \| D_h^{\beta_h} \p_{x_3} \p_t^2 \theta(t,x_3) \|_{\infty, \lambda ,0 }
\\  \lesssim &\sum_{0 \le | \beta_h | \le 1 } \| D_h^{\beta_h} \p_{x_3} \p_t^2 \theta_{0 } \|_{\infty, \lambda, 0 }  
\\ &+ \int_0^t  \left( \frac{1}{\sqrt \kappa} + \frac{1}{\sqrt{ \kappa s }} \right) ds \left( \sup_{0 \le s\le t }  \sum_{\ell = 0}^2 \vertiii{ \p_t^\ell \o(s) }_s \right) \left( \sup_{0 \le s\le t }  \sum_{\ell = 0}^2 \vertiii{ \p_t^\ell \theta(s) }_s \right)  
\\ & + \left( \int_0^t  ( \lambda_0 - \lambda - \gamma_0 s)^{-\alpha }  ds  \right)  \left( \sup_{0 \le s\le t }  \sum_{\ell = 0}^2 \vertiii{ \p_t^\ell \o(s) }_s \right)  \left(     \sup_{0 \le s \le t } \sum_{\ell = 0}^2 \vertiii{\p_t^\ell \theta (s) }_z \right)
\\ &
 + \left( \int_0^t  ( \lambda_0 - \lambda - \gamma_0 s)^{-(\alpha +1) }  ds \right)\left( \sup_{0 \le s\le t }  \sum_{\ell = 0}^2 \vertiii{ \p_t^\ell \o(s) }_s \right)  \left(  \sum_{\ell = 0}^2   \sup_{0 \le s \le t } \vertiii{ \p_t^\ell \theta (s) }_z \right)
\\ \lesssim &\sum_{0 \le | \beta_h | \le 1 } \| D_h^{\beta_h} \p_{x_3} \p_t^2 \theta_{0 } \|_{\infty, \lambda, 0 } +  \frac{t + \sqrt t}{\sqrt \kappa}   + \left( \frac{1}{\gamma_0} + ( \lambda_0 - \lambda - \gamma_0 t )^{-\alpha} \frac{1}{\gamma_0 } \right)   \sup_{0 \le s \le t } \vertiii{  \p_t^2 \theta (s) }_z.
\end{split}
\]
which lead to the desired bounds for $\p_{x_3} \p_t^2 \theta(t)$ in \eqref{thetapx3_bound} by choosing sufficiently large $\gamma_0$.

   \

 {\bf Step 6: Estimate (1), vorticity estimates. }Both \eqref{b1} and \eqref{b2} are direct consequences of (\ref{norm_bound}). To show \eqref{b3}, we first note that the boundedness of $\o(t)$ norms implies 
 $|\p_{x_3}\o_\xi(t,x_3)|\lesssim e^{-\bar{\alpha} x_3}e^{-\lambda |\xi|}$ for all $|\xi|$ and $x_3 \geq 1$ (away from the boundary). When $x_3\leq 1$, we draw on  the equation \eqref{NS_f} to rewrite $ \p_{x_3}^2\o_{\xi,h} = \frac{1}{\kappa\eta_0}\{ \p_t \o_{\xi,h}  + \kappa\eta_0 |\xi|^2 \o_{\xi,h} - N_{\xi,h}\}$ and the boundary condition \eqref{NSB_f}: 
 \Be\begin{split}
 \p_{x_3} \o_{\xi,h} (t, x_3) &= \p_{x_3} \o_{\xi,h} (t,0) + \int_0^{x_3}  \p_{x_3}^2\o_{\xi,h}(t,y) \dd y\\
 &=- |\xi| \o_{\xi,h} (t,0) + \frac{1}{\kappa\eta_0} B_\xi (t) +  \int_0^{x_3}  \frac{1}{\kappa\eta_0}[ \p_t \o_{\xi,h}  + \kappa\eta_0 |\xi|^2 \o_{\xi,h} - N_{\xi,h}](t,y)\dd y. %\\
% & \sim \frac{1}{\kappa} + \frac{x_2}{\kappa}. 
\end{split}
\Ee
We now appeal to  $|B_\xi(t)| \leq \|N_\xi(t)\|_{\mathcal{L}^1_\lambda}$ and $\sum_{0\leq \ell\leq 1} ( \vertiii{ \p_t^\ell\o(t)}_{\infty,\kappa}+ \vertiii{ \p_t^\ell\o(t)}_{1}) <\infty$ to obtain that for all $x_3\in \mathbb R_+$
\Be 
| \p_{x_3} \o_{\xi, h} (t, x_3)|\lesssim  \frac{1}{\kappa} e^{-\bar{\alpha} x_3}e^{-\lambda |\xi|} \text{ for } 0<\lambda<\lambda_0,
\Ee
which proves \eqref{b3} for $\o_h$ and $\ell=0$. The remaining case can be estimated similarly. Near $O(1)$ boundary, from \eqref{NS_f1} and \eqref{NSB_f1}, we derive 
 \Be\begin{split}
 &\p_{x_3} \p_t\o_{\xi,h} (t, x_3)\\
  &=- |\xi| \p_t\o_{\xi,h} (t,0) + \frac{1}{\kappa\eta_0} \p_tB_\xi (t) +  \int_0^{x_3}  \frac{1}{\kappa\eta_0}[\p_t^2 \o_{\xi,h}  + \kappa\eta_0 |\xi|^2 \p_t\o_{\xi,h} - \p_tN_{\xi,h}](t,y)\dd y. %\\
% & \sim \frac{1}{\kappa} + \frac{x_2}{\kappa}. 
\end{split}
\Ee
Together with  $\sum_{0\leq \ell\leq 1} \vertiii{ \p_t^\ell\o(t)}_{\infty,\kappa}+ \sum_{0\leq \ell\leq 2}  \vertiii{ \p_t^\ell\o(t)}_{1}<\infty$ we deduce \eqref{b3} for $\o_h$ and $\ell=1$.
For $\o_3$ we use $\nabla\cdot \o=0$ to write $\p_3 \o_3 = - \p_1\o_1- \p_2\o_2$. Now \eqref{b3} for $\o_3$ follows from \eqref{b1}.

% {\color{red}\texttt{...Can we use $\p_{x_3}\o_{x_3} = - \p_{x_1}\o_{x_1}- \p_{x_2}\o_{x_2}$ and the estimates of r.h.s to derive the estimate of $\p_{x_3}\o_{x_3}$ ?...}}
%\begin{align*}
%\p_{x_3} \p_t \o_{\xi, 3}(t, x_3) &= \p_{x_3}\p_t \o_{\xi, 3}(t,1) + \int^{x_3}_1 \p^2_{x_3} \p_t\o_{\xi, 3} (t,y) \dd y\\&=\p_{x_3} \p_t\o_{\xi, 3}(t,1) + \int^{x_3} _1 \frac{1}{\kappa \eta_0} 
%[
%\p_t^2\o_{\xi,3} + \kappa |\xi|^2 \eta_0 \p_t\o_{\xi, 3} - \p_tN_{\xi, 3}
%](t,y) \dd y.
%\end{align*}
%From $\sum_{0\leq \ell\leq 1} ( \vertiii{ \p_t^\ell\o(t)}_{\infty,\kappa}+ \vertiii{ \p_t^\ell\o(t)}_{1}) <\infty$ and (\ref{est:N_infty}) we deduce that 
%\Be
%|\p_{x_3}\p_t \o_{\xi, 3}(t, x_3)  |\lesssim   \frac{1}{\kappa}  e^{-\lambda |\xi|} \ \ \text{for} \ x_3 \leq 1.
%\Ee

  \

 {\bf Step 7: Estimate (2), velocity estimates, except (\ref{ut}). } From Lemma \ref{lem_elliptic}, the estimation of the velocity follows from $\phi$ estimate.  From $(|\xi|^2   - \p_z^2 )\phi_\xi = \o_\xi$ and $\phi_\xi (0)=0$ 
 \Be\label{phi_xi}
\begin{split}
\phi_\xi(z)=& \int^z_0 G_- (y,z) \o_\xi (y) \dd y + \int^\infty_z G_+ (y,z) \o_\xi (y) \dd y, \\
&\text{with} \  \ G_{\pm} (y,z) := \frac{-1}{2 |\xi|} \Big( e^{\pm |\xi| (z-y)} -e^{- |\xi | (y+z)}\Big),
\end{split}\Ee 
and we have
\Be\label{est:phi}
|\xi|^{\beta_h} | \p_{z}^{\beta_3}\p_t^\ell\phi_\xi (t,z)|
\lesssim \int_{\p \mathcal{H}_\lambda} |\xi|^{|\beta|-1} e^{-|\xi| |y-z|} | \p_t^\ell \o_\xi (t,y)| |\dd y|
 \ \ \text{for} \ \beta_3\leq 1. 
\Ee
For $|\beta|= |\beta_h| + \beta_3=1$ we bound (\ref{est:phi}) by $e^{-\lambda |\xi|} \| \p_t^\ell \o(t) \|_{1,\lambda}$. Then from (\ref{norm_bound}) we conclude (\ref{est:u_t}).

For $|\beta|\geq 2$ and $\beta_3 \leq 1$, we bound (\ref{est:phi}) by 
\Be\label{est:phi1}
\begin{split}
(\ref{est:phi}) &\lesssim%\| \p_t^\ell \o_\xi (t)\|_{\infty, \lambda, \kappa } 
\int_{\p \mathcal{H}_\lambda} |\xi|^{|\beta|-2} |\xi|e^{-|\xi| |y-z|} 
 e^{- {\bar{\alpha}}  \text{Re}\,y}e^{-\lambda |\xi|} \big(1+ \phi_\kappa (y)+ \phi_{\kappa t} (y)\big) 
|\dd y|  \\
&\lesssim |\xi|^{|\beta|-2} e^{-\lambda |\xi|} e^{- \min (1, \frac{\bar{\alpha}}{2})x_3}
\int_{\p \mathcal{H}_\lambda} 
e^{- \frac{\bar{\alpha}}{2} \text{Re} \, y}\big ( 1+ \phi_\kappa (y)+ \phi_{\kappa t} (y)\big) 
|\dd y|\\
&\lesssim |\xi|^{|\beta|-2} e^{-\lambda |\xi|} e^{- \min (1, \frac{\bar{\alpha}}{2})x_3} \ \ \text{for} \ |\beta| \geq 2, \ \text{and} \ \beta_3 \leq 1, \ \text{and} \  \ell=0,1,2, \ \text{and} \  t \in [0,T],
\end{split}
\Ee
where we have used $|\xi||y-z| + \frac{\bar{\alpha}}{2} \text{Re}\, y\geq \min (1, \frac{\bar{\alpha}}{2} ) x_3$ for $|\xi|\geq 1$ and (\ref{norm_bound}).

For $\beta_3=2,3$ we use $\p_z^2 \p_t^\ell\phi_\xi = |\xi|^2  \p_t^\ell\phi_\xi +   \p_t^\ell\o_\xi$. Then following the same argument of (\ref{est:phi1}), we derive 
\Be\label{est:phi2}
\begin{split}
&|\xi|^{\beta_h} | \p_{z}^{\beta_3}\p_t^\ell\phi_\xi (t,z)|\\
&\lesssim |\xi|^{|\beta_h|+2} | \p_{z}^{\beta_3-2}\p_t^\ell\phi_\xi (t,z)|
+|\xi|^{\beta_h} | \p_{z}^{\beta_3-2}\p_t^\ell\o_\xi (t,z)|\\
&\lesssim 
\begin{cases}
(|\xi|^{|\beta|-2}+|\xi|^{\beta_h}) e^{- \lambda |\xi|}  e^{- \min (1, \frac{\bar{\alpha}}{2}) \text{Re}\, z} (1+ \phi_\kappa (z)) \ \ \text{for} \ \ell=0,1, \ \text{and} \ \beta_3=2,\\
 (|\xi|^{|\beta|-2}+|\xi|^{\beta_h}) e^{- \lambda |\xi|}  e^{- \min (1, \frac{\bar{\alpha}}{2})\text{Re}\, z}\kappa^{-1}  \ \ \text{for} \ \ell=0,1, \ \text{and} \ \beta_3=3, \\
(|\xi|^{|\beta|-2}+|\xi|^{\beta_h}) e^{- \lambda |\xi|}  e^{- \min (1, \frac{\bar{\alpha}}{2})\text{Re}\, z} (1+ \phi_\kappa (z)+ \phi_{\kappa t} (z)) \ \ \text{for} \ \ell=2, \ \text{and} \ \beta_3=2. 
\end{cases}
\end{split}\Ee
Finally from (\ref{est:phi1}) and (\ref{est:phi2}) we conclude (\ref{est:u1}) and (\ref{est:u2}).

%%%%%%%%%%%%%%
%%%%%%%%%%%%%%
%%%%%%%%%%%%%%
\hide
 From $e^{-|\xi| |y-z|} \leq 1$ or $|\xi|  e^{-|\xi| |\cdot |} \in L^1(\R)$ 
\Be\begin{split}\label{est:phi_t}
|\xi|^k|\p_t^\ell\phi_\xi (t,x_3)|%\lesssim|\xi|^{(k-1)}e^{- \bar{\alpha}    \text{Re}\,y } |\p_t^\ell\o_\xi (t,y) |
&\lesssim |\xi|^{ k-1 } e^{-\lambda |\xi|}
 \| \p_t^\ell\o  (t )\|_{1,\lambda},\\
(1+ \phi_\kappa (x_3)  )^{-1} |\xi|^k |\p_t^\ell \phi_\xi(t,x_3)| &\lesssim |\xi|^{k-2}e^{-\lambda |\xi|}     \|\p_t^\ell \o  (t )\|_{\infty, \lambda,\kappa } 
  \ \ \text{for} \ \ell=0,1,
 \\
(1+ \phi_\kappa (x_3) + \phi_{\kappa t} (x_3))^{-1}  |\xi|^k |\p_t^2 \phi_\xi(t,x_3)| &\lesssim |\xi|^{k-2}   e^{-\lambda |\xi|}
\|\p_t^2\o (t   )\|_{ \infty,\lambda, \kappa t }.
\end{split}\Ee
Once again from (\ref{phi_xi}) we have $\p_z \phi_\xi (z) = \int^z_0 \p_z G_- (y,z) \o_\xi (y) \dd y + \int^\infty_z \p_z G_+ (y,z) \o_\xi (y) \dd y$. Since $|\p_z G_{\pm}(y,z)|  \lesssim e^{-|\xi||y-z|}$
\Be\begin{split}\label{est:phi_tz}
|\xi|^k| \p_{x_3}\p_t^\ell\phi_\xi (t,x_3)| &\lesssim |\xi|^{k} e^{-\lambda |\xi|}
 \| \p_t^\ell\o  (t )\|_{1,\lambda},\\
 (1+ \phi_\kappa (x_3)  )^{-1} |\xi|^k | \p_{x_3} \p_t^\ell \phi_\xi(t,x_3)| &\lesssim |\xi|^{k-1}  e^{-\lambda |\xi|} \| \p_t^\ell \o (t )\|_{ \infty,\lambda,\kappa } 
  \ \ \text{for} \ \ell=0,1,
 \\
(1+ \phi_\kappa (x_3) + \phi_{\kappa t} (x_3))^{-1}  |\xi|^k |\p_{x_3} \p_t^2 \phi_\xi(t,x_3)| &\lesssim |\xi|^{k-1}   e^{-\lambda |\xi|}
\|\p_t^2\o  (t )\|_{ \infty,\lambda, \kappa t }.
\end{split}\Ee
Therefore we derive that, from (\ref{est:phi_t}), (\ref{est:phi_tz}), and $|\xi|^{|\beta_h|}| 
\p_t^\ell u_{\xi} (t,x_3)| \lesssim 
|\xi|^{|\beta_h|+1}| %\p_z^{\beta_3}
\p_t^\ell \phi_{\xi} (t,x_3)| 
+|\xi|^{|\beta_h| }| \p_z%^{\beta_3+1}
\p_t^\ell \phi_{\xi} (t,x_3)|$, 
\begin{align}
%|\xi|^{|\beta_h|}
| 
\p_t^\ell u_{\xi} (t,z)|&\lesssim 
 e^{-\lambda |\xi|} \| \p_t^\ell \o_\xi (t ) \|_{1,\lambda}   ,
 \notag
 \\
|\xi|^{|\beta_h|}| 
\p_t^\ell u_{\xi} (t,z)|&\lesssim |\xi|^{|\beta_h| -1}
e^{-\lambda |\xi|}  \| \p_t^\ell \o (t ) \|_{1,\lambda} 
+
(1+ \phi_\kappa(x_3))  
 |\xi|^{|\beta_h|-1} e^{-\lambda |\xi|}  \| \p_t^\ell \o  (t  ) \|_{ \infty,\lambda,\kappa}
  \ \ \text{for} \ |\beta_h|\geq 1 \ \text{and} \ \ell=0,1,
  \label{est:u_t}
  \\
  |\xi|^{|\beta_h|}| 
\p_t^2 u_{\xi} (t,z)|&\lesssim |\xi|^{|\beta_h| -1}
e^{-\lambda |\xi|}  \| \p_t^2\o (t ) \|_{1,\lambda} 
+
(1+ \phi_\kappa(x_3) +\phi_{\kappa t}(x_3) )  
 |\xi|^{|\beta_h|-1} e^{-\lambda |\xi|}  \| \p_t^2\o  (t ) \|_{ \infty,\lambda,\kappa t}
  \ \ \text{for} \ |\beta_h|\geq 1.\notag
\end{align}

From $\p_z^2 \p_t^\ell\phi_\xi = |\xi|^2  \p_t^\ell\phi_\xi +   \p_t^\ell\o_\xi$, (\ref{est:phi_t}) and (\ref{est:phi_tz}) we further derive that 
\Be\begin{split}\label{est:phi_tzz}
|\xi|^k| \p_{x_3}^2\p_t^\ell\phi_\xi (t,x_3)| &\lesssim |\xi|^{k+1} e^{-\lambda |\xi|}
 \| \p_t^\ell\o  (t )\|_{1,\lambda}
 + e^{-\lambda|\xi|} (1+ \phi_\kappa (x_3)) \sum_{m=0,1} \vertiii{ \p_t^m\o(t)}_{\infty, \kappa}  \ \ \text{for} \ \ell=0,1
 ,\\
 |\xi|^k| \p_{x_3}^2\p_t^2\phi_\xi (t,x_3)| &\lesssim |\xi|^{k+1} e^{-\lambda |\xi|}
 \| \p_t^2\o  (t )\|_{1,\lambda}
 + e^{-\lambda|\xi|}
  (1+ \phi_\kappa (x_3)+ \phi_{\kappa t} (x_3))
  \vertiii{ \p_t^2\o(t)}_{\infty, \kappa} .
  \end{split}\Ee
 Therefore we derive that, from (\ref{est:phi_tzz}) and (\ref{est:phi_tz})
\Be\begin{split} \label{est:u_tz}
%|\xi|^{|\beta_h|}
 |\xi|^{|\beta_h|} | 
 \p_{x_3} \p_t^\ell u_{\xi} (t,z)|&\lesssim 
|\xi| ^{|\beta_h|+1}e^{-\lambda |\xi| }  \| \p_t^\ell \o(t) \|_{1, \lambda }
 + |\xi| ^{|\beta_h| } e^{-\lambda|\xi|} (1+ \phi_\kappa (x_3)) \sum_{m=0,1} \vertiii{ \p_t^m\o(t)}_{\infty, \kappa}  \ \ \text{for} \ \ell=0,1,
 \\
  |\xi|^{|\beta_h|} | 
 \p_{x_3} \p_t^2 u_{\xi} (t,z)|&\lesssim 
|\xi| ^{|\beta_h|+1}e^{-\lambda |\xi| }  \| \p_t^2 \o(t) \|_{1, \lambda }
 + |\xi| ^{|\beta_h| } e^{-\lambda|\xi|} (1+ \phi_\kappa (x_3)+ \phi_{\kappa t} (x_3))  \vertiii{ \p_t^2\o(t)}_{\infty, \kappa t}   .
\end{split}\Ee
From (\ref{est:u_t}), (\ref{est:u_tz}), $ 
\nabla_{x_h}^{\beta_h} \p_{z}^{\beta_3} \p_t ^\ell u(t,x_h,z)
= \sum_{\xi  \in \mathbb Z^2}
(i \xi)^{\beta_h } \p_t^\ell \p_{z}^{\beta_3} u_\xi (t,z) e^{i x_h \cdot \xi }$ and (\ref{norm_bound}) we derive that 
\Be
\begin{split}\label{est:u1}
\sum_{0 \leq |\beta | \leq 1} \sum_{\ell=0,1}  |\nabla ^{\beta } \p_t^\ell u(t,x )|%\lesssim  \sup_{t \in [0,T]}\vertiii{\p_t^\ell \o (t) } 
 \lesssim  1+ \phi_\kappa (x_3)  ,\ \
\sum_{0 \leq |\beta | \leq 1}  |\nabla ^{\beta } \p_t^2 u(t,x )|%\lesssim  \sup_{t \in [0,T]}\vertiii{\p_t^\ell \o (t) } 
 \lesssim  1+ \phi_\kappa (x_3) + \phi_{\kappa t} (x_3) .
\end{split}
\Ee

Now we estimate the second order derivatives of the velocity field. Note that $\p_{x_3}^3 \p_t^\ell\phi_\xi = |\xi|^2 \p_{x_3} \p_t^\ell\phi_\xi +  \p_{x_3} \p_t^\ell\o_\xi$. From (\ref{b3}) and (\ref{est:phi_tz}), 
\Be
|\p_{x_3}^3 \p_t^\ell\phi_\xi| \lesssim
|\xi|^{2} e^{-\lambda |\xi| } \| \p_t^\ell \o(t) \|_{1,\lambda }
 +\kappa^{-1} e^{-\bar \alpha x_3} e^{-\lambda |\xi|}
 \ \ \text{for} \ \ell=0,1.\notag
\Ee
Together this bound with (\ref{est:phi_tzz}) and $
\p_{x_3}^2\p_t^\ell u_{\xi} (t,x_3)| \lesssim 
|\xi| | %\p_z^{\beta_3}
\p_{x_3}^2\p_t^\ell \phi_{\xi} (t,x_3)| 
+  |  \p_{x_3}^3%^{\beta_3+1}
\p_t^\ell \phi_{\xi} (t,x_3)|$, we derive that 
\[
\sum_{|\beta|=2} \sum_{\ell=0,1} |\nabla^\beta \p_t^\ell u(t,x)|\lesssim 1+ \phi_\kappa (x_3). 
\]
\unhide
%%%%%%%%%%%%%%
%%%%%%%%%%%%%%
%%%%%%%%%%%%%%

 \
 
  {\bf Step 8: Estimate (3), temperature estimates.} Both \eqref{est:theta_t} and \eqref{est:theta1} are direct consequences of \eqref{thetanorm_bound}. To show \eqref{est:theta2}, we use the equation \eqref{heat_NS} to get
\Be
\begin{split}
\p_{x_3}^2 \theta = & \frac{1}{\kappa \eta_c} \left( \p_t \theta + u \cdot \nabla_x \theta \right) - \p_{x_1}^2 \theta - \p_{x_2}^2 \theta.
\\ = &  \frac{1}{\kappa \eta_c} \left( \p_t \theta + u_1\p_{x_1} \theta + u_2 \p_{x_2} \theta + ( \frac{u_3}{\zeta } ) ( \zeta \p_{x_3} \theta ) \right) - \p_{x_1}^2 \theta - \p_{x_2}^2 \theta
\end{split}
\Ee
Then from \eqref{est:u_t}, \eqref{est:theta_t}, and \eqref{est:theta1} we get
\[
\begin{split}
\sum_{\ell = 0}^1 | \p_t^\ell \p_{x_3}^2 \theta| \lesssim &  \frac{1}{\kappa \eta_c}  \left[ \sum_{\ell = 1}^2 |\p_t^\ell \theta| + \left(  \sum_{\ell = 0}^1 ( |\p_t^\ell u_1 | +  |\p_t^\ell u_2 | + | \p_t^\ell ( \frac{u_3}{\zeta} ) | )   \right) \left(  \sum_{\ell = 0}^1 ( |\p_t^\ell \p_{x_1} \theta | +  | \p_t^\ell \p_{x_2} \theta | +  | \p_t^\ell (\zeta \p_{x_3} \theta ) |   )\right)  \right] 
\\ & + \sum_{\ell =0}^1 \left( |\p_t^\ell \p_{x_1}^2 \theta |  + |\p_t^\ell \p_{x_2}^2 \theta |  \right)
\\ \lesssim & \frac{1}{\kappa}\left( e^{-\bar \alpha x_3} \right) + e^{-\bar \alpha x_3 } \lesssim \kappa^{-\frac{3}{2}} e^{-\bar \alpha x_3}.
\end{split}
\]
This proves \eqref{est:theta2}.

The pressure estimate \eqref{est:p}-\eqref{est:pt2} can be found in Theorem 3 of \cite{JK}. The last estimate for $\p_t^\ell u$ for $\ell=1,2$ follows from the equation: $\p_t  u = \kappa\eta_0 \Delta u - u \cdot\nabla u - \nabla  p $ and $\p_t^2 u = \kappa\eta_0 \Delta \p_t u - u \cdot\nabla  \p_t u - \p_t u \cdot\nabla u - \nabla \p_t  p $. This finishes the proof.

\section{Main theorem}
In the last section we state and prove the precise statement of the main theorem which was informally stated as Theorem \ref{Informal statement}. Before that we first show the convergence of heat flow $\theta$ to $\theta_E$ in $L^\infty([0,T];L^2(\O) )$ in the following lemma.

\begin{lemma}[Convergence of $\theta$ to $\theta_E$] \label{thetaEtheta}
Let $u_E$ be the solution of the Euler equation \eqref{Euler}, \eqref{incomp_E}, \eqref{no-pen}, and $u$ be the solution of the Navier-Stokes equation \eqref{NS}, \eqref{incomp_NS}, \eqref{NSB}. For the following equations
\Be \label{equuE1} \begin{split}
\p_t \theta_E + u_E \cdot \nabla_x \theta_E   & =0 \ \ \text{in} \ \O,  % \ \ u_E \cdot n =0  \ \ \text{on} \ \p\O, 
 \\ \p_t \theta + u \cdot \nabla_x \theta  - \kappa \eta_c \Delta \theta & =0 \ \ \text{in} \ \O,
% \ \ u =0  \ \ \text{on} \ \p\O, 
 \ \ \theta = 0 \ \ \text{on} \ \p\O.
\end{split} \Ee
Assume $ \theta_E(0,x) =  \theta (0,x) = \theta_0(x)$, and
\Be \label{theta0bdry}
\theta_0(x) = 0 \text{ on } \p \O.
\Ee
Then
\Be
\sup_{0 \le t \le T}  \| \theta - \theta_E \|_{L^2(\O ) } \to 0, \text{ as } \kappa \to 0.
\Ee

\end{lemma}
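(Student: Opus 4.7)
The plan is a standard relative energy estimate on $\psi := \theta - \theta_E$, but with careful attention paid to boundary traces and to the control of $u - u_E$ afforded by the inviscid limit already established via the analytic framework of Section~\ref{fluidestimate}.

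First I would observe that although the Euler transport equation \eqref{Euler} does not a priori impose any boundary condition on $\theta_E$, the combination of the no-penetration condition $u_E\cdot n=0$ and the boundary compatibility \eqref{theta0bdry} forces $\theta_E|_{\p\O}\equiv 0$ for all $t\in[0,T]$. Indeed, along any characteristic $X(s)$ of $u_E$ starting at a boundary point, $X(s)$ stays on $\p\O$ because $u_E\cdot n=0$, and $\theta_E$ is constant along characteristics; together with $\theta_0|_{\p\O}=0$ this gives the claim. Consequently $\psi|_{\p\O}=\theta|_{\p\O}-\theta_E|_{\p\O}=0$, and $\psi(0,\cdot)=0$.

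Next, subtracting the two equations in \eqref{equuE1} gives
\begin{equation}\notag
\p_t\psi + u\cdot\nabla\psi - \kappa\eta_c\Delta\psi = -(u-u_E)\cdot\nabla\theta_E + \kappa\eta_c\Delta\theta_E \ \ \text{in}\ \O.
\end{equation}
I would test with $\psi$ and integrate over $\O$. The transport term vanishes thanks to $\nabla\cdot u=0$ and $\psi|_{\p\O}=0$. Integrating the viscous term by parts (again using $\psi|_{\p\O}=0$) yields
\begin{equation}\notag
\tfrac{1}{2}\tfrac{d}{dt}\|\psi\|_{L^2(\O)}^2 + \kappa\eta_c\|\nabla\psi\|_{L^2(\O)}^2 = -\!\int_\O (u-u_E)\cdot\nabla\theta_E\,\psi\,dx - \kappa\eta_c\!\int_\O \nabla\theta_E\cdot\nabla\psi\,dx.
\end{equation}
I would bound the first RHS term by $\|u-u_E\|_{L^2(\O)}\,\|\nabla\theta_E\|_{L^\infty(\O)}\,\|\psi\|_{L^2(\O)}$ and apply Young's inequality. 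For the second, Cauchy--Schwarz and Young give $\kappa\eta_c\|\nabla\theta_E\|_{L^2(\O)}\|\nabla\psi\|_{L^2(\O)}\le \tfrac{\kappa\eta_c}{2}\|\nabla\psi\|_{L^2(\O)}^2 + \tfrac{\kappa\eta_c}{2}\|\nabla\theta_E\|_{L^2(\O)}^2$. Absorbing the $\|\nabla\psi\|_{L^2}^2$ contribution into the LHS, one arrives at
\begin{equation}\notag
\tfrac{d}{dt}\|\psi\|_{L^2}^2 \lesssim \|\nabla\theta_E\|_{L^\infty}\|\psi\|_{L^2}^2 + \|u-u_E\|_{L^2}^2\|\nabla\theta_E\|_{L^\infty} + \kappa\|\nabla\theta_E\|_{L^2}^2,
\end{equation}
and Gronwall on $[0,T]$ with $\psi(0)=0$ yields
\begin{equation}\notag
\sup_{0\le t\le T}\|\theta-\theta_E\|_{L^2(\O)}^2 \lesssim_T e^{C T \|\nabla\theta_E\|_{L^\infty_{t,x}}}\Bigl(\sup_{0\le t\le T}\|u-u_E\|_{L^2(\O)}^2 + \kappa\,\sup_{0\le t\le T}\|\nabla\theta_E\|_{L^2(\O)}^2\Bigr).
\end{equation}

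The remaining step is to argue that the right-hand side tends to $0$ as $\kappa\to 0$. The $\kappa$-factor in the second term takes care of itself once $\|\nabla\theta_E\|_{L^2}$ is finite, which is guaranteed by the analytic regularity transported from $\theta_0$ by the Euler flow (an Euler analogue of the estimates in Theorem~\ref{thm_bound}, obtained by sending $\kappa\to 0$ or argued directly). The first term requires the vanishing of $\|u-u_E\|_{L^2(\O)}$ in the inviscid limit, which is precisely the content of the Navier--Stokes-to-Euler convergence established for the analytic framework of Section~\ref{fluidestimate} (and is the 3D half-space analogue of \cite{NN2018,FW}; this is the part of the paper this lemma is meant to dovetail with).

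The main technical obstacle is really bookkeeping rather than substance: one must make sure that $\theta_E$ is regular enough (in particular $\nabla\theta_E\in L^\infty_{t,x}$) and that $\theta_E|_{\p\O}=0$ is used correctly to justify the integration by parts — otherwise a spurious boundary term $\kappa\eta_c\int_{\p\O}\p_n\theta_E\,\psi$ would appear and derail the absorption of $\|\nabla\psi\|_{L^2}^2$. With the boundary trace argument above and the analytic bounds on $\theta_E$, both concerns are handled.
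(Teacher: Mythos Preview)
Your proposal is correct and follows essentially the same route as the paper: both show $\theta_E|_{\p\O}=0$ via characteristics and the no-penetration condition, then run the relative $L^2$ energy estimate on $\psi=\theta-\theta_E$, kill the transport term via $\nabla\cdot u=0$ and $\psi|_{\p\O}=0$, and conclude by Gronwall together with the known inviscid limit $\|u-u_E\|_{L^2}\to 0$. The only cosmetic difference is your treatment of the $\kappa\eta_c\Delta\theta_E$ forcing: you integrate by parts to $-\kappa\eta_c\!\int\nabla\theta_E\cdot\nabla\psi$ and absorb half of it into the dissipation (needing only $\nabla\theta_E\in L^\infty\cap L^2$), whereas the paper bounds $\kappa\eta_c\!\int\Delta\theta_E\,w$ directly via Young (using a $C^2$ bound on $\theta_E$ that it derives explicitly from the characteristic ODE and Gronwall); either way the term contributes $O(\kappa)$ after Gronwall.
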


\begin{proof}
From \eqref{equuE1} we have 
\Be \label{thetaEsol}
\theta_E(t,x) = \theta_E (0, X(0;t,x)),
\Ee
where $X(s;t,x)$ satisfies
\[
\frac{d}{ds} X(s;t,x) = u_E(s, X(s;t,x)), \ \ X(t;t,x)  = x.
\]
From assumption \eqref{theta0bdry} and $u_E \cdot n = 0$ on $\p \O$, we have $\{ X(0;t,x) : x  \in \p \O \} \subset \p \O$. Therefore 
\Be \label{thetaE0bdry}
\theta_E(t,x) = 0 \text{ on } \p \O.
\Ee
For the solution $u_E(t,x)$ to the Euler equation \eqref{Euler}, \eqref{incomp_E}, \eqref{no-pen}, it is shown in \cite{Euler} that for smooth initial data $u_0$ satisfying $\nabla \cdot u_0 = 0 $ and $u_0 \cdot n = 0$ on $\p \O$, there exists a time $T>0$ such that $u_E$ is smooth on $(0, T)$. In particular,
\Be \label{uEC2}
\sup_{0 < s < T} \| u_E(s ) \|_{C^2(\O)} < \infty.
\Ee
Thus taking $\nabla_x$ derivative to 
\Be \label{Xstx}
X(s;t,x) = x - \int_s^t u_E(\tau, X(\tau;t,x) ) d\tau,
\Ee
we have
\Be \notag
| \nabla_x X(s;t,x) | \lesssim 1 + \int_s^t  \| \nabla_x u_E(\tau ) \|_{L^\infty} | \nabla_x X(\tau; t,x ) | d\tau.
\Ee
From Gronwall's inequality
\Be \label{pxX}
\sup_{0 \le s \le  t \le T^* } \sup_{x \in \O } | \nabla_x X(s;t,x) | \lesssim e^{T  \sup_{0 < s < T^*} \| \nabla_x u_E(s ) \|_{L^\infty} } < C.
\Ee
Similarly, taking $\nabla_x^2$ to \eqref{Xstx} and using \eqref{uEC2}, \eqref{pxX} we get
\[ 
\begin{split}
| \nabla_x^2 X(s;t,x) | \lesssim &  \int_s^t \left( \| \nabla_x^2 u_E (\tau) \|_{L^\infty} | \nabla_x X(\tau;t,x) | + \| \nabla_x u_E (\tau) \|_{L^\infty} | \nabla_x^2 X(\tau;t,x) | \right) d\tau
\\ \lesssim & T C + \int_s^t  | \nabla_x^2 X(\tau;t,x) | d\tau
\end{split}
\]
Using Gronwall we get
\Be \label{px2X}
\sup_{0 \le  s \le  t \le T^* } \sup_{x \in \O } | \nabla_x^2 X(s;t,x) | \lesssim T  C e^{T   } < C_1.
\Ee
Combing \eqref{thetaEsol}, \eqref{pxX}, and \eqref{px2X} we get
\Be \label{thetaEC2}
\begin{split}
& \sup_{0 \le t \le T^*} \sup_{x \in \O} \left( | \nabla_x \theta_E(t,x) | + | \nabla_x^2 \theta_E(t,x) | \right)
\\ &  \lesssim \left( \| \nabla_x \theta_0 \|_{L^\infty} + \| \nabla_x^2 \theta_0 \|_{L^\infty} \right)  \sup_{0 \le t \le T^*} \sup_{x \in \O}  \left( | \nabla_x X(s;t,x) | +  | \nabla_x^2 X(s;t,x) | \right) < C_2.
\end{split}
\Ee

Now let $w = \theta - \theta_E $, then from \eqref{equuE1}, the equation for $w$ is
\Be \label{eqnw1}
\p_t w + u \cdot \nabla w + ( u- u_E) \cdot \nabla \theta_E  - \kappa \eta_c \Delta w =  \kappa \eta_c \Delta \theta_E.
\Ee

From \eqref{equuE1} and \eqref{thetaE0bdry}, $w = 0 $ on $\p \O$. Thus from integration by parts one get $\int_{\O } (\Delta w) w \, dx = - \int_{\O} | \nabla w |^2 dx $ without the boundary term. Therefore the standard energy estimate $\int_{\O} \eqref{eqnw1} \cdot w \, dx $ gives
\Be \label{enew1} \begin{split}
& \frac{1}{2} \frac{d}{dt } \| w \|_{L^2}^2 + \kappa \eta_c \| \nabla w \|_{L^2}^2 + \int_{\O} (u \cdot \nabla w) w \,  dx 
\\  = &- \int_{\O}   ( u- u_E) \cdot \nabla \theta_E w \, dx  + \kappa \eta_c \int_{\O} \Delta \theta_E w \, dx.
\end{split} \Ee
From $ \nabla \cdot u = 0$, and $ u = 0 $ on $\p \O$, we have
\[
 \int_{\O} (u \cdot \nabla w) w \,  dx = \frac{1}{2}  \int_{\O} u \cdot  \nabla  (w^2 )  \,  dx =  -\frac{1}{2}  \int_{\O}  ( \nabla \cdot u ) w^2   \,  dx = 0.
\]
Now we bound the terms on the RHS of \eqref{enew1}. Using \eqref{thetaEC2} we have
\Be \label{west11}
\int_{\O}  |  ( u- u_E) \cdot \nabla \theta_E w |  \, dx \lesssim \| \nabla \theta_E \|_{L^\infty} \left( \| u - u_E \|_{L^2}^2 + \| w \|_{L^2}^2 \right) \lesssim  \| u - u_E \|_{L^2}^2 + \| w \|_{L^2}^2,
\Ee
and
\Be \label{west12}
\kappa \eta_c \int_{\O} |  \Delta \theta_E w |  \, dx  \lesssim  (\kappa \eta_c)^2 \| \Delta \theta_E \|_{L^2}^2 + \| w \|_{L^2}^2 \lesssim \kappa^2 + \| w \|_{L^2}^2.
\Ee
Now collect \eqref{west11}, \eqref{west12}, \eqref{enew1} gives
\Be \notag
\frac{d}{dt } \| w \|_{L^2}^2  \lesssim \| w\|_{L^2}^2 + \| u - u_E \|_{L^2}^2 + \kappa^2  ,
\Ee
Therefore by Gronwall's inequality
\[
\sup_{0 \le t \le T} \| w \|_{L^2}^2 \lesssim e^T \left(  \| u - u_E \|_{L^2}^2 + \kappa^2  \right) \to 0, \text{ as } \kappa \to 0.
\]

\end{proof}

Finally we prove the main theorem of the paper through combining separate estimates we established in different sections of the paper.
\begin{theorem}[Main Theorem]\label{main_theorem}
Let $\O$ be a upper half space in 3D:
 \Be\label{domain}
 \begin{split}
\O := \mathbb{T}^2 \times \R_+ \ni (x_1, x_2, x_3),
 \ \ & \text{where } \mathbb R_+ :=\{ x_3 \in \mathbb R: x_3 > 0 \},
 \\ & \text{ and } \mathbb{T} \text{ is the periodic interval } (-\pi, \pi).
 \end{split}
\Ee
Suppose an initial velocity field $u_{in}$ is divergence-free 
\Be \label{divuin}
\nabla_x \cdot u_{in}=0 \ \ \text{ in } \O,
\Ee 
the corresponding initial vorticity $\o_{in}= \nabla_x \times u_{in}$ and the initial heat $\theta_{in}$ satisfy
\Be \label{analyticspacein}
\o_{in}, \theta_{in} \in \mathfrak B^{\lambda_0,\kappa},
\Ee
with the real analytic space $\mathfrak B^{\lambda_0,\kappa}$ defined in (\ref{[]}) for some $\lambda_0>0$, such that (\ref{initial_norm}), (\ref{inital_norm_theta}) hold.
%\Be
% \sum_{0\leq |\beta|\leq 2}  \|D^\beta \p_t^\ell\o_{in}  \|_{1,\lambda_0}+\sum_{0\leq |\beta|\leq 2} \|D^\beta \p_t^\ell\o_{in}  \|_{\infty,\lambda_0, \kappa}  <\infty \  \text{ for } \ \ell=0,1,2. \label{initial_norm}
%\Ee
Further we assume that $\o_{in}$, $\theta_{in}$ satisfies the compatibility conditions (\ref{CC}) on $\p\O$.
%\Be\label{CC}
%\begin{split}
%\kappa \eta_{0} (\p_{x_3} + \sqrt{- \Delta_h})\o_{in,h}    = [\p_{x_3} (-\Delta)^{-1} (-u_{in} \cdot \nabla \o_{in, h} + \o_0 \cdot \nabla u_{in,h}) ]  , \ \ 
%\o_{in,3}    =0,   \  \  \p_t\o_{in,3}  = 0 . 
%\end{split}
%r\Ee
 Then there exists a unique real analytic solution $(u(t,x), \theta(t,x))$ to the Navier-Stokes-Fourier flow (\ref{NS})-(\ref{Bouss}) in $[0,T] \times \O$, while $T>0$ only depends on $\lambda_0$ and the size of the initial data as in (\ref{initial_norm}), (\ref{inital_norm_theta}).

\hide

 for $\lambda_0>0$ and satisfies (\ref{initial_norm}), while %: 
%\Be
% \sum_{0\leq |\beta|\leq 2}  \|D^\beta \p_t^\ell\o_0  \|_{1,\lambda_0}+\sum_{0\leq |\beta|\leq 2} \|D^\beta \p_t^\ell\o_0 \|_{\infty,\lambda_0, \kappa}  <\infty \  \text{ for } \ \ell=0,1,2, \notag
%\Ee
where $(\o_0, \p_t \o_0, \p_t^2 \o_0)$ and $(u_0,\p_t u_0)$ defined through the equation as in (\ref{idata}). Further assume that $\o_0$ satisfies the compatibility conditions (\ref{CC}).   
%\Be\notag
%\begin{split}
%\kappa \eta_0 (\p_{x_3} + \sqrt{- \Delta_h})\o_{0,h} \, |_{x_3=0} &= [\p_{x_3} (-\Delta)^{-1} (-u_0 \cdot \nabla \o_{0, h} + \o_0 \cdot \nabla u_{0,h}) ] \, |_{x_3=0}\\
%\o_{0,3} |_{x_3=0} &=0, \quad \p_t\o_{0,3} |_{x_3=0} = 0 . 
%\end{split}
%\Ee
\unhide
Choosing a pressure $p(t,x)$ such that $p(t,x) \to 0$ as $x_3 \to \infty$, and setting the first order and the second order correction terms $f_1$, $f_2$ as (\ref{f_1})-(\ref{Pf2choice}), we also choose $\e$ and $\kappa$ in the relation of (\ref{choice:delta}).
%\hide
%, for some $\mathfrak{C}\gg 1$, 
% \Be\label{choice:delta:intro}
%%\delta= \sqrt{\e} \ \ \text{and} \ \ 
%%\delta
%\sqrt{\e}=\exp\Big(  \frac{  - \mathfrak{C}  T}{ \kappa^{1/2} }\Big) 
%% \Big\{1+    %\mathfrak{C}^\prime \kappa^{-\mathfrak{P}} T 
%% \Big(  \frac{  \mathfrak{C}_2 T}{ \kappa^{ \mathfrak{P}} }\Big)^2   \exp\Big(  \frac{  \mathfrak{C}_2 T}{ \kappa^{ \mathfrak{P}} }\Big) 
%   % \Big\}^{-1}    
% . 
%\Ee
%\unhide
%
%
%%as in  (\ref{choice:delta}). 
%
%
Assume that an initial datum for the remainder $f_{R,in }$ satisfies (\ref{initial_EF}), then for the given $T>0$, 
%\Be\label{initial_EF}
 %\sqrt{\mathcal{E}(0) }+ \sqrt{\mathcal{F} _p (0) } \lesssim1.
% \exp\Big( \frac{1}{\kappa^{\mathfrak{P}}}\Big)
%\Ee
 we construct a unique solution $f_R(t,x,v)$ of the form of %:
 \Be\notag
 F  = \mu + \e f_1 \sqrt \mu + \e^2 f_{2} \sqrt \mu  + \e ^{3/2} f_{R} \sqrt \mu  \ \ \text{in} \ \ [0,T] \times \O \times \R^3,
 \Ee
which solves the Boltzmann equation (\ref{Boltzmann}) and the diffuse reflection boundary condition (\ref{diffuse_BC}) with the scale of (\ref{StMae}), (\ref{Knke}), (\ref{choice:delta}), and satisfies the initial condition 
\[
F|_{t=0}=  \mu + \e  \sqrt \mu f_1 |_{t =0 } + \e^2 \sqrt \mu  f_{2} |_{t=0} + \e ^{3/2}  \sqrt \mu f_{R,in},
\]
such that, for each $\e$ and $\kappa$ of (\ref{choice:delta}), 
\Be\label{approx}
\sup_{0 \leq t \leq T}\left\|  \frac{F (t,x,v)-  \mu (v)}{\e  \sqrt{ \mu  (v)} } - f_1 \right\|_{L^2(\O \times \R^3)}  \lesssim 
\exp\Big(  \frac{  - \mathfrak{C}  (T + 1 )}{ 4 \kappa }\Big)   \ \ \text{for} \ \ \kappa \ll1. 
\Ee
Moreover. let $u_E(t,x), \theta_E(t,x) $ be the unqiue solution of the incompressible Euler equations under heat transfer (\ref{Euler})-(\ref{heat_E}) with the initial condition $(u_E, \theta_E)|_{t=0}= (u_{in}, \theta_{in} )$ satisfying (\ref{divuin}), (\ref{analyticspacein}). Then 
   \Be\notag
\sup_{0 \leq t \leq T}\left\|
\frac{F (t,x,v)- \mu (v)}{\e  \sqrt {\mu (v) } } - \left( - \theta_E(t,x) + u_E(t,x) \cdot v + \theta_E(t,x) \frac{|v|^2-3}{2} \right) \sqrt {\mu(v)}  \right\|_{L^2(\O \times \R^3 )}
\\    \longrightarrow 0
% \ \ \text{as} \ \ \e \downarrow 0.
\Ee
as $\e \to 0$.

  \end{theorem}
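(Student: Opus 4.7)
The plan is to chain together the three pillars already established in the paper: the propagation of analytic regularity for the Navier-Stokes-Fourier system (Theorem \ref{thm_bound}), the uniform-in-$\e$ control of the Boltzmann remainder (Theorem \ref{main_theorem:conditional}), and the inviscid limit of the fluid part (Lemma \ref{thetaEtheta} together with the analogous statement for $u$). First, I would feed the initial data $(\omega_{in},\theta_{in})$ satisfying \eqref{initial_norm}, \eqref{inital_norm_theta}, and the compatibility conditions \eqref{CC} into Theorem \ref{thm_bound}. This produces, on a time interval $[0,T]$ depending only on $\lambda_0$ and the size of the initial data (so \emph{uniformly} in $\kappa$), a real analytic solution $(u,\theta)$ of \eqref{NS}--\eqref{Bouss} together with the point-wise and conormal bounds \eqref{b1}--\eqref{est:pt2}. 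A direct inspection of those bounds with $\mathfrak{P}=1$, $\mathfrak{P}'$ slightly less than $1$, shows that the hypotheses \eqref{mathfrak_C'} and \eqref{condition:theorem} of Theorem \ref{main_theorem:conditional} are satisfied; in particular the boundary-layer singularities in $\nabla^2 u$, $\nabla^2\theta$, $\p_t^2 p$, and so on are exactly of order $\kappa^{-1}$, matching the required growth rate.

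Next, with $f_1$ and $f_2$ chosen as in \eqref{f_1}--\eqref{Pf2choice} (which is permissible by Proposition \ref{prop:Hilbert}), and with $\e$ and $\kappa$ linked by \eqref{choice:delta}, I would invoke Theorem \ref{main_theorem:conditional} to obtain, for any initial remainder $f_{R,in}$ satisfying \eqref{initial_EF}, a unique solution $f_R$ on $[0,T]$ of the remainder equation \eqref{eqtn_fR} with the full energy/dissipation/$L^6$/$L^\infty$ bound \eqref{est:E}. Expanding $F$ according to \eqref{F_e} one has
\[
\frac{F-\mu}{\e\sqrt{\mu}}-f_1 \;=\; \e\,f_2+\e^{1/2} f_R,
\]
so by the triangle inequality, the bound \eqref{est:f2} on $f_2$, and \eqref{est:E} applied to $\|f_R\|_{L^2_{x,v}}$, one finds $\sup_{[0,T]}\|\frac{F-\mu}{\e\sqrt{\mu}}-f_1\|_{L^2(\O\times\R^3)}\lesssim \e+\e^{1/2}\cdot \e^{-\frac14+\frac{3}{2p}(1-\frac{p}{3})}\kappa^{\frac34+\frac{\mathfrak P}{2}+\cdots}$, which with the scaling \eqref{choice:delta} collapses to the claimed rate $\exp(-\mathfrak C(T+1)/(4\kappa))$ in \eqref{approx}. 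The slightly delicate book-keeping is to verify that the exponent on $\e$ ends up positive so that the whole quantity vanishes as $\e\to0$; this is precisely why the constant $\mathfrak C$ in \eqref{choice:delta} is chosen large.

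It remains to pass from $f_1$ to the Euler profile. Using the Boussinesq relation $\rho+\theta=0$ from \eqref{Bouss},
\[
f_1 = \Big(-\theta+u\cdot v+\theta\tfrac{|v|^2-3}{2}\Big)\sqrt{\mu},
\]
so its difference with the target profile $(-\theta_E+u_E\cdot v+\theta_E\tfrac{|v|^2-3}{2})\sqrt{\mu}$ is controlled in $L^2(\O\times\R^3)$ by $\|u-u_E\|_{L^2(\O)}+\|\theta-\theta_E\|_{L^2(\O)}$ via the Gaussian $v$-integrals. The temperature convergence is Lemma \ref{thetaEtheta}; the velocity convergence $\|u-u_E\|_{L^\infty([0,T];L^2(\O))}\to0$ as $\kappa\to0$ follows from the Sammartino-Caflisch / Nguyen-Nguyen analytic inviscid limit already built into Theorem \ref{thm_bound} (the analytic boundary-layer profile has $L^2$ mass $O(\sqrt{\kappa})$, so the correction to $u_E$ vanishes in $L^2(\O)$). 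Combining these two ingredients with \eqref{approx} yields the final convergence as $\e\to0$.

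The main obstacle, and the place where the work of the paper is concentrated, is the second step: closing \eqref{est:E} uniformly in $\e$ despite the singular factor $\tfrac{2}{\e\kappa}\Gamma(f_1,f_R)$ arising from the global-Maxwellian expansion, which only costs a $\tfrac{1}{\kappa}$ in the energy estimate and thus only an $e^{1/\kappa}$ growth that \eqref{choice:delta} is designed to absorb. Once one accepts Theorem \ref{main_theorem:conditional} and Theorem \ref{thm_bound}, the assembly of the main theorem is essentially an exercise in matching scales and invoking the two inviscid-limit lemmas; all the non-routine analysis has been done in Sections 2 and 3.
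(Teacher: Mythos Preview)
Your approach is exactly that of the paper: invoke Theorem \ref{thm_bound} for the Navier-Stokes-Fourier bounds, feed those into Theorem \ref{main_theorem:conditional} for the remainder, and then split the Euler limit into \eqref{approx} plus the fluid inviscid limit (Lemma \ref{thetaEtheta} for $\theta$, Kato for $u$). The assembly is correct.

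One concrete correction: you take $\mathfrak{P}=1$, but the paper takes $\mathfrak{P}=\tfrac12$, and this matters. The hypothesis \eqref{mathfrak_C'} involves only \emph{first} derivatives $\nabla_x\p_t^\ell\mathcal I$ and the quantities $\kappa^{-1/2}\p_t^\ell\mathcal I$, $\kappa^{-1/2}p$; by \eqref{est:u_t}--\eqref{est:u1}, \eqref{est:theta_t}--\eqref{est:theta1}, \eqref{est:p} these are all $O(\kappa^{-1/2})$, so $\mathfrak{P}=\tfrac12$ suffices. The $\kappa^{-1}$ singularities in $\nabla^2 u$, $\nabla^2\theta$, $\p_t^2 p$ that you cite enter only in \eqref{condition:theorem}, whose right-hand side is $\exp(\kappa^{-\mathfrak{P}'})$ and therefore swallows any polynomial growth. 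With $\mathfrak{P}=\tfrac12$ the relation \eqref{choice:delta} becomes $\e=\exp(-\mathfrak{C}(T+1)/\kappa)$, and then $\e^{1/2}\|f_R\|_{L^2}\lesssim \e^{1/4}$ from \eqref{est:E} gives precisely the rate $\exp(-\mathfrak{C}(T+1)/(4\kappa))$ asserted in \eqref{approx}. Your choice $\mathfrak{P}=1$ would force the much more restrictive scaling $\e=\exp(-\mathfrak{C}(T+1)/\kappa^2)$ and would not reproduce the stated rate.

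A minor remark on the $u\to u_E$ step: the paper appeals to Kato's criterion \cite{kato} (which is verifiable from the boundary-layer bounds of Theorem \ref{thm_bound}) rather than the analytic boundary-layer $L^2$ mass argument you sketch; either route works here.
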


  \hide
%We prove Theorem \ref{main_theorem} as a direct consequence of theorems following in this section. Since some of notions of them are complex 

We give a proof of Theorem \ref{main_theorem}, as a direct consequence of theorems following in this section,
%using Theorem \ref{main_theorem:conditional}, Theorem \ref{thm_bound}, and Theorem \ref{thm_bound}, 
while they are deliberately delayed to be stated since some of notions are complex:
\unhide
\begin{proof}%[\textbf{Proof of Theorem \ref{main_theorem}}] 
%We established each part of the main theorem through separate theorems in different sections of the paper. 
The existence of the Navier-Stokes-Fourier system follows from Theorem \ref{thm_bound}. For the remaining assertions, we note that all  the estimates (\ref{est:u_t})-(\ref{ut}) of Theorem \ref{thm_bound} ensure the conditions of Theorem \ref{main_theorem:conditional} with $\mathfrak{P}=\frac{1}{2}$. Therefore \eqref{approx} follows  directly as a consequence of Theorem \ref{main_theorem:conditional} and Theorem \ref{thm_bound}. 
As for the incompressible Euler limit, note that 
\[
\begin{split}
 &   \frac{F (t,x,v)- \mu (v)}{\e  \sqrt {\mu (v) } }  -  \left( - \theta_E + u_E \cdot v + \theta_E \frac{|v|^2-3}{2} \right) \sqrt {\mu}    
  \\ & =   \big[ \frac{F (t,x,v)- \mu (v)}{\e  \sqrt {\mu (v) } }  - f_1\big] +\big[ f_1 -  \left( - \theta_E + u_E \cdot v + \theta_E \frac{|v|^2-3}{2} \right) \sqrt {\mu}\big].
  \end{split}
\]
The first term can be bounded as in (\ref{approx}). 
%We bound the second term by an expansion: 
%\Be\begin{split}\notag
%%&|M_{1, \e u(t,x), 1} (v) - M_{1, \e u_E(t,x), 1} (v)|\\&\lesssim  
%|u(t,x)-u_E(t,x)|
%\int^\e_0 |(v-\e u_E) + a(u_E-u)| e^{- \frac{|(v-\e u_E) + a(u_E-u)|^2}{2}} \dd a .
%\end{split}\Ee
%Note that $\|\e  u \|_{L^\infty} \ll1 $ and $\|\e u_E\|_{L^\infty}\ll 1$ from Theorem \ref{thm_bound}. Then we conclude that 
%
The second term converges to $0$ as $\kappa \downarrow 0$ from Theorem \ref{thm_bound}, Lemma \ref{thetaEtheta}, and the famous Kato's condition for vanishing viscosity limit in \cite{kato}.
\end{proof}

\section*{Acknowledgements} 

JJ was supported  in part by the NSF DMS-grant 2009458 and WiSE Program at the University of Southern California. CK was partly supported by National Science Foundation under Grant No. 1900923 and the Wisconsin Alumni Research Foundation.

%\section{Possible extension}
%
%1) non-isothermal boundary $\theta_w$ is not constant: check \cite{SR}
%
%2) $u_w$ is not constant
%

%\appendix 
%
%\section{Convergence of $\theta$ to $\theta_E$}
%We 

\hide

The next Lemma is a Kato's condition for $\theta$, which would allow the convergence of $\theta$ to $\theta_E$ without assuming $\theta_0 = 0 $ on $\O$. However, we do need to assume $\theta_0 = 0 $ on $\O$ when proving the regularity estimates \eqref{thetanorm_bound}, \eqref{thetapx3_bound} for $\p_t^\ell \theta$, $\ell = 0,1,2$. So this lemma is not useful for this paper.

\begin{lemma}[Kato's condition for $\theta$]
Let $u_E$ be the solution of the Euler equation \eqref{Euler}, \eqref{incomp_E}, \eqref{no-pen}, and $u$ be the solution of the Navier-Stokes equation \eqref{NS}, \eqref{incomp_NS}, \eqref{NSB}. For the following equations
\Be \label{equuE} \begin{split}
\p_t \theta_E + u_E \cdot \nabla_x \theta_E   & =0 \ \ \text{in} \ \O,  % \ \ u_E \cdot n =0  \ \ \text{on} \ \p\O, 
 \\ \p_t \theta + u \cdot \nabla_x \theta  - \kappa \eta_c \Delta \theta & =0 \ \ \text{in} \ \O,
% \ \ u =0  \ \ \text{on} \ \p\O, 
 \ \ \theta = 0 \ \ \text{on} \ \p\O.
\end{split} \Ee
Assume $ \theta_E(0,x) =  \theta (0,x) = \theta_0(x)$. Suppose
\Be \label{kato1}
\kappa \int_0^T \int_{\O} |  \nabla_x \theta | \, dx \to 0, \text{ as } \kappa \to 0,
\Ee
and
\Be \label{kato}
 \kappa \int_0^T \int_{\O_\kappa} \left( | \nabla_x u |^2 + | \nabla_x \theta|^2\right) dxdt  \to 0, \text{ as } \kappa \to 0,
\Ee
where $\O_\kappa := \{ x = (x_1, x_2,x_3) \in \O: x_3 < \kappa \}$.
Then
\Be
\sup_{0 \le t \le T}  \| \theta - \theta_E \|_{L^2(\O ) } \to 0, \text{ as } \kappa \to 0.
\Ee

\end{lemma}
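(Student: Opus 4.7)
\medskip

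\noindent\textbf{Proposal for the proof of Lemma \ref{thetaEtheta}.}

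The plan is a direct $L^2$ energy comparison between $w := \theta - \theta_E$ after first extracting just enough regularity of $\theta_E$ to treat the viscous forcing $\kappa \eta_c \Delta\theta_E$, and then invoking the inviscid limit $u \to u_E$ in $L^2$ as an input from the paper's analytic framework (Theorem \ref{thm_bound} together with Kato's criterion \cite{kato}).

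First I would show that $\theta_E$ vanishes on $\p\Omega$ for all $t \in [0,T]$. Since $u_E \in C^2$ on some interval $[0,T^*]$ for analytic $u_{in}$ (cf.\ \cite{Euler}), the flow map $X(s;t,x)$ defined by $\tfrac{d}{ds}X = u_E(s,X)$, $X(t;t,x)=x$, is well defined. Because $u_E \cdot n = 0$ on $\p\Omega$, the boundary is invariant under $X$, so if $x \in \p\Omega$ then $X(0;t,x) \in \p\Omega$, and the representation $\theta_E(t,x) = \theta_0(X(0;t,x))$ combined with the hypothesis $\theta_0|_{\p\Omega}=0$ yields $\theta_E|_{\p\Omega}=0$. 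Consequently $w|_{\p\Omega}=0$. Next, differentiating \eqref{Xstx} once and twice in $x$ and applying Gronwall against $\sup_{0\le s\le T}\|u_E(s)\|_{C^2(\Omega)}<\infty$ gives uniform-in-$(t,x)$ bounds on $\nabla_x X$ and $\nabla_x^2 X$; combined with $\theta_0 \in C^2(\Omega)$ (an easy consequence of \eqref{inital_norm_theta}), this delivers
\[
\sup_{0\le t\le T}\bigl(\|\nabla_x\theta_E(t)\|_{L^\infty(\Omega)} + \|\nabla_x^2\theta_E(t)\|_{L^\infty(\Omega)}\bigr) \lesssim 1,
\]
and in particular $\kappa\|\Delta\theta_E\|_{L^2} = O(\kappa)$ on any fixed bounded subdomain (and, since $\nabla_x^2\theta_0$ has the analytic decay built into \eqref{inital_norm_theta}, on $\Omega$ as well by the same transport representation).

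With this in hand, subtracting the two equations in \eqref{equuE1} gives
\begin{equation*}
\p_t w + u\cdot\nabla w + (u-u_E)\cdot\nabla\theta_E - \kappa\eta_c \Delta w \;=\; \kappa\eta_c \Delta\theta_E,
\end{equation*}
with $w|_{t=0}=0$ and $w|_{\p\Omega}=0$. Testing against $w$ and integrating by parts, the Dirichlet boundary condition kills the viscous boundary term and produces $+\kappa\eta_c\|\nabla w\|_{L^2}^2$ on the left, while $\nabla\cdot u=0$ together with $u|_{\p\Omega}=0$ eliminates the transport term $\int_\Omega (u\cdot\nabla w)\,w\,dx$. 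The two forcing contributions are bounded by
\[
\Bigl|\int_\Omega (u-u_E)\cdot\nabla\theta_E\, w\,dx\Bigr| \lesssim \|\nabla\theta_E\|_{L^\infty}\bigl(\|u-u_E\|_{L^2}^2 + \|w\|_{L^2}^2\bigr),
\]
\[
\kappa\eta_c\Bigl|\int_\Omega \Delta\theta_E\, w\,dx\Bigr| \lesssim \kappa^2\|\Delta\theta_E\|_{L^2}^2 + \|w\|_{L^2}^2.
\]
This yields $\tfrac{d}{dt}\|w\|_{L^2}^2 \lesssim \|w\|_{L^2}^2 + \|u-u_E\|_{L^2}^2 + \kappa^2$, and Gronwall on $[0,T]$ gives
\[
\sup_{0\le t\le T}\|w(t)\|_{L^2}^2 \;\lesssim\; e^T\Bigl(\sup_{0\le t\le T}\|u(t)-u_E(t)\|_{L^2}^2 + \kappa^2\Bigr).
\]

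The main obstacle is the remaining input $\sup_{t\in[0,T]}\|u(t)-u_E(t)\|_{L^2(\Omega)} \to 0$ as $\kappa\to 0$; this is precisely the classical inviscid limit for Navier--Stokes with no-slip boundary, which in the analytic setting used here is supplied by Theorem \ref{thm_bound} (whose bounds on $u$, $\nabla u$ verify Kato's energy-dissipation criterion \cite{kato} in the $O(\kappa)$ strip $\Omega_\kappa$) together with the boundary-layer analysis of \cite{SC1,SC2,NN2018}. Feeding this into the Gronwall estimate above completes the proof.
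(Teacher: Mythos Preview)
Your argument proves a different lemma than the one stated. The Kato's-condition lemma does \emph{not} assume $\theta_0|_{\p\O}=0$; its whole purpose (as the paper remarks just before stating it) is to obtain the convergence $\theta\to\theta_E$ \emph{without} that boundary hypothesis, replacing it by the dissipation conditions \eqref{kato1}--\eqref{kato}. You invoke ``the hypothesis $\theta_0|_{\p\O}=0$'' to conclude $\theta_E|_{\p\O}=0$ and hence $w|_{\p\O}=0$, but that hypothesis is absent from the statement. Without it, $\theta_E$ need not vanish on $\p\O$, so $w=\theta-\theta_E$ does not satisfy a Dirichlet condition, the integration by parts for $-\kappa\eta_c\int_\O(\Delta w)\,w$ picks up a nontrivial boundary contribution, and your energy inequality breaks down at exactly that step. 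A further symptom: you never use \eqref{kato1} or \eqref{kato} anywhere, which should signal that the stated hypotheses are not being engaged.

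The paper's proof fixes this by introducing a boundary corrector $\theta_K:=-\chi(x_3/\kappa)\,\theta_E$ supported in the strip $\O_\kappa$, so that $\theta_E+\theta_K$ vanishes on $\p\O$ and $w:=\theta-\theta_E-\theta_K$ is a legitimate Dirichlet test function. The price is extra forcing terms $-\p_t\theta_K$ and $-u\cdot\nabla\theta_K$ in the $w$-equation; these are controlled using the scalings $\|\theta_K\|_{L^2}\lesssim\kappa^{1/2}$, $\|\nabla\theta_K\|_{L^\infty}\lesssim\kappa^{-1}$, Hardy's inequality in the strip, and precisely the Kato assumptions \eqref{kato1}--\eqref{kato}. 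What you have written is essentially the proof of the \emph{other} lemma (Lemma~\ref{thetaEtheta}), which does carry the hypothesis \eqref{theta0bdry}; for the present statement you must add the corrector and show how \eqref{kato1}--\eqref{kato} close the estimate.
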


\begin{proof}
Let $\chi : \mathbb R^3 \to \mathbb R^3$ be a smooth cut-off function with support in $[0,1]$ and $\chi (0 ) = 1 $. Let
\[
\theta_K := - \chi( \frac{x_3}{\kappa}) \theta_E.
\]
Then $\theta_E + \theta_K = 0$ on $\p \O$, and from \eqref{thetaEC2}, we have the following bound in terms of $\kappa$
\Be \label{thetaKest}
\| \theta_K \|_{L^\infty} \lesssim 1, \, \| \p_t \theta_K \|_{L^2} \lesssim \kappa^{1/2}, \, \| \theta_K \|_{L^2} \lesssim \kappa^{1/2}, \, \| \nabla \theta_K \|_{L^\infty} \lesssim \kappa^{-1}, \, \| \nabla \theta_K \|_{L^2} \lesssim \kappa^{-1/2}.
\Ee
Let $w = \theta - \theta_E -\theta_K$, then from \eqref{equuE}, the equation for $w$ is
\Be \label{eqnw}
\p_t w + u \cdot \nabla w + ( u- u_E) \cdot \nabla \theta_E  - \kappa \eta_c \Delta \theta = -\p_t \theta_K - u \cdot \nabla \theta_K.
\Ee
The standard energy estimate $\int_{\O} \eqref{eqnw} \cdot w \, dx $ gives
\Be \label{enew} \begin{split}
& \frac{1}{2} \frac{d}{dt } \| w \|_{L^2}^2 + \kappa \eta_c \| \nabla \theta \|_{L^2}^2 + \int_{\O} (u \cdot \nabla w) w \,  dx 
\\  = &- \underbrace{ \int_{\O}   ( u- u_E) \cdot \nabla \theta_E w \, dx }_{\eqref{enew}_1}  - \underbrace{ \kappa \eta_c \int_{\O} \Delta \theta  ( \theta_E + \theta_K) \, dx}_{\eqref{enew}_2}  - \int_{\O} \p_t \theta_K w \, dx -   \underbrace{ \int_{\O} (u \cdot \nabla \theta_K) w \, dx}_{\eqref{enew}_3}.
\end{split} \Ee
From $ \nabla \cdot u = 0$, and $ u = 0 $ on $\p \O$, we have
\[
 \int_{\O} (u \cdot \nabla w) w \,  dx = \frac{1}{2}  \int_{\O} u \cdot  \nabla  (w^2 )  \,  dx =  -\frac{1}{2}  \int_{\O}  ( \nabla \cdot u ) w^2   \,  dx = 0.
\]
Now we bound the terms on the RHS of \eqref{enew}. From \eqref{thetaEC2} We have
\Be \label{west1}
\left| \eqref{enew}_1 \right|  \lesssim \| \nabla \theta_E \|_{L^\infty} \left( \| u - u_E \|_{L^2}^2 + \| w \|_{L^2}^2 \right) \lesssim \| u - u_E \|_{L^2}^2 + \| w \|_{L^2}^2.
\Ee
Since $\theta_E + \theta_K = 0 $ on $\p \O$, we have $\int_{\O} \Delta \theta( \theta_E + \theta_K ) \, dx = \int_\O - \nabla \theta ( \nabla \theta_E + \nabla \theta_K ) \, dx $, thus from \eqref{thetaKest} and \eqref{thetaEC2}
\Be \label{west2} \begin{split}
\left| \eqref{enew}_2 \right| \lesssim  & \kappa \eta_c \| \nabla \theta_E\|_{L^\infty} \| \nabla \theta \|_{L^1}  + \kappa \eta_c \| \nabla \theta_K \|_{L^2 } \| \nabla \theta \|_{L^2_{\O_\kappa}}
\\ \lesssim & \kappa  \| \nabla \theta \|_{L^1} + \kappa^{1/2}   \| \nabla \theta \|_{L^2_{\O_\kappa}}.
\end{split} \Ee
From \eqref{thetaKest}, $   \int_{\O} |  \p_t \theta_K w  | \, dx  \lesssim   \kappa +  \| w \|_{L^2}^2 $. Finally since $ \int_{\O} ( u \cdot \nabla \theta_K) \theta_K \, dx = 0$, the Hardy's inequality, \eqref{thetaEC2}, and \eqref{thetaKest}, we have
\Be  \label{west3}
\begin{split}
\left|  \eqref{enew}_3 \right|  = & \left| \int_{\O}  (u \cdot \nabla \theta_K) (\theta - \theta_E) \, dx \right|
\\ \lesssim & \kappa^2 \| \nabla \theta_K \|_{L^\infty} \int_{\O_\kappa} | \frac{ | u |}{|x_3|} \frac{ | \theta - \theta_E | }{|x_3|} \, dx 
\\ \lesssim & \kappa \left( \| \nabla u \|_{L^2_{\O_\kappa}}^2 + \| \nabla (\theta - \theta_E) \|_{L^2_{\O_\kappa}}^2 \right) 
 \\ \lesssim &  \kappa \left( \| \nabla u \|_{L^2_{\O_\kappa}}^2 + \| \nabla \theta  \|_{L^2_{\O_\kappa}}^2 \right) 
\end{split}.
\Ee
Now collect \eqref{west1} - \eqref{west3}, \eqref{enew} gives
\Be \label{dtwbd}
\frac{d}{dt } \| w \|_{L^2}^2  \lesssim \| w\|_{L^2}^2 +  \underbrace{ \kappa +  \| u - u_E \|_{L^2}^2 + \kappa  \| \nabla \theta \|_{L^1} + \kappa^{1/2}   \| \nabla \theta \|_{L^2_{\O_\kappa}} +  \kappa \left( \| \nabla u \|_{L^2_{\O_\kappa}}^2 + \| \nabla \theta  \|_{L^2_{\O_\kappa}}^2 \right)}_{\eqref{dtwbd}_1}.
\Ee
Now from assumptions \eqref{kato1}, \eqref{kato}, we have $\eqref{dtwbd}_1 \to 0 $ as $\kappa \to 0$,
%\[
%\delta (\kappa) = \kappa +  \| u - u_E \|_{L^2}^2 + \kappa  \| \nabla \theta \|_{L^1} + \kappa^{1/2}   \| \nabla \theta \|_{L^2_{\O_\kappa}} +  \kappa \left( \| \nabla u \|_{L^2_{\O_\kappa}}^2 + \| \nabla \theta  \|_{L^2_{\O_\kappa}}^2 \right) \to 0, \text{ as } \kappa \to 0.
%\]
Therefore by Gronwall's inequality 
\[
\sup_{0 \le s \le T} \| w \|_{L^2}^2  \lesssim \eqref{dtwbd}_1 e^{T} \to 0, \text{ as } \kappa \to 0.
\] 
Thus from \eqref{thetaKest} we conclude
\[
\sup_{0 \le s \le T} \| \theta - \theta_E \|_{L^2}^2 \le \sup_{0 \le s \le T} \left(  \| w \|_{L^2}^2 + \| \theta_K \|_{L^2}^2 \right) \lesssim \sup_{0 \le s \le T}  \| w \|_{L^2}^2 + \kappa \to 0.
\]

\end{proof}

\unhide


\begin{thebibliography}{9}


%\bibitem{BGL1} BGL. Fluid dynamic limits of kinetic equations. I.
%
%\bibitem{BGL2} BGL. Fluid dynamic limits of kinetic equations. II.

\bibitem{BGL1} C.Bardos; F.Golse; D.Levermore: {Fluid dynamical limits of kinetic equations I. Formal derivations.}, \textit{J. Statist. Phys.}, 63 (1991), no. 1-2, 323--344.%\vskip.05cm

\bibitem{BGL2} C.Bardos; F.Golse; D.Levermore: {Fluid dynamical limits of kinetic equations, II : Convergence proofs for the Boltzmann equation}, \textit{Comm. Pure Appl. Math.},  46 (1993), no. 5, 667--753.%


\bibitem{BGP} C.Bardos; F.Golse; L.Paillard: {The incompressible Euler limit of the Boltzmann equation with accommodation boundary condition,} \textit{Commun. Math. Sci.} 10 (2012), no. 1, 159--190.

\bibitem{BGPDB} F. Bouchut; F. Golse; M. Pulvirenti; in: L. Desvillettes; B. Perthame (Eds.): Kinetic equations and asymptotic theory. \textit{Gauthier-Villars, \'Editions Scientifiques et M\'edicales Elsevier, Paris,} 2000.

\bibitem{Mouhot} M.Briant; S.Merino-Aceituno; C.Mouhot: From Boltzmann to incompressible Navier-Stokes in Sobolev spaces with polynomial weight. \textit{Anal. Appl.} 17 (2019), no. 1, 85--116.

\bibitem{Caflisch} R.Caflisch: The fluid dynamic limit of the nonlinear boltzmann equation, \textit{Comm. Pure Appl. Math.} 33 (1980), no. 5, 651--666

\bibitem{CaoK} Y.Cao; C.Kim: On Some Recent Progress in the Vlasov-Poisson-Boltzmann System with Diffuse Reflection Boundary, \textit{submitted}

\bibitem{CKL} Y.Cao; C.Kim; D.Lee: Global strong solutions of the Vlasov-Poisson-Boltzmann system in bounded domains, \textit{Arch. Rational Mech. Anal.} 233 (3), 1027--1130 (2019)

\bibitem{CK1} H.Chen; C.Kim: Regularity of Stationary Boltzmann equation in Convex Domains, \textit{submitted}

\bibitem{CKLi} H.Chen; C.Kim; Q.Li: Local well-posedness of Vlasov-Poisson-Boltzmann equation with generalized diffuse boundary condition, \textit{J. Stat. Phys}. 179 (2020), no. 2, 535--631.


\bibitem{DEL} A.de Masi; R.Esposito; J.L.Lebowitz, Incompressible Navier-Stokes and Euler Limits of the Boltzmann Equation, \textit{Comm. Pure Appl. Math.} 1189--1214 (1989) 

\bibitem{DP} R.J.DiPerna; P.L.Lions: On the Cauchy problem for the Boltzmann equation: Global existence and weak stability results, \textit{Ann. of Math.}, 130, 321--366, (1990).

 
\bibitem{EGKM} R.Esposito; Y.Guo; C.Kim; R.Marra: Non-Isothermal Boundary in the Boltzmann Theory and Fourier Law.  \textit{Comm. Math. Phys.} \textbf{323} (2003), no. 1, 177--239. 

\bibitem{EGKM2} R.Esposito; Y.Guo; C.Kim; R.Marra: Stationary solutions to the Boltzmann equation in the Hydrodynamic limit,  \textit{Annals of PDE}, 4:1, 1--119 (2018)


\bibitem{EGKM3}R.Esposito; Y.Guo; C.Kim; R.Marra: Diffusive limits of the Boltzmann equation in bounded domain, \textit{Ann. Appl. Math.}, 36 (2020), pp. 111-185.

 \bibitem{EGM}R.Esposito; Y.Guo; R.Marra: Hydrodynamic Limit of a Kinetic Gas Flow Past an Obstacle, Commun. Math. Phys. vol. 364, pp. 765-823 (2018)

\bibitem{Golse}F.Golse: Hydrodynamic limits, \textit{Eur. Math. Soc., Zrich,} 699--717 (2005)

\bibitem{GSR}	F.Golse; L.Saint-Raymond: The Navier-Stokes limit of the Boltzmann equation for bounded collision kernels, \textit{Invent. Math.}, 155, 81--161, (2004)

\bibitem{Guo2006} Y. Guo: Boltzmann diffusive limit beyond the Navier-Stokes approximation. \textit{Commun. Pure Appl. Math.} 59, 626--687 (2006)

\bibitem{Guo10} Y.Guo: Decay and continuity of the Boltzmann equation in bounded domains. \textit{Arch. Ration. Mech. Anal.} 197 (2010), no. 3, 713--809

\bibitem{GJJ} Y.Guo; J.Jang; N.Jiang: Acoustic limit for the Boltzmann equation in optimal scaling, \textit{Comm. Pure Appl. Math.} 63 (2010), no. 3, 337--361. 

\bibitem{GJ} Y.Guo; J.Jang: Global Hilbert expansion for the Vlasov-Poisson-Boltzmann system. \textit{Comm. Math. Phys.} 299 (2010), no. 2, 469--501.

\bibitem{GJ1} Y.Guo; J.Jang; N.Jiang: Local Hilbert expansion for the Boltzmann equation. \textit{Kinet. Relat. Models} (2009), no. 1, 205--214

\bibitem{GKTT1} Y.Guo; C.Kim; D.Tonon; A.Trescases: Regularity of the Boltzmann Equation in Convex Domains. \textit{Invent. Math.} 207, 115--290 (2017)
	
\bibitem{GKTT2} Y.Guo; C.Kim; D.Tonon; A.Trescases: BV-regularity of the Boltzmann equation in Non-convex Domains, \textit{Arch. Rational Mech. Anal.}, 220, 1045--1093 (2016)
	
\bibitem{GW} Y.Guo; L.Wu: Geometric correction in diffusive limit of neutron transport equation in 2D convex domains. \textit{Arch. Ration. Mech. Anal.} 226 (2017), no. 1, 321--403

\bibitem{GV} J.Ginibre; G.Velo: Smoothing properties and retarded estimates for some dispersive evolution equations, 
\textit{Comm. Math. Phys.} Volume 144, Number 1 (1992), 163-188.
	 
 \bibitem{Hilbert1} D.Hilbert: Begrundung der kinetischen Gastheorie. \textit{Math. Ann.} 72 (1912), no. 4, 562--577
 
 \bibitem{Jang} J.Jang: Vlasov-Maxwell-Boltzmann diffusive limit. \textit{Arch. Ration. Mech. Anal.} 194 (2009), no. 2, 531--584.
 
 \bibitem{JK} J.Jang; C.Kim: Incompressible Euler limit from Boltzmann equation with Diffuse Boundary Condition for Analytic data, https://arxiv.org/abs/2005.12192, \textit{submitted}
 
 \bibitem{JK2} J.Jang; C.Kim: A note on acoustic limit for the Boltzmann equation, \textit{submitted}
 
 \bibitem{JinK} J.Jin; C.Kim: Damping of kinetic transport equation with diffuse boundary condition, \textit{submitted}
 
  \bibitem{JM} N.Jiang; N.Masmoudi: Boundary layers and incompressible Navier-Stokes-Fourier limit of the Boltzmann equation in bounded domain I. \textit{Comm. Pure Appl. Math.} 70 (2017), no. 1, 90--171.
 
 \bibitem{JV} P.-E.Jabin; L.Vega: A real space method for averaging lemmas. \textit{J. Math. Pures Appl.} 83, 1309--1351 (2004)
 
 \bibitem{kato} T.Kato: Remarks on zero viscosity limit for nonstationary Navier-Stokes flows with boundary. \textit{Seminar on nonlinear partial differential equations (Berkeley, Calif., 1983)}, 85--98,  Math. Sci. Res. Inst. Publ., 2, Springer, New York, 1984.
 
\bibitem{Kim} C.Kim: Formation and Propagation of Discontinuity for Boltzmann Equation in Non-Convex Domains, \textit{Commun. Math. Phys,} 308, 641--701 (2011)

 \bibitem{Kim2} C.Kim: Boltzmann Equation with a Large Potential in a Periodic Box, \textit{Comm. Partial Differential Equations} 39 (2014), no. 8, 1393--1423
 
\bibitem{KL} C.Kim; D.Lee: The Boltzmann equation with specular boundary condition in convex domains, \textit{Comm. Pure Appl. Math.}, 71, 411--504 (2018) 

\bibitem{KL1}C.Kim; D.Lee: Decay of the Boltzmann equation with the specular boundary condition in non-convex cylindrical domains,\textit{Arch. Rational Mech. Anal.} 230(1), 49-123 (2018)

\bibitem{KYun} C.Kim; S.Yun: The Boltzmann equation near a rotational local Maxwellian. \textit{SIAM J. Math. Anal.} 44 (2012), no. 4, 2560--2598.

\bibitem{KVW} I.Kukavica; V.Vicol; F.Wang: The inviscid limit for the navier-stokes equations with data analytic only near the boundary, \textit{Arch. Rational Mech. Anal.} 237, 779--827 (2020)
 
 \bibitem{Lach}  M. Lachowicz: On the initial layer and the existence theorem for the nonlinear Boltzmann equation. \textit{Math. Methods Appl. Sci.} 9 (1987), no. 3, 342--366.
 
 \bibitem{LM1} P.L.Lions; N.Masmoudi, From the Boltzmann Equations to the Equations of Incompressible Fluid Mechanics, I, \textit{Archive Rat. Mech. Anal.,} 158, 173--193, (2001)

\bibitem {LM2}  P.L.Lions; N.Masmoudi: From the Boltzmann Equations to the Equations of Incompressible Fluid Mechanics II, \textit{Archive Rat. Mech. Anal.}, 158, 195--211 (2001)
 
 \bibitem{Mae13} Y.Maekawa: Solution formula for the vorticity equations in the half plane with application to high vorticity creation at zero viscosity limit. \textit{Advances in Differential Equations,} 18(1/2):101--146 (2013).

\bibitem{Mae14} Y.Maekawa: On the inviscid limit problem of the vorticity equations for viscous incompressible flows in the half-plane. \textit{Comm. Pure Appl. Math.}, 67(7):1045-1128, 2014

\bibitem{SRBGK} L.Saint-Raymond: From the BGK Boltzmann model to the Euler equations of incompressible fluids,
\textit{Bull. Sci. Math.} 126 (2002), no. 6, 493--506.

\bibitem{SR}L.Saint-Raymond: Convergence of solutions to the Boltzmann equation in the incompressible Euler limit, \textit{Arch. Ration. Mech. Anal.}, 166, 47--80, 2003.

\bibitem{SR09}  L.Saint-Raymond: Hydrodynamic limits: some improvements of the relative entropy method, \textit{Ann. Inst. H. Poincar\'e Anal. Non Lin\'eaire} 26 (2009), no. 3, 705--744.

\bibitem{Laure} L.Saint-Raymond: \textit{Hydrodynamic Limits of the Boltzmann Equation}. Springer, Berlin, Heidelberg, 2009.

\bibitem{SC1} M.Sammartino; R.E.Caflisch. Zero viscosity limit for analytic solutions, of the Navier-Stokes equation on a half-space. I. Existence for Euler and Prandtl equations. \textit{Comm. Math. Phys.}, 192(2):433-461, 1998.

\bibitem{SC2} M.Sammartino; R.E.Caflisch: Zero viscosity limit for analytic solutions of the Navier-Stokes equation on a half-space. II. Construction of the Navier-Stokes solution. \textit{Comm. Math. Phys.}, 192(2):463-491, 1998

\bibitem{Euler}  R. Temam: On the Euler equations of incompressible perfect fluids, \text{J. Functional Analysis} 20 (1975), no. 1, 32--43.

\bibitem{NN2018} T.T.Nguyen; T.T.Nguyen: The inviscid limit of Navier-Stokes equations for analytic data on the half-space. Arch. Ration. Mech. Anal., 230(3):1103-1129, 2018

\bibitem{UA} S.Ukai; K. Asano: The Euler limit and initial layer of the nonlinear Boltzmann equation, \textit{Hokkaido Math. J.} 12, 3, part 1 (1983), 311--332.

\bibitem{FW} F.Wang: The 3D inviscid limit problem with data analytic near the boundary, \textit{SIAM J. Math. Anal.} 52 (2020), no. 4,  3520--3545. 

\bibitem{Wu} L.Wu: Hydrodynamic limit with geometric correction of stationary Boltzmann equation. \textit{J. Differential Equations} 260, (2016) no. 10, 7152--7249.




\end{thebibliography}
\end{document}